\theoremstyle{plain}
\newtheorem{theorem}{Theorem}[subsection] 
\theoremstyle{definition}
\newtheorem{definition}[theorem]{Definition} 
\newtheorem{proposition}[theorem]{Proposition}
\newtheorem{propdef}[theorem]{Proposition/Definition}
\newtheorem{defprop}[theorem]{Definition/Proposition}
\newtheorem{corollary}[theorem]{Corollary}
\newtheorem{lemma}[theorem]{Lemma}
\newtheorem{remark}[theorem]{Remark}
\newtheorem{warning}[theorem]{Warning}
\newtheorem{example}[theorem]{Example} 
\newtheorem{question}[theorem]{Question}
\newtheorem{answer}[theorem]{Answer}
\newtheorem{observation}[theorem]{Observation}
\newtheorem{conjecture}[theorem]{Conjecture}
\newcommand{\fun}{\text{Fun}}
\newcommand{\poly}{\text{Poly}}
\newcommand{\Poly}{\text{Poly}}
\newcommand{\yo}{\text{y}}
\newcommand{\cj}{\mathcal{J}}
\newcommand{\cjtop}{{\cj_{top}}}
\newcommand{\cjenr}{\cj_{enr}}
\newcommand{\cs}{\mathcal{S}}
\newcommand{\ct}{\mathcal{T}}
\newcommand{\csp}{\cs_*}
\newcommand{\spc}{Sp}
\newcommand{\ce}{\fun(\cj,\cs)}
\newcommand{\cC}{\mathcal{C}}
\newcommand{\cD}{\mathcal{D}}
\newcommand{\lan}{\text{Lan}}
\newcommand{\mor}{\text{mor}}
\newcommand{\conv}{\circledast}
\newcommand{\redconv}{\circledast_\text{red}}
\newcommand{\im}{\text{im}}
\newcommand{\colim}{\operatorname{colim}\displaylimits}
\newcommand{\B}{\text{B}}
\newcommand{\BO}{\text{BO}}
\newcommand{\BU}{\text{BU}}
\newcommand{\TOP}{\text{TOP}}
\newcommand{\BTOP}{\text{BTOP}}
\newcommand{\into}{\hookrightarrow}
\newcommand{\Th}{\text{Th}}
\newcommand{\Cat}{\text{Cat}}
\newcommand{\nfin}{\text{N}(\text{Fin}_*)}
\newcommand{\Map}{\text{Map}}
\newcommand{\fib}{\text{fib}}
\newcommand{\calg}{\text{CAlg}}
\newcommand{\homog}{\text{Homog}^n(\cj,\csp)}
\newcommand{\Homog}{\text{Homog}^n}
\newcommand{\sh}{\text{sh}}
\newcommand{\id}{\text{id}}
\newcommand{\Nat}{\text{Nat}}
\newcommand{\inthom}{\mathcal{F}}
\newcommand{\Lan}{\text{Lan}}
\newcommand{\hofib}{\mathrm{hofib}}
\newcommand{\Tow}{\mathrm{Tow}}
\author{Leon Hendrian}
\title{Monoidal Structures in Orthogonal Calculus}
\date{}
\begin{document}

\maketitle
\begin{abstract}
Orthogonal Calculus, first developed by Weiss in 1991 \cite{weiss-oc}, provides a \enquote{calculus of functors} for functors from real inner product spaces to spaces. Many of the functors to which Orthogonal Calculus has been applied since carry an additional lax symmetric monoidal structure which has so far been ignored. For instance, the functor $V \mapsto \BO(V)$ admits maps
$$\BO(V) \times \BO(W) \to \BO(V \oplus W)$$
which determine a lax symmetric monoidal structure. 

Our first main result, \cref{approx_of_monoidal_functors}, states that the Taylor approximations of a lax symmetric monoidal functor are themselves lax symmetric monoidal. 
We also study the derivative spectra of lax symmetric monoidal functors, and prove in \cref{final_derivative_spectra_result} that they admit $O(n)$-equivariant structure maps of the form
$$\Theta^nF \otimes \Theta^nF \to D_{O(n)} \otimes \Theta^nF$$
where $D_{O(n)} \simeq S^{\text{Ad}_n}$ is the Klein-Spivak dualising spectrum of the topological group $O(n)$. 

As our proof methods are largely abstract and $\infty$-categorical, we also formulate Orthogonal Calculus in that language before proving our results. 

This article is largely identical to the authors PhD thesis \cite{hendrian-thesis}.

\end{abstract}

\tableofcontents
\newpage
\thispagestyle{empty}
\section*{Acknowledgements}
I am grateful to my advisor Michael Weiss for all the support I received during my time as a PhD student. Further thanks go out to Thomas Nikolaus for showing me the way to $\infty$-land. \\
The author was supported by the Deutsche Forschungsgemeinschaft (DFG, German Research Foundation) – Project-ID 427320536 – SFB 1442, as well as under Germany’s Excellence Strategy EXC 2044 – 390685587, Mathematics Münster: Dynamics–Geometry–Structure.

\chapter{Introduction} 
\label{chapter_introduction}

\section{Overview}
\sectionmark{Overview}

As already mentioned in the abstract, the main results are
\begin{itemize}
\item \cref{orthogonal_monoidality_theorem} and \cref{approx_of_monoidal_functors}, which state that the polynomial approximation functors $T_n$ are lax symmetric monoidal and hence that $T_nF$ of a lax symmetric monoidal functor $F$ is lax symmetric monoidal.
\item \cref{final_derivative_spectra_result}, which states that the derivative spectra $\Theta^nF$ of a lax symmetric monoidal functor $F \colon \cj \to (\csp,\times)$ admit structure maps
$$\Theta^nF \otimes \Theta^n F \to \Theta^n F \otimes D_{O(n)}$$
\end{itemize} 

The content is structured into chapters as follows:

\begin{itemize}
\item In \cref{chapter_introduction}, we first briefly introduce Orthogonal Calculus (and functor calculus in general) in \cref{section_what_is_oc} and then explain the central motivation for this work in \cref{section_monoidal_structures}, particularly in \cref{day_convolution_exposition}.
\item In \cref{chapter_survey}, we mention some interesting work on or involving Orthogonal Calculus in the literature. This chapter is not essential to the thesis, but as no existing survey on Orthogonal Calculus is known to the author, it might be useful to a reader who wants to know more about Orthogonal Calculus in general.
\item In \cref{infinity_oc}, we go over essential features of Orthogonal Calculus in $\infty$-categorical language. For the proofs of the basic properties, we mostly refer back to \cite{weiss-oc} and then explain how the $\infty$-categorical formulation follows.
\item In \cref{chapter_monoidal_oc_theorem}, we examine the Day Convolution symmetric monoidal structure on $\fun(\cj,\cs)$ in detail. A particularly important feature is the existence of an internal hom functor 
$$\inthom(-,-) \colon \fun(\cj,\cs) \times \fun(\cj,\cs) \to \fun(\cj,\cs)$$
whose properties will play a major role in \cref{chapter_monoidal_oc_theorem} and \cref{monoidal_oc_spectra}. At the end of this chapter we prove \cref{orthogonal_monoidality_theorem}.
\item In \cref{monoidal_oc_spectra}, we show that the homogeneous layers
$$L_n F = \text{fib}(T_nF \to T_{n-1}F) \colon \cj \to \csp$$
of a lax symmetric monoidal functor
$$F \colon \cj \to \csp$$
are also lax symmetric monoidal. Note that $\csp$ is equipped with the cartesian product monoidal structure here (although the same would be true for the smash product monoidal structure). We then use the equivalence
\begin{align*}
\spc^{\BO(n)} &\to \homog \\
\Theta &\mapsto \left( V \mapsto \Omega^\infty \left(S^{nV} \wedge \Theta \right)_{hO(n)} \right)
\end{align*}
to calculate the derivatives of the internal hom functor and the convolution product of homogeneous functors. This enables us to prove \cref{final_derivative_spectra_result}, i.e.\ the existence of the maps
$$\Theta^n F \otimes \Theta^n F \to \Theta^n F \otimes D_{O(n)}$$
\item In the final chapter, \cref{chapter_applications_and_open_questions}, we mention a few possible examples as well as a (conjectured) alternative description of a multiplication on $\Theta^1$. We finish by stating some open questions.
\end{itemize}

\section{What is Orthogonal Calculus?}
\label{section_what_is_oc}

\subsection{Functor Calculus in General}
In homotopy theory, the objects of interest often are, or can be viewed as, functors $F \colon \cC \to \cD$. There are multiple theories which are (or could be) referred to as \textit{Functor Calculus}. Among these are 
\begin{itemize}
\item \textbf{Goodwillie Calculus} studies functors from spaces to spaces\footnote{
More generally, one can also apply it to functors $F \colon \cC \to \cD$ where $\cC$ is an $\infty$-category with finite colimits and $\cD$ admits finite limits as well as sequential colimits such that the formation of sequential colimits commutes with finite limits. See \cite[Chapter~6]{lurie-ha}.
}. (The original sources are \cite{GoodwillieI, GoodwillieII, GoodwillieIII}. See also \cite[Chapter~6]{lurie-ha} for a modern generalisation using $\infty$-categories.)
\item \textbf{Embedding Calculus} and \textbf{Manifold Calculus} (\cite{goodwillie-weiss-embeddings, weiss-embeddings}) study functors from a category of manifolds (possibly restricted to submanifolds of a given manifold) and embeddings to spaces.
\item \textbf{Orthogonal Calculus}, the subject of this work, studies functors from the category of real vector spaces with inner products and linear isometric maps to spaces. 
 (\cite{weiss-oc} is the original source, a model category version has been developed by Barnes and Oman \cite{barnes-oman}.)
\end{itemize}
The guiding principle of any of these calculi is the following: 
\begin{itemize}

\item There are categories\footnote{
sometimes very concrete, sometimes less so, usually enriched categories or $\infty$-categories}
 $\cC$ and $\cD$. One is interested in functors (usually specific ones) $F \colon \cC \to \cD$.
 
\item There is a notion of $n$-polynomiality of these functors, in particular of the subcategories of functors which are polynomial of degree $\leq n$
$$\text{Poly}^{\leq n}(\cC,\cD) \subset \fun(\cC,\cD)$$
Any functor $F$ admits universal $n$-polynomial approximations denoted\footnote{
The notation used in the literature is unfortunately not consistent across calculi. In Goodwillie Calculus, the approximations are usually called $P_nF$, in Orthogonal Calculus $T_nF$. To make matters worse, $T_nF$ is also used in the Goodwillie Calculus literature for something that is called $\tau_nF$ in Orthogonal Calculus. We stick to the conventions from Orthogonal Calculus.} 
$F \to T_nF$ for any $n \in \mathbb{N}$. These $T_n$ determine left-adjoints to the inclusions, i.e.\

\[\begin{tikzcd}
	{\poly^{\leq n}(\cC,\cD)} && {\fun(\cC,\cD)}
	\arrow[""{name=0, anchor=center, inner sep=0}, "{T_n}"', curve={height=12pt}, from=1-3, to=1-1]
	\arrow[""{name=1, anchor=center, inner sep=0}, "\iota"', curve={height=12pt}, from=1-1, to=1-3]
	\arrow["\dashv"{anchor=center, rotate=-90}, draw=none, from=0, to=1]
\end{tikzcd}\]

$T_nF$ is also called the {\it $n$-th Taylor approximation of $F$}. 

\item There are inclusions 
$$\poly^{\leq n-1}(\cC,\cD) \subset \poly^{\leq n}(\cC,\cD)$$
Hence, the $n$-polynomial approximations assemble into a diagram of the form
\[\begin{tikzcd}
	&&& {T_\infty F := \lim_n T_nF} \\
	&&& {\vdots} \\
	&&& {T_1F} \\
	{F} &&& {T_0F}
	\arrow["{\eta_0}", from=4-1, to=4-4]
	\arrow["{\eta_1}", from=4-1, to=3-4, curve={height=-6pt}]
	\arrow[from=4-1, to=2-4, curve={height=-6pt}, dotted]
	\arrow["{\lim_n \eta_n}", from=4-1, to=1-4, curve={height=-12pt}]
	\arrow[from=1-4, to=2-4]
	\arrow[from=2-4, to=3-4]
	\arrow[from=3-4, to=4-4]
\end{tikzcd}\]

This tower is called the {\it Taylor tower}\footnote{
Depending on the type of functor calculus, other names are also used, e.g.\ \textit{Goodwillie tower} in Goodwillie Calculus, \textit{Orthogonal tower} or \textit{Weiss tower} in Orthogonal Calculus.
} of $F$.

\item A functor $F$ is said to be $n$-homogeneous if it is $n$-polynomial and $T_{n-1}F$ vanishes\footnote{
This means that it is equivalent to the constant functor to the terminal object.
}. The most important general examples are the fibres of the maps $T_nF \to T_{n-1}F$,  which are always $n$-homogeneous. These are called the homogeneous layers of $F$.

\item An important but often difficult question is if a functor $F$ is \textit{analytic} in the sense that
$$ F \cong \lim_n T_nF $$
i.e.\ if it can be reconstructed from its Taylor approximations.
Known criteria usually involve connectivity estimates for the maps in the image of $F$.

\item In Goodwillie and Orthogonal Calculus, there is a classification theorem for $n$-homogeneous functors in terms of spectra with actions of the symmetric group $\Sigma_n$ resp.\ the orthogonal group $O(n)$. The spectrum classifying the fibre of $T_nF \to T_{n-1}F$ is called the $n$-th \textit{derivative} of $F$.
\end{itemize}

\begin{remark}[Analogy to calculus of real functions] $ $ \\
Functor calculi are partially motivated by a surprisingly strong analogy with calculus for smooth functions $f \colon \mathbb{R} \to \mathbb{R}$. Under this analogy, $n$-polynomial functors correspond to polynomial functions of degree at most $n$. The $n$-polynomial approximation $F \to T_nF$ corresponds to replacing a function $f$ by its $n$-th Taylor polynomial. The homogeneous layers of $F$ correspond to the difference of the $n$-th and the $n-1$-th Taylor polynomial (at $0$), i.e.\ the expression 
$$\frac{f^{(n)}(0)}{n!} \cdot x^n$$
which is obviously closely related to the derivative $f^{(n)}(0)$. 

Even the division by $n!$ corresponds to something in functor calculus: In Goodwillie calculus, $n$-homogeneous functors have the form
$$X \mapsto \Omega^\infty \left( \left(\partial_n(F) \otimes X^{\wedge n}\right)_{h\Sigma_n} \right)$$
where $\partial_n(F)$ is a spectrum with an action of the symmetric group $\Sigma_n$. The homotopy orbit construction $(-)_{h\Sigma_n}$ can be thought of as dividing out the action. As $|\Sigma_n|=n!$, this can be seen as analogous to dividing by $n!$. 

A similar thing is true in Orthogonal Calculus. $n$-homogeneous functors take the form
$$V \mapsto \Omega^\infty \left( \left(
S^{V \otimes \mathbb{R}^n} \wedge \Theta^nF \right)
_{hO(n)} \right)$$
where $\Theta^nF$ is a spectrum with an action of the orthogonal group $O(n)$. 
These spectra $\partial_n(F) \in \spc^{B\Sigma_n}$ and $\Theta^nF \in \spc^{\BO(n)}$ are referred to as the derivative spectra of $F$.
\end{remark}

\subsection{Orthogonal Calculus}
Orthogonal Calculus was first introduced by Michael Weiss in 1995 (\cite{weiss-oc}). It is a type of functor calculus in the previously discussed sense:

\begin{itemize}
\item $\cj$ denotes the (topological\footnote{
or, in later versions, simplicially enriched category, or quasicategory, via the appropriate nerve constructions
}) category of finite-dimensional real inner product vector spaces where the morphism spaces are spaces of linear isometric (in particular injective) maps (also known as Stiefel manifolds).

\item The functors that one studies are always defined on $\cj$, typically with codomain the category of spaces\footnote{
Parts of the literature are ambiguous if this means topological spaces, a \textit{good} category of spaces, or something else, and in which sense the functors are \textit{continuous}. In the later parts of this work, we will consider functors of $\infty$-categories to the $\infty$-category $\cs$ of spaces.
} or the category of pointed spaces. Other possibilities for the target category are the category of spectra (implicit in \cite{weiss-oc}) or localisations of it \cite{barnes-rational, taggart-local}.

\item A functor
$$F \colon \cj \to \cs$$
is called $n$-polynomial if the natural maps
$$ F(V) \to \lim_{0 \neq U \subseteq \mathbb{R}^{n+1}} F(V \oplus U)$$
which are induced by the inclusions
$V \hookrightarrow V \oplus U$ for $U \subseteq \mathbb{R}^{n+1}, U \neq 0$
are homotopy equivalences. \\
The limit on the right-hand side is not indexed by a (discrete) poset, but is a (homotopy) limit over the space / simplicial set of subspaces $0 \neq U \subseteq \mathbb{R}^{n+1}$. In the $\infty$-categorical world, the meaning of this is clear. 
In the more classical world, most homotopy limits are formed over a indexing diagram which is a $1$-category and not a space, so some care has to be taken (see e.g.\ \cite[Comment to Def.~5.1]{weiss-oc}).
\item An arbitrary functor $F$ can be made $n$-polynomial by replacing $F$ iteratively by $\tau_nF := \lim_{0 \neq U \subseteq \mathbb{R}^{n+1}} F(V \oplus U)$, i.e.\ by
$$T_nF = \colim \left( F \to \tau_nF \to \tau_n(\tau_nF) \to \ldots \right)$$
which spells out to the colimit over the sequential diagram
$$\lim_{0 \neq U_1 \subseteq \mathbb{R}^{n+1}} F(V \oplus U) \to \lim_{0 \neq U_2 \subseteq \mathbb{R}^{n+1}} \lim_{0 \neq U_1 \subseteq \mathbb{R}^{n+1}} F(V \oplus U_1 \oplus U_2) \to \ldots$$
\item An important role is played by the $n$-\textit{homogeneous functors}, which are functors $F$ which are $n$-polynomial and such that $T_{n-1}$ vanishes. They occur in particular as the layers\footnote{i.e.\ fibres of the maps $T_nF \to T_{n-1}F$}
in the tower, and correspond to spectra with $O(n)$-actions in the following way
\begin{theorem}[Weiss \cite{weiss-oc}, Theorem 7.3.]
Any $n$-homogeneous functor to $\csp$, in particular the layers of the tower, is of the form
$$\Omega^\infty \left( (S^{\mathbb{R}^n \otimes V} \wedge \Theta^n F)_{h O(n)} \right)$$
for some spectrum with $O(n)$-action $\Theta^nF \in Sp^{BO(n)}$.
\end{theorem}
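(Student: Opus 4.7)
The plan is to establish the claimed description by exhibiting an equivalence of $\infty$-categories
$$E_n\colon \spc^{BO(n)} \xrightarrow{\simeq} \homog, \qquad E_n(\Theta)(V) := \Omega^\infty\left((S^{\IR^n \otimes V}\wedge\Theta)_{hO(n)}\right),$$
so that the theorem becomes the assertion that every $F \in \homog$ lies in the essential image of $E_n$, with $\Theta^nF := E_n^{-1}(F)$. I would proceed in three stages: first show that $E_n(\Theta)$ is always $n$-homogeneous, then construct a candidate inverse $\Theta^n\colon \homog \to \spc^{BO(n)}$, and finally check that the two composites are identities.

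The first stage is essentially a computation. For $n$-polynomiality one needs the canonical comparison
$$E_n(\Theta)(V) \longrightarrow \lim_{0 \neq U \subseteq \IR^{n+1}} E_n(\Theta)(V \oplus U)$$
to be an equivalence. Unwinding the formula and using that $\Omega^\infty$ and $O(n)$-homotopy orbits commute appropriately with the relevant limit at the spectrum level reduces this to an assertion about representation spheres, which can be proved by stratifying the indexing space by $\dim U$ and inducting on connectivity of $S^{\IR^n \otimes U}$. The vanishing of $T_{n-1}E_n(\Theta)$ is the parallel statement with $\IR^{n+1}$ replaced by $\IR^n$; the same inductive argument now yields a contractible limit, because the top stratum $U = \IR^n$ is missing.

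For the inverse, the cleanest $\infty$-categorical approach is to extend $F$ along an embedding $\cj \hookrightarrow \cj_n$, where $\cj_n$ is the Weiss category whose morphism objects are built from Thom spaces of the canonical $n$-dimensional orthogonal complement bundle over the Stiefel manifolds $\mor_{\cj}(V,W)$, and then to extract a spectrum by a colimit over growing $\dim V$. The $O(n)$-action on the resulting $\Theta^nF$ comes automatically from the $O(n)$-symmetry of the complement bundle.

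The main obstacle is showing that both composites are equivalent to the identity. The direction $\Theta^n \circ E_n \simeq \id_{\spc^{BO(n)}}$ should reduce to a direct unwinding of definitions. The harder direction, $E_n \circ \Theta^n \simeq \id_{\homog}$, is precisely the assertion that every $n$-homogeneous functor is of the stated form. My strategy would be to argue abstractly: $\homog$ is a stable presentable $\infty$-category (it is a Bousfield localization of $\fun(\cj,\csp)$ after passing to spectrum objects), $E_n$ is exact and preserves colimits, and it remains to identify a compact generator on each side whose endomorphism spectra match up as $O(n)$-equivariant ring spectra. Once this is in place, a Schwede--Shipley style argument yields the equivalence. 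Pinning down the generators and comparing their endomorphism spectra is where the real work lies.
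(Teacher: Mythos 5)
The paper does not contain its own proof of this theorem: it is quoted verbatim as Theorem~7.3 of Weiss's original paper \cite{weiss-oc}, and the present paper builds on it as a black box (see \cref{theorem_classification_of_homogeneous_functors}, where the same footnote about the ``more precise version'' appears). So the comparison to make is against Weiss's argument, which is quite different from yours. Weiss works concretely: he builds $\Theta^n F$ from the spaces $F^{(n)}(\mathbb{R}^k)$ obtained by Kan extension along $\cj_0 \hookrightarrow \cj_n$, establishes the fibre sequence $F^{(n)}(V) \to F(V) \to \tau_{n-1} F(V)$ (his Proposition~5.3), proves via connectivity estimates that homogeneous functors deloop and that $E_n(\Theta)$ is $n$-homogeneous, and then checks directly that the unit and counit of the resulting adjunction are equivalences. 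Your overall skeleton --- exhibit $E_n$ and an inverse $\Theta^n$ and show both composites are identities, with $\Theta^n$ built from $\cj_n$ --- does match his, and your stage-one computation (stratifying by $\dim U$) is the right shape. Where you genuinely diverge is the final stage: you propose to replace Weiss's explicit unit/counit analysis by a Schwede--Shipley recognition argument, which is a legitimately more modern and more conceptual route (Lurie does something of this flavour in the Goodwillie setting).

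The gap is that the Schwede--Shipley step needs two inputs you haven't secured, and the first is not as cheap as your sketch suggests. You assert that $\homog = \Homog(\cj,\csp)$ is ``stable presentable\ldots after passing to spectrum objects,'' but the equivalence $\Homog(\cj,\csp) \simeq \Homog(\cj,\spc)$ is itself a nontrivial deloopability theorem about homogeneous functors --- in Weiss's development it is essentially Theorem~8.1, proved by the same connectivity machinery that powers Theorem~7.3 --- so invoking it here without proof quietly assumes a substantial piece of the result. Second, and as you concede yourself, the compact-generator comparison is the core of the argument and is left entirely open: you would need to produce a compact generator $G \in \homog$, identify its $\infty$-categorical endomorphism object as $\Sigma^\infty_+ O(n)$ with the correct ring structure (coming from the right $O(n)$-action), and check that $E_n$ sends $\Sigma^\infty_+ O(n)$ to $G$ exactly. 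Until those two points are filled in, the proposal is a plan rather than a proof; but it is a workable plan, and if completed would give a cleaner, more structural argument than Weiss's original.
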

\item The \textit{derivative spectra} $\Theta^nF$ can also be constructed more explicitly out of a functor $F$. A very brief summary of this is as follows:
\begin{itemize}
\item There are categories $\cj_0 \subset \ldots \subset \cj_n$ enriched in pointed spaces, and $F^{(n)} \colon \cj_{n} \to \cs$, the \enquote{unstable $n$-th derivative} of $F$, is defined as a Kan extension of $F \colon \cj_0 \to \cs$ along $\cj_0  \to \cj_{n}$, and $\Map_{\cj_0}(V,W) = \Map_\cj(V,W)_+$. (This is in \cite{weiss-oc}, mainly section 2.)
\item $F^{(n)}$ relates to the derivative spectra $\Theta^nF$ by $(\Theta^nF)_{nl} = F^{(n)}(\mathbb{R}^l)$. (See e.g.\ \cite[p.2, p.3, p.10, p.12]{weiss-oc}.)
\item There are fibre sequences
$$F^{(n)}(V) \to F(V) \to \lim_{0 \neq U \subseteq \mathbb{R}^{n}} F(U \oplus V)$$
(\cite{weiss-oc}, Proposition 5.3)
\end{itemize}

\end{itemize}

A much more detailled description of Orthogonal Calculus follows in \cref{infinity_oc}.

\section{Monoidal Structures}
\label{section_monoidal_structures}

Our central question is
\begin{center}
{\it Which \enquote{product rules} hold in Orthogonal Calculus?}
\end{center}

To make sense of the question, one first has to specify the products involved. They take the form of symmetric monoidal structures.

\begin{observation}
There are the following relevant symmetric monoidal structures:
\begin{itemize}
\item On $\cj$, there is a \enquote{direct sum} symmetric monoidal structure \enquote{$\oplus$}, given by direct sum of underlying vector spaces and block sum of scalar products.\footnote{This is not actually a coproduct in $\cj$, as morphism spaces in $\cj$ do not consist of all linear maps but only of isometric ones. See \cref{direct-sum-on-cj-is-not-coproduct}.}

\item On $\cs$, there is the cartesian monoidal structure given by products of spaces.
\item On $\cs_*$, there is the cartesian product, as well as the smash product, of pointed spaces.
\item On $\spc$, there is the tensor product (also known as smash product) of spectra.
\end{itemize}
\end{observation}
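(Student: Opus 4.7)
The plan is to treat each item independently, since they correspond to four well-known symmetric monoidal (higher) categories and no single construction dominates. The task is really to fix conventions and to point to the standard references for the coherence data; actual verification reduces to straightforward checks at the level of vector spaces, pointed spaces, and spectra.

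For $(\cj, \oplus)$, I would first check that $\oplus$ extends to a bifunctor $\cj \times \cj \to \cj$. On objects, $V \oplus W$ carries the block-sum inner product $\langle (v,w), (v',w')\rangle = \langle v,v'\rangle + \langle w,w'\rangle$. Given linear isometries $f \colon V \to V'$ and $g \colon W \to W'$, the map $f \oplus g$ remains an isometry by the same formula, and continuity in the Stiefel-manifold topology on $\mor_\cj(V,V') \times \mor_\cj(W,W')$ is immediate. Then I would take $0 \in \cj$ as the unit, and note that the canonical vector-space associator, unitor and symmetry are linear isometries satisfying the pentagon, hexagon and triangle axioms; these axioms hold because they already hold for finite-dimensional real vector spaces, and tensoring with the trivial inner product only adds the isometry condition, which is preserved. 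The $\infty$-categorical upgrade on the appropriate (topological or simplicial) nerve of $\cj$ follows via the standard construction turning a topologically enriched symmetric monoidal category into a symmetric monoidal $\infty$-category.

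For $\cs$ and the cartesian part of $\csp$, essentially nothing needs to be proved: every $\infty$-category with finite products admits a canonical cartesian symmetric monoidal structure (Lurie, Higher Algebra, \S 2.4), and both $\cs$ and $\csp$ admit all finite products.

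The real content lies in the two remaining smash products. For $(\csp, \wedge)$, my plan is to cite that $\csp$ carries a unique symmetric monoidal structure compatible with colimits in each variable under which $(-)_+ \colon \cs \to \csp$ is symmetric monoidal; concretely $X \wedge Y \simeq \mathrm{cofib}(X \vee Y \to X \times Y)$. For $(\spc, \otimes)$, I would appeal to the fact that $\spc = \mathrm{Sp}(\csp)$ inherits a unique presentably symmetric monoidal structure by inverting $S^1$ in $\csp$, equivalently characterised by $\Sigma^\infty_+ \colon \cs \to \spc$ being symmetric monoidal (Lurie, Higher Algebra, \S 4.8). The main obstacle here is not mathematical but expository: reconstructing these well-known symmetric monoidal $\infty$-categories from first principles would consume a disproportionate amount of space, so the honest move is to cite Lurie's theory and direct the paper's energy toward the Orthogonal-Calculus-specific questions that follow, in particular the Day convolution on $\ce$ announced in the overview.
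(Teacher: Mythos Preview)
Your proposal is correct and in fact considerably more thorough than what the paper does at this point: the paper records this as a bare observation with no proof, treating the symmetric monoidal structures on $\cs$, $\csp$, and $\spc$ as standard background. The one place where the paper eventually supplies details is for $(\cj,\oplus)$, and there it takes a slightly different route from yours: rather than verifying the pentagon/hexagon axioms for the enriched category and then passing to the nerve, it directly writes down the total category $\cj^\oplus$ over $N(\mathrm{Fin}_*)$ and checks by hand that the projection is a coCartesian fibration satisfying the Segal condition. Your approach is cleaner conceptually (and is what the paper's ``standard construction'' remark gestures at), while the paper's explicit fibration has the minor advantage of making the $\infty$-operadic structure visible without appealing to a nerve-comparison theorem.
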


Let $(\ct,\otimes)$ be one of the monoidal categories $\{(\cs,\times), (\cs,\times), (\csp,\wedge),(\spc,\otimes)\}$. 
There are (at least) two natural induced monoidal structures on $\fun(\cj,\ct)$. We refer to them as the \enquote{pointwise monoidal structure} and the \enquote{Day convolution monoidal structure} and shall explain them shortly.

The case of pointwise products has been treated (with some restrictions) by Hahn and Yuan \cite[Section~4]{hahn-yuan}. We will quickly explain their result, before discussing the case of convolution products, which is the focus of our work.

\subsection{The Pointwise Monoidal Structure}
Using only the monoidal structure on $\ct$ (and ignoring that on $\cj$), there is a monoidal structure given by
$$(F \otimes_{\fun(\cj,\ct)}G)(V) = F(V) \otimes_\ct G(V)$$
(For formal details, see for example \cite[Example~3.2.4.4]{lurie-ha} (and possibly a good explanation at \cite{nardin-mathoverflow}).) \\
Monoids for this monoidal structure can be identified with functors valued in monoid objects in $\ct$
$$\cj \to \text{CAlg}(\ct)$$
i.e.\ functors $F \colon \cj \to \ct$ equipped with compatible maps
$$F(V) \otimes_\ct F(V) \to F(V)$$
as well as
$$1_\ct \to F(V)$$
for any $V \in \cj$, where $1_\ct$ denotes the monoidal unit in $\ct$.

\paragraph*{The results of Hahn-Yuan on the pointwise monoidal structure in Orthogonal Calculus} $ $\\
The result (and approach) we are describing here is for functors with values in spectra, so $\ct = \spc$, and $\fun(\cj,\spc)$ is understood to be equipped with the pointwise monoidal structure. It is \cite[Theorem~4.10]{hahn-yuan}.

\begin{definition}
The category of towers in $\fun(\cj,\ct)$ is 
$$\fun(\mathbb{Z}_{\geq 0}^{op}, \fun(\cj,\ct))$$
 On this category, there is a convolution monoidal structure, given by Day convolution, i.e.\
$$(X \conv Y)_n = \colim_{p+q \geq n} X_p \otimes Y_q \simeq \oplus_{p+q=n} X_p \otimes_{\text{pointwise}} Y_q $$
\end{definition}
\begin{proposition}[\cite{hahn-yuan}, Construction 4.4.]
The polynomial approximations $T_n$ ($n \in \mathbb{N}$) assemble into a functor
$$\Tow \colon \ct^\cj \to \fun(\mathbb{Z}_{\geq 0}^{op}, \fun(\cj,\ct))$$
such that $\Tow(F)(n)=T_nF$.
\end{proposition}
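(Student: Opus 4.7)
The plan is to coherently assemble the individual Taylor functors $T_n$ together with the inter-tower maps $T_nF \to T_{n-1}F$ into a single functor.

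The comparison maps exist already at the level of each object: since $\Poly^{\leq n-1}(\cj,\ct)$ is contained in $\Poly^{\leq n}(\cj,\ct)$, the functor $T_{n-1}F$ lies in $\Poly^{\leq n}(\cj,\ct)$, and so the universal property of $T_n$ applied to the unit $F \to T_{n-1}F$ factors it as $F \to T_nF \to T_{n-1}F$. I would then package this information in $\infty$-categorical form by encoding the entire filtration
$$\Poly^{\leq 0}(\cj,\ct) \hookrightarrow \Poly^{\leq 1}(\cj,\ct) \hookrightarrow \Poly^{\leq 2}(\cj,\ct) \hookrightarrow \cdots \hookrightarrow \fun(\cj,\ct)$$
as a single functor $\Phi \colon \IN \cup \{\infty\} \to \Cat_\infty$, where $\IN \cup \{\infty\}$ is the linearly ordered poset with $\infty$ on top. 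Every transition arrow of $\Phi$ is an inclusion of a reflective full subcategory and thus admits a left adjoint (a Taylor approximation).

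By the standard $\infty$-categorical principle for passing between diagrams of right adjoints and diagrams of left adjoints (cf.\ \cite[\S5.2]{lurie-ha}, exploiting the equivalence between Cartesian and coCartesian fibrations with adjointable straightening data), $\Phi$ upgrades to a companion diagram $\Phi^\dagger \colon (\IN \cup \{\infty\})^{op} \to \Cat_\infty$ whose transition functors are the left adjoints $T_m^n$ and, in particular, whose arrow $\infty \to n$ is sent to $T_n \colon \fun(\cj,\ct) \to \Poly^{\leq n}(\cj,\ct)$. Viewing $\Phi^\dagger$ as a cone under $\fun(\cj,\ct)$ and postcomposing every vertex pointwise with the inclusion $\iota_n$ back into $\fun(\cj,\ct)$, I obtain a coherent diagram $\IN^{op} \to \fun(\fun(\cj,\ct),\fun(\cj,\ct))$ which sends $n$ to the endofunctor $\iota_n T_n$ and the arrow $n \to n-1$ to the natural transformation constructed in the first paragraph. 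Currying via the exponential adjunction yields the required functor $\Tow$ with $\Tow(F)(n) \simeq T_nF$ by construction.

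The only real obstacle is the coherent adjoint-passage step, i.e.\ producing $\Phi^\dagger$ from $\Phi$; once that formal $\infty$-categorical input is invoked, the remainder of the construction is routine bookkeeping. In a strict model-categorical framework (as in \cite{hahn-yuan}), this obstacle disappears because one has on-the-nose functorial point-set formulas for $T_n$ and for the tower maps, so one can instead check the required coherences by a direct calculation.
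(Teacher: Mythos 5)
The paper does not actually prove this proposition; it is stated as a citation to Construction 4.4 of Hahn--Yuan, whose own argument proceeds at the point-set/model-categorical level where the tower maps are given by explicit formulas and coherence is not an issue (you correctly observe this in your final paragraph). Your proposal is therefore an independent $\infty$-categorical reconstruction, and its overall shape is sensible, but your claim that after the adjoint-passage step ``the remainder of the construction is routine bookkeeping'' is too quick. The postcomposition step hides genuine content: the inclusions $\iota_n \colon \Poly^{\leq n}(\cj,\ct) \hookrightarrow \fun(\cj,\ct)$ do \emph{not} strictly intertwine the transition left adjoints $T_m|_{\Poly^{\leq n}}$ with the identity of $\fun(\cj,\ct)$; instead, for each square one gets only a non-invertible $2$-cell $\iota_n \Rightarrow \iota_m \circ T_m|_{\Poly^{\leq n}}$ (the unit of $T_m$). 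So ``postcomposing every vertex pointwise with $\iota_n$'' is not literally a pointwise composition with a natural transformation of diagrams, and a further coherence argument is needed to turn this collection of $2$-cells into a genuine functor $\IN^{op} \to \fun(\fun(\cj,\ct),\fun(\cj,\ct))$.

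A clean way to discharge this is to avoid postcomposition altogether by working with the unstraightened picture. The diagram $\Phi|_{\IN} \colon \IN \to \Cat_\infty$ of inclusions unstraightens to a coCartesian fibration $\mathcal{E} \to \IN$ whose objects are pairs $(n,P)$ with $P \in \Poly^{\leq n}$. Because each inclusion is reflective, $\mathcal{E} \to \IN$ is in fact a biCartesian fibration, and its Cartesian lifts encode the $T_m$'s; this is exactly your $\Phi^\dagger$. Crucially, the assignment $(n,P) \mapsto \iota_n(P)$ is a well-defined single functor $\pi \colon \mathcal{E} \to \fun(\cj,\ct)$ on the total space, precisely because the inclusions do commute with one another. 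For each $F$, taking Cartesian lifts from $(0, T_0F)$ produces a section $\IN^{op} \to \mathcal{E}$, and composing with $\pi$ gives the desired object of $\fun(\IN^{op}, \fun(\cj,\ct))$, functorially in $F$. This makes the ``postcomposition'' rigorous. One last nit: the reference for the passage between diagrams of right adjoints and diagrams of left adjoints should be HTT \textsection 5.2.6 (or a comparable source on adjointable squares), not HA \textsection 5.2, which is about little cubes operads.
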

\begin{proposition}[\cite{hahn-yuan}, Lemma 4.5]
$\Tow$ is oplax symmetric monoidal.
\end{proposition}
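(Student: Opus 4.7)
The plan is to build the oplax structure level by level from a \emph{product rule} for polynomiality, and then verify the symmetric monoidal coherences. The product rule asserts that if $F$ is $p$-polynomial and $G$ is $q$-polynomial with respect to the pointwise monoidal structure on $\fun(\cj,\ct)$, then $F \otimes G$ is $(p+q)$-polynomial. This is the essential input: I would either import it from Hahn--Yuan's setup or prove it first by analyzing how the defining limits of polynomiality interact with the pointwise tensor product.

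Given the product rule, the oplax map at level $n$ can be constructed as follows. For each decomposition $p + q = n$, the unit maps $F \to T_p F$ and $G \to T_q G$ tensor together to give $F \otimes G \to T_p F \otimes T_q G$; the target is $n$-polynomial by the product rule, so by the universal property of $T_n$ the map factors uniquely through
$$T_n(F \otimes G) \to T_p F \otimes T_q G.$$
Composing with the structure map into the Day convolution gives a candidate level-$n$ map $T_n(F \otimes G) \to (\Tow F \conv \Tow G)_n$. More uniformly, the product rule produces a natural transformation of diagrams $T_{p+q}(F \otimes G) \to T_p F \otimes T_q G$ indexed by the poset $\{(p,q): p+q \geq n\}$, and passing to colimits on both sides yields the desired oplax structure map; in the stable target the various contributions combine cleanly since the colimit defining Day convolution simplifies in a way compatible with the tower of polynomial approximations.

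The main obstacle will be supplying the full higher-coherence data of an oplax symmetric monoidal functor in the $\infty$-categorical sense, i.e.\ unit, associativity, and symmetry at every layer of coherence. Rather than verify these by hand, I would prefer a conceptual route: identify $\Tow$ with a symmetric monoidal functor arising from the filtered symmetric monoidal structure on polynomial approximations together with the universal property of Day convolution as a left Kan extension along $+ \colon \mathbb{Z}_{\geq 0}^{op} \times \mathbb{Z}_{\geq 0}^{op} \to \mathbb{Z}_{\geq 0}^{op}$. In such an approach the product rule is precisely the input needed to upgrade the filtration $\poly^{\leq \bullet}(\cj,\ct) \subset \fun(\cj,\ct)$ to a (lax) symmetric monoidal filtration, and the oplaxness of $\Tow$ then drops out formally from the universal property. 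The hardest technical step in either approach is pinning down the coherent compatibility between the polynomial filtration and the convolution structure at the $\infty$-categorical level.
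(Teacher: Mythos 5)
Your overall strategy matches the one in the cited reference (Hahn--Yuan, Lemma 4.5; the paper itself offers no independent proof): the product rule for polynomial degree under the pointwise tensor is indeed the essential input, and the coherences should come from a universal property of Day convolution rather than a hand check.

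However, the level-wise construction as you have written it has a genuine gap. The target of the oplax structure map at level $n$ is
$(\Tow F \conv \Tow G)_n = \colim_{p+q \geq n} T_pF \otimes T_qG$,
a colimit over the non-cofiltered poset $\{(p,q) : p+q \geq n\} \subseteq \mathbb{Z}_{\geq 0}^{op} \times \mathbb{Z}_{\geq 0}^{op}$. The terms with $p+q > n$ are only $(p+q)$-polynomial, so this colimit is not obviously $n$-polynomial --- $\poly^{\leq n}(\cj,\spc)$ is a reflective subcategory, hence closed under limits but not under colimits --- and the universal property of $T_n$ therefore does not directly factor the canonical map $F \otimes G \to \colim_{p+q\geq n} T_pF \otimes T_qG$ through $T_n(F\otimes G)$. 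Your individual factorisations $T_n(F\otimes G) \to T_pF \otimes T_qG \to \colim$ for the various $(p,q)$ with $p+q = n$ are only seen to coincide after precomposing with the unit $\eta_n \colon F\otimes G \to T_n(F\otimes G)$, which is not sufficient. The ``passing to colimits on both sides'' variant also fails: the projection $(p,q)\mapsto p+q$ from $\{p+q\geq n\}$ to $\{m\geq n\}$ is not cofinal (its comma category over $m = n$ is the discrete set of $n+1$ decompositions of $n$), so the left-hand colimit is not $T_n(F\otimes G)$. The conceptual route in your last paragraph is in fact where the whole argument must live: upgrade the coherent system of maps $T_{p+q}(F\otimes G) \to T_pF \otimes T_qG$ (supplied by the product rule together with the universal property of $T_{p+q}$) into an oplax structure on the curried functor $(F,n)\mapsto T_nF$ over $\mathbb{Z}_{\geq 0}^{op}$, and then invoke a form of the Day convolution universal property adapted to the \emph{oplax} direction; that adaptation deserves care, as the standard universal property is usually stated for lax monoidal functors into the Day convolution.
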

Recall that an oplax structure on a functor $F \colon \cC \to \cD$ consists of structure maps
$$F(A \otimes_\cC B) \to F(A) \otimes_\cD F(B) $$
In order for these oplax structure maps to be equivalences (and thus to obtain a symmetric monoidal structure from this oplax symmetric monoidal structure) they restrict to the following subclass of functors.
\begin{definition}[\cite{hahn-yuan}, Definition 4.6.]
Let $F \in \spc^\cj$ be a functor. Call $F$ rapidly convergent if $F$ takes values in connective spectra and there exist real numbers $c,\alpha >0$ such that the natural map $F(W) \to T_nF(W)$ is $(\alpha n)\dim W -c$ - connected. Denote by $\spc^\cj_{conv}$ the category of rapidly convergent functors.
\end{definition}

\begin{theorem}[\cite{hahn-yuan}, Theorem 4.10]
The functor 
$$\Tow \colon \spc^\cj_{conv} \to \fun(\mathbb{Z}_{\geq 0}^{op}, \spc^\cj_{conv})$$
is symmetric monoidal. 
\end{theorem}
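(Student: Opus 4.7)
The plan is to upgrade the existing oplax symmetric monoidal structure on $\Tow$ to a genuinely symmetric monoidal one by showing that, on the full subcategory of rapidly convergent functors, all oplax structure maps are equivalences. This splits into two tasks: first, verifying that $\spc^\cj_{conv}$ is closed under the pointwise tensor product (so that it actually inherits a symmetric monoidal substructure from the ambient $\spc^\cj$); second, showing that for rapidly convergent $F, G$ the canonical oplax comparison
$$\Tow(F \otimes G) \longrightarrow \Tow(F) \conv \Tow(G)$$
is a level-wise equivalence in the category of towers.

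For the closure statement, I would combine the given connectivity bounds: $F(W) \to T_nF(W)$ is $(\alpha_F n \dim W - c_F)$-connected and likewise for $G$. Since both functors are connective, the standard connectivity estimate for smash products of connective spectra, together with the fact that pointwise tensor of $p$-polynomial and $q$-polynomial functors lies inside the $(p+q)$-polynomial functors (a product rule which I would establish directly from the limit definition of polynomiality), gives that $F\otimes G$ is rapidly convergent with parameters roughly $(\min(\alpha_F,\alpha_G),\, c_F+c_G)$. For the oplax structure maps to be equivalences, at level $n$ the claim is
$$T_n(F \otimes G) \longrightarrow \colim_{p+q \geq n} T_p F \otimes T_q G.$$
The product rule shows the right-hand colimit stabilises (at any finite $p+q \geq n$ with $p,q$ large), reducing the problem to identifying $T_n(F\otimes G)$ with $T_N F \otimes T_N G$ modulo $T_{n-1}$ in a suitable range of $N$. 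Rapid convergence of $F\otimes G$ and of $T_N F \otimes T_N G$ then forces them to agree on every $W \in \cj$ through a range that grows in $\dim W$, and passing to $T_n$ this is enough.

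The main obstacle will be the connectivity bookkeeping that ties the above together: one has to control simultaneously the connectivity of the canonical maps $F \otimes G \to T_N F \otimes T_N G$ and $F\otimes G \to T_n(F\otimes G)$ as functions of $\dim W$, and argue that the error terms are negligible once the polynomial degrees are pushed far enough. Rapid convergence is calibrated precisely for this — the approximation error grows linearly in $\dim W$ with slope tending to infinity in $n$ — which is what allows the $p,q \to \infty$ stages in the convolution colimit to contribute nothing new, and yields the clean equivalence demanded by a symmetric monoidal enhancement of $\Tow$.
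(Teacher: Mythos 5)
Your framing---restrict the known oplax symmetric monoidal structure on $\Tow$ to rapidly convergent functors and show the oplax comparison maps become equivalences there---is the right approach, and separating the two sub-tasks (closure of $\spc^\cj_{conv}$ under $\otimes$, and levelwise invertibility of the oplax maps) is a sensible organisation. The product rule you invoke (the pointwise tensor of a $p$-polynomial and a $q$-polynomial functor is $(p+q)$-polynomial) is a genuinely necessary and nontrivial input; it does hold, but it is a lemma requiring proof in its own right rather than something one reads off directly from the limit definition of polynomiality.

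The substantive gap is the ``stabilisation'' step. The indexing diagram for $(\Tow F \conv \Tow G)_n = \colim_{p+q\ge n} T_pF \otimes T_qG$ is the poset $\{(p,q)\in\mathbb{Z}_{\ge 0}^2 : p+q\ge n\}$ with a morphism $(p,q)\to(p',q')$ precisely when $p\ge p'$ and $q\ge q'$. This poset is not filtered---for instance $(n,0)$ and $(0,n)$ admit no common target---so the colimit does not collapse onto a single $T_NF\otimes T_NG$ for $N$ large, and increasing $p$ and $q$ pushes you towards the \emph{source} of the diagram, not towards anything that controls the colimit. A cofinal subdiagram is the finite truncated square $\{(p,q) : p+q\ge n,\ p\le n,\ q\le n\}$, over which one computes a genuine finite homotopy colimit; already for $n=1$ this is the pushout
\[
T_1F\otimes T_0G \,\cup_{T_1F\otimes T_1G}\, T_0F\otimes T_1G,
\]
and in general it is a staircase-shaped gluing of the terms with $p+q=n$ along those with $p+q>n$. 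Reducing the problem to comparing $T_n(F\otimes G)$ with a single $T_NF\otimes T_NG$ ``modulo $T_{n-1}$'' therefore aims at the wrong object, and connectivity bookkeeping calibrated to that pair cannot, on its own, produce the required equivalence with the actual staircase colimit. What the rapid-convergence hypothesis has to be fed into is a connectivity estimate for the comparison map into the full colimit (for instance by inducting over the cofibres of the maps in the cofinal diagram and tracking how the error terms decay with $\dim W$); this is precisely the technical content that the sketch elides.
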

\begin{remark}
Their result should also be true in Goodwillie calculus (even without a rapid convergence hypothesis).
\end{remark}

\subsection{The Day Convolution Monoidal Structure}
\label{day_convolution_exposition}
For nice symmetric monoidal categories 
$\cC$ and $\cD$ ($\cD$ cocomplete) the functor category $\fun(\cC,\cD)$ can be equipped with the {\it Day convolution product}. This product is given on functors $F,G$ by
$$(F \conv G)(C) := \colim_{C_1 \otimes_\cC C_2 \to C} F(C_1) \otimes_\cD G(C_2)$$

Day convolution goes back to Brian Day in the 1-categorical setting (\cite{day-article, day-thesis}) and has been made available in the $\infty$-categorical world by Saul Glasman (\cite{glasman}) and later in greater generality by Jacob Lurie (\cite[Section~2.2.6]{lurie-ha}). \\

Another description of $F \conv G$ is as the left Kan extension of $\otimes_\cD \circ (F \times G)$ along $\otimes_\cC$ as shown in the diagram

\begin{center}
\begin{tikzcd}[column sep = 5em, row sep = 5em]
\cC \times \cC \arrow[rr, "F \times G"] \arrow[dd, "\otimes_\cC"] \arrow[rrdd, "\otimes_\cD \circ (F \times G)", ""'{name=U}] &  & \cD \times \cD \arrow[dd, "\otimes_\cD"] \\
                                                                                                                 &  &                                          \\
\cC \arrow[rr, "F \conv G := Lan_{(\otimes_\cD \circ (F \times G))}\otimes_\cC"]                          
\arrow[Rightarrow, from=U, shorten > = 2em, shorten < = 1em] &  & \cD                                     
\end{tikzcd}
\end{center}

where the upper triangle commutes and the lower triangles commutes up to a universal natural transformation 
$$\otimes_\cD \circ (F \times G) \Rightarrow (Lan_{(\otimes_\cD \circ (F \times G))} \otimes_\cC) \circ \otimes_\cC$$

A key property of Day convolution is that monoid objects in this monoidal category are precisely lax symmetric monoidal functors, i.e.\ functors $F \colon C \to D$ equipped with structure maps of the form
$$F(C_1) \otimes_D F(C_2) \to F(C_1 \otimes_C C_2)$$
as well as unit maps
$$1_\cD \to F(1_\cC) $$

The aforementioned fact that monoids for the Day convolution structure are precisely lax symmetric monoidal functors is our original reason for considering it. Many functors of interest such as
\begin{itemize}
\item $V \mapsto S^V$
\item $V \mapsto O(V)$
\item $V \mapsto \BO(V)$
\item $V \mapsto \BTOP(V)$
\end{itemize}
are lax symmetric monoidal\footnote{
The required maps $O(V) \times O(W) \to O(V \oplus W)$ are given by block sum of matrices. The maps for the other functors are analogous or derived from this one.
}, and one can ask if the same holds for their derivatives and Taylor approximations. This would be implied by the statement that
$T_n \colon \fun(\cj,\ct) \to \fun(\cj,\ct)$
\enquote{restricts} to a functor
\[\begin{tikzcd}
	{\fun(\cj,\ct)} && {\fun(\cj,\ct)} \\
	{\fun^{\text{lax symm.\ monoidal}}(\cj,\ct)} && {\fun^{\text{lax symm.\ monoidal}}(\cj,\ct)} \\
	{\text{CAlg}(\fun(\cj,\ct),\conv)} && {\text{CAlg}(\fun(\cj,\ct),\conv)}
	\arrow["{T_n}", from=1-1, to=1-3]
	\arrow[from=2-1, to=1-1]
	\arrow[from=2-3, to=1-3]
	\arrow[dotted, from=2-1, to=2-3]
	\arrow[dotted, from=3-1, to=3-3]
	\arrow["\cong"{description}, no head, from=2-1, to=3-1]
	\arrow["\cong"{description}, no head, from=2-3, to=3-3]
\end{tikzcd}\]

The \enquote{best} explanation for a functor between the categories of commutative algebra objects in symmetric monoidal categories to exist is a lax symmetric monoidal functor between the symmetric monoidal categories (and not just between the monoid objects). This motivates the following central question, which we will answer positively.
\begin{question}
Are the functors
$$T_n \colon \fun(\cj,\ct) \to \fun(\cj,\ct)$$
lax symmetric monoidal for the Day convolution monoidal structure? 
\end{question}
\begin{question}
What can be said about the derivative spectra $\Theta^nF$ of a lax symmetric monoidal functor $F$?
\end{question}

In \cref{orthogonal_monoidality_theorem}, we will answer the first question positively, i.e.\ show that the $T_n$ are indeed lax symmetric monoidal. In particular, the Taylor approximations of lax symmetric monoidal functors are themselves lax symmetric monoidal. \\
Furthermore, in \cref{monoidal_oc_spectra}, we examine the implications for derivative spectra. The main result is \cref{final_derivative_spectra_result}, which states the derivative spectra of any lax symmetric monoidal functor $F \colon \cj \to \csp$ admit maps
$$\Theta^nF \otimes \Theta^nF \to \Theta^nF \otimes D_{O(n)}$$
where $D_{O(n)} \simeq S^{\text{Ad}_n}$ is the dualising spectrum of the topological group $O(n)$.

\chapter{Other Work on Orthogonal Calculus}
\label{chapter_survey}
\chaptermark{Other work}

This chapter is a collection of several versions of, calculations with and statements about Orthogonal Calculus that are in the literature. We hope that it might serve as a useful starting point for the reader who wishes to delve deeper into the subject. 

\section{Formalisms for Orthogonal Calculus}
\sectionmark{Different Formalisms}

Orthogonal Calculus has been developed and used in different formalisms. We briefly mention some of them and their differences.

\subsection{The Original Version (Weiss)}
\label{subsection_the_original_version_(Weiss)}
In Weiss's original paper \cite{weiss-oc}, the category $\cj$ of finite-dimensional real inner-product spaces and linear isometric embeddings is a $1$-category enriched in (topological) spaces, and the functors $F$ which are studied are supposed to be continuous in the sense that the evaluation maps
$$\mor(V,W) \times F(V) \to F(W)$$ 
are continuous. If the category of spaces occuring as the target of the functors is understood to be a convenient category of topological space in the sense that it is cartesian closed (e.g.\ CGWH spaces), then one can equivalently ask for the maps
$$\mor(V,W) \to \mor(F(V),F(W))$$
to be continuous, hence the setting is that of categories enriched in nice topological spaces and enriched functors between them.

In this framework, one has to be careful about the meaning of
$$\lim_{0 \neq U \subseteq \mathbb{R}^{n+1}}F(U \oplus V)$$
It is a $\text{holim}$, and extra care has to be taken as the holim is not indexed over a set or a finite diagram, but over a space. See also \cite[Comments~to~Definition~5.1]{weiss-oc}.

In defining the derivatives, an important role is played by categories called $\cj_n$. These are categories enriched in pointed spaces with the same objects as $\cj$ and
$$\mor_{\cj_n}(V,W) = \Th(\gamma_n(V,W))$$
where $\gamma_n \to \mor_\cj(V,W)$ is the $n$-th tensor power of the orthogonal complement bundle on $\mor_\cj(V,W)$ and $\Th$ denotes the Thom space. Starting with a functor $F$ from $\cj$ to spaces, one obtains a functor $F^{(n)}$ on $\cj_n$ by left Kan extension. $F^{(n)}$ might be called the unstable $n$-th derivative of $F$. The derivative spectrum $\Theta^n F$ is related to $F^{(n)}$ by $(\Theta^n F)_{nk} = F^{(n)}(\mathbb{R}^k)$. An inductive construction using fibre sequences for $F^{(n)}$ is also available (see e.g.\ \cite{weiss-oc}, p.2), but obscures the $O(n)$-action. 

A slight difference in the (statement of) properties of Orthogonal Calculus is that in this enriched, $1$-categorical setup, the subcategory of polynomial functors $\poly^{\leq n}(\cj,\cs) \subset \fun(\cj,\cs)$ is not quite a reflective subcategory. (See \cite[Chapter~6]{weiss-oc} for details.)

\subsection{Model Categories (Barnes-Oman)}

Orthogonal Calculus has been re-developed in the language of model categories by Barnes and Oman (\cite{barnes-oman}). In their description, there are several model structures on the category of functors from $\cj$ to spaces. More concretely, there is a model category $n$-poly-$\mathcal{E}_0$ which is a Bousfield localisation of the standard projective model structure on $\fun(\cj,\cs)$ such that the fibrant objects of $n$-poly-$\mathcal{E}_0$ are precisely the $n$-polynomial functors. This means that the $n$-polynomial approximation functor is the fibrant replacement functor in $n$-poly-$\mathcal{E}_0$. 

An application of their work is given in (\cite[Section~11]{barnes-oman}) where they provide a rigorous construction of Orthogonal Calculus for functors with values in (orthogonal) spectra. Similar model-categorical techniques also play an important role in the later work of Taggart (e.g.\ \cite{taggart-unitary, taggart-reality}) who constructed Unitary Calculus as well as Calculus with Reality, which are functor calculi similar to Orthogonal Calculus for functors defined on the category of complex vector spaces resp.\ vector spaces with Reality.

\subsection{$\infty$-categorical Orthogonal Calculus}

It is possible to express the methods and results of Orthogonal Calculus in the framework of $\infty$-categories, and we will use this version in much of this thesis. As we explain it in detail in \cref{infinity_oc}, we don't say much here.

Many basic statements and properties can be very easily formulated in the language of $\infty$-categories. As an example, the discussion in (\cite[Chapter~6]{weiss-oc}) mentions the problem that $\poly^{\leq n}(\cj,\cs) \subseteq \fun(\cj,\cs)$ is not quite a reflective subcategories. In $\infty$-land, this problem disappears, and we get

\begin{proposition}[\cref{infty_oc_taylor_tower}]
 $ $ \\
Let $\cj$ be the $\infty$-category of finite dimensional $\mathbb{R}$-vector spaces and linear isometric maps\footnote{
A priori, this describes a category enriched in (good) topological spaces. Applying $\text{Sing}$ yields a category enriched in Kan complexes, from which the homotopy coherent nerve produces an $\infty$-category. 
}.
Then for each $n \geq 0$ there are the subcategories $\poly^{\leq n} \subset \fun(\cj,\cs)$ of $n$-polynomial functors. Their inclusions admit left-exact left-adjoints $T_n$
\[\begin{tikzcd}
	{\poly^{\leq n}} & {\fun(\cj,\cs)}
	\arrow[""{name=0, anchor=center, inner sep=0}, "\iota"', curve={height=12pt}, hook, from=1-1, to=1-2]
	\arrow[""{name=1, anchor=center, inner sep=0}, "{T_n}"', curve={height=12pt}, from=1-2, to=1-1]
	\arrow["\dashv"{anchor=center, rotate=-90}, draw=none, from=1, to=0]
\end{tikzcd}\]
\end{proposition}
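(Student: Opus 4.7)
The plan is to mirror Weiss's classical construction while taking advantage of the cleaner $\infty$-categorical localization machinery. First I would set up the partial polynomialisation endofunctor
\[
\tau_n F(V) := \lim_{0 \neq U \subseteq \mathbb{R}^{n+1}} F(V \oplus U),
\]
where the index is the genuine $\infty$-categorical limit over the Grassmannian of nonzero subspaces of $\mathbb{R}^{n+1}$, viewed as an $\infty$-groupoid. The inclusions $V \hookrightarrow V \oplus U$ yield a natural transformation $\rho_F \colon F \Rightarrow \tau_n F$, and $\poly^{\leq n}$ is the full subcategory of functors on which $\rho_F$ is an equivalence. Then I would define $T_n F$ as the sequential colimit of the telescope
\[
F \xrightarrow{\rho_F} \tau_n F \xrightarrow{\tau_n \rho_F} \tau_n^2 F \to \cdots.
\]

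The next step is to verify the three essential properties. First, that $T_n F$ is $n$-polynomial: this amounts to interchanging the limit defining $\tau_n$ with the sequential colimit defining $T_n$, which is possible because the indexing Grassmannian is a compact CW complex and filtered colimits in $\cs$ commute with limits over compact spaces in the relevant range. Second, that the unit $\eta \colon F \to T_n F$ is universal among maps into $n$-polynomial functors: if $G$ is $n$-polynomial, then $\Map(-, G)$ sees each $\rho$ as an equivalence, and since $\Map(-, G)$ turns sequential colimits into sequential limits in $\cs$, one obtains $\Map(T_n F, G) \simeq \Map(F, G)$, giving the adjunction $T_n \dashv \iota$. Third, that $T_n$ is left-exact: $\tau_n$ preserves all (in particular finite) limits because it is itself a limit, and filtered colimits commute with finite limits in the $\infty$-topos $\cs$, so $T_n$ inherits left-exactness from the combination.

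The main obstacle is the first part of the second step, namely the proof that $T_n F$ is actually $n$-polynomial. This is where Weiss invests his most substantial technical effort in \cite{weiss-oc}, and the subtlety is precisely that one must commute a homotopy limit indexed by a nontrivial topological space with an infinite sequential colimit. In the $\infty$-categorical language this can be argued directly using the compactness of the Grassmannian together with connectivity estimates for the maps $\tau_n^k F \to \tau_n^{k+1} F$, but in the interest of brevity I would import the underlying convergence result from \cite{weiss-oc} and merely re-interpret the conclusion. Everything else — the adjunction and the left-exactness — is then formal.
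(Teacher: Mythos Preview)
Your overall architecture matches the paper's: define $\tau_n$, form the telescope $T_n$, import Weiss's hard work to see that $T_n F$ lands in $\poly^{\leq n}$, and then argue the adjunction and left-exactness formally. Your left-exactness argument is essentially identical to the paper's.

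The gap is in your adjunction step. You assert that for $n$-polynomial $G$ the map $\rho_F^* \colon \Map(\tau_n F, G) \to \Map(F, G)$ is an equivalence, and then pass to the sequential limit. But this assertion is not obvious and you do not justify it. One can build a one-sided inverse $\phi \mapsto \rho_G^{-1} \circ \tau_n(\phi)$, but checking the other composite is the identity reduces to comparing the two \emph{different} maps $\tau_n(\rho_F)$ and $\rho_{\tau_n F}$ from $\tau_n F$ to $\tau_n^2 F$; these need not agree, because $\tau_n$ is not itself a localization. Put differently, ``$\rho_F$ is inverted by $\Map(-,G)$ for every $n$-polynomial $G$'' is equivalent to ``$\rho_F$ is a $T_n$-local equivalence'', which is exactly the adjunction you are trying to establish --- so as written the argument is circular.

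The paper sidesteps this by importing \emph{all three} conclusions of Weiss's Theorem~6.3, not just item~(1). In particular item~(3), that $T_n(\eta_F) \colon T_n F \to T_n T_n F$ is an equivalence, together with item~(2) (so that $\eta_{T_n F}$ is also an equivalence), is precisely the input to Lurie's criterion \cite[Prop.~5.2.7.4]{lurie-htt} characterizing reflective localizations via an idempotent endofunctor. That criterion then gives the adjunction $T_n \dashv \iota$ for free, with no need to analyze $\Map(-,G)$ on the individual maps $\rho$. If you want to keep your mapping-space approach, you would have to import item~(3) from Weiss as well and reorganize the argument around $\eta_F$ rather than the individual $\rho$'s; at that point you are effectively reproving Lurie's criterion.
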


A similar approach to Goodwillie Calculus has been given by Lurie in \cite[Chapter~6]{lurie-ha}, and our approach is indeed inspired by his.

This perspective has also been used and partially described by Hahn and Yuan \cite{hahn-yuan}.

\paragraph{An Alternative Topos-theoretic Construction} $ $ \\

A completely alternative construction using topos-theoretic methods has been announced by Anel, Biedermann, Finster and Joyal (\cite{abfj-generalised_goodwillie_towers}). They associate a \textit{Generalised Goodwillie tower} to any left-exact localisation of an $\infty$-topos, and claim that Orthogonal Calculus can be recovered by considering the example

$$\fun(\cj,\cs) \to \cs $$
$$F \mapsto F(\mathbb{R}^\infty) = \colim_n F(\mathbb{R}^n)$$

See also \cref{topos_theoretical_remarks} for some further comments on this perspective.

\section{Some Known Calculations in Orthogonal Calculus}
\sectionmark{Known Calculations}

In this section, we collect some calculations and applications of Orthogonal Calculus from the literature.

\subsection{Weiss's Original Article}

The following are some of the examples given in the original article (\cite{weiss-oc}). 

\begin{example}
[\cite{weiss-oc}, Introduction, Example 2.7, Example 10.6]
For 
$$E(V)=\BO(V)$$
the first three derivative spectra are given by
$$\Theta^1 E \simeq \mathbb{S}$$
$$\Theta^2 E \simeq \Omega \mathbb{S}$$
$$\Theta^3 E \simeq \Omega^2 \mathbb{S}/3$$
\end{example}
\begin{example}
[\cite{weiss-oc}, Introduction]
For
$$E(V) = \BTOP(V)$$
the first derivative spectrum is
$$\Theta^1 E \simeq A(*)$$
where $A(*)$ is Waldhausens A-theory spectrum of a point, i.e.\ $A(*) \simeq K(\mathbb{S})$.
\begin{remark}
Recall that for a manifold $M$, one possible description of $A(M)$
is
$$A(M) = K(\mathbb{S}[\Omega M])$$
where $\Omega M$ is the loop space of $M$ (as a monoid), $\mathbb{S}[\Omega M]$ is the spherical group ring of that monoid, and $K(\mathbb{S}[\Omega M])$ is its $K$-theory spectrum  (\cite{waldhausen-algebraic-k-theory-of-spaces}).
\end{remark}
\end{example}

\begin{example}
[\cite{weiss-oc}, Example 3.2.]
Let $M$ be a smooth compact manifold, potentially with boundary. Consider
$$F(V)=B \text{Diff}^b_\partial(M \times V)$$
where 
$$\text{Diff}^b_\partial (M \times V)$$
is the group of diffeomorphisms of $M \times V$ which are the identity on $\partial M \times V$ and which are bounded in the sense that there is a constant $c > 0$ such that $||p_2(f(m,v))-v|| \leq c$ for all $(m,v) \in M \times V$.

Then 
$$\Theta^1 F \simeq \Omega \text{Wh}^\text{Diff}(M)$$
where $\text{Wh}^\text{Diff}$ is the smooth Whitehead spectrum of $M$. This is related to the $A$-theory spectrum $A(M)$ by $A(M) \simeq \Omega^\infty S^\infty (M_+) \times \text{Wh}^\text{Diff}(M)$ (see \cite{waldhausen-algebraic-k-theory-of-spaces}, Theorem 2, Addendum 4).
\end{example}

\subsection{Arone's Work on BO and BU}

Recall, as mentioned in \cref{unitary-calculus}, that Orthogonal Calculus works just as well in the setting of complex vector spaces as it does in real vector spaces.

In his paper \cite{arone-bo-bu}, Arone studies the derivative spectra of the functors
$$V \mapsto \BO(V)$$
and
$$V \mapsto \BU(V)$$
We outline his results briefly. Let $\mathbb{F} \in \left\lbrace\mathbb{R},\mathbb{C}\right\rbrace$.

\begin{theorem}[\cite{arone-bo-bu}, Theorem 2]
For $n \geq 1$, the $n$-th derivative of $V \mapsto BAut(V)$ is the spectrum
$$\text{Map}_*(L_n,\Sigma^\infty S^{Ad_n})$$
where $L_n$ is the unreduced suspension of the geometric realisation of the category of non-trivial direct-sum decompositions of $\mathbb{F}^n$ and $Ad_n$ is the adjoint representation of $Aut(n)$ (i.e.\ $O(n)$ resp.\ $U(n)$).
\end{theorem}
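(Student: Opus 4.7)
My plan is to attack this via Weiss's recognition principle for homogeneous functors, combined with an explicit model for the unstable derivative $F^{(n)}$ of $F(V) = B\text{Aut}(V)$. The starting point is the fibre sequence
$$F^{(n)}(V) \longrightarrow F(V) \longrightarrow \lim_{0 \neq U \subseteq \mathbb{F}^n} F(U \oplus V)$$
recalled in the excerpt (\cite{weiss-oc}, Proposition 5.3), together with the relation $(\Theta^n F)_{nk} = F^{(n)}(\mathbb{F}^k)$. The whole task is to rewrite this fibre in a form that exposes both the decomposition category $L_n$ and the representation sphere $S^{\text{Ad}_n}$.

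First I would reinterpret the indexing diagram. A non-trivial subspace $U \subseteq \mathbb{F}^n$ yields the orthogonal (resp.\ unitary) splitting $\mathbb{F}^n \cong U \oplus U^\perp$, so adding $V$ presents $U \oplus V$ as $\mathbb{F}^n \oplus V$ equipped with a distinguished decomposition. I would therefore replace the holim cofinally by one indexed on the topological category of non-trivial direct-sum decompositions of $\mathbb{F}^n$, whose classifying space is, up to the single suspension coming from the excluded trivial and total decompositions, exactly $L_n$. This step is a Quillen-Theorem-A-style comparison and conceals a fair amount of work: the space of subspaces and the nerve of decompositions are a priori quite different objects, and the equivalence relies on finely controlled combinatorial bookkeeping.

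Next I would track the Thom twisting. The homotopy fibre of $B\text{Aut}(V) \to B\text{Aut}(U \oplus V)$ is the Stiefel-type space $\text{Aut}(U \oplus V)/\text{Aut}(V)$; once one passes to the $\cj_n$-enriched derivative, the normal bundle controlling stabilisation over the $\text{Aut}(n)$-orbit of a decomposition is exactly the adjoint representation, so its one-point compactification contributes $S^{\text{Ad}_n}$. Assembling these pieces, $F^{(n)}(V)$ becomes $\Omega^\infty$ of a homotopy limit of $\text{Ad}_n$-twisted sphere spectra indexed over $L_n$, and the passage
$$\operatorname{holim}_{L_n} \Sigma^\infty S^{\text{Ad}_n} \simeq \Map_*\!\left(L_n,\, \Sigma^\infty S^{\text{Ad}_n}\right)$$
delivers the stated formula. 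The homogeneous-layer recognition principle then forces $\Theta^n F \simeq \Map_*(L_n, \Sigma^\infty S^{\text{Ad}_n})$.

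The main obstacle, beyond the cofinality comparison already noted, will be the $O(n)$-equivariance. The spectrum $\Map_*(L_n, \Sigma^\infty S^{\text{Ad}_n})$ must carry an $O(n) = \text{Aut}(\mathbb{F}^n)$-action compatible with the classification of $n$-homogeneous functors, and this requires showing that the natural action of $O(n)$ on the decomposition category and on the normal bundle of a decomposition-orbit assemble coherently under the Thom-space identification. A secondary subtlety is the distinction between reduced and unreduced suspension in the definition of $L_n$: this corresponds to whether the trivial and total decompositions are retained as basepoints, and one must match this choice with the pointing of $\Map_*$ in order to land on the correct spectrum rather than something differing by a copy of $\Sigma^\infty S^{\text{Ad}_n}$.
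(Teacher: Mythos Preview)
The paper does not prove this theorem but does sketch Arone's strategy, and that strategy is quite different from yours. Arone's key move is the fibre sequence
\[
S^V \longrightarrow B\!\operatorname{Aut}(V) \longrightarrow B\!\operatorname{Aut}(V \oplus \mathbb{F}),
\]
which reduces the computation of $\Theta^n B\!\operatorname{Aut}$ to that of $\Theta^n(V \mapsto S^V)$. The latter is accessible because $V \mapsto S^V$ is the restriction along $\cj \to \cs$ of the identity functor, so its orthogonal derivatives are induced up from the Goodwillie derivatives of $\id_\cs$, which are controlled by the partition complex $K_n$. The decomposition complex $L_n$ then enters through the induction $\Sigma_n \hookrightarrow \operatorname{Aut}(n)$ and an equivariant comparison between partitions of $\{1,\dots,n\}$ and direct-sum decompositions of $\mathbb{F}^n$; the appearance of $S^{\operatorname{Ad}_n}$ is a by-product of that induction, not of a direct normal-bundle analysis over a decomposition orbit.

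Your route, by contrast, tries to read $L_n$ directly out of the Weiss fibre sequence $F^{(n)}(V) \to F(V) \to \lim_{0 \neq U \subset \mathbb{F}^n} F(U \oplus V)$. The gap is the cofinality step: the indexing category here is the topological poset of \emph{non-zero subspaces} of $\mathbb{F}^n$ ordered by inclusion, whose nerve is a Tits building, not the category of \emph{non-trivial direct-sum decompositions} whose suspended nerve is $L_n$. Sending $U$ to $U \oplus U^\perp$ lands only in decompositions with exactly two summands, and there is no evident cofinal functor between these diagrams; indeed the two nerves have quite different homotopy types in general. Without a genuine mechanism to produce iterated decompositions (which in Arone's argument comes for free from the iterated cross-effects in the Goodwillie tower of $\id$), the passage from the subspace holim to $\operatorname{Map}_*(L_n,-)$ does not go through as stated. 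Your identification of the twist as $\operatorname{Ad}_n$ is also underspecified: the tangent space to $\operatorname{Aut}(n)/(\operatorname{Aut}(n_1) \times \cdots \times \operatorname{Aut}(n_k))$ at the basepoint depends on the decomposition type and is not uniformly $\operatorname{Ad}_n$, so one still has to explain how these assemble over the nerve.
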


Note that $\Sigma^\infty S^{\text{Ad}_n} \simeq D_{O(n)}$ is the dualising spectrum which also appears in our result \cref{final_derivative_spectra_result}.

He then goes on to study the (non-equivariant, or only partially equivariant) homotopy type of these spectra:

\begin{theorem}[\cite{arone-bo-bu}, Theorem 3]
There is an $O(n-1)$-equivariant equivalence
$$\text{Map}_*(L_n^\mathbb{R},\Sigma^\infty S^{Ad_n}) \simeq \text{Map}_*(S^1 \wedge K_n,\Sigma^\infty S^0) \wedge_{\Sigma_n} O(n-1)_+ $$
and a $U(n-1)$ equivariant equivalence
$$\text{Map}_*(L_n^\mathbb{C},\Sigma^\infty S^{Ad_n^\mathbb{C}}) \simeq \text{Map}_*(S^1 \wedge K_n,\Sigma^\infty S^n) \wedge_{\Sigma_n} U(n-1)_+ $$
where $K_n$ denotes the unreduced suspension of the category of non-trivial partitions of $\left\lbrace 1, \ldots , n \right\rbrace$.
\end{theorem}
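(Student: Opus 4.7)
The plan is to build an explicit $O(n-1)$-equivariant comparison map between the two sides and then verify it is an equivalence by reducing it to a simpler geometric statement. The central geometric input is that a choice of orthonormal frame of $\mathbb{F}^n$ converts a non-trivial partition $\pi$ of $\{1,\dots,n\}$ into a non-trivial orthogonal direct-sum decomposition of $\mathbb{F}^n$ (send a part $S \subseteq \{1,\dots,n\}$ to the span of the corresponding frame vectors). This produces a continuous, $\Sigma_n$-equivariant map of (unreduced suspensions of) nerves $K_n \to L_n^{\mathbb{F}}$ (with $\Sigma_n$ acting on $L_n^{\mathbb{F}}$ via the block permutation embedding $\Sigma_n \hookrightarrow \text{Aut}(n)$), and on inducing up we obtain a map
$$\text{Aut}(n)_+ \wedge_{\Sigma_n} K_n \longrightarrow L_n^{\mathbb{F}}.$$

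First I would show that this induced map is an equivariant weak equivalence. The idea is that the nerve of the category of non-trivial decompositions deformation retracts onto the subspace of decompositions that are diagonal with respect to \emph{some} orthonormal frame, the frame being parametrised by $\text{Aut}(n)$ up to the residual symmetry $\Sigma_n$ (and the torus acting within each block, which is contractible in the relevant sense). Making this precise amounts to exhibiting a homotopy-fibre-sequence-style decomposition of the decomposition category, and this step is where I expect the main technical difficulty: one must compare a continuous poset-like space (decompositions into linear subspaces, which carry moduli) with a discrete one (partitions), so the proof really does use that an unordered tuple of mutually orthogonal subspaces spanning $\mathbb{F}^n$ is a homogeneous space for $\text{Aut}(n)$ on which $\Sigma_n$ acts freely.

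Granted this equivariant equivalence, the second step is purely homotopy-theoretic. Applying $\text{Map}_*(-,\Sigma^\infty S^{\text{Ad}_n})$ and commuting the $\text{Map}_*$ past the induction (using that $K_n$ is a finite equivariant CW complex) gives
$$\text{Map}_*(L_n^{\mathbb{F}},\Sigma^\infty S^{\text{Ad}_n}) \;\simeq\; \text{Map}_*\bigl(K_n,\, \text{Map}_*(\text{Aut}(n)_+, \Sigma^\infty S^{\text{Ad}_n})\bigr) \wedge_{\Sigma_n} \text{Aut}(n-1)_+,$$
after restricting to the $\text{Aut}(n-1)$-action and using the Wirthmüller/norm isomorphism to convert mapping out of $\text{Aut}(n)_+$ into a smash with the dualising spectrum. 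Because $S^{\text{Ad}_n}$ \emph{is} the dualising spectrum of $\text{Aut}(n)$, the inner mapping spectrum collapses: $\text{Map}_*(\text{Aut}(n)_+,\Sigma^\infty S^{\text{Ad}_n}) \simeq \Sigma^\infty \text{Aut}(n)_+ \wedge S^0$, and after the appropriate identifications one is left with $\text{Map}_*(K_n, \Sigma^\infty S^k)$ smashed with $\text{Aut}(n-1)_+$ over $\Sigma_n$.

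The final bookkeeping is matching the sphere dimension and the extra $S^1$. Restricting $\text{Ad}_n$ to $\text{Aut}(n-1)$ produces $\text{Ad}_{n-1} \oplus T$, where $T$ is the tangent space of $\text{Aut}(n)/\text{Aut}(n-1)$ at the identity coset; over $\mathbb{R}$ this is $\mathbb{R}^{n-1}$ (tangent to $S^{n-1}$) but the mismatch with the $\text{Ad}_{n-1}$-appearing in the dualising spectrum of $\text{Aut}(n-1)$ produces a single leftover suspension, accounting for the $S^1$ and the reduction of $K_n$ to $S^1 \wedge K_n$. Over $\mathbb{C}$, the analogous computation uses that $\text{Ad}_n^{\mathbb{C}}|_{U(n-1)} \cong \text{Ad}_{n-1}^{\mathbb{C}} \oplus \mathbb{C}^{n-1} \oplus \mathbb{R}$, producing the $S^n$ target instead. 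This last step is mostly careful representation-theoretic bookkeeping, but it is the place where the real and complex cases genuinely diverge, and where I would double-check the proof most carefully.
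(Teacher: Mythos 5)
This theorem is cited from Arone's paper and is not proved in the thesis; the survey chapter only sketches Arone's key idea, namely using the fibre sequences $S^V \to \BO(V) \to \BO(V \oplus \mathbb{R})$ (resp.\ $\Sigma S^V \to \BU(V) \to \BU(V \oplus \mathbb{C})$) to reduce, inductively, to the Weiss derivatives of $V \mapsto S^V$, which are then compared to the Goodwillie derivatives of the identity via the one-point-compactification functor $\cj \to \cs$. Your approach is structurally different: you try to build a direct $\text{Aut}(n)$-equivariant comparison between $L_n^\mathbb{F}$ and the induced-up partition complex, and then dualise.

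The key step of your proposal is false. You claim that the map $\text{Aut}(n)_+ \wedge_{\Sigma_n} K_n \to L_n^{\mathbb{F}}$ induced by sending a partition to the corresponding coordinate-subspace decomposition is an equivariant weak equivalence, but it is not even a nonequivariant one. Take $n=2$ in the real case. The poset of proper nontrivial partitions of $\{1,2\}$ is empty, so $K_2 = S^0$. On the other hand, the poset of nontrivial orthogonal decompositions of $\mathbb{R}^2$ is the space of unordered pairs of orthogonal lines, i.e.\ $\mathbb{RP}^1/(\mathbb{Z}/2) \cong S^1$, so $L_2 = \Sigma S^1 = S^2$; this is also forced by $\Theta^2 \BO \simeq \Omega\mathbb{S}$ together with Arone's Theorem 2, since $\text{Ad}_2$ is one-dimensional. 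But
$$O(2)_+ \wedge_{\Sigma_2} K_2 \;\cong\; \bigl(O(2)/\Sigma_2\bigr)_+ \;\cong\; S^1_+ \;\simeq\; S^0 \vee S^1 \;\not\simeq\; S^2 = L_2^\mathbb{R},$$
so the comparison map is not an equivalence. The decomposition complex genuinely carries more homotopy than the free induction of the partition complex, and no amount of careful handling of the $\text{Aut}(n)$-orbit structure will repair this: the two spaces have different nonequivariant homotopy types.

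The shape of Arone's theorem is itself a warning sign: it asserts only an $O(n-1)$-equivariant equivalence of mapping spectra, and it is precisely the step down from $O(n)$ to $O(n-1)$ — forced by the fibre sequence $S^V \to \BO(V) \to \BO(V\oplus\mathbb{R})$, which shifts the representation variable by $\mathbb{R}$ and thus only retains $O(n-1)$-equivariance — that lies at the heart of Arone's argument and has no counterpart in yours. A space-level equivalence of the sort you propose would, after dualising, yield an $O(n)$-equivariant statement, which is strictly stronger than what Arone proves and, as the $n=2$ case shows, is false. You would need to replace the first step with Arone's inductive fibre-sequence argument (or with an equally strong substitute that produces only an $O(n-1)$-equivalence on mapping spectra), at which point the second and third steps of your outline, which are mostly dualisation bookkeeping, could be retained in spirit.
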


The key idea here is to use the fibre sequences
$$S^V \to \BO(V) \to \BO(V \oplus \mathbb{R})$$
respectively
$$\Sigma S^V \to \BU(V) \to \BU(V \oplus \mathbb{C})$$
and the close comparison between the Goodwillie tower of the identity functor on spaces and the orthogonal or unitary derivatives of $S^V$, coming from the comparison functor
\begin{align*}
\cj &\to \cs \\
V &\mapsto S^V
\end{align*}
inducing a functor (by precomposition)
$$\fun(\cs,\cs) \to \fun(\cj,\cs)$$
(See also \cref{comparison_with_goodwillie_calculus} for more on the comparison functor).

He also calculates the mod $p$  - cohomology of the derivative spectra (\cite[p.456]{arone-bo-bu}), which turns out to be a free module over a certain subalgebra $A_k$ of the Steenrod algebra $A$. Strikingly, the $n$-th derivative spectra of $U(n)$ resp.\ $O(n)$, $M_n^\mathbb{C}$, are contractible for all $n$ which are not prime powers, and $M_n^\mathbb{R}$ is contractible for all $n$ which are not prime powers of twice a prime power. ($M_n$ denotes the $n$-th derivative spectrum of $\text{Aut}(n)$.)

\subsection{{$\Theta^2 \text{btop}$} (Krannich-Randal-Williams)}

Three very important example functors in Orthogonal Calculus are 

\begin{align*}
\text{BO}(-) \colon V &\mapsto \BO(V) \\
\text{BTOP}(-) \colon V &\mapsto \BTOP(V) = \text{B}\text{Homeo}(V,V) \\
\text{BG}(-) \colon V &\mapsto \text{BG}(V)
\end{align*}
where $G(V)$ is the grouplike monoid of self-homotopy equivalences of $V$. 

We already mentioned that the first derivative of $\text{BTOP}(-)$ is $A(*) \simeq K(\mathbb{S})$. Already the second one remains somewhat mysterious. The following partial answer has been given by Krannich and Randal-Williams (\cite{krannich-randal-williams}).

\begin{theorem}[\cite{krannich-randal-williams}, Theorem 9.9.]
$\Theta^2 \text{BTOP}(-) \to \Theta^2 \text{BG}(-)$ is a rational equivalence. The latter was calculated by (\cite[Prop.~3.5]{reis-weiss}) to be rationally equivalent to $Map(S^1_+, \mathbb{S}^{-1})$
\end{theorem}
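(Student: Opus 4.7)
The plan is to extract the two statements and treat them separately, then focus on the first (which is the substantive content of Krannich--Randal-Williams, the second being imported from Reis--Weiss). The input data is the principal fibration
$$G(V)/\text{TOP}(V) \to \text{BTOP}(V) \to \text{BG}(V)$$
of functors on $\cj$. Since the derivative spectrum construction $\Theta^n$ preserves fibre sequences (derivatives commute with finite limits, because $\Theta^n = \fib(T_n \to T_{n-1})$ composed with the classification equivalence $\homog \simeq \spc^{\BO(n)}$, and $T_n$ is left-exact by \cref{infty_oc_taylor_tower}), applying $\Theta^2$ yields a fibre sequence of $O(2)$-spectra
$$\Theta^2\bigl(G/\text{TOP}\bigr) \to \Theta^2 \text{BTOP} \to \Theta^2 \text{BG}.$$
Thus the claim reduces to showing that $\Theta^2\bigl(V\mapsto G(V)/\text{TOP}(V)\bigr)$ is rationally trivial.

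For the rational vanishing, the route I would take is via surgery theory. The space $G/\text{TOP}$, regarded as a constant-coefficient object, rationally splits as $\prod_{k\geq 1} K(\mathbb{Q},4k)$, and the functoriality in $V$ is controlled by $L$-theory of a point through the Sullivan--Wall normal invariant map. Concretely, one identifies $V\mapsto G(V)/\text{TOP}(V)$ rationally with a functor assembled from $L$-theoretic data that is ``linear'' in $V$ in a precise sense (a first-order functor), so that already $\Theta^1$ carries all of the information and $\Theta^n$ vanishes for $n\geq 2$. The cleanest way to phrase this is via the Weiss/Waldhausen-style identification of the structure space of $D^V$ with a loop space of $L$-theory, combined with a connectivity/degree estimate on the map from $V\mapsto G(V)/\text{TOP}(V)$ to its linearisation $T_1$; since we only want a rational statement, only the rational connectivity of the comparison is needed.

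A parallel input is smoothing theory: the functor $V\mapsto \text{TOP}(V)/O(V)$ is $1$-polynomial after rationalisation (the derivative is essentially $K(\mathbb{Z})$-type data concentrated in degrees shifted by $V$), so its $\Theta^2$ also vanishes rationally. Combining this with the previous step via the fibration $O(V)\to \text{TOP}(V)\to G(V)$ shows that rationally all of the second-derivative obstructions live on $V\mapsto\text{BG}(V)$, and then one imports the Reis--Weiss computation $\Theta^2 \text{BG}(-)\simeq_\mathbb{Q} \Map(S^1_+,\mathbb{S}^{-1})$ to conclude.

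The main obstacle will be the second paragraph: establishing the ``first-order'' nature of $V\mapsto G(V)/\text{TOP}(V)$ rationally. Naively, $G/\text{TOP}$ feels like it should be polynomial of degree one only in the loose sense of being affine in $V$ through normal invariants, but making this precise inside Orthogonal Calculus requires either (a) an explicit rational model identifying the functor with a linear functor built from $L$-theory of $\mathbb{Z}$ up to higher connectivity, or (b) a direct computation of $\tau_2\bigl(G/\text{TOP}\bigr)$ and a verification that the canonical map from $G/\text{TOP}$ is a rational equivalence. Option (b) reduces to a homotopy limit over lines and planes in $\mathbb{R}^3$, which is tractable rationally because $G/\text{TOP}_\mathbb{Q}$ is a generalised Eilenberg--MacLane space. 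I would follow route (b), using the polynomiality criterion of Weiss \cite{weiss-oc} together with the explicit rational decomposition of $G/\text{TOP}$, since it avoids any engagement with the geometric meaning of normal invariants on open manifolds $V$ and stays entirely within functor calculus and rational homotopy theory.
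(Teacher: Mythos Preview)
The paper does not actually prove this theorem: it appears in the survey chapter (\cref{chapter_survey}), where it is simply quoted from \cite{krannich-randal-williams} as a result in the literature, with no argument given. So there is no ``paper's own proof'' to compare your proposal against.

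Evaluating your proposal on its own merits: the reduction via the fibre sequence $G(V)/\text{TOP}(V) \to \text{BTOP}(V) \to \text{BG}(V)$ to the vanishing of $\Theta^2\bigl(V\mapsto G(V)/\text{TOP}(V)\bigr)_\mathbb{Q}$ is correct and is a natural first step. The gap is in how you propose to establish that vanishing. You repeatedly invoke the rational homotopy type of the \emph{stable} space $G/\text{TOP} \simeq_\mathbb{Q} \prod_{k\geq 1} K(\mathbb{Q},4k)$, but this tells you nothing directly about the \emph{unstable} functor $V\mapsto G(V)/\text{TOP}(V)$. The individual spaces $\text{TOP}(n)$ have extremely complicated and largely unknown rational homotopy types; understanding them is exactly the subject of the Krannich--Randal-Williams paper, and the statement you are trying to prove is one of its principal outputs. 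Your ``route (b)'' --- computing $\tau_2$ directly using that $(G/\text{TOP})_\mathbb{Q}$ is a generalised Eilenberg--MacLane space --- conflates the stable and unstable objects: the input to $\tau_2$ at $V$ involves the spaces $G(V\oplus U)/\text{TOP}(V\oplus U)$ for $0\neq U\subseteq\mathbb{R}^3$, and these are not rationally GEMs. The same objection applies to your claim that $V\mapsto \text{TOP}(V)/O(V)$ is rationally $1$-polynomial: this is not lighter than the theorem itself.

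The actual argument in \cite{krannich-randal-williams} is of a quite different nature and depth. It passes through their identification of $\text{BTOP}(n)$ (in a range, rationally) with the classifying space of automorphisms of the little $n$-discs operad, and then uses the rational homotopy theory of such automorphism spaces via graph complexes (Fresse--Turchin--Willwacher et al.). Classical surgery theory alone, as you are trying to use it, does not give enough control over the unstable spaces $\text{TOP}(n)$ to run your argument.
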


\subsection{Pontryagin Classes etc. (Reis-Weiss et al.)}

In \cite{reis-weiss}, Reis and Weiss tried to use Orthogonal Calculus to see that that the square of the euler class is the Pontryagin class, not just in $H^*(BSO(2m))$ but also in $BSTOP(2m)$. 

Later, Weiss showed in \cite{weiss-dalian} that this is not true. 

Even later, Galatius-Randal-Williams \cite{galatius-randal-williams_pontryagin} showed that the topological Pontryagin classes are in fact algebraically independent in $BTOP(2m)$.

\begin{corollary}
$Bo \to Btop$ does not admit a rational left inverse.
\end{corollary}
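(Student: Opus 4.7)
The plan is to argue by contradiction, combining Weiss's counterexample with the algebraic independence of topological Pontryagin classes established by Galatius--Randal-Williams. Suppose a rational left inverse $r \colon \BTOP \to \BO$ of the forgetful map $i \colon \BO \to \BTOP$ exists. Passing to the oriented, fixed-dimension level produces a rational retraction $r_{2m} \colon B\mathrm{STOP}(2m) \to B\mathrm{SO}(2m)$ for each $m$, so that $r_{2m}^*$ is a ring-theoretic section of the restriction $i^* \colon H^*(B\mathrm{STOP}(2m);\mathbb{Q}) \to H^*(B\mathrm{SO}(2m);\mathbb{Q})$.

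Since $r$ is natural in $V \in \cj$, it is compatible with stabilization $V \mapsto V \oplus \mathbb{R}$. Stable rational characteristic classes of topological vector bundles are generated by the topological Pontryagin classes, and combined with the algebraic independence of $p_1, \ldots, p_m$ inside $H^*(B\mathrm{SO}(2m);\mathbb{Q})$ this forces $r_{2m}^*(p_j) = p_j^{\mathrm{TOP}}$ for every $j \leq m$. Applying the section to the defining identity $p_m = e^2$ in $H^*(B\mathrm{SO}(2m);\mathbb{Q})$ then yields a class $r_{2m}^*(e) \in H^{2m}(B\mathrm{STOP}(2m);\mathbb{Q})$ whose square equals $p_m^{\mathrm{TOP}}$.

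By Galatius--Randal-Williams, the classes $p_1^{\mathrm{TOP}}, \ldots, p_m^{\mathrm{TOP}}$ are algebraically independent in $H^*(B\mathrm{STOP}(2m);\mathbb{Q})$, so $p_m^{\mathrm{TOP}}$ admits no square root inside the polynomial subring they generate. By Weiss's counterexample, the natural candidate $e^{\mathrm{TOP}}$ satisfies $(e^{\mathrm{TOP}})^2 \neq p_m^{\mathrm{TOP}}$. Together these rule out the existence of the required square root, giving the desired contradiction.

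The main obstacle will be ruling out \emph{exotic} square roots of $p_m^{\mathrm{TOP}}$ outside the subring generated by the topological Pontryagin classes, since the GRW result controls only that polynomial subalgebra directly. Closing this gap likely requires a finer analysis of $\ker(i^*)$ in conjunction with the explicit form of Weiss's obstruction, to preclude any $\delta \in \ker(i^*)$ satisfying $(e^{\mathrm{TOP}} + \delta)^2 = p_m^{\mathrm{TOP}}$; the naturality/stability input reducing the problem to this purely cohomological statement is expected to be the cleaner half of the argument.
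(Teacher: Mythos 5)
Your approach is genuinely different from the paper's, and it contains a gap that you have correctly identified but not closed. The paper's proof is a one-line citation: Reis--Weiss \cite{reis-weiss} established that the existence of a rational left inverse of $Bo \to Btop$ is \emph{equivalent} to the identity $e^2 = p_{n/2}$ in $H^{2n}(BSTOP(n);\mathbb{Q})$, and Weiss later showed in \cite{weiss-dalian} that this identity fails, so the corollary follows at once. In particular, the Galatius--Randal-Williams theorem is not invoked in the paper's proof; it appears in the surrounding discussion only as a later, stronger result.

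Your argument instead attempts to re-derive a contradiction from scratch: a splitting $r$ would make $r_{2m}^*(e)$ a square root of $p_m^{\mathrm{TOP}}$ in $H^{2m}(BSTOP(2m);\mathbb{Q})$, and you want to show no such square root exists. But the two inputs you assemble do not cover the full space of candidates. GRW's algebraic independence rules out square roots lying in the polynomial subring $\mathbb{Q}[p_1^{\mathrm{TOP}},\ldots,p_m^{\mathrm{TOP}}]$, and Weiss's computation rules out the single class $e^{\mathrm{TOP}}$, but any class of the form $e^{\mathrm{TOP}} + \delta$ with $\delta \in \ker(i^*)$ and $\delta$ not polynomial in the $p_j^{\mathrm{TOP}}$ escapes both constraints. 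You flag this yourself, but it is not a routine loose end: pinning down the candidate square root produced by a natural retraction of orthogonal-calculus functors is exactly what the Reis--Weiss analysis accomplishes, showing that the problem reduces precisely to whether $e^2 = p_{n/2}$ holds in $H^{2n}(BSTOP(n);\mathbb{Q})$. Without some version of that result, your argument does not reach a contradiction, and the GRW independence theorem alone cannot substitute for it. The paper uses the Reis--Weiss equivalence as a black box and thereby sidesteps this entire difficulty.
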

\begin{proof}
According to Reis-Weiss, the existence of such a left inverse would be equivalent to $e^2 = p_{\frac{n}{2}} \in H^{2n}(BSTOP(n);\mathbb{Q})$. But this is not true, so there is no such splitting.
\end{proof}

\section{Analyticity Conditions}

Recall that a functor $F \colon \cj \to \cs$ is called \textit{analytic} if it can be recovered from its Taylor tower, i.e.\ if
$$F \simeq \lim_n T_nF$$
Since limits of functors are computed pointwise, this implies that
$$F(V) \simeq \lim_n T_nF(V)$$
for any $V \in \cj$.

Unfortunately, this notion of analyticity is quite hard to check in practice. However, one can often show weaker versions, as we will discuss shortly.  

\subsection{Barnes-Eldred}
\begin{definition}[\cite{barnes-eldred-comparison}, Def.\ 2.25]
A functor $F$ is weakly $\rho$-analytic if and only if 
$$F(V) \to T_\infty F(V)$$ 
is an equivalence for all $V$ of dimension $\geq \rho$.
\end{definition}

\begin{proposition}[\cite{barnes-eldred-comparison}, Thm. 4.1]
$\text{BO}(-)$ is weakly $2$-analytic. $\text{BU}(-)$ is weakly $1$-analytic.
\end{proposition}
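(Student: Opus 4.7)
The plan is to verify the connectivity criterion underlying weak $\rho$-analyticity: for each $V$ with $\dim V \geq \rho$, show that $F(V) \to T_n F(V)$ becomes arbitrarily highly connected as $n \to \infty$, so that passage to the limit yields $F(V) \xrightarrow{\simeq} T_\infty F(V)$. The cleanest route is via Weiss's fibre sequence
\[
F^{(n+1)}(V) \to F(V) \to \tau_n F(V)
\]
(with $\tau_n F(V) = \lim_{0 \neq U \subseteq \mathbb{R}^{n+1}} F(V \oplus U)$), which shows that the connectivity of $F(V) \to \tau_n F(V)$, and hence by iteration of $F(V) \to T_n F(V)$, is controlled by that of the unstable derivative $F^{(n+1)}(V)$.

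Next, using the relation $(\Theta^n F)_{nk} = F^{(n)}(\mathbb{R}^k)$, one has $\text{conn}(F^{(n)}(\mathbb{R}^k)) \geq nk + \text{conn}(\Theta^n F)$, so the problem reduces to bounding $\text{conn}(\Theta^n F)$ from below by an affine function $\beta - \alpha n$ whose slope $\alpha$ is strictly smaller than $\dim_{\mathbb{R}} V$. Equivalently, one could work with the layers $L_n F(V) \simeq \Omega^\infty((S^{nV} \wedge \Theta^n F)_{hO(n)})$ and propagate connectivity up the tower via $L_n F \to T_n F \to T_{n-1} F$, since homotopy $O(n)$-orbits do not decrease connectivity; both bookkeepings reduce to the same input.

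For $F = \BO(-)$, I would obtain the required bound inductively from the principal bundle $O(V) \to O(V \oplus \mathbb{R}) \to S^V$, delooped to the fibre sequence
\[
S^V \to \BO(V) \to \BO(V \oplus \mathbb{R}),
\]
and its iterates over $V \oplus \mathbb{R}^k$, which control the connectivity of $\BO^{(n)}(\mathbb{R}^k)$ via Weiss's iterated Kan-extension definition of the unstable derivatives. This yields $\text{conn}(\Theta^n \BO) \geq -n$, compatibly with the low-degree computations $\Theta^1 \BO \simeq \mathbb{S}$, $\Theta^2 \BO \simeq \Omega \mathbb{S}$, $\Theta^3 \BO \simeq \Omega^2 \mathbb{S}/3$. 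Substituting back gives $\text{conn}(F^{(n+1)}(V)) \geq (n+1)(\dim_{\mathbb{R}} V - 1)$, which tends to $\infty$ precisely when $\dim_{\mathbb{R}} V \geq 2$. For $F = \BU(-)$, the analogous sequence $S^{2\dim_{\mathbb{C}} V + 1} \to \BU(V) \to \BU(V \oplus \mathbb{C})$ has a fibre whose real dimension exceeds the complex ``direction'' by one, improving the slope by one and giving convergence already for $\dim_{\mathbb{C}} V \geq 1$.

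The main obstacle will be producing the bound $\text{conn}(\Theta^n \BO) \geq -n$ (and its unitary analogue) uniformly in $n$, since the low-degree calculations do not immediately suggest a clean inductive statement that interacts well both with the polynomial step $\tau_n$ and with the stabilization into the spectrum $\Theta^n F$. A plausible route is to bound $\text{conn}(\BO^{(n)}(\mathbb{R}^k))$ directly by a nested induction on $n$ and $k$, using the sphere-fibre sequences above to propagate the estimate one orthogonal summand at a time. Once any bound of the correct linear shape is in hand, the passage to convergence of the Weiss tower is formal (Milnor-type $\lim^1$ vanishing for a tower of spaces whose connectivities go to infinity), and the exact thresholds $\rho = 2$ and $\rho = 1$ fall out of the slope comparison.
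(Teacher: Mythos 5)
The paper's ``proof'' is really just a citation of Barnes--Eldred together with a one-line indication of their strategy: compare with Goodwillie Calculus, using the identification $\BO^{(1)}(V) \simeq S^V$ to transport the analyticity of the identity functor on $\cs$ into Orthogonal Calculus. Your proposal takes a genuinely different route. Instead of importing analyticity from Goodwillie Calculus via the comparison functor, you attempt a connectivity analysis entirely internal to Orthogonal Calculus: reduce to a uniform lower bound $\mathrm{conn}(\Theta^n\BO) \geq -n$ (and the unitary analogue), then feed that back through $(\Theta^n F)_{nk} = F^{(n)}(\mathbb{R}^k)$ and the fibre sequence $F^{(n+1)}(V) \to F(V) \to \tau_n F(V)$. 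The arithmetic bookkeeping is sound: a bound of slope $\alpha = 1$ in $n$ does single out $\rho = 2$ for $\BO$ and $\rho = 1$ for $\BU$, and the final $\lim$/$\lim^1$ argument is standard.

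The real problem is the gap you yourself flag: the bound $\mathrm{conn}(\Theta^n\BO) \geq -n$ is asserted, not proved, and it is the entire content of the theorem. Your sketch of a ``nested induction using the sphere-fibre sequences'' is too thin, for two reasons. First, the higher unstable derivatives $\BO^{(n)}$ are not obtained by iterating the single fibre sequence $S^V \to \BO(V) \to \BO(V\oplus\mathbb{R})$; they are defined by Kan extension along $\cj_0 \to \cj_n$ (equivalently as iterated fibres of maps into the topological limits $\tau_m$), and relating the connectivity of those to the sphere fibre sequence is precisely the hard step that Barnes--Eldred handle through the Goodwillie comparison and the known analyticity of $\mathrm{id}\colon\cs\to\cs$. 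Second, passing from ``$F(V) \to \tau_n F(V)$ is highly connected'' to ``$F(V) \to T_n F(V)$ is highly connected'' requires a connectivity estimate for $\tau_n$ itself, which is a homotopy limit over the topological (not discrete, not finite) poset of nonzero subspaces of $\mathbb{R}^{n+1}$; you treat the iteration as automatic, but it relies on a quantitative lemma of Weiss that needs to be invoked or reproved. Until both of these are supplied, your argument is a plausible strategy, not a proof; and since supplying them would very likely amount to re-deriving the Goodwillie comparison in disguise, the self-contained approach you envision is probably not shorter than Barnes--Eldred's.
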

\begin{proof}
Their proof uses their comparison to Goodwillie Calculus and the fact that the first unstable derivative of $\text{BO}(-)$ is $V \mapsto S^V$.
\end{proof}

\subsection{Taggart}
\begin{definition}[\cite{taggart-thesis} Definition 3.4.6.]
A functor $F$ is weakly $(\rho,n)$-polynomial if the map
$$\eta \colon F(U) \to T_nF(U)$$
is an agreement of order $n$ for all $U$ with $\dim U \geq \rho$. A functor is weakly polynomial if there exists $\rho$ such that it is weakly $(\rho,n)$-polynomial for all $n \geq  0$.
\end{definition}

where

\begin{definition}
A map $p \colon F \to G$ is an order $n$ agreement if there is some $\rho \in \mathbb{N}$ and $b \in \mathbb{Z}$ such that $p_U \colon F(U) \to G(U)$ is $((n+1) \dim U -b)$-connected for all $U$ satisfying $\dim U \geq \rho$. We say that $F$ agrees with $G$ to order $n$ if there is an order $n$ agreement between them.
\end{definition}

The point of these notions is the following connection to convergence of the Taylor tower

\begin{lemma}[\cite{taggart-thesis} Lemma 3.4.5.]
If for all $n \geq 0$ the map $F \to T_nF$ is an order $n$ agreement then the Taylor tower of $F$ converges to $F(V)$ at $V$ for $\dim V \geq \rho$.
\end{lemma}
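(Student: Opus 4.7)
The plan is to show that, under the hypothesis, for each $V \in \cj$ with $\dim V \geq \rho$ the canonical map $F(V) \to T_\infty F(V) := \lim_n T_n F(V)$ is a weak equivalence of spaces. First I would fix such a $V$ and unpack the order-$n$ agreement hypothesis: for each $n \geq 0$ there exists $b_n \in \mathbb{Z}$ such that $F(V) \to T_n F(V)$ is $((n+1)\dim V - b_n)$-connected. Writing $c_n := (n+1)\dim V - b_n$, the decisive point is that $\dim V \geq 1$ forces $c_n \to \infty$ as $n \to \infty$, regardless of how $b_n$ depends on $n$, provided only that $b_n$ is not allowed to grow linearly in $n\dim V$ (which is guaranteed because the constant in the definition of order-$n$ agreement does not depend on $V$).

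The core of the argument is then a standard Milnor $\lim^1$-computation. For the tower $\cdots \to T_1 F(V) \to T_0 F(V)$ of pointed spaces, the Milnor short exact sequence reads
$$0 \to {\lim_n}^1 \pi_{k+1}(T_n F(V)) \to \pi_k(T_\infty F(V)) \to \lim_n \pi_k(T_n F(V)) \to 0.$$
I would fix $k \geq 0$ and choose $N$ so large that $c_N > k+1$. Then $F(V) \to T_n F(V)$ induces isomorphisms on $\pi_k$ and $\pi_{k+1}$ for every $n \geq N$, which forces the transition maps $\pi_j(T_n F(V)) \to \pi_j(T_{n-1} F(V))$ to be isomorphisms for $j \in \{k, k+1\}$ and all $n > N$. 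Hence the towers $\{\pi_j(T_n F(V))\}_n$ are eventually constant, so the ${\lim}^1$-term vanishes and $\lim_n \pi_k(T_n F(V)) \cong \pi_k(T_N F(V)) \cong \pi_k(F(V))$.

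Combining these observations, the induced map $\pi_k(F(V)) \to \pi_k(T_\infty F(V))$ is an isomorphism for every $k \geq 0$, and Whitehead's theorem then yields that $F(V) \to T_\infty F(V)$ is a weak equivalence. The main technical obstacle to watch for is the usual low-degree / pointed bookkeeping in the Milnor sequence (issues with $\pi_0$, $\pi_1$, and Mittag-Leffler behavior), but the \emph{eventual constancy} of each tower of homotopy groups sidesteps all of these subtleties uniformly, so no machinery beyond the Milnor ${\lim}^1$-sequence for towers of spaces is required. This is exactly the standard Goodwillie-type convergence argument, now specialised to the orthogonal setting where the connectivity bound is governed by $\dim V$ rather than the connectivity of an input space.
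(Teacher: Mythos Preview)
The paper does not supply its own proof of this lemma; it is quoted from Taggart's thesis as background in the survey chapter, with no argument given. So there is nothing in the paper to compare against, and I can only evaluate your argument on its own merits.

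Your Milnor $\lim^1$ argument is the standard and correct way to finish, once you know that the connectivity $c_n = (n+1)\dim V - b_n$ tends to infinity. The gap is precisely in that step. You write that $c_n \to \infty$ ``provided only that $b_n$ is not allowed to grow linearly in $n\dim V$ (which is guaranteed because the constant in the definition of order-$n$ agreement does not depend on $V$)''. This justification is incorrect: the definition of an order-$n$ agreement gives, for each $n$ separately, some integer $b_n$ that is independent of $V$, but nothing in the hypothesis as stated prevents $b_n$ from growing arbitrarily fast in $n$. If, say, $b_n = n^2$, then for any fixed $V$ one has $c_n \to -\infty$ and your argument collapses. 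The independence of $b_n$ from $V$ is irrelevant here, since $V$ is already fixed.

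What is actually needed is either a uniformity hypothesis on the constants $b_n$ (for instance a single $b$ working for all $n$, which is how such statements are typically formulated in Goodwillie's original work), or else an appeal to the precise form of Taggart's definitions where this is built in. Once that is in place, the rest of your proof goes through without change.
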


\begin{example}[{\cite[Examples~3.4.9]{taggart-thesis}}, {\cite[Section~9]{taggart-unitary}}]
The following functors are weakly polynomial:
\begin{itemize}
\item $V \mapsto S^V$
\item $\mapsto BO(V)$ ($\rho = 2$)
\item $V \mapsto BU(V)$ (in unitary calculus)($\rho = 1$, \cite{taggart-unitary} Example 9.16.)
\item $V \mapsto O(V)$
\item Representable functors $V \mapsto \text{Map}_\cj(U,V)$ (\cite{taggart-unitary}, Theorem 9.17.)
\end{itemize}

\end{example}

\section{Various}

\subsection{Unitary Calculus}

\begin{remark}[Unitary Calculus]
\label{unitary-calculus}
There is a complex analogue of Orthogonal Calculus called \enquote{Unitary Calculus}. In complete analogy with Orthogonal Calculus, it studies functors 
$$\cj \to \cs$$
where now, $\cj$ denotes the category of finite-dimensional vector spaces over $\mathbb{C}$ with a positive definite inner product.

That this analogue exists has been known from the beginning (e.g. \cite[Example~10.4]{weiss-oc}). A relatively modern treatment (using model-categorical techniques) has recently been given by Taggart in \cite{taggart-unitary} and subsequently been used in his \enquote{Unitary Calculus with Reality} (\cite{taggart-reality}), a unitary version which takes into account the complex conjugation action on the complex vector spaces and is able to specialise to both Orthogonal and Unitary Calculus.
\end{remark}

Other places in the literature where Unitary Calculus made an appearance include:
\begin{itemize}
\item Arone (\cite{arone-bo-bu}) calculating the derivatives of the orthogonal functor $V \mapsto BO(V)$ as well the unitary functor $V \mapsto BU(V)$.
\item Hahn and Yuan (\cite{hahn-yuan}) using unitary calculus of the functor (valued in spectra) $V \mapsto \Sigma^\infty_+ \Omega \cj(V,V \oplus W)$ to study the Mitchell-Richter filtration/splitting on $\Sigma_+^\infty \Omega SU(n)$.
\end{itemize}

\subsection{Comparison with Goodwillie Calculus}
\label{comparison_with_goodwillie_calculus}
Goodwillie Calculus and Orthogonal Calculus are closely related. Some explicit statements are

\begin{theorem}[Barnes-Eldred {\cite[Prop.~3.2]{barnes-eldred-comparison}}]
Precomposition with the one-point compactification functor $\cj \to \cs$ induces a functor
$$\cs^\cs \to \cs^\cj$$
which maps $n$-excisive functors (in the sense of Goodwillie) to $n$-polynomial functors (in the sense of Weiss).
\end{theorem}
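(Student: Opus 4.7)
My plan is to unfold both conditions into cube-theoretic statements in $\cs$ and then verify Weiss's condition using the $n$-excisive property of $F$. Recall that $F \colon \cs \to \cs$ is $n$-excisive precisely when it sends every strongly cocartesian $(n+1)$-cube in $\cs$ to a Cartesian cube, whereas Weiss polynomiality of $G := F \circ S^{(-)} \colon \cj \to \cs$ requires, for every $V \in \cj$, that the natural map
$$G(V) = F(S^V) \longrightarrow \lim_{0 \neq U \subseteq \IR^{n+1}} F(S^V \wedge S^U)$$
be an equivalence.

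The first step is to reduce the limit over the Grassmannian-stratified space of non-zero linear subspaces of $\IR^{n+1}$ to the limit over the punctured $(n+1)$-cube of coordinate subspaces $T \mapsto \IR^T$, indexed by $\emptyset \neq T \subseteq \{1,\ldots,n+1\}$. I would justify this via a cofinality argument: every non-zero subspace of $\IR^{n+1}$ has a contractible slice of coordinate subspaces containing it, so restricting the indexing should leave the homotopy limit unchanged. After this reduction the goal becomes Cartesian-ness of the $(n+1)$-cube $T \mapsto F(S^V \wedge S^{\IR^T})$.

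Next I would construct a strongly cocartesian $(n+1)$-cube $\mathcal{Y}$ in $\csp$ by iterated pushout of the equatorial inclusions $S^V \hookrightarrow S^{V \oplus \IR_i}$ for $i = 1, \ldots, n+1$, and observe that there is a canonical natural transformation of cubes $\mathcal{Y}_T \to S^V \wedge S^{\IR^T}$ induced by the smash structure. Since $F$ is $n$-excisive, $F(\mathcal{Y})$ is automatically Cartesian, giving $F(S^V) \simeq \lim_{\emptyset \neq T} F(\mathcal{Y}_T)$. The proof would then conclude provided the induced map $\lim_{\emptyset \neq T} F(\mathcal{Y}_T) \to \lim_{\emptyset \neq T} F(S^V \wedge S^{\IR^T})$ is an equivalence.

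The main obstacle is exactly this final comparison, since the individual maps $\mathcal{Y}_T \to S^V \wedge S^{\IR^T}$ are not equivalences: already at $|T|=2$ the pushout $S^{V \oplus \IR} \cup_{S^V} S^{V \oplus \IR}$ is homotopy equivalent to a wedge of copies of $S^{V \oplus \IR}$ rather than to $S^{V \oplus \IR^2}$. A conceptually cleaner route, which I would actually pursue, is to bypass the cube comparison entirely and work one layer at a time in the Goodwillie Taylor tower: $n$-polynomial functors are closed under fiber sequences, so it suffices to handle $k$-homogeneous $F$ for $k \leq n$. For such $F$, the Goodwillie classification gives $F(X) \simeq \Omega^\infty\bigl((\partial_k F \wedge X^{\wedge k})_{h\Sigma_k}\bigr)$, so $F(S^V) \simeq \Omega^\infty\bigl((\partial_k F \wedge S^{V \otimes \IR^k})_{h\Sigma_k}\bigr)$, which is exactly the shape of Weiss's classification of $k$-homogeneous functors with derivative spectrum $O(k)_+ \wedge_{\Sigma_k} \partial_k F$ (using the permutation embedding $\Sigma_k \hookrightarrow O(k)$ and the fact that $EO(k)/\Sigma_k$ models $B\Sigma_k$). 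This makes $F \circ S^{(-)}$ $k$-homogeneous in Weiss's sense, hence $k$-polynomial and a fortiori $n$-polynomial.
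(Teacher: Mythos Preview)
The paper does not supply its own proof of this statement --- it sits in the survey chapter and is simply cited from Barnes--Eldred --- so there is no in-paper argument to compare against directly.

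On your first approach: the cofinality step is not correct as stated. For a \emph{limit} one needs the inclusion of the coordinate-cube poset to be \emph{initial}, which requires contractibility of the over-slices $K_{/U}$ (coordinate subspaces \emph{contained in} $U$), not the under-slices you describe; for a generic line $U \subset \IR^{n+1}$ that over-slice is empty. More fundamentally, the indexing for Weiss's $\tau_n$ is a genuinely topological category (as both Weiss's comments to Definition~5.1 and the paper's own remark that for $n=1$ the shape is ``a cone over $\mathbb{RP}^1$'' emphasise), and the discrete cube does not sit initially inside it. Your correct observation that $T \mapsto S^{V \oplus \IR^T}$ fails to be strongly cocartesian is a symptom of the same phenomenon rather than a separate obstacle.

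Your second approach, via the Goodwillie tower and the two classification theorems, is valid and does prove the result, with two caveats. First, the formula $D_k F(X) \simeq \Omega^\infty\bigl((\partial_k F \wedge X^{\wedge k})_{h\Sigma_k}\bigr)$ is for reduced functors between pointed categories, so you should split off $P_0 F$ (constant, hence $0$-polynomial after precomposition) and run the induction in $\csp$; the unpointed conclusion then follows because equivalences are detected after forgetting basepoints, and $n$-polynomiality is preserved along fibre sequences since $\tau_n$ is a limit. Second, this route leans on both Goodwillie's and Weiss's classifications of homogeneous functors. That is legitimate and not circular --- neither classification depends on the comparison --- but it is substantially heavier than necessary, and it effectively absorbs the \emph{second} Barnes--Eldred theorem the paper cites (identifying $\Theta^n F_W$ with $\text{ind}_{\Sigma_n}^{O(n)}\partial_n F_{GW}$) into the proof of the first. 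Barnes--Eldred's own argument for Proposition~3.2 is more elementary and does not pass through either classification.
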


and

\begin{theorem}[Barnes-Eldred {\cite[Section~5]{barnes-eldred-comparison}}, in particular commutativity of the squares 1,2,3] $ $ \\
Let $F_{GW} \colon \cs \to \cs$ be some functor and let $F_W \colon \cj \to \cs$ be its composition with the one-point compactification functor $\cj \to \cs$. Then the $n$-th derivative spectrum of $F_W$ (in Orthogonal Calculus) is obtained from the $n$-th derivative spectrum of $F_{GW}$ (in Goodwillie Calculus) by induction along $\Sigma_n \to O(n)$, i.e.
$$\Theta^n (F_W) \simeq \text{ind}_{\Sigma_n}^{O(n)} \Theta^n_{GW} (F_{GW}) \simeq O(n)_+ \wedge_{\Sigma_n} \Theta^n_{GW} (F_{GW})$$
where $\Theta^n(F_W)$ denotes the derivative spectrum in Orthogonal Calculus of $F_W$ and $\Theta^n_{GW}$ denotes the derivative spectrum of $F_{GW}$ in the sense of Goodwillie calculus.
\end{theorem}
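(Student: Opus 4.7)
The plan is to reduce the identification of $\Theta^n F_W$ to a computation with the classification of homogeneous functors, with the induction $\text{Ind}_{\Sigma_n}^{O(n)}$ arising from a projection formula. Write $\Phi \colon \cs^\cs \to \cs^\cj$ for precomposition with the one-point compactification functor $S^{(-)} \colon \cj \to \cs$, so that $F_W = \Phi(F_{GW})$.

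The first step is a Taylor-tower comparison: I need natural equivalences $T_n^W \circ \Phi \simeq \Phi \circ T_n^{GW}$, so that $\Phi$ takes the Goodwillie layer $L_n^{GW} F_{GW}$ to the orthogonal layer $L_n^W F_W$. Since $\Phi(T_n^{GW} F_{GW})$ is $n$-polynomial by the Barnes-Eldred theorem cited just above, the universal property of $T_n^W$ provides a comparison map $T_n^W F_W \to \Phi(T_n^{GW} F_{GW})$. I would check inductively that this is an equivalence by passing to fibres: granting the inductive hypothesis for $n-1$, the fibre of the right-hand side is $\Phi(L_n^{GW} F_{GW})$, which is $n$-homogeneous ($n$-polynomial by Barnes-Eldred, with vanishing $T_{n-1}^W$ by the inductive hypothesis applied to $L_n^{GW} F_{GW}$), and the induced comparison $L_n^W F_W \to \Phi(L_n^{GW} F_{GW})$ is then a map of $n$-homogeneous functors.

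The core computation is to identify the $O(n)$-spectrum of the right-hand side $\Phi(L_n^{GW} F_{GW})$ directly from its values. Applying the Goodwillie classification at $S^V$:
\[
  \Phi(L_n^{GW} F_{GW})(V) \;=\; L_n^{GW} F_{GW}(S^V) \;\simeq\; \Omega^\infty\bigl( (S^V)^{\wedge n} \wedge \Theta^n_{GW} F_{GW}\bigr)_{h\Sigma_n}.
\]
The canonical identification $(S^V)^{\wedge n} \simeq S^{V \otimes \IR^n} = S^{nV}$ is $\Sigma_n$-equivariant, and the $\Sigma_n$-action agrees with the restriction along $\Sigma_n \hookrightarrow O(n)$ of the standard $O(n)$-action on $S^{nV}$. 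Setting $\Theta := O(n)_+ \wedge_{\Sigma_n} \Theta^n_{GW} F_{GW}$ as a candidate for $\Theta^n F_W$, the projection formula $\text{Ind}_H^G(A) \wedge B \simeq \text{Ind}_H^G(A \wedge \text{Res}\,B)$ and the identification $(\text{Ind}_{\Sigma_n}^{O(n)}(-))_{hO(n)} \simeq (-)_{h\Sigma_n}$ combine to give
\[
  (S^{nV} \wedge \Theta)_{hO(n)} \;\simeq\; \bigl(O(n)_+ \wedge_{\Sigma_n}(S^{nV} \wedge \Theta^n_{GW} F_{GW})\bigr)_{hO(n)} \;\simeq\; (S^{nV} \wedge \Theta^n_{GW} F_{GW})_{h\Sigma_n},
\]
so the orthogonal formula $\Omega^\infty (S^{nV} \wedge \Theta)_{hO(n)}$ for the $n$-homogeneous functor with derivative $\Theta$ agrees with the previously-computed expression.

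Combining this with the layer identification $L_n^W F_W \simeq \Phi(L_n^{GW} F_{GW})$ from step one, and invoking the classification theorem $\homog \simeq \spc^{\BO(n)}$ (under which $n$-homogeneous functors are determined up to equivalence by their associated $O(n)$-spectra), we conclude that $\Theta^n F_W \simeq O(n)_+ \wedge_{\Sigma_n} \Theta^n_{GW} F_{GW}$ as $O(n)$-spectra. The main obstacle I foresee is step one: while the universal-property argument for $T_n^W \circ \Phi \simeq \Phi \circ T_n^{GW}$ is conceptually clean, its cleanest inductive verification flirts with circularity by passing through the very classification we want to use. A more robust route would argue at the level of the constructive definitions of $\tau_n^W$ and $\tau_n^{GW}$, exploiting the fact that $S^{(-)}$ takes direct sums to smash products in order to transport the strongly cocartesian cubical diagrams underlying Goodwillie's construction to the limit diagrams over the space of non-zero subspaces of $\IR^{n+1}$ underlying Weiss's.
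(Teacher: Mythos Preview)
The paper does not give its own proof of this statement: it appears in the survey chapter, is attributed to Barnes--Eldred, and is cited without argument (the parenthetical ``in particular commutativity of the squares 1,2,3'' is a pointer to where in \cite{barnes-eldred-comparison} the relevant diagrams are checked). So there is nothing to compare your proposal against in this paper.

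That said, your outline is a reasonable reconstruction of how the Barnes--Eldred argument runs. The core computation --- rewriting $\Omega^\infty\bigl((S^V)^{\wedge n}\wedge \Theta^n_{GW}F_{GW}\bigr)_{h\Sigma_n}$ as $\Omega^\infty\bigl(S^{nV}\wedge \mathrm{Ind}_{\Sigma_n}^{O(n)}\Theta^n_{GW}F_{GW}\bigr)_{hO(n)}$ via the projection formula and $(\mathrm{Ind}_{\Sigma_n}^{O(n)}(-))_{hO(n)}\simeq(-)_{h\Sigma_n}$ --- is correct and is exactly the mechanism that produces the induced $O(n)$-spectrum. You are also right that the real content lies in step one, the tower comparison $T_n^W\Phi \simeq \Phi T_n^{GW}$: this is precisely what the cited ``squares 1,2,3'' establish, and they do so by working at the model-categorical level (Quillen functors between the relevant model structures) rather than by the inductive layer argument you sketch. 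Your worry about circularity is well-founded; Barnes--Eldred avoid it by building the comparison of towers independently of the classification of layers.
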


\chapter{$\infty$-Categories for Orthogonal Calculus} 
\label{infinity_oc}
When Orthogonal Calculus was first introduced by Weiss in 1995 (\cite{weiss-oc}), the theory of quasicategories / $\infty$-categories as developed later by Lurie (\cite{lurie-htt, lurie-ha}) was not yet available. As we will make essential use of $\infty$-categorical constructions later, we describe the essential features of Orthogonal Calculus in the language of $\infty$-categories in this chapter. The key justification for the passage to $\infty$-land\footnote{
also referred to as $\infty$-land, see e.g.\ Thomas Nikolaus' talk at ICM 2022 (\cite{nikolaus-icm}).}
 will be \cref{infty_oc_taylor_tower}.

This perspective on Orthogonal Calculus has not found widespread use in the literature yet. It has appeared in a paper of Hahn-Yuan (\cite{hahn-yuan}), and a new construction of \enquote{Generalised Goodwillie towers}, which includes Orthogonal Calculus, has been announced by Anel, Biedermann, FInster and Joyal (\cite{abfj-generalised_goodwillie_towers}).

\section{Basic Definitions of Relevant Categories and Functors}
\sectionmark{Basic Definitions}

\begin{definition}
\label{infty_oc-definition-cjtop}
Let $\cjtop$ be the category enriched in topological spaces whose objects are finite-dimensional real inner product spaces $V$ with morphism spaces $\mor(V,W)$ given by the spaces of linear isometric embeddings
$$\mor(V,W) := \{f \colon V \to W \mid f \text{ linear and } \langle v_1,v_2 \rangle_V = \langle f(v_1),f(v_2) \rangle_W\} $$
with the obvious composition of maps.
\end{definition}

\begin{remark}
Choosing identifications $V \cong \mathbb{R}^k$ und $W \cong \mathbb{R}^n$ leads to an identification 
$$\mor_\cjtop(V,W) \cong V_k(\mathbb{R}^n) \cong O(n)/O(n-k)$$
where $V_k(\mathbb{R}^n)$ is the Stiefel manifold of orthonormal $k$-frames in $\mathbb{R}^n$ and $O(n)/O(n-k)$ is the space of cosets of $O(n-k)$ in $O(n)$.
\end{remark}

\begin{definition}
\label{infty_oc-definition-cj}
Let $\cj$ denote the $\infty$-category corresponding to the enriched category $\cj_{\text{top}}$ of \cref{infty_oc-definition-cjtop}. More precisely, $\cj$ is obtained by applying the homotopy-coherent nerve construction (see e.g.\ \cite[Def.\ 1.1.5.5, Cor.\ 1.1.5.12]{lurie-htt}) to the category enriched in simplicial sets (even in Kan complexes) $\cjenr$, which itself arises by taking nerves of the hom-spaces of $\cjtop$. {(This composition is also called the \enquote{topological nerve}.)}
\end{definition}

\paragraph{Functors on $\cj$} $ $\\
Orthogonal Calculus primarily studies functors from $\cj$ to 
\begin{itemize}
\item $\cs$, the category of spaces.
\item $\cs_*$, the category of pointed spaces.
\item $\spc$, the category of spectra. (\cite[Section~11]{barnes-oman})
\end{itemize}

\begin{remark}
Orthogonal Calculus can also be done for functors with values in localisations of $\cs$, as is shown in \cite{taggart-local}. As far as we know, the question \enquote{What is the highest generality, especially concerning the target category, for which Orthogonal Calculus can be done?} is still open. We note that Lurie has a notion of \enquote{differentiable $\infty$-category} (\cite{lurie-ha}, 6.1.1.6), which might be a candidate answer to this question.
\end{remark}

Some important examples of functors on $\cj$ are

\begin{example}
\label{example-functors}
\begin{enumerate}[label=\alph*)] $ $
\item 
$
\cj \to \cs \\
V \mapsto S^V \cong V^c
$
\item
$
\cj \to \cs \\
V \mapsto O(V)
$
\item 
$
\cj \to \cs \\
V \mapsto \B O(V)
$
\item 
$
\cj \to \cs \\
V \mapsto \B TOP(V)
$
\item 
$
\cj \to \cs \\
V \mapsto \B G(V)
$
\item For fixed $W \in \cj$, 

$$F_W \colon \cj \to \spc$$
$$V \mapsto \Sigma_+^\infty \Omega (\mor_\cj(W, W \oplus V))$$

\end{enumerate} 
Note that these in fact define functors to pointed spaces as well.
\end{example}

\begin{proof}[Proof of functoriality]
Since $$\cj \simeq N_{hcoh} (\text{Sing}_* (\cj_{top}))$$
where
$$\text{Sing}_* \colon \Cat_{top} \to \Cat_{Kan}$$
is the functor which associates to a category enriched in topological spaces the corresponding category enriched in Kan complexes
and
$$N_{hcoh} \colon \Cat_{Kan} \to \Cat_\infty$$
is the homotopy-coherent nerve which associates to a category enriched in Kan complexes the corresponding $\infty$-category,
it is sufficient to describe the corresponding topologically enriched functors from $\cj_{top} \to \text{Top}$ (by functoriality of $\text{Sing}_*$ and $N_{hcoh}$).

To do that, let's examine the case $V \mapsto O(V)$ first. Let $f \colon V \to W$ be a morphism in $\cj_{top}$. Then any $\varphi \in O(V)$ can be extended by the identity on the orthogonal complement of $f(V)$ in $W$, i.e.\
$$O(V) \to O(V \oplus W)$$
$$\varphi \to \begin{pmatrix}
\varphi & 0 \\
0 & id_{W \ominus f(V)}
\end{pmatrix}
$$

The continuity is obvious.
The checks required for the other examples work similarly.
\end{proof}

\begin{remark}
\label{remark_oc_over_more_general_fields}
We observe that the functoriality checks only makes use of the definition of $\cj$, in particular the isometricity condition in the mapping spaces and the inner product, insofar as the (natural) orthogonal complements are used to extend automorphisms on subspaces to the whole vector space. One can wonder whether this is a starting point for a generalisation for Orthogonal Calculus. See also \cref{question_oc_over_more_general_fields}.
\end{remark}

\section{Polynomial Functors}

The central notion to functor calculus is that of polynomial functors. 

Recall from ordinary calculus that for a polynomial function $f$ of degree at most $n$, the interpolation polynomial at $n+1$ distinct points is $f$ itself.

In Orthogonal Calculus, a similar interpolation property is turned into a definition.

\begin{definition}
\label{polynomial_of_degree_n}
	A functor $F \colon \cj \to \cs$ is called \textbf{polynomial of degree $\leq n$} or simply\textbf{ $n$-polynomial} if the canonical natural transformation
	$$F(-) \to \lim_{0 \neq U \subseteq \mathbb{R}^{n+1} \text{ linear subspace}}F(- \oplus U)$$
is an equivalence.	
	
	A functor $F \colon \cj \to \csp$ is \textbf{polynomial of degree $\leq n$} if its composition with the forgetful map $\mathcal{S}_* \to \mathcal{S}$ is. 
	
We denote by $\Poly^{\leq n}(\cj,\cs) \subseteq \ce$ and $\Poly^{\leq n}(\cj,\csp)$ the full subcategories of functors which are polynomial of degree $\leq n$.
\end{definition}

\begin{remark}
The limit
$$ \lim_{0 \neq U \subseteq \mathbb{R}^{n+1} \text{ linear subspace}}F(- \oplus U)$$
appearing in the previous definition is formed over the slice category in $\cj$ over $\mathbb{R}^{n+1}$. Note that, from a classical viewpoint, this is not simply the poset of sub-vector spaces of $\mathbb{R}^{n+1}$, but takes into account the topology. 
\end{remark}

\begin{example}
Examining \cref{polynomial_of_degree_n} for $n=0$, we see that a $0$-polynomial functor $F$ is equivalent to the constant functor with value $F(0)$.
\end{example}

\begin{example}
Let $n=1$. Then the indexing category $0 \neq U \subseteq \mathbb{R}^{2}$ takes the shape of a cone over $\mathbb{R}P^1$, as there is an $\mathbb{R}P^1$ worth of $1$-dimensional subspaces of $\mathbb{R}^2$, and a single $2$-dimensional subspace, into which all of them include. 
\end{example}

\begin{example}
Let $\Theta \in \spc^{\BO(n)}$. Then the functor
$$V \mapsto \Omega^\infty \left(  \left( S^{nV} \wedge \Theta \right)_{hO(n)}\right)$$
is polynomial of degree $n$.
\end{example}

\begin{defprop}
\label{definition_of_T_n}
	Let $F \in \ce$. Define 
		 $$\tau_nF \colon \mathcal{J} \to \cs$$ 
		 $$V \mapsto \lim_{0 \neq U \subseteq \mathbb{R}^{n+1} \text{ linear subspace}} F(V \oplus U)$$
	
	By definition, $F \in \Poly^{\leq n}$ iff the natural map $\rho \colon F \to \tau_nF$ is an equivalence.
	
	Define furthermore $T_nF$ to be the colimit of
	\begin{center}
		\begin{tikzcd}
			F \arrow[r, "\rho"] & \tau_nF \arrow[r,"\tau_n(\rho)"]&  \tau_n^2F \arrow[r,"\tau_n^2(\rho)"]  & \cdots
		\end{tikzcd}
	\end{center}
	The inclusion into the first term induces a natural transformation 
	$$\eta_n \colon \text{id}_\ce \to T_n$$
\end{defprop}

Weiss showed in \cite{weiss-oc}, Theorem 6.3\ that $T_nF$ is always $n$-polynomial.

\paragraph{Polynomial Functors Form A Reflective Subcategory} $ $ \\

{

As Michael Weiss already remarked in the original article, $\Poly^{\leq n}(\cj,\cs) \subset \fun(\cj,\cs)$ \enquote{wants} to be a reflective subcategory, but could not be in the $1$-categorical world. In $\infty$-land, we now show that this problem disappears and $T_n \colon \fun(\cj,\cs) \to \poly^{\leq n}(\cj,\cs)$ is a left adjoint to the inclusion, and $\eta_n$ is the unit of this adjunction.}

\begin{definition}[\cite{lurie-htt}, 5.2.7.9]
A full subcategory $\cC^0$ of an $\infty$-category $\cC$ is called \textbf{reflective subcategory} if the inclusion $\cC^0 \into \cC$ admits a left adjoint.
\end{definition}

Much of the work needed to establish an $\infty$-categorical version of Orthogonal Calculus has already been done in the original article \cite{weiss-oc} (and the erratum \cite{weiss-oc-err}), especially in Theorem 6.3. We repeat it here, noting that we have already adapted the notation.

\begin{theorem}[{\cite[Thm.~6.3]{weiss-oc}}, \cite{weiss-oc-err}]

\label{weiss-oc-6.3.}
For any $n \geq 0$, there exists a functor
$$T_n \colon \ce \to \ce$$
(constructed as in \cref{definition_of_T_n})
taking equivalences to equivalences, and a natural transformation
$$\eta_n \colon id_{\ce} \to T_n$$
with the properties
\begin{enumerate}
\item $T_n(E)$ is polynomial of degree $\leq n$ for all $E \colon \cj \to \cs$
\item If $E$ is already polynomial of degree $\leq n$, then
$$\eta_{n,E} \colon E \to T_nE$$
is an equivalence.
\item For every $E$, the map
$$T_n(\eta_{n,E}) \colon T_nE \to T_nT_nE$$
is an equivalence.
\end{enumerate}
\end{theorem}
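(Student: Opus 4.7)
The plan is to prove the three properties in the order (2), (1), (3), with property (1) carrying all the real content. For (2), the argument is formal: if $F$ is already $n$-polynomial, then the structure map $\rho \colon F \to \tau_n F$ is an equivalence by definition, so by functoriality every transition map $\tau_n^k F \to \tau_n^{k+1} F$ in the sequential diagram defining $T_n F$ is an equivalence, and hence $\eta_{n,F} \colon F \to T_n F = \colim_k \tau_n^k F$ is an equivalence.

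The heart of the proof is (1), which I would attack through the single key lemma that $\tau_n$ commutes with the specific sequential colimits used to build $T_n$. Granted this, one computes
$$\tau_n(T_n F) \;\simeq\; \tau_n \colim_k \tau_n^k F \;\simeq\; \colim_k \tau_n^{k+1} F \;\simeq\; T_n F,$$
the last step being a cofinal reindexing of the tail of the sequential diagram, and tracing maps shows this equivalence agrees with the natural map $T_n F \to \tau_n T_n F$, yielding $T_n F \in \poly^{\leq n}$. The hard part is thus the commutation lemma itself: $\tau_n$ is a limit indexed by the compact topological space of non-zero linear subspaces of $\mathbb{R}^{n+1}$, which is not a finite diagram, so commutation of this limit with sequential colimits is not automatic. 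I would expect to obtain it either via Weiss's original approach of explicit connectivity estimates, showing that the maps $\tau_n^k \rho$ become increasingly highly connected so that the colimit stabilises inside the limit, or by an $\infty$-categorical compactness argument for limits over a Grassmannian-type indexing space.

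For (3), once (1) is established, (2) applied to the $n$-polynomial functor $T_n F$ shows that $\eta_{n, T_n F} \colon T_n F \to T_n T_n F$ is an equivalence. Naturality of $\eta_n$ produces the square
$$\begin{tikzcd}
F \arrow[r, "\eta_{n,F}"] \arrow[d, "\eta_{n,F}"'] & T_n F \arrow[d, "\eta_{n, T_n F}"] \\
T_n F \arrow[r, "T_n(\eta_{n, F})"'] & T_n T_n F
\end{tikzcd}$$
so that $T_n(\eta_{n,F}) \circ \eta_{n,F} \simeq \eta_{n, T_n F} \circ \eta_{n,F}$. To upgrade this to an identification $T_n(\eta_{n,F}) \simeq \eta_{n, T_n F}$, and thereby conclude $T_n(\eta_{n,F})$ is an equivalence, I would use the explicit description of both maps as inclusions of zeroth terms in the appropriate sequential colimits and compare the defining diagrams termwise. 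This final step is routine once (1) is in hand.
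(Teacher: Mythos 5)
The paper does not prove this theorem; it states it as a citation of Weiss's original Theorem~6.3 and the erratum \cite{weiss-oc-err}, and then feeds it into Lurie's Proposition~5.2.7.4 to obtain \cref{infty_oc_taylor_tower}. So your sketch can only be compared against Weiss's proof, not against anything in this document. With that understood, your outline has the right shape, and you correctly place the weight of the argument on property~(1), but there are two points where your optimism outruns what the cited sources actually deliver.

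First, the key lemma you isolate for (1) --- that $\tau_n$ commutes with the sequential colimit used to build $T_n$ --- is \emph{precisely} the place where Weiss's original proof had a gap, which is why \cite{weiss-oc-err} exists. The limit defining $\tau_n$ is taken over the compact but infinite topological poset of nonzero subspaces of $\mathbb{R}^{n+1}$, and commuting such a limit with a filtered colimit is not a general fact of the $\infty$-category $\cs$; it genuinely requires the connectivity/cell-structure analysis you gesture at. Your phrase ``an $\infty$-categorical compactness argument for limits over a Grassmannian-type indexing space'' suggests there is a soft abstract route here, but there is not: the corrected argument is a hands-on estimate, and flagging this would make the sketch honest about where the content lives.

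Second, your treatment of (3) has a real gap. The naturality square only gives $T_n(\eta_{n,F}) \circ \eta_{n,F} \simeq \eta_{n,T_nF} \circ \eta_{n,F}$; since $\eta_{n,F}$ is not invertible, you cannot cancel it to conclude $T_n(\eta_{n,F}) \simeq \eta_{n,T_nF}$. Your fallback --- comparing the two maps termwise in the defining colimits --- is the right idea, but it is not ``routine once (1) is in hand'': the map $T_n(\eta_{n,F})$ sends the $k$-th term $\tau_n^k F$ to $\tau_n^k T_n F$, while $\eta_{n,T_nF}$ is the inclusion of the zeroth term $T_nF$, and identifying the two requires a cofinality argument on the bi-indexed diagram $(\tau_n^j \tau_n^k F)_{j,k}$. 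This is also why Lurie's Proposition~5.2.7.4(c) lists ``$L(\alpha(C))$ is an equivalence'' and ``$\alpha(LC)$ is an equivalence'' as two separate requirements rather than deriving one from the other: (3) is not a formal consequence of (1) and (2) together with naturality.
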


This theorem covers much of what one needs to know to use the $\infty$-categorical language for Orthogonal Calculus. 
The reason is that these conditions (as was already noted in \cite[Obs.~6.1]{weiss-oc}) precisely characterise reflective subcategories. In $\infty$-land, this is the following proposition

\begin{proposition}[{\cite[Prop.~5.2.7.4]{lurie-htt}}]
	\label{5274}
Let $\mathcal{C}$ be an $\infty$-category and let $L \colon \mathcal{C} \to \mathcal{C}$ be a functor with essential image $L\mathcal{C} \subseteq \mathcal{C}$. The following conditions are equivalent:
\begin{enumerate}[label=(\alph*)]
\item There exists a functor $F \colon \mathcal{C} \to \mathcal{D}$ with a fully faithful right adjoint $G \colon D \to C$ and an equivalence between $G \circ F$ and $L$.
\item When regarded as a functor from $\mathcal{C}$ to $L\mathcal{C}$, $L$ is left adjoint to the inclusion $L\mathcal{C} \subseteq \mathcal{C}$.
\item There exists a natural transformation 
$$\alpha \colon \mathcal{C} \times \Delta^1 \to \mathcal{C}$$
from $id_\mathcal{C}$ to $L$ such that, for every object $C$ of $\mathcal{C}$, the morphisms 
$$L(\alpha(C)) \colon L(C) \to L(L(C))$$
and 
$$\alpha(LC) \colon L(C) \to L(L(C))$$
are equivalences.
\end{enumerate}
\end{proposition}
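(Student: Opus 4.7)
The plan is to prove the three-way equivalence via (a) $\Leftrightarrow$ (b) $\Leftrightarrow$ (c), treating (b) as the central statement.

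For (a) $\Rightarrow$ (b): since $G$ is fully faithful, it induces an equivalence of $\mathcal{D}$ onto its essential image, which by assumption is $L\mathcal{C}$. Transporting the adjunction $F \dashv G$ along this equivalence turns $F$ into $L$ (viewed as landing in $L\mathcal{C}$) and $G$ into the inclusion, yielding the adjunction in (b). For (b) $\Rightarrow$ (c): I would take $\alpha$ to be the unit of this adjunction. Fully-faithfulness of the inclusion $\iota \colon L\mathcal{C} \hookrightarrow \mathcal{C}$ is equivalent (by a standard $\infty$-categorical criterion) to the counit being an equivalence; the triangle identity then forces $L(\alpha_C)$ to be an equivalence, and $\alpha_{LC}$ is an equivalence for the dual reason, being a unit component at an object already in the essential image.

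The main implication is (c) $\Rightarrow$ (a). I would set $\mathcal{D} := L\mathcal{C}$ with $G$ the inclusion, and produce an adjunction $L \dashv G$ with unit $\alpha$. By the mapping space criterion for adjunctions in $\infty$-categories, it suffices to show that precomposition with $\alpha_C$ yields an equivalence
$$\Map_{L\mathcal{C}}(LC, D) \xrightarrow{\ \alpha_C^* \ } \Map_{\mathcal{C}}(C, D)$$
for every $C \in \mathcal{C}$ and $D \in L\mathcal{C}$. Writing $D \simeq LC'$ for some $C' \in \mathcal{C}$, the candidate inverse sends $f \colon C \to LC'$ to $(L(\alpha_{C'}))^{-1} \circ L(f) \colon LC \to LLC' \to LC'$, which is well-defined since $L(\alpha_{C'})$ is assumed to be an equivalence. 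The two invertibility hypotheses on $\alpha$, together with naturality, make this map a two-sided inverse up to homotopy.

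The main obstacle is the step (c) $\Rightarrow$ (a): promoting the pointwise data in $\alpha$ to a genuine $\infty$-categorical adjunction is more delicate than in the $1$-categorical case, since one must exhibit a fully coherent system of triangle identities rather than a mere equality of natural transformations. In practice I would invoke a criterion from Lurie's HTT that characterizes adjunctions by the mapping-space property above, which sidesteps the explicit higher coherences and reduces the verification to the elementary calculation sketched here.
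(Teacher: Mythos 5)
The paper does not actually supply a proof of this proposition: it quotes it verbatim from Lurie's HTT (Prop.~5.2.7.4) and immediately applies it, so the comparison is really between your argument and the standard one.

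Your decomposition and your use of the mapping-space criterion for adjunctions are exactly right, and the directions (a)$\Leftrightarrow$(b) and (b)$\Rightarrow$(c) are essentially sound. (In (a)$\Rightarrow$(b) you should briefly note that the essential image of $G$ equals $L\mathcal{C}$: the counit $FG\to\mathrm{id}$ is an equivalence because $G$ is fully faithful, so $G(D)\simeq GF(G(D))=L(G(D))$, giving $\mathrm{ess.im}(G)\subseteq L\mathcal{C}$, while $L\mathcal{C}=\mathrm{ess.im}(GF)\subseteq\mathrm{ess.im}(G)$. In (b)$\Rightarrow$(c) the invertibility of $\alpha_{LC}$ is not a ``dual reason'' but the other triangle identity, which exhibits $\alpha_{\iota D}$ as a section of $\iota(\varepsilon_D)$.)

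There is, however, a genuine gap in (c)$\Rightarrow$(a). Your candidate inverse sends $f\colon C\to LC'$ to $(L(\alpha_{C'}))^{-1}\circ L(f)$, but the morphism you want to invert is $\alpha_{LC'}\colon LC'\to LLC'$, not $L(\alpha_{C'})\colon LC'\to LLC'$. Both are equivalences by hypothesis (c), but they are not a priori the same map; the naturality square at $\alpha_{C'}$ only gives $L(\alpha_{C'})\circ\alpha_{C'}\simeq\alpha_{LC'}\circ\alpha_{C'}$, and you cannot cancel $\alpha_{C'}$. (They do coincide once one knows $L$ is a reflective localization, but invoking that here is circular.) Concretely, with your formula one computes $u(f)\circ\alpha_C\simeq L(\alpha_{C'})^{-1}\circ\alpha_{LC'}\circ f$ using naturality at $f$, which is only $f$ if $L(\alpha_{C'})\simeq\alpha_{LC'}$. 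With the corrected formula $u'(f)=\alpha_{LC'}^{-1}\circ L(f)$ one gets $u'(f)\circ\alpha_C\simeq\alpha_{LC'}^{-1}\circ\alpha_{LC'}\circ f\simeq f$, so $\alpha_C^*$ is (split) surjective. Injectivity then needs its own short argument rather than the ``two-sided inverse'' claim: if $g\circ\alpha_C\simeq g'\circ\alpha_C$ for $g,g'\colon LC\to D$ with $D\simeq LC'$, apply $L$ and cancel $L(\alpha_C)$ to get $L(g)\simeq L(g')$, then use naturality of $\alpha$ at $g$ and $g'$ together with the invertibility of $\alpha_D$ to conclude $g\simeq g'$. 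With these two fixes the mapping-space criterion applies and the argument closes.
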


\begin{corollary}
\label{infty_oc_taylor_tower}
 $ $ \\
The inclusions of the subcategories 
$$\poly^{\leq n}(\cj,\cs) \subseteq \fun(\cj,\cs)$$
 of $n$-polynomial functors admit left-exact left-adjoints $T_n$

\[\begin{tikzcd}
	{\poly^{\leq n}(\cj,\cs)} && {\fun(\cj,\cs)}
	\arrow[""{name=0, anchor=center, inner sep=0}, "{T_n}"', curve={height=12pt}, from=1-3, to=1-1]
	\arrow[""{name=1, anchor=center, inner sep=0}, "\iota"', curve={height=12pt}, from=1-1, to=1-3]
	\arrow["\dashv"{anchor=center, rotate=-90}, draw=none, from=0, to=1]
\end{tikzcd}\]

\end{corollary}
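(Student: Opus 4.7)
My plan is to combine Weiss's theorem \cref{weiss-oc-6.3.} with Lurie's criterion for reflective subcategories \cref{5274}, and then extract left-exactness from the explicit construction of $T_n$ in \cref{definition_of_T_n}. The point is that Weiss already verified, in the enriched $1$-categorical setting, the three bullet-point properties of $T_n$ and $\eta_n$ that are precisely the hypotheses of Lurie's characterization; in $\infty$-land this is enough to promote $T_n$ to an honest left adjoint.

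First I would identify the essential image. Set $L := \iota \circ T_n : \fun(\cj,\cs) \to \fun(\cj,\cs)$. By item (1) of \cref{weiss-oc-6.3.}, the image of $L$ lies in $\poly^{\leq n}(\cj,\cs)$. Conversely, if $E$ is $n$-polynomial then $\eta_{n,E} : E \to T_n E$ is an equivalence by item (2), so every $n$-polynomial functor is in the essential image. Hence the essential image of $L$ equals $\poly^{\leq n}(\cj,\cs)$.

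Next I would verify condition (c) of \cref{5274} using the natural transformation $\eta_n : \id \Rightarrow L$ from \cref{definition_of_T_n}. At each $E$, the two required maps are $L(\eta_{n,E}) = T_n(\eta_{n,E}) : T_nE \to T_nT_nE$, which is an equivalence by item (3), and $\eta_{n,LE} = \eta_{n,T_nE} : T_nE \to T_nT_nE$; the latter is an equivalence because $T_nE$ is itself $n$-polynomial by item (1), so item (2) applied to $T_nE$ gives the claim. By the equivalence (c) $\Leftrightarrow$ (b) of \cref{5274}, $T_n$ is left adjoint to the inclusion.

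It remains to see that $T_n$ is left-exact. By \cref{definition_of_T_n}, $\tau_n$ is defined pointwise as a (non-finite) limit, so it preserves all limits; iterating, each $\tau_n^k$ preserves limits. Then $T_n F = \colim_k \tau_n^k F$ is a sequential colimit of limit-preserving functors, and since filtered colimits commute with finite limits in $\cs$ (and both kinds of (co)limit are computed pointwise in $\fun(\cj,\cs)$), $T_n$ preserves finite limits. The main obstacle was already discharged by Weiss; the genuinely $\infty$-categorical input is that \cref{5274} has no $1$-categorical analogue that works in this generality, which is precisely why the adjointness statement was only ``morally'' true in \cite{weiss-oc}.
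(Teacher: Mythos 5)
Your proposal is correct and follows the same route as the paper: apply Lurie's characterisation of reflective localisations (\cref{5274}) by checking condition (c) via the three items of Weiss's Theorem 6.3, and then deduce left-exactness from the fact that $\tau_n^k$ is a pointwise limit and $T_n$ a sequential colimit, using that filtered colimits commute with finite limits in $\cs$. You are actually slightly more careful than the paper's phrasing in spelling out that \emph{both} maps of condition (c) need to be checked (using item (3) for one and items (1)+(2) for the other), whereas the paper compresses this to a single sentence.
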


\begin{proof}
Apply \cref{5274} to the situation 
\begin{center}
$\cC = \fun(\cj,\cs)$, $L=T_n$, $L\cC = \poly^{\leq n}(\cj,\cs)$
\end{center}
 \cref{weiss-oc-6.3.} gives precisely the third condition.
 
For the left-exactness, let $\lim_K F$ be a finite limit. Then
\[\begin{tikzcd}
	{T_n (\lim_K F)(V) } & {\colim \left( \lim_K F(V) \to  \lim_{0 \neq U \subseteq \mathbb{R}^{n+1}} \lim_K F(U \oplus V) \to \ldots \right)} \\
	{\lim_K T_n (F)(V)} & {\lim_K \colim \left( F(V) \to  \lim_{0 \neq U \subseteq \mathbb{R}^{n+1}}  F(U \oplus V) \to \ldots \right)}
	\arrow["\cong", from=1-1, to=1-2]
	\arrow["\cong", from=2-1, to=2-2]
	\arrow["\cong"{description}, from=1-2, to=2-2]
\end{tikzcd}\]
The two right-hand sides agree since $\lim_K$ commutes both with the other involved limits and the filtered colimit (as $\cs$ is a filtered category).
\end{proof}

\begin{remark}
In \cite{hahn-yuan} (esp.\ Remark 4.2.), Hahn-Yuan used the same argument in order to use Orthogonal Calculus in $\infty$-categorical language.
\end{remark}

\begin{remark}
Since a functor from $\cj$ to pointed spaces is polynomial of degree $\leq n$ if and only if the same functor to unpointed spaces is polynomial of degree $\leq n$ (by definition), many statements we prove about unpointed functors apply verbatim pointed functors.
\end{remark}

\begin{lemma}
Any $n$-polynomial functor is also ($n+1$)-polynomial, i.e.\ there are inclusions
\begin{align*}
\poly^{\leq n-1}(\cj,\cs) &\into \poly^{\leq n}(\cj,\cs) \\
\poly^{\leq n-1}(\cj,\csp) &\into \poly^{\leq n}(\cj,\csp)
\end{align*}
They induce factorisations
\[\begin{tikzcd}
	{\fun(\cj,\cs)} & {\poly^{\leq n}(\cj,\cs)} \\
	& {\poly^{\leq n-1}(\cj,\cs)}
	\arrow["{T_n}", from=1-1, to=1-2]
	\arrow["{T_{n-1}}"', from=1-1, to=2-2]
	\arrow[dotted, from=1-2, to=2-2]
\end{tikzcd}\]
and
\[\begin{tikzcd}
	{\fun(\cj,\csp)} & {\poly^{\leq n}(\cj,\csp)} \\
	& {\poly^{\leq n-1}(\cj,\csp)}
	\arrow["{T_n}", from=1-1, to=1-2]
	\arrow["{T_{n-1}}"', from=1-1, to=2-2]
	\arrow[dotted, from=1-2, to=2-2]
\end{tikzcd}\]
\end{lemma}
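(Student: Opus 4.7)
My plan is in two steps: first I would establish the inclusion $\poly^{\leq n-1}(\cj,\cs) \subseteq \poly^{\leq n}(\cj,\cs)$ of full subcategories, and then derive the factorisation formally from it. The pointed version is then immediate, since polynomiality for functors valued in $\csp$ is defined by applying the forgetful functor to $\cs$ (see \cref{polynomial_of_degree_n}).

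For the inclusion, I would show that any $(n-1)$-polynomial $F$ is automatically $n$-polynomial: assuming $F(V) \xrightarrow{\sim} \lim_{0 \neq U \subseteq \mathbb{R}^n} F(V \oplus U)$ for all $V$, I want the analogous equivalence for the limit over nonzero subspaces of $\mathbb{R}^{n+1}$. This is the $\infty$-categorical analogue of \cite[Prop.~5.4]{weiss-oc}. The cleanest route is to invoke that result directly, after noting that the $\infty$-categorical condition in \cref{polynomial_of_degree_n} matches Weiss's enriched one: the $\infty$-categorical limit of the diagram $U \mapsto F(V \oplus U)$ over the space of nonzero subspaces of $\mathbb{R}^{n+1}$ coincides with Weiss's topological homotopy limit over the same space, under the homotopy-coherent nerve used to define $\cj$ in \cref{infty_oc-definition-cj}. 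Alternatively, one could argue via the cube characterisation of $k$-polynomiality as cartesianness of the "Weiss cubes" $S \mapsto F(V \oplus \bigoplus_{i \in S} L_i)$ (for $S \subseteq \{0,\ldots,k+1\}$ and pairwise orthogonal lines $L_i$): a $(n+2)$-cube of this form decomposes along any coordinate into two $(n+1)$-cubes of the same form, both cartesian by $(n-1)$-polynomiality, and the standard cubical pasting lemma then forces the $(n+2)$-cube to be cartesian.

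For the factorisation, observe that $T_{n-1}F$ is $(n-1)$-polynomial and so, by the inclusion just proved, also lies in $\poly^{\leq n}(\cj,\cs)$. Property (2) of \cref{weiss-oc-6.3.} then gives that $\eta_{n, T_{n-1}F} \colon T_{n-1}F \to T_n T_{n-1}F$ is an equivalence. Applying $T_n$ to $\eta_{n-1,F} \colon F \to T_{n-1}F$ and composing with the inverse of this equivalence yields the desired natural transformation $T_n F \to T_{n-1} F$, which supplies the dotted arrow. Equivalently, the reflective inclusion $\poly^{\leq n-1} \into \poly^{\leq n}$ itself admits a left adjoint given by the restriction of $T_{n-1}$ along it, and this restriction is the dotted arrow.

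The main obstacle is the inclusion statement itself: the factorisation step is purely formal once one has the inclusion together with the reflective structures from \cref{infty_oc_taylor_tower} and \cref{weiss-oc-6.3.}. The non-trivial content is the fact that enlarging the indexing space from nonzero subspaces of $\mathbb{R}^n$ to nonzero subspaces of $\mathbb{R}^{n+1}$ does not change which functors land in the corresponding subcategory. Whether this is done by citing Weiss or by the cube decomposition, the subtle point to verify is the compatibility of the $\infty$-categorical and topologically-enriched limits over the same indexing space.
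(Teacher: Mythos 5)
Your proposal follows essentially the same route as the paper: the inclusion $\poly^{\leq n-1} \into \poly^{\leq n}$ is obtained by citing \cite[Prop.~5.4]{weiss-oc}, and the factorisation is a formal consequence of the reflective structure from \cref{infty_oc_taylor_tower}. The paper dispatches the factorisation in one line via uniqueness of left adjoints (both $T_{n-1}$ and $T_{n-1}|_{\poly^{\leq n}}\circ T_n$ are left adjoints to the same inclusion $\poly^{\leq n-1}\into\fun(\cj,\cs)$), which is exactly the observation you make at the end of your paragraph; the intermediate argument you give first — producing a natural transformation $T_nF \to T_{n-1}F$ by inverting $\eta_{n,T_{n-1}F}$ — is more roundabout and by itself only yields a map, not the commuting triangle of functors, so the left-adjoint phrasing is the one to lead with. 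Your alternative cube-pasting argument for the inclusion and your flag about matching the $\infty$-categorical limit with Weiss's topological homotopy limit are reasonable additions, but neither is in the paper, which simply defers to Weiss for the inclusion without further comment.
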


\begin{proof}
For the inclusions $\poly^{\leq n-1} \into \poly^{\leq n}$, we refer back to \cite{weiss-oc}, Proposition 5.4. The factorisations follows by uniqueness of left adjoints from \cref{infty_oc_taylor_tower}.
\end{proof}

For a functor $F$, the collection of $T_nF$ together with the comparison maps is usually referred to as the \textbf{Taylor tower} (or \enquote{Orthogonal tower}, or \enquote{Weiss tower}) of $F$.

\begin{corollary}
The $n$-polynomial approximation functors assemble into the following tower of categories
\[\begin{tikzcd}
	& {\poly^{\leq n}(\cj,\cs)} \\
	& \vdots \\
	& {\poly^{\leq 1}(\cj,\cs)} \\
	{\fun(\cj,\cs)} & {\poly^0(\cj,\cs)}
	\arrow["{T_0}", from=4-1, to=4-2]
	\arrow["{T_1}", from=4-1, to=3-2]
	\arrow["{T_n}", from=4-1, to=1-2]
	\arrow[from=1-2, to=2-2]
	\arrow[from=2-2, to=3-2]
	\arrow["{{T_1}_{|\poly^{\leq 1}(\cj,\cs)}}", from=3-2, to=4-2]
\end{tikzcd}\]

For any single functor $F$, the units of the adjunctions assemble into the Taylor-Weiss tower
\[\begin{tikzcd}
	&& {T_nF} \\
	&& \vdots \\
	&& {T_1F} \\
	F && {T_0F}
	\arrow["{\eta_0}"', from=4-1, to=4-3]
	\arrow["{\eta_1}"', from=4-1, to=3-3]
	\arrow["{\eta_n}"', from=4-1, to=1-3]
	\arrow[from=1-3, to=2-3]
	\arrow[from=2-3, to=3-3]
	\arrow[from=3-3, to=4-3]
\end{tikzcd}\]
\end{corollary}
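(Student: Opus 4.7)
The plan is to derive both diagrams directly from the preceding Lemma together with \cref{infty_oc_taylor_tower}, with essentially no extra content: the Corollary is a repackaging that bundles the factorisations into a tower.

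First, for the tower of categories, I would assemble the reflective subcategories $\poly^{\leq k}(\cj,\cs) \subseteq \fun(\cj,\cs)$ coming from \cref{infty_oc_taylor_tower}, together with the full inclusions $\poly^{\leq k-1}(\cj,\cs) \subseteq \poly^{\leq k}(\cj,\cs)$ from the preceding Lemma. The diagonal maps $T_k \colon \fun(\cj,\cs) \to \poly^{\leq k}(\cj,\cs)$ are the left adjoints of \cref{infty_oc_taylor_tower}, and the downward arrows $\poly^{\leq k}(\cj,\cs) \to \poly^{\leq k-1}(\cj,\cs)$ are the restrictions $T_{k-1}\big|_{\poly^{\leq k}(\cj,\cs)}$; these restrictions are themselves left adjoints to the full inclusion $\poly^{\leq k-1}\hookrightarrow \poly^{\leq k}$, by the same reflective-subcategory argument applied inside $\poly^{\leq k}$. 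The commutativity of each triangle, namely $T_{k-1} \simeq T_{k-1}\big|_{\poly^{\leq k}} \circ T_k$, is precisely the factorisation produced in the Lemma, and the dotted arrow of that Lemma is identified with $T_{k-1}\big|_{\poly^{\leq k}}$ by uniqueness of left adjoints to $\poly^{\leq k-1} \hookrightarrow \fun(\cj,\cs)$.

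Second, for the Taylor-Weiss tower of a fixed $F$, I would evaluate the above diagram at $F$. The diagonal arrows are the unit maps $\eta_k \colon F \to T_k F$ of the adjunctions from \cref{infty_oc_taylor_tower}. The vertical maps $T_k F \to T_{k-1}F$ are obtained by applying the natural transformation of endofunctors $T_k \Rightarrow T_{k-1}$ assembled in the previous paragraph to $F$; equivalently, since $T_{k-1}F$ is $k$-polynomial, the adjunction equivalence $\Map_{\poly^{\leq k}}(T_kF, T_{k-1}F) \simeq \Map_{\fun(\cj,\cs)}(F, T_{k-1}F)$ selects a unique arrow $T_kF \to T_{k-1}F$ corresponding to $\eta_{k-1}$, and commutativity of the triangle is built into this correspondence.

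I do not anticipate any substantive obstacle. The one minor point to verify is coherence: that the maps $T_kF \to T_{k-1}F$ really assemble into a diagram indexed by $\mathbb{Z}_{\geq 0}^{\mathrm{op}}$, rather than merely being pairwise compatible up to non-canonical homotopy. This is automatic because the construction is diagrammatic — composing the natural transformations $T_k \Rightarrow T_{k-1} \Rightarrow T_{k-2}$ of endofunctors of $\fun(\cj,\cs)$ and then evaluating at $F$ produces a genuine tower with no further choices to make.
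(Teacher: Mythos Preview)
Your proposal is correct and matches the paper's approach: the Corollary is stated without proof in the paper, being an immediate repackaging of the preceding Lemma and \cref{infty_oc_taylor_tower}, exactly as you describe. Your explicit verification of the coherence via natural transformations of endofunctors is a welcome elaboration of what the paper leaves implicit.
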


\section{Homogeneous Functors and Derivative Spectra}

If one imagines $F \to T_nF$ to be the \enquote{$n$-th Taylor polynomial of $F$}, the natural next question is \begin{center}
\enquote{What is the $n$-th derivative of $F$?}
\end{center}
Sticking to the analogy with real calculus for just a moment, for $f \colon \mathbb{R} \to \mathbb{R}$, we observe
$$(T_nf - T_{n-1}f)(x) = x^n \frac{f^{(n)}(0)}{n!} $$
In functor calculus, the role corresponding to the term on the right-hand side of this equation is played by $n$-homogeneous functors.

\begin{definition}[\cite{weiss-oc}, Definition 7.1.]
\label{definition_of_homogeneous_functors}
A functor $F \colon \cj \to \cs$ or $F \colon \cj \to \csp$ is \textbf{$n$-homogeneous} if it is polynomial of degree $\leq n$ and all values of $T_{n-1}F$ are contractible. \\
Denote by 
\begin{align*}
\Homog(\cj,\cs) &\into \fun(\cj,\cs) \\
\Homog(\cj,\csp) &\into \fun(\cj,\csp)
\end{align*}
the full subcategories on $n$-homogeneous functors. \\
These categories can also be characterised by pullback diagrams of $\infty$-categories
\[\begin{tikzcd}
	{\poly^{\leq n}(\cj,\cs)} & {\Homog(\cj,\cs)} \\
	{\poly^{\leq n-1}(\cj,\cs)} & {*}
	\arrow[from=1-1, to=2-1]
	\arrow[from=2-2, to=2-1]
	\arrow[from=1-2, to=2-2]
	\arrow[from=1-2, to=1-1]
	\arrow["\lrcorner"{anchor=center, pos=0.125, rotate=-90}, draw=none, from=1-2, to=2-1]
\end{tikzcd}\]
and
\[\begin{tikzcd}
	{\poly^{\leq n}(\cj,\csp)} & {\Homog(\cj,\csp)} \\
	{\poly^{\leq n-1}(\cj,\csp)} & {*}
	\arrow[from=1-1, to=2-1]
	\arrow[from=2-2, to=2-1]
	\arrow[from=1-2, to=2-2]
	\arrow[from=1-2, to=1-1]
	\arrow["\lrcorner"{anchor=center, pos=0.125, rotate=-90}, draw=none, from=1-2, to=2-1]
\end{tikzcd}\]

where the maps $* \to \poly^{\leq n}(\cj,\cs)$ and $* \to \poly^{\leq n}(\cj,\csp)$ map to the constant functor with value the one point space, which is $0$-polynomial and thus $n$-polynomial for any $n$.
\end{definition}

In the pointed case, it is easy to associate $n$-homogeneous functors to any $F \colon \cj \to \csp$ in the following way.

\begin{defprop}
\label{definition_of_L_n}
Let $F \colon \cj \to \csp$ and $n \in \mathbb{N}$. For $V \in \cj$, denote by $L_nF(V)$ the fibre in the fibre sequence
$$L_nF(V) \to T_nF(V) \to T_{n-1}F(V)$$
They assemble into a functor $L_nF \colon \cj \to \csp$. This construction defines a functor $L_n \colon \fun(\cj,\csp) \to \fun(\cj,\csp)$.
\end{defprop}

\begin{proposition}
$L_n$ is right adjoint to the inclusion $\Homog(\cj,\csp) \into \poly^{\leq n}(\cj,\csp)$, i.e.

\[\begin{tikzcd}
	{\poly^{\leq n}(\cj,\csp)} && {\Homog(\cj,\csp)}
	\arrow[""{name=0, anchor=center, inner sep=0}, "{L_n}"', curve={height=12pt}, from=1-1, to=1-3]
	\arrow[""{name=1, anchor=center, inner sep=0}, "\iota"', curve={height=12pt}, from=1-3, to=1-1]
	\arrow["\dashv"{anchor=center, rotate=-90}, draw=none, from=1, to=0]
\end{tikzcd}\]
\end{proposition}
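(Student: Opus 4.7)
The plan is to verify the universal property of the right adjoint directly: for every $H \in \Homog(\cj,\csp)$ and $F \in \poly^{\leq n}(\cj,\csp)$, I would exhibit a natural equivalence $\Map(H, L_nF) \simeq \Map(H, F)$ induced by the counit candidate $L_nF \to F$ (the fibre inclusion). Once this is established, the pointed-$\infty$-categorical dual of \cref{5274} (applied to the endofunctor $\iota L_n$ on $\poly^{\leq n}(\cj,\csp)$ together with the natural transformation $\iota L_n \to \id$) promotes the pointwise universal property into an actual adjunction.

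The first step is to check that $L_nF$ genuinely lands in $\Homog(\cj,\csp)$ when $F$ is already $n$-polynomial. Since $T_nF \simeq F$, we have $L_nF \simeq \fib(F \to T_{n-1}F)$. Applying the left-exact functor $T_n$ from \cref{infty_oc_taylor_tower} and using that $T_{n-1}F$ is itself $n$-polynomial (so $T_nT_{n-1}F \simeq T_{n-1}F$), we obtain $T_nL_nF \simeq L_nF$, so $L_nF$ is $n$-polynomial. Applying the left-exact $T_{n-1}$ instead yields $T_{n-1}L_nF \simeq \fib\bigl(T_{n-1}F \to T_{n-1}T_{n-1}F\bigr)$, and the map in the fibre is precisely the unit $\eta_{n-1}$ at $T_{n-1}F$, which is an equivalence by idempotency of $T_{n-1}$; hence $T_{n-1}L_nF \simeq 0$.

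The second step is the mapping-space calculation. Applying the left-exact functor $\Map(H,-)$ to the fibre sequence $L_nF \to F \to T_{n-1}F$ reduces the problem to showing $\Map(H, T_{n-1}F) \simeq *$ for every $n$-homogeneous $H$. Here I would use the adjunction $T_{n-1} \dashv \iota$ to rewrite
\[
\Map_{\fun(\cj,\csp)}(H, T_{n-1}F) \simeq \Map_{\poly^{\leq n-1}}(T_{n-1}H, T_{n-1}F).
\]
Because $H$ is $n$-homogeneous, $T_{n-1}H$ is the zero object of $\fun(\cj,\csp)$ (a constant pointed functor at the basepoint), so the mapping space is contractible. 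Chasing this through the fibre sequence gives the desired equivalence $\Map(H, L_nF) \simeq \Map(H, F)$.

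The main subtlety I expect is bookkeeping around the pointed structure: I must ensure that "all values of $T_{n-1}H$ contractible" in \cref{definition_of_homogeneous_functors} does force $T_{n-1}H$ to be the zero object in $\fun(\cj,\csp)$ (not merely in $\fun(\cj,\cs)$), which is where the basepoints matter. A secondary technical point is checking that the map $T_{n-1}F \to T_{n-1}T_{n-1}F$ obtained by applying $T_{n-1}$ to the unit $F \to T_{n-1}F$ really coincides, up to equivalence, with the unit $\eta_{n-1,T_{n-1}F}$; this is a standard triangle-identity argument for the reflective localisation $T_{n-1} \dashv \iota$ but deserves a sentence of justification.
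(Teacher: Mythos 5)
Your proof is correct and the core argument (Step 2) is exactly the one in the paper: apply $\Map(H,-)$ to the fibre sequence $L_nF \to F \to T_{n-1}F$ and use the adjunction $T_{n-1} \dashv \iota$ to show $\Map(H, T_{n-1}F) \simeq \Map(T_{n-1}H, T_{n-1}F) \simeq \Map(*, T_{n-1}F) \simeq *$. Your Step 1 (verifying that $L_nF$ actually lands in $\Homog(\cj,\csp)$ when $F$ is $n$-polynomial, by pushing the defining fibre sequence through the left-exact functors $T_n$ and $T_{n-1}$ and using idempotency) is not spelled out in the paper's proof — the paper takes it for granted as part of \cref{definition_of_L_n} and the classical theory — but it is a genuine prerequisite for the statement to typecheck and your verification of it is sound, so including it is a mild improvement rather than a detour. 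The two subtleties you flag (that $T_{n-1}H$ is the zero object of $\fun(\cj,\csp)$, not merely objectwise contractible as a functor to $\cs$; and that $T_{n-1}(\eta_{n-1,F})$ agrees with $\eta_{n-1,T_{n-1}F}$ up to equivalence) are real, and the second is exactly condition (c) of \cref{5274} / condition 3 of \cref{weiss-oc-6.3.}, so you are right that it only needs a citation rather than a new argument.
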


\begin{proof}
Let $H \in \Homog(\cj,\csp)$ and $P \in \poly^{\leq n}(\cj,\csp)$. We want to show that
$$\Nat_{\fun(\cj,\csp)}(H,L_nP) \simeq \Nat_{\fun(\cj,\csp)}(H,P)$$
Using the definition and the adjunction of \cref{infty_oc_taylor_tower}, we calculate 
\begin{align*}
\Nat_{\fun(\cj,\csp)}(H,T_{n-1}P)
	&\simeq \Nat_{\fun(\cj,\csp)}(T_{n-1}H,T_{n-1}P) \\
	&\simeq \Nat_{\fun(\cj,\csp)}(*,T_{n-1}P) \\
	&\simeq *
\end{align*}
hence by the fibre sequence
$$L_nP \to T_nP \simeq P \to T_{n-1}P$$
the claim follows.
\end{proof}

We can include the homogeneous layers in the Taylor-Weiss tower

\[\begin{tikzcd}
	&& {T_nF} & {L_nF} \\
	&& \vdots \\
	&& {T_1F} & {L_1F} \\
	F && {T_0F}
	\arrow["{\eta_0}"', from=4-1, to=4-3]
	\arrow["{\eta_1}"', from=4-1, to=3-3]
	\arrow["{\eta_n}"', from=4-1, to=1-3]
	\arrow[from=1-3, to=2-3]
	\arrow[from=2-3, to=3-3]
	\arrow[from=3-3, to=4-3]
	\arrow[from=1-4, to=1-3]
	\arrow[from=3-4, to=3-3]
\end{tikzcd}\]

Phrased differently, any functor $F$ is the successive extension of $n$-homogeneous functors. 

\begin{remark}
Even though \cref{definition_of_homogeneous_functors} defines homogeneous functors to pointed spaces and to spaces in the same way, constructing an $n$-homogeneous functor out of an arbitrary functor $F$ is more complicated in the unpointed case, as a choice of basepoints is needed to take the fibres in \cref{definition_of_L_n}.
 For our purposes, the pointed case will suffice. (See \cref{section_pointed_and_unpointed_functors}.)
\end{remark}

Homogeneous functors to pointed spaces have been classified in the following way.

\begin{defprop}
For any spectrum with $O(n)$-action $\Theta \in \spc^{\BO(n)}$, there is a functor 
\begin{align*}
F \colon \cj &\to \csp \\
V &\mapsto \Omega^\infty \left( \left( S^{V \otimes \mathbb{R}^n} \wedge \Theta \right) \right)_{hO(n)}
\end{align*}
$O(n)$ acts diagonally on the smash product: on $S^{V \otimes \mathbb{R}^n} \simeq {(S^V)}^{\wedge n}$ by the canonical action on $\mathbb{R}^n$ as well as on $\Theta$ by its action on $\Theta$.  \\
This defines a functor 
\begin{align*}
\spc^{\BO(n)} &\to \fun(\cj,\csp) \\
\Theta &\mapsto \left\lbrace V \mapsto  \Omega^\infty \left( \left( S^{V \otimes \mathbb{R}^n} \wedge \Theta \right)_{hO(n)} \right) \right\rbrace
\end{align*}
\end{defprop}

\begin{theorem}[\cite{weiss-oc}, Theorem 7.3.\footnote{Actually, this is the \enquote{more precise version of theorem 7.3.} that Weiss mentions after stating his actual Theorem 7.3.}]
\label{theorem_classification_of_homogeneous_functors}
The essential image of this functor is $\homog$, and the functor is an equivalence
$$\spc^{\BO(n)} \to \homog$$
\end{theorem}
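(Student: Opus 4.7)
The plan is to deduce the theorem from Weiss's original classification \cite[Thm.~7.3]{weiss-oc} by translating it into the $\infty$-categorical setting, in the same style that \cref{infty_oc_taylor_tower} was obtained from \cref{weiss-oc-6.3.}. First I would verify that the functor $\Psi \colon \spc^{\BO(n)} \to \fun(\cj,\csp)$ of the statement takes values in $\homog$; that is, $\Psi(\Theta)$ is $n$-polynomial with vanishing $(n-1)$-polynomial approximation. Both properties follow from the fact that $V \mapsto S^{V \otimes \mathbb{R}^n}$ passes the polynomiality test of \cref{polynomial_of_degree_n} by a connectivity argument, and this behaviour is preserved by smashing with $\Theta$, taking homotopy $O(n)$-orbits, and applying $\Omega^\infty$.

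Next, I would construct an inverse $\Theta^n \colon \homog \to \spc^{\BO(n)}$ as an $\infty$-categorical version of Weiss's derivative. Weiss works with enriched categories $\cj_n$ whose mapping pointed spaces are $\Th(\gamma_n(V,W))$, defines the unstable $n$-th derivative $F^{(n)}$ as a Kan extension along $\cj_0 \to \cj_n$, and encodes the spectrum via $(\Theta^n F)_{nk} = F^{(n)}(\mathbb{R}^k)$ together with its canonical $O(n)$-action and structure maps. Applying the topological nerve turns $\cj_n$ into an $\infty$-category, the enriched Kan extension into an $\infty$-categorical left Kan extension, and the stabilisation step into a colimit in the stable $\infty$-category $\spc^{\BO(n)}$, so that $F \mapsto \Theta^n F$ assembles into a well-defined $\infty$-functor.

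Finally, Weiss's theorem supplies equivalences $\Theta^n \Psi(\Theta) \simeq \Theta$ and $\Psi \Theta^n(F) \simeq F$ on underlying objects. To promote this to an equivalence of $\infty$-categories, one can either exhibit the constructions as unit and counit of an adjunction $\Theta^n \dashv \Psi$ and apply a \cref{5274}-style argument, or verify directly that $\Psi$ is fully faithful with essential image $\homog$ by comparing mapping spectra. The main obstacle will be the rigorous identification of Weiss's point-set, topologically enriched constructions of $F^{(n)}$ and $\Theta^n$ with their $\infty$-categorical counterparts: one has to check that the Thom-space enriched Kan extensions and stabilisation maps are compatible with the topological nerve and with the $\infty$-categorical Kan extension formalism, so that the homotopy-invariant content of Weiss's proof transfers faithfully and the resulting unit and counit remain equivalences after passage to $\infty$-land.
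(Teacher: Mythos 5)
The paper does not prove this theorem; it is cited verbatim as Weiss's ``more precise version of Theorem 7.3,'' and the only gloss the paper offers is the remark immediately following it, which stresses that $\Theta^n$ will be defined \emph{a posteriori} as an inverse to $\Phi$ rather than by re-running Weiss's Kan-extension construction of the derivative in $\infty$-land. Your proposal is a reasonable blueprint for actually justifying the $\infty$-categorical statement from Weiss's enriched one, but it is considerably more than the paper undertakes, and it diverges from the paper's approach precisely where the paper deliberately takes a shortcut: your Step 2 reconstructs Weiss's $\cj_n$, the Thom-space enriched Kan extension $F^{(n)}$, and the spectrum $(\Theta^n F)_{nk} = F^{(n)}(\mathbb{R}^k)$ as $\infty$-categorical gadgets, whereas the paper never does this (and says so explicitly); it simply posits that $\Phi$ is an equivalence by citation and defines $\Theta^n$ as its inverse composed with $L_n$. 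The benefit of the paper's route is economy — no model-comparison lemmas are needed; the benefit of yours is that one would genuinely know why the $\infty$-categorical statement follows, which the paper silently delegates to folklore.

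One small imprecision in your Step 3: you write that Weiss's theorem supplies equivalences $\Theta^n \Psi(\Theta) \simeq \Theta$ and $\Psi \Theta^n(F) \simeq F$ ``on underlying objects'' which then need to be promoted to an equivalence of $\infty$-categories. In fact the cited ``more precise version of Theorem 7.3'' in \cite{weiss-oc} already asserts an equivalence of (topologically enriched) categories, not merely a bijection on equivalence classes of objects, so the promotion is automatic once the model-comparison issue you flag in your last sentence is settled. The real burden of a self-contained proof is exactly that comparison: identifying the enriched-categorical functor $\Phi$ and its claimed inverse with the $\infty$-functors arising from the topological nerve, and knowing that the nerve carries enriched equivalences to equivalences of $\infty$-categories. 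You correctly identify this as the main obstacle, but your proposal leaves it as a to-do rather than resolving it — which is consistent with the paper also not resolving it.
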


\begin{defprop}
Thus, the composition of 
$$L_n \colon \fun(\cj,\cs) \to \homog$$
with an inverse to this equivalence defines a functor which we call
$$\Theta^n \colon \fun(\cj,\cs) \to \spc^{\BO(n)}$$
For a functor $F$, we call $\Theta^n F$ its $n$-th derivative spectrum.
\end{defprop}

\begin{remark}
This \enquote{a posteriori definition} is quite far from the original definition of $\Theta^n$ in \cite{weiss-oc}, because it only makes sense once the properties of orthogonal calculus such as  \cref{theorem_classification_of_homogeneous_functors} have been established. \\
In the original article, the derivative spectrum $\Theta^nF$ of a functor $F$ is constructed long before homogeneous functors are classified. \\
However, as we do not use the original construction much\footnote{
It will appear once in the proof of \cref{derivative_of_fixed_points_of_homogeneous_functor} and is also mentioned in \cref{subsection_the_original_version_(Weiss)}.
}, we do not describe it in detail.
\end{remark}

\section{Some Topos-Theoretical Remarks}
\sectionmark{Topos-Theoretical Remarks}
\label{topos_theoretical_remarks}

This short section contains some remarks about the categories $\poly^{\leq n}(\cj,\cs)$ from a topos-theoretical viewpoint. It is not essential to the rest of the thesis and could be safely skipped.

A trivial observation is that since
$$\fun(\cj,\cs)=\fun((\cj^\text{op})^\text{op},\cs)=\text{PSh}(\cj^\text{op})$$
is a presheaf category (valued in spaces), it is an example of an $\infty$-topos.

Recall the definition:

\begin{definition}[Definition 6.1.0.4\ of \cite{lurie-htt}]
An $\infty$-category $\mathcal{X}$ is called an $\infty$-topos if there exists a small $\infty$-category $\mathcal{C}$ and an accessible left exact localisation functor $\mathcal{P}(\mathcal{C}) \to \mathcal{X}$.
\end{definition}

In $1$-category theory, this definition of a topos is equivalent to the following other definition:

\begin{definition}[{\cite[Prop.~6.1.0.1]{lurie-htt}}]
In classical category theory, a category $\cC$ is called a topos if the following equivalent conditions are satisfied
\begin{itemize}
\item $\cC$ is equivalent to the category of sheaves of sets on a Grothendieck site.
\item $\cC$ is equivalent to a left exact localisation of the category of presheaves of sets on some small category $\cC_0$.
\item Giraud's axioms are satisfied.
\end{itemize}
\end{definition}

\begin{proposition}
For every $n \in \mathbb{N}$, $\poly^{\leq n}(\cj,\cs)$ is an $\infty$-topos and 
$$T_n \colon \fun(\cj,\cs) \to \poly^{\leq n}$$ 
is a left-exact localisation.
\end{proposition}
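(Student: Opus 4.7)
The plan is to apply the characterisation of $\infty$-topoi from \cite[Definition~6.1.0.4]{lurie-htt}: it suffices to exhibit $T_n$ as an \emph{accessible left-exact localisation} of a presheaf $\infty$-topos. Since $\cj$ is essentially small (finite-dimensional real inner product spaces have countably many isomorphism classes), $\fun(\cj,\cs) \simeq \mathcal{P}(\cj^{op})$ is a presheaf $\infty$-topos, giving the base on which to localise.

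Two of the three required properties are already in hand from \cref{infty_oc_taylor_tower}. That corollary shows that $T_n$ is left adjoint to the fully faithful inclusion $\iota \colon \poly^{\leq n}(\cj,\cs) \hookrightarrow \fun(\cj,\cs)$, which by definition makes it a localisation; and the same corollary verifies that $T_n$ preserves finite limits, i.e.\ is left-exact.

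What remains is accessibility of the localisation. My approach is to exhibit $\poly^{\leq n}(\cj,\cs)$ as the subcategory of objects local with respect to a small set of morphisms, and then invoke \cite[Proposition~5.5.4.15]{lurie-htt}. Writing $\yo(V) \in \fun(\cj,\cs)$ for the covariantly represented functor $\Map_\cj(V,-)$, the Yoneda lemma and the fact that mapping spaces turn colimits in the first variable into limits give natural equivalences
$$F(V) \simeq \Map(\yo(V),F), \qquad \lim_{0\neq U \subseteq \mathbb{R}^{n+1}} F(V \oplus U) \simeq \Map\!\left(\colim_{0\neq U \subseteq \mathbb{R}^{n+1}} \yo(V \oplus U),\, F\right)$$
Hence $F$ is $n$-polynomial precisely when it is local with respect to the set
$$S_n = \left\{\, \colim_{0\neq U \subseteq \mathbb{R}^{n+1}} \yo(V \oplus U) \longrightarrow \yo(V) \;\middle|\; V \in \cj \,\right\}$$
which is small because $\cj$ is essentially small. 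By \cite[Proposition~5.5.4.15]{lurie-htt} the localisation at $S_n$ is accessible, and its essential image agrees with $\poly^{\leq n}(\cj,\cs)$; combined with the preceding paragraph this shows $T_n$ is an accessible left-exact localisation, and \cite[Definition~6.1.0.4]{lurie-htt} then yields that $\poly^{\leq n}(\cj,\cs)$ is an $\infty$-topos.

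The only mildly delicate point I would double-check is the Yoneda identification above, in particular that the indexing shape for the limit defining $\tau_nF$ (which is a limit over a \emph{space} of subspaces, not a $1$-categorical poset) is correctly dualised into a colimit of representables in $\fun(\cj,\cs)$; this is automatic in the $\infty$-categorical language, but it is the one spot where the non-discrete nature of the indexing diagram matters. Everything else is bookkeeping on top of \cref{infty_oc_taylor_tower}.
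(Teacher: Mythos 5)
Your argument follows the same route as the paper's (one-line) proof — cite \cref{infty_oc_taylor_tower} to obtain left-exactness and the localisation, then invoke the definition of an $\infty$-topos — but you actually fill a gap the paper elides: \cite[Definition~6.1.0.4]{lurie-htt} also demands that the localisation be \emph{accessible}, and \cref{infty_oc_taylor_tower} does not address this on its own. Your Yoneda argument exhibiting $\poly^{\leq n}(\cj,\cs)$ as exactly the $S_n$-local objects for the small set $S_n = \{\colim_{0 \neq U \subseteq \mathbb{R}^{n+1}} \yo(V \oplus U) \to \yo(V) : V \in \cj\}$ (with $\cj$ essentially small) is the clean way to supply this, and the appeal to \cite[Proposition~5.5.4.15]{lurie-htt} then finishes the job. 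The directions are all right: since $\Map(-,F)$ turns colimits into limits and the maps $V \hookrightarrow V \oplus U$ induce $\yo(V \oplus U) \to \yo(V)$, locality of $F$ with respect to $S_n$ is literally the $n$-polynomiality condition, even with the non-discrete indexing diagram. In short: same approach, but your version is more careful about accessibility than the paper's, which is a genuine and welcome addition.
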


\begin{proof}
This follows directly from \cref{infty_oc_taylor_tower} and the definition.
\end{proof}

In contrast to the $1$-categorical case, not every $\infty$-topos arises from a Grothendieck topology. Recall that there are two classes of localisations of $\infty$-topoi:

\begin{definition}[\cite{lurie-htt}, 6.5.2.17]
Let $X$ be an $\infty$-topos and let $Y \subseteq X$ be an accessible left exact localisation of $X$. $Y$ is a  \textbf{cotopological localisation} of $X$ if the localisation $X \to Y$ does not invert any monomorphisms which were not already equivalences in $X$.
\end{definition}

\begin{definition}[\cite{lurie-htt}, 6.2.1.4]
Let $X$ be an $\infty$-topos and let $Y \subseteq X$ be an accessible left exact localisation of $X$. $Y$ is a  \textbf{topological localisation} of $X$ if any morphism that is inverted by the localisation $X \to Y$ is a monomorphism in $X$. 
\end{definition}

Any (accessible) left exact localisation of $\infty$-topoi can be factored into a topological localisation followed by a cotopological localisation (\cite{lurie-htt}, 6.5.2.19). Topological localisations of $\text{PSh}(\cC)$ are precisely given by the categories of sheaves for a Grothendieck topology on $\cC$ (\cite{lurie-htt}, 6.2.2.17).

Thus, it makes sense to ask the following question.

\begin{question}
Is there a Grothendieck topology on $\cj^{\text{op}}$ such that 
$$\poly^{\leq n}(\cj,\cs) \into \text{PSh}(\cj^\text{op})$$
 is the corresponding category of sheaves, i.e.\ is it a topological localisation?
\end{question}

\begin{answer}
No. \\
It follows from work of Anel, Biedermann, Finster and Joyal 
(\cite{abfjII}[Example 4.1.19] and \cite{abfjIII}[Thm. 4.3.1(2), Cor. 4.3.4])
that the localisations occuring in each stage of the Taylor tower have the same topological components under this factorisation, hence the difference lies in the cotopological components. This topological component is the smallest left-exact localisation which makes all representable maps $\infty$-connected.
\end{answer}

\begin{remark}
This is in contrast to the situation in Manifold calculus, where $n$-polynomial functors are given as sheaves for the Grothendieck topology $\cj_n$, where a cover $\{U_i \to U\}_{i \in I}$ consists of a family of maps with the property that for any subset $X \subseteq U$ of cardinality $\leq n$, there exists an $i \in I$ such that $X \subseteq \text{im}(U_i)$. (This is the main result of \cite{debrito-weiss}.) 

In particular, Manifold Calculus is thus not a Generalised Goodwillie calculus in the sense of Anel, Biedermann, Finster and Joyal.
\end{remark}

\chapter{Orthogonal Calculus and Monoidal Structures} 
\chaptermark{Monoidal Structures}
\label{chapter_monoidal_oc_theorem}

The goal of this chapter is to prove \cref{orthogonal_monoidality_theorem}, i.e.\ that the $n$-polynomial approximation functors
$$T_n \colon \fun(\cj,\cs) \to \fun(\cj,\cs)$$
are lax symmetric monoidal.

Before, we will need some preliminaries:

\begin{itemize}
\item In \cref{subsection-on-symmetric-monoidal-infinity-categories} we briefly recall some fundamentals about symmetric monoidal infinity categories and functors between them, and explain the symmetric monoidal structure on $\cj$.
\item In \cref{subsection-on-day-convolution} we explain the Day convolution symmetric monoidal structure on $\fun(\cj,\cs)$, and in \cref{subsection_on_internal_hom} we prove that in this seting, it admits an internal hom functor with particularly nice properties.
\item In \cref{subsection-on-general-monoidal-localisations} we state and explain some abstract nonsense about the question when a localisation of a monoidal category is itself monoidal.
\end{itemize}

Then we can prove the theorem in \cref{section_on_monoidal_localisation_theorem}.

Afterwards, in \cref{monoidal_oc_spectra}, we will show that the result carries over homogeneous layers, and prove that this induces \enquote{twisted multiplication maps}
$$\Theta^nF \otimes \Theta^nF \to D_{O(n)} \otimes \Theta^nF$$
on the derivative spectra of a monoidal functor $F \colon \cj \to \cs$.

\section{Setup / Preliminaries}

\subsection{Symmetric Monoidal $\infty$-Categories}
In the following, we use the language and techniques of symmetric monoidal $\infty$-categories in the sense of Lurie  (\cite[Chapter~2]{lurie-ha}).

We recall the most important definition:

\begin{definition}[\cite{lurie-ha}, 2.0.0.7]
A symmetric monoidal $\infty$-categeory is a coCartesian fibration of simplicial sets $p \colon \cC^\otimes \to N(\text{Fin}_*)$ with the property:
\begin{center}
For each $n \geq 0$, the maps $\{p_i \colon \langle n \rangle \to \langle 1 \rangle\}_{1 \leq i \leq n}$ induce funtors $\rho^i_! \colon \cC^\otimes_{\langle n \rangle} \to \cC^\otimes_{\langle 1 \rangle}$ which determine an equivalence $\cC^\otimes_{\langle n \rangle} \simeq \left( \cC^\otimes_{\langle 1 \rangle} \right)^n$.
\end{center}
\end{definition}

Any symmetric monoidal $\infty$-category is an example of an $\infty$-operad.

In this generality, a lax symmetric monoidal functor is an $\infty$-operad map (see also {\cite[Remark~2.1.3.6]{lurie-ha}}).

\begin{definition}[\cite{lurie-ha}, 2.1.2.7]
Let $\mathcal{O}^\otimes$ and $\mathcal{O}'^\otimes$ be $\infty$-operads. An $\infty$-operad map from $\mathcal{O}^\otimes$ to $\mathcal{O}'^\otimes$ is a map of simplicial sets $\mathcal{O}^\otimes \to \mathcal{O}'^\otimes$ which commutes with the structure maps to $N(\text{Fin}_*)$ and which carries inert morphisms to inert morphisms. (A morphism $\langle m \rangle \to \langle n \rangle$ is called inert if it identifies a subset of $\langle m \rangle$ with the basepoint of $\langle n \rangle$ and is otherwise bijective. A morphism in an $\infty$-operad is inert if it maps to an inert morphism of $N(\text{Fin}_*)$.)
\end{definition}

\label{subsection-on-symmetric-monoidal-infinity-categories}

\paragraph{The Symmetric Monoidal Structure on $\cj$}
\label{subsection-on-direct-sum-on-j}

We want to equip $\cj$ with the structure of a symmetric monoidal $\infty$-category corresponding to the construction
$$\left( V,\langle -,-\rangle_V), (W,\langle -,-\rangle_W)\right) \mapsto (V \oplus W,\langle -,-\rangle_V \oplus \langle -,- \rangle_W)$$
We simply write $V \oplus W$ for this, noting however that this is not actually a coproduct in $\cj$.

\begin{warning}
\label{direct-sum-on-cj-is-not-coproduct}
$\cj$ does not admit any coproducts of inner product spaces of positive dimension. 
\end{warning}
\begin{proof} Let $V$ be an inner product space of dimension $n > 0$ and assume $X$ to be a coproduct of $V$ and $V$. $X$ admits a codiagonal map to $V$, thus $\dim X \leq n$ because all maps are embeddings. On the other hand, consider the two standard embeddings $f,g \colon V \to V \oplus V$. Clearly, $\dim(V \oplus V) = 2n$. By the universal property, there is an embedding $f \oplus g \colon X \to V \oplus V$ extending $f$ and $g$, thus $\dim X = 2n$. \\
The case of a coproduct of different inner product spaces works analogously after choosing an embedding of one into the other.
\end{proof}

{\sloppy
\begin{propdef}
There is a symmetric monoidal $\infty$-category $(\cj^\oplus)$ with underlying $\infty$-category $\cj$  extending the construction 
$$(V,\langle -,-\rangle_V) \times (W,\langle -,-\rangle_W) \mapsto (V \oplus W,\langle -,-\rangle_V \oplus \langle -,- \rangle_W)$$
\end{propdef}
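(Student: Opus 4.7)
The plan is to build $\cj^\oplus$ by first equipping the topologically enriched category $\cjtop$ with a symmetric monoidal structure and then transporting it through the two successive nerve constructions used to produce $\cj$ in \cref{infty_oc-definition-cj}.

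First I would define the bifunctor $\oplus \colon \cjtop \times \cjtop \to \cjtop$ sending $(V,W)$ to $V \oplus W$ equipped with the block inner product, with monoidal unit the zero space $\{0\}$. On morphism spaces the assignment $(f,g) \mapsto f \oplus g$ gives a map
$$\mor_{\cjtop}(V_1,W_1) \times \mor_{\cjtop}(V_2,W_2) \to \mor_{\cjtop}(V_1 \oplus V_2, W_1 \oplus W_2),$$
which is continuous (in fact smooth), since block-diagonal formation is the restriction of a polynomial map on ambient spaces of linear maps to the relevant Stiefel manifolds, and visibly preserves the isometric-embedding condition. The associator, unitor, and braiding are supplied by the canonical linear isomorphisms of underlying vector spaces, which are automatically isometries; the pentagon and hexagon coherence diagrams reduce to those for direct sums of vector spaces and hold strictly on the nose.

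Next I would push this structure through $\text{Sing}_*$. Since $\text{Sing}_*$ is a right adjoint it preserves finite products, yielding a symmetric monoidal category $\cjenr^\oplus$ enriched in Kan complexes. Finally, applying the homotopy coherent nerve and appealing to Lurie's foundations (\cite{lurie-ha}, Chapter 2), a symmetric monoidal category enriched in Kan complexes gives rise to a symmetric monoidal $\infty$-category via the associated coCartesian fibration over $N(\text{Fin}_*)$. This produces the desired $\cj^\oplus \to N(\text{Fin}_*)$ whose underlying $\infty$-category is $\cj$ and whose tensor product is given on objects by $V \oplus W$.

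As an alternative to invoking the nerve machinery as a black box, one could build $\cj^\oplus$ directly: take objects over $\langle n \rangle$ to be $n$-tuples of inner product spaces, and morphisms over $\alpha \colon \langle n \rangle \to \langle m \rangle$ to be tuples of isometric embeddings $\bigoplus_{\alpha(i) = j} V_i \to W_j$, assembled into a simplicial set via the topological nerve. One then verifies by hand the Segal condition $\cj^\oplus_{\langle n \rangle} \simeq \cj^n$ and the coCartesian fibration property. The main obstacle in either approach is genuinely just bookkeeping: no substantive difficulty arises because direct sum is strictly associative, strictly unital, and symmetric already on underlying vector spaces, and block sum of isometric embeddings is manifestly continuous. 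The only care required is in matching conventions between Lurie's $\infty$-operadic framework and the strict enriched symmetric monoidal structure on $\cjtop$.
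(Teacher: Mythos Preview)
Your proposal is correct. Your primary approach---equipping $\cjtop$ with a strict symmetric monoidal structure and then transporting it through $\text{Sing}_*$ and the homotopy coherent nerve---is a different route from the one the paper takes: the paper instead writes down the Grothendieck-style category $\cj^\oplus$ over $N(\text{Fin}_*)$ directly (objects over $\langle n\rangle$ are $n$-tuples of inner product spaces, morphisms over $\alpha$ are isometric embeddings $\bigoplus_{\alpha(i)=j}V_i\to W_j$) and then verifies by hand that the projection is an inner fibration, a coCartesian fibration, and satisfies the Segal condition. In fact your ``alternative'' paragraph is essentially the paper's argument. Your main approach has the advantage of being more conceptual and reusable (it would work verbatim for any topologically enriched symmetric monoidal category), at the cost of relying on the somewhat black-boxed passage from enriched symmetric monoidal categories to symmetric monoidal $\infty$-categories; the paper's approach is more self-contained but requires checking the coCartesian lifting property explicitly. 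Either is perfectly adequate here, since as you note the coherence is strict and the only content is bookkeeping.
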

}

\begin{proof}
We will explicitly describe a cocartesian fibration $\cj^\oplus \to \nfin$ such that $\cj^\oplus_n \cong (\cj^\oplus_1)^n$, i.e.\ verify \cite[Definition 2.0.0.7.]{lurie-ha}.

An object of $\cj^\oplus$ consists of a finite pointed set $X$ together with objects $(X_i)_{i \in X} \in \cj$. A morphism of $\cj^\oplus$ consists of a map of pointed sets $f \colon X \to Y$ together with maps $\tilde{f}_k \colon \bigoplus_{i \in f^{-1}(k)} X_i \to Y_k$ for $k \in Y \backslash \{*\}$. The functor to $\nfin$ is the one which remembers only the finite pointed set.

The property $$\cj^\oplus_n \cong (\cj^\oplus_1)^n$$ follows directly from the definition.

Next, $\cj^\oplus \to \nfin$ has to be an inner fibration. The required lifting property follows, after unravelling the definitions, directly from the fact that $\cj$ is an $\infty$-category (see also \cite[Proof of 2.4.3.3.]{lurie-ha}).

Finally, $\cj^\oplus \to \nfin$ should be a cocartesian fibration. Thus, let $f \colon X \to Y$ be a morphism in $\nfin$ and $\hat{X} \in \cj^\oplus$ a lift of $X$. We need to find a cocartesian lift $\hat{f} \colon \hat{X} \to \hat{Y}$. We let $(\hat{Y})_i = \bigoplus_{f^{-1}(i)}\hat{X}_i$, and $\hat{f}$ the canonical map (consisting of identity maps). $\hat{f}$ is cocartesian, as can be verified using the definition of $\cj^\oplus$ and \cite[Prop.\ 2.4.4.3.]{lurie-htt}.
\end{proof}

\subsection{Day Convolution and the Internal Hom Functor}
\subsubsection{The Day Convolution Monoidal Structure}
\label{subsection-on-day-convolution}
We will now put a monoidal structure on $\ce$, called the \textit{Day convolution monoidal structure}, which has the important property that monoids with respect to it are precisely lax monoidal functors. 

Day convolution goes back to Brian Day in the 1-categorical setting (\cite{day-article, day-thesis}) and has been made available in the $\infty$-categorical world first by Saul Glasman (\cite{glasman}) and in greater generality by Jacob Lurie (\cite[Section~2.2.6]{lurie-ha}).

We will first recall some generalities. The reader who is already familiar with Day convolution may want to skip this section.

\paragraph{The case of symmetric monoidal 1-categories} $ $ \\
For sufficiently nice\footnote{
i.e.\ $\cC$ small, $\cD$ admits all small colimits}
 symmetric monoidal categories $\cC$ and $\cD$ with tensor product functors
\begin{center}
$ \otimes_\cC \colon \cC \times \cC \to \cC$, $\otimes_\cD \colon \cD \times \cD \to \cD$
\end{center}
and arbitrary functors $$F,G \colon \cC \to \cD$$ their \textit{Day convolution product} $$F \conv G \in \fun (\cC,\cD)$$ is defined to be the left Kan extension of $\otimes_\cD \circ (F \times G)$ along $\otimes_\cC$ as shown in the diagram

\begin{center}
\begin{tikzcd}[column sep = 5em, row sep = 5em]
\cC \times \cC \arrow[rr, "F \times G"] \arrow[dd, "\otimes_\cC"] \arrow[rrdd, "\otimes_\cD \circ (F \times G)", ""'{name=U}] &  & \cD \times \cD \arrow[dd, "\otimes_\cD"] \\
                                                                                                                 &  &                                          \\
\cC \arrow[rr, "F \conv G := \Lan_{(\otimes_\cD \circ (F \times G))}\otimes_\cC"]                          
\arrow[Rightarrow, from=U, shorten > = 2em, shorten < = 1em] &  & \cD                                     
\end{tikzcd}
\end{center}

where the upper triangle commutes and the lower triangles commutes up to a universal natural transformation 
$$\otimes_\cD \circ (F \times G) \Rightarrow (\Lan_{(\otimes_\cD \circ (F \times G))} \otimes_\cC) \circ \otimes_\cC$$

\begin{defprop}[Day \cite{day-thesis}, Lurie {\cite[Example 2.2.6.17]{lurie-ha}}]$ $ \\
The association
$$ - \conv - \colon \fun(\cC,\cD) \times \fun(\cC,\cD) \to \fun(\cC,\cD)$$
determines a symmetric monoidal structure on $\fun(\cC,\cD)$ 
such that monoid objects of $(\fun(\cC,\cD),\conv)$ are precisely the lax symmetric monoidal functors.
\end{defprop}

Using the pointwise formula for Kan extensions, there is a more explicit description.

\begin{proposition}[Day, Lurie {\cite[Remark 2.2.6.15]{lurie-ha}}]$ $ \\
\label{pointwise-day-convolution}
Let $F,G \colon \cC \to \cD$, then $F \conv G \colon \cC \to \cD$ is given on objects by

$$(F \conv G)(C) = \colim_{C_0 \otimes_\cC C_1 \to C} F(C_0) \otimes_\cD G(C_1)$$
\end{proposition}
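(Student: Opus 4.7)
The plan is to derive the formula as an instance of the pointwise formula for left Kan extensions in the $\infty$-categorical setting. By construction, $F \conv G$ is defined as the left Kan extension
$$F \conv G = \Lan_{\otimes_\cC}\bigl(\otimes_\cD \circ (F \times G)\bigr),$$
so the claim is really just an unwinding of the standard formula for such Kan extensions applied to this specific diagram.

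First I would invoke the pointwise formula for left Kan extensions of $\infty$-categorical functors, namely \cite[Prop.~4.3.3.2 and Lem.~4.3.2.13]{lurie-htt}, which states that for a functor $H \colon \cA \to \cD$ (with $\cD$ admitting sufficiently many colimits) and a functor $p \colon \cA \to \cB$, the value of $\Lan_p H$ at an object $b \in \cB$ is computed as
$$(\Lan_p H)(b) \simeq \colim_{(a,\, p(a) \to b) \in \cA_{/b}} H(a),$$
where $\cA_{/b} := \cA \times_\cB \cB_{/b}$ denotes the appropriate $\infty$-categorical slice/comma object. I would then simply specialise to $\cA = \cC \times \cC$, $\cB = \cC$, $p = \otimes_\cC$, and $H = \otimes_\cD \circ (F \times G)$. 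An object of the slice $(\cC \times \cC)_{/C}$ under this identification is exactly a pair $(C_0,C_1)$ together with a map $C_0 \otimes_\cC C_1 \to C$, and the functor $H$ sends this datum to $F(C_0) \otimes_\cD G(C_1)$. Plugging in gives precisely the claimed formula
$$(F \conv G)(C) \simeq \colim_{C_0 \otimes_\cC C_1 \to C} F(C_0) \otimes_\cD G(C_1).$$

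The only substantive check is that the left Kan extension in question actually exists, i.e.\ that the indexing $\infty$-category $(\cC \times \cC)_{/C}$ is small and that $\cD$ admits the necessary colimits; this follows from smallness of $\cC$ and cocompleteness of $\cD$, which are the standing hypotheses for the Day convolution construction (cf.\ \cite[Cor.~2.2.6.14]{lurie-ha}). I do not expect any real obstacle here: the entire proposition is a direct translation of the $\infty$-categorical pointwise Kan extension formula through the definition of Day convolution, and the only thing requiring any care is an honest identification of the slice $\infty$-category $(\cC \times \cC)_{/C}$ with the $\infty$-category of triples $(C_0, C_1, C_0 \otimes_\cC C_1 \to C)$, which is immediate from the definitions.
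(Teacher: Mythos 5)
Your proof is correct and follows exactly the approach the paper indicates. The paper states this proposition without a detailed proof, merely citing Lurie and prefacing it with the remark that it follows ``using the pointwise formula for Kan extensions''---which is precisely the route you take (identify $F \conv G$ as $\Lan_{\otimes_\cC}(\otimes_\cD \circ (F \times G))$, apply the pointwise colimit formula for $\infty$-categorical left Kan extensions, and unwind the slice category $(\cC \times \cC)_{/C}$).

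One small caveat worth being aware of: your argument takes the left Kan extension description of Day convolution as the \emph{definition}, which is what the paper does as well (following Day in the $1$-categorical case). If instead one takes Lurie's Construction 2.2.6.7 as the primitive definition of the $\infty$-categorical Day convolution operad, then the agreement with the Kan extension formula is itself the content of \cite[Remark~2.2.6.15]{lurie-ha}, and assuming it at the outset would be mildly circular. But since the paper itself introduces $F \conv G$ via the Kan extension diagram before stating this proposition, your proof is a faithful filling-in of the details the paper leaves to the reader.
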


\paragraph{The case of symmetric monoidal $\infty$-categories} $ $ \\
Analogous statements hold in the world of symmetric monoidal (or more generally $\mathcal{O}$-monoidal) $\infty$-categories. We will use the constructions of Lurie. References are in particular \cite[Construction 2.2.6.7, Example 2.2.6.9, Example 2.2.6.17]{lurie-ha}. We reproduce the latter here for convenience

\begin{proposition}[Lurie, {\cite[Example 2.2.6.9]{lurie-ha}}]
\label{infty-day-convolution}
Let $\cC^\otimes$ be a symmetric monoidal $\infty$-category and let $\cD^\otimes$ be an arbitrary $\infty$-operad. Applying Construction 2.2.6.7 of \cite{lurie-ha} in the case where $\mathcal{O}^\otimes = \text{Comm}^\otimes$ is the commutative $\infty$-operad, we obtain an $\infty$-operad $\fun(\cC,\cD)^\otimes$ with the following features:
\begin{itemize}
\item The underlying $\infty$-category of $\fun(\cC,\cD)^\otimes$ is canonically equivalent to the $\infty$-category $\fun(\cC,\cD)$ of functors from $\cC$ to $\cD$.
\item The $\infty$-category $\calg(\fun(\cC,\cD))$ of commutative algebra objects of $\fun(\cC,\cD)$ is equivalent to the $\infty$-category $\text{Alg}_\cC(\cD)$ of lax symmetric monoidal functors from $\cC$ to $\cD$.
\end{itemize}
\end{proposition}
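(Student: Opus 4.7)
The plan is to construct $\fun(\cC,\cD)^\otimes \to \nfin$ via a universal mapping property and then read off both features from it. Following the spirit of Lurie's Construction 2.2.6.7, I would define $\fun(\cC,\cD)^\otimes$ so that, for any simplicial set $K$ over $\nfin$, maps $K \to \fun(\cC,\cD)^\otimes$ over $\nfin$ correspond naturally to maps $K \times_{\nfin} \cC^\otimes \to \cD^\otimes$ over $\nfin$. Such a representing object exists because the functor $K \mapsto K \times_{\nfin} \cC^\otimes$ on $(\text{sSet})_{/\nfin}$ preserves colimits, hence admits a right adjoint. With the construction in hand, the first nontrivial step is to verify that $\fun(\cC,\cD)^\otimes \to \nfin$ is actually an $\infty$-operad, i.e.\ to check the inner fibration property, the Segal condition on fibers, and the existence of cocartesian lifts along inert morphisms. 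Each of these reduces, via the universal property, to the corresponding property of $\cD^\otimes \to \nfin$, which is assumed to hold.

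To identify the underlying $\infty$-category, I would apply the universal property with $K = \Delta^m$ placed over $\langle 1 \rangle \in \nfin$. Such a map corresponds to a map $\Delta^m \times \cC \to \cD$ (since $\Delta^m \times_{\nfin} \cC^\otimes$ has underlying fiber $\Delta^m \times \cC$ at $\langle 1 \rangle$), and these in total assemble into $\fun(\Delta^m, \fun(\cC,\cD))$. Hence the fiber over $\langle 1 \rangle$ is canonically $\fun(\cC, \cD)$. The Segal condition (which was verified in the previous step) then yields the analogous description $\fun(\cC,\cD)^\otimes_{\langle n \rangle} \simeq \fun(\cC,\cD)^n$, so the construction behaves as a genuine $\infty$-operad over $\fun(\cC,\cD)$.

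For the final feature, note that by definition $\calg(\fun(\cC,\cD)^\otimes)$ is the $\infty$-category of sections $\nfin \to \fun(\cC,\cD)^\otimes$ that send inert morphisms to inert morphisms, equivalently the $\infty$-category of $\infty$-operad maps from the commutative $\infty$-operad $\nfin$ into $\fun(\cC,\cD)^\otimes$. Applying the universal property to $K = \nfin$ converts these into maps $\nfin \times_{\nfin} \cC^\otimes = \cC^\otimes \to \cD^\otimes$ over $\nfin$. The work is then to match the \emph{inert-preservation} condition on the two sides of this correspondence; this boils down to the fact that the cocartesian lifts of inert morphisms in $\fun(\cC,\cD)^\otimes$ were constructed from those of $\cD^\otimes$. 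Once matched, the two conditions coincide with the definition of a lax symmetric monoidal functor $\cC^\otimes \to \cD^\otimes$.

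The main obstacle I expect is precisely this bookkeeping with inert morphisms and cocartesian lifts: the conceptual skeleton, namely the adjunction defining $\fun(\cC,\cD)^\otimes$ and its instantiation at $\langle 1 \rangle$ and at the commutative operad, is clean, but verifying that the universal bijection restricts to the appropriate subclasses of operadic maps and sections requires careful simplicial-set combinatorics. In our intended application, where $\cC = \cj^\oplus$ and $\cD = \cs^\times$, one moreover wants the conclusion of \cref{pointwise-day-convolution}, namely the colimit formula for $F \conv G$; this should be recovered separately by computing the left Kan extension in $\cD$ pointwise, using that the underlying $\infty$-category of $\cD^\otimes_{\langle 2 \rangle}$ decomposes as $\cD \times \cD$.
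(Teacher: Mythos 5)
The paper itself does not prove this proposition — it simply quotes Lurie's Example 2.2.6.9 for convenience. Your attempt is therefore really a reconstruction of Lurie's argument, and it is worth checking whether you have captured it correctly. You haven't quite: there is a genuine gap in the construction step.

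You define $\fun(\cC,\cD)^\otimes$ as the naive relative exponential, i.e.\ the simplicial set over $\nfin$ representing $K \mapsto \text{Hom}_{\nfin}(K \times_{\nfin} \cC^\otimes, \cD^\otimes)$, and then assert that the $\infty$-operad axioms, including the Segal condition, reduce to the corresponding properties of $\cD^\otimes$. That is false for the Segal condition. Compute the fiber of your naive exponential over $\langle n \rangle$ directly from the universal property: taking $K = \Delta^k$ constant at $\langle n \rangle$, you get $\Delta^k \times_{\nfin} \cC^\otimes \cong \Delta^k \times \cC^\otimes_{\langle n \rangle}$, so the fiber is $\fun(\cC^\otimes_{\langle n \rangle}, \cD^\otimes_{\langle n \rangle}) \simeq \fun(\cC^n, \cD^n)$. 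But the Segal condition demands that the fiber be $\fun(\cC,\cD)^n$, which is the \emph{full subcategory} of $\fun(\cC^n,\cD^n)$ spanned by functors decomposing as a product $F_1 \times \cdots \times F_n$; for $n \geq 2$ these are far from the same. So your candidate is not an $\infty$-operad, and your own later remark that \enquote{the Segal condition then yields $\fun(\cC,\cD)^\otimes_{\langle n \rangle} \simeq \fun(\cC,\cD)^n$} is inconsistent with the universal property you started from.

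What Lurie actually does in Construction 2.2.6.7 is to form the naive exponential first and then \emph{restrict} to the full sub-simplicial set spanned by those objects over $\langle n \rangle$ which do decompose compatibly with the Segal equivalences (equivalently, which carry inert morphisms of $\cC^\otimes_{\langle n \rangle}$ in the appropriate sense). That restriction is what makes the Segal condition hold; it is a definitional input, not a bookkeeping afterthought. Your treatment relegates all \enquote{inert-morphism bookkeeping} to feature (b), but the restriction is already needed to make the object in feature (a) exist. Once the construction is corrected, your identification of the fiber over $\langle 1\rangle$ (which is unaffected, since the decomposition condition is vacuous for $n=1$) and your plan for feature (b) via $K = \nfin$ and matching inert-preservation conditions are both on the right track. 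You should also separate out, as you note at the end, the explicit colimit formula of \cref{pointwise-day-convolution}, which comes from computing the operadic left Kan extension pointwise and is a statement about the underlying multiplication, not about the operad construction itself.
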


\paragraph{Day convolution on $\ce$ and some properties} $ $ \\
As a special case of the previous discussion we obtain
\begin{proposition}
$\fun(\cj,\cs)$ and 
$\fun(\cj,\csp)$ are symmetric monoidal $\infty$-categories with respect to Day convolution.
\end{proposition}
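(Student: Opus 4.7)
The plan is to obtain the statement as a direct instance of Lurie's general Day convolution construction, applied to the symmetric monoidal $\infty$-categories in play. Concretely, we have $\cj^\oplus$ from \cref{subsection-on-direct-sum-on-j}, and both $(\cs,\times)$ and $(\csp,\times)$ are presentable symmetric monoidal $\infty$-categories. So the formal input of \cref{infty-day-convolution} is available in both cases.

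First I would invoke \cref{infty-day-convolution} with $\cC^\otimes = \cj^\oplus$ and $\cD^\otimes$ equal to $(\cs,\times)^\otimes$ resp.\ $(\csp,\times)^\otimes$. This immediately produces $\infty$-operads $\fun(\cj,\cs)^\otimes$ and $\fun(\cj,\csp)^\otimes$ whose underlying $\infty$-categories are the usual functor categories, and whose commutative algebra objects are the lax symmetric monoidal functors $\cj \to \cs$ resp.\ $\cj \to \csp$. The remaining step is to upgrade these $\infty$-operads to genuine symmetric monoidal $\infty$-categories, i.e.\ to check that the coCartesian fibration condition over $\nfin$ is satisfied and not merely the inert-coCartesian condition of an $\infty$-operad.

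For this upgrade I would rely on the pointwise formula analogous to \cref{pointwise-day-convolution}: the putative tensor product is
$$(F \conv G)(V) \simeq \colim_{V_0 \oplus V_1 \to V} F(V_0) \times G(V_1),$$
with the colimit taken in the slice $\cj^\oplus_{/V}$. Since $\cs$ and $\csp$ are presentable, all such colimits exist and are functorial in $F,G$. The fact that this indeed assembles to a symmetric monoidal structure (rather than just an $\infty$-operad structure) is exactly the content of \cite[Example~2.2.6.17]{lurie-ha}, which one applies once cocompleteness of the target is noted. The main subtlety, and the one place to be careful, is to make sure we are in the setting where the general theorem upgrades the $\infty$-operad to a symmetric monoidal $\infty$-category; this is why the presentability (in particular cocompleteness) of $\cs$ and $\csp$ is essential, and both are standard.

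Finally I would record the identification of $\calg$-objects: monoid objects for $\conv$ are precisely lax symmetric monoidal functors $\cj \to \cs$ resp.\ $\cj \to \csp$, which is the property that motivates using Day convolution in the first place and is automatic from \cref{infty-day-convolution}. No real obstacle is expected beyond citing the correct generality of Lurie's construction; the whole point is that the preceding subsection supplied $\cj^\oplus$, so the two cases follow by plugging in.
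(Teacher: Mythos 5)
Your proposal takes exactly the route the paper intends: the paper states this Proposition without proof as ``a special case of the previous discussion,'' which is precisely the citation of Lurie's Day convolution machinery applied to $\cj^\oplus$, so the strategy is a match. Your additional care in distinguishing the $\infty$-operad conclusion of \cref{infty-day-convolution} from the genuine symmetric-monoidal conclusion, and in noting that the upgrade uses cocompleteness of the target, is a genuine improvement over the paper's terseness.

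One caveat worth flagging. You run the argument for $\fun(\cj,\csp)$ with $\cD^\otimes = (\csp,\times)$. The hypothesis of \cite[Prop.~2.2.6.16, Example~2.2.6.17]{lurie-ha} is not merely that $\cD$ is cocomplete but that $\otimes_\cD$ preserves small colimits separately in each variable; this holds for $(\cs,\times)$ (cartesian closed) and for $(\csp,\wedge)$ (closed monoidal), but fails for $(\csp,\times)$, since the cartesian product on $\csp$ does not commute with colimits. So for $\csp$ you should read the Proposition as referring to $(\csp,\wedge)$, for which your argument goes through verbatim; the $(\csp,\times)$ convolution structure needs a separate treatment (the paper itself notes in \cref{chapter_monoidal_oc_theorem} that the cartesian Day convolution on $\fun(\cj,\csp)$ lacks an internal hom for exactly this reason, and handles that case by a different reduction). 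With that substitution your proof is the paper's proof, just written out.
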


An example of functors whose convolution product is easy to understand are representable functors.

\begin{example}
\label{convolution-of-corepresentables}
$\mor(V,-) \conv \mor(W,-) \simeq \mor(V \oplus W,-)$
\end{example}

\begin{proof}
By the pointwise formula (\ref{pointwise-day-convolution}), we see that
$$
(\mor(V,-) \conv \mor(W,-))(X) \simeq \colim_{A \oplus B \to X} \mor(V,A) \times \mor(W,B)
$$
The terms on the right-hand side admit maps
$$\mor(V,A) \times \mor(W,B) \to \mor(V \oplus W,A \oplus B) \to \mor(V \oplus W,X)$$
which assemble to a map 
$$ \colim_{A \oplus B \to X} \mor(V,A) \times \mor(W,B) \to \mor(V \oplus W,X)$$
This map is an equivalence. An inverse is given by
\begin{align*}
\mor(V \oplus W,X) &\to \mor(V,\im_f(V)) \times (W,\im_f(W)) \\
&\to \colim_{A \oplus B \to X} \mor(V,A) \times \mor(W,B)\\
(f \colon V \oplus W \to X) &\mapsto (f_{|V},f_{|W})
\end{align*}

\end{proof}

Another important property of Day convolution in our setting is that it commutes with colimits in each variable.
\begin{lemma}
\label{convolution-commutes-with-colimits}
On $\fun(\cj,\cs)$, the convolution product $\conv$ commutes with colimits in each variable separately.
\end{lemma}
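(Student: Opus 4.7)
The plan is to verify the claim using the pointwise formula for Day convolution (\cref{pointwise-day-convolution}), combined with the fact that colimits in $\fun(\cj,\cs)$ are formed pointwise and that the cartesian product on $\cs$ commutes with colimits in each variable. Concretely, fix $G \in \fun(\cj,\cs)$ and a diagram $(F_i)_{i \in I}$ in $\fun(\cj,\cs)$. I want to exhibit an equivalence $(\colim_i F_i) \conv G \simeq \colim_i (F_i \conv G)$ that is natural in $G$, and then argue symmetrically in the other variable.

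First I would evaluate both sides at an arbitrary $X \in \cj$. By the pointwise formula,
\[
\bigl((\colim_i F_i) \conv G\bigr)(X) \simeq \colim_{A \oplus B \to X} \bigl(\colim_i F_i\bigr)(A) \times G(B).
\]
Since colimits in $\fun(\cj,\cs)$ are computed objectwise, $(\colim_i F_i)(A) \simeq \colim_i F_i(A)$. The $\infty$-category $\cs$ is cartesian closed, so the functor $(-) \times G(B)$ preserves colimits in its first argument. Combining these and then interchanging the two colimits (colimits commute with colimits) yields
\[
\colim_{A \oplus B \to X} \colim_i \bigl(F_i(A) \times G(B)\bigr) \simeq \colim_i \colim_{A \oplus B \to X} F_i(A) \times G(B) \simeq \colim_i (F_i \conv G)(X).
\]
Applying objectwise colimits once more identifies the right-hand side with $\bigl(\colim_i (F_i \conv G)\bigr)(X)$, finishing the pointwise comparison. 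The symmetric monoidal structure implies $F \conv G \simeq G \conv F$, so the same argument also handles colimits in the second variable.

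I expect no real obstacle here; the only mild care required is that the interchange of the two colimits and the objectwise computation of colimits are genuine $\infty$-categorical facts rather than just $1$-categorical ones. For the former, one may invoke \cite[Lemma~5.5.2.3]{lurie-htt} (or the fact that a bifunctor preserves colimits separately iff it preserves them jointly, after suitable cofinality), and for the latter one uses that evaluation functors $\text{ev}_X \colon \fun(\cj,\cs) \to \cs$ preserve all colimits. Alternatively, one can give a fully abstract proof: the Day convolution is defined as a left Kan extension of a functor that is itself colimit-preserving in each variable (since $\otimes_\cs = \times$ is so, and left Kan extension along a fixed functor is itself a left adjoint, hence cocontinuous). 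Either route yields the lemma.
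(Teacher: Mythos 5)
Your proof is correct, and you actually sketch \emph{both} of the two natural proofs; the paper chooses the second one. Your primary argument is pointwise: evaluate at $X \in \cj$, use the coend formula from \cref{pointwise-day-convolution}, pull the diagram colimit through (objectwise colimits, $(-) \times G(B)$ cocontinuous since $\cs$ is cartesian closed), interchange the two colimits, and reassemble. The paper instead argues abstractly: since $F \conv G \simeq \Lan_{\oplus}(\times_\cs \circ (F \times G))$ and $\Lan_{\oplus}$ is left adjoint to restriction $\oplus^* \colon \fun(\cj,\cs) \to \fun(\cj \times \cj, \cs)$, hence cocontinuous, it suffices to see that $(F,G) \mapsto \times_\cs \circ (F \times G)$ preserves colimits separately, which again follows from $\cs$ being cartesian closed and colimits in functor categories being computed objectwise. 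You mention exactly this route in your final paragraph, so the two proofs are essentially interchangeable. The abstract route is cleaner in that it avoids the explicit colimit interchange and the cofinality bookkeeping you flag as a potential worry; the pointwise route is more concrete and makes the role of the pointwise formula visible. One small imprecision in your alternative sketch: what needs to preserve colimits in each variable is the assignment $(F,G) \mapsto \times_\cs \circ (F \times G)$ as a functor $\fun(\cj,\cs)^{\times 2} \to \fun(\cj \times \cj, \cs)$, not the single functor $\times_\cs \circ (F \times G)$ itself; the intent is clear but the phrasing could mislead.
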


\begin{proof}
Roughly, this follows from \cref{pointwise-day-convolution} by commuting colimits with colimits and using that the (monoidal) product in $\cs$ commutes with colimits as $\cs$ is closed monoidal. \\
Recall the definition
$$F \conv G \cong \lan_{\otimes_J} \otimes_\mathcal{S} \circ (F \times G)$$
Since $\lan_{\otimes_\cj}$ is left adjoint to $\otimes_\cj^* \colon [\mathcal{J},\mathcal{S}] \to [\mathcal{J}\times \mathcal{J}, \mathcal{S}]$, it commutes with colimits in $\fun(J \times J,\mathcal{S})$. To have $\conv$ commute with colimits in each variable we thus need
$$ (\colim_I F_i) \times G \simeq \colim_I (F_i \times G)$$
(and its analogue for $G_i$) in $\fun(J \times J,S)$. This holds as colimits are computed in the codomain category and since $\mathcal{S}$ is closed monoidal, hence $ - \times G$ preserves colimits. 
\end{proof}

\subsubsection{The Internal Hom Functor}
\label{subsection_on_internal_hom}

\begin{corollary}
\label{internal_hom}
$\fun(\cj,\cs)$ equipped with $\conv$ is closed monoidal, i.e.\ there is an internal hom functor 
$$\inthom \colon \fun(\cj,\cs)^{\text{op}} \times \fun(\cj,\cs) \to \fun(\cj,\cs)$$
such that for $X \in \fun(\cj,\cs)$, there is an adjunction with left adjoint
$$X \conv - \ \colon \fun(\cj,\cs) \to \fun(\cj,\cs)$$
and right adjoint
$$\inthom(X,-) \colon \fun(\cj,\cs) \to \fun(\cj,\cs)$$
\end{corollary}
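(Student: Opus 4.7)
The plan is to deduce the existence of the internal hom from the adjoint functor theorem for presentable $\infty$-categories, using \cref{convolution-commutes-with-colimits} as the key input.

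First, I would observe that $\fun(\cj,\cs)$ is a presentable $\infty$-category. This is standard: since $\cj$ is a small $\infty$-category and $\cs$ is presentable, the functor category $\fun(\cj,\cs)$ is presentable (see \cite[Prop.~5.5.3.6]{lurie-htt}). Next, for any fixed $X \in \fun(\cj,\cs)$, \cref{convolution-commutes-with-colimits} tells us that the endofunctor $X \conv (-) \colon \fun(\cj,\cs) \to \fun(\cj,\cs)$ preserves all small colimits. Invoking the $\infty$-categorical adjoint functor theorem \cite[Cor.~5.5.2.9]{lurie-htt}, any colimit-preserving functor between presentable $\infty$-categories admits a right adjoint. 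Applying this to $X \conv (-)$ yields a right adjoint, which we denote $\inthom(X,-) \colon \fun(\cj,\cs) \to \fun(\cj,\cs)$.

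The remaining task is to upgrade the collection $\{\inthom(X,-)\}_{X \in \fun(\cj,\cs)}$ into a single functor
$$\inthom \colon \fun(\cj,\cs)^{\text{op}} \times \fun(\cj,\cs) \to \fun(\cj,\cs).$$
The most economical route is to note that, by the preceding paragraph, the symmetric monoidal structure $\conv$ on the presentable $\infty$-category $\fun(\cj,\cs)$ is compatible with colimits in the sense of \cite[Def.~4.8.1.13]{lurie-ha}. By \cite[Remark~4.8.1.17]{lurie-ha}, every presentably symmetric monoidal $\infty$-category is closed, which gives the internal hom bifunctor with the required adjunction property. Alternatively, one may construct $\inthom$ directly by applying the relative/parametrised adjoint functor theorem to the functor
$$\conv \colon \fun(\cj,\cs) \times \fun(\cj,\cs) \to \fun(\cj,\cs)$$
viewed as a family over the first factor.

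The main conceptual obstacle is the functoriality of $\inthom$ in its first (contravariant) argument: for each fixed $X$ we obtain a right adjoint only as an object of the $\infty$-category of functors, so one needs the coherent machinery cited above (either presentably symmetric monoidal categories are closed, or the parametrised adjoint functor theorem) to assemble these into a single bifunctor. Once that is invoked, the statement of the corollary is immediate, and no further calculation is required.
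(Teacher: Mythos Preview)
Your proposal is correct and follows essentially the same approach as the paper: apply the adjoint functor theorem \cite[Cor.~5.5.2.9]{lurie-htt} using \cref{convolution-commutes-with-colimits}, then assemble the objectwise right adjoints into a bifunctor via a standard coherence result. The only difference is cosmetic: the paper cites \cite[Prop.~6.1.11]{cisinski-book} for the assembly step, whereas you invoke \cite[Remark~4.8.1.17]{lurie-ha} or the parametrised adjoint functor theorem.
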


\begin{proof}
For fixed $X$, the existence of an adjoint to $X \conv -$ follows from \cref{convolution-commutes-with-colimits} using the adjoint functor theorem (Corollary 5.5.2.9 of \cite{lurie-htt}).
{\sloppy That all of those adjoints assemble into a single internal hom functor $\inthom$ follows since an adjoint exists if it exists for every object (see e.g.\ \cite[Prop.~6.1.11]{cisinski-book}). }
\end{proof}

\begin{remark}
\label{internal_hom_commutes_with_products}
For every $X$, $\inthom(X,-) \colon \fun(\cj,\cs) \to \fun(\cj,\cs)$ commutes with limits, as it is a right adjoint.
\end{remark}
\begin{proposition}
\label{internal_hom_remains_poly}
For an arbitrary functor $A \in \fun(\cj,\cs)$ and an $n$-polynomial functor $P \in \poly^{\leq^n}(\cj,\cs)$, the internal hom
$$\inthom(A,P) \in \fun(\cj,\cs)$$
is $n$-polynomial.
\end{proposition}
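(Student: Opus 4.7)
The plan is to reduce the $n$-polynomial condition for $\inthom(A,P)$ to the $n$-polynomial condition for $P$ by means of an explicit pointwise formula for the Day-convolution internal hom. The formula I would establish is
$$\inthom(A,P)(V) \simeq \Nat\bigl(A(-),\, P(V \oplus -)\bigr),$$
where the right-hand side is the mapping space in $\fun(\cj,\cs)$ with $A$ and $P(V \oplus -)$ viewed as functors of the displayed dash variable. The derivation is a short Yoneda-plus-adjunction chain
$$\inthom(A,P)(V) \simeq \Nat\bigl(\mor_\cj(V,-),\, \inthom(A,P)\bigr) \simeq \Nat\bigl(\mor_\cj(V,-) \conv A,\, P\bigr),$$
using Yoneda and then \cref{internal_hom}; unfolding $\conv$ as a left Kan extension along $\oplus$ and absorbing $\mor_\cj(V,-)$ via Yoneda in the first variable produces the claimed expression (and generalises \cref{convolution-of-corepresentables} to the case where only one convolution factor is representable).

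With the formula in hand, substitute it into both sides of the polynomial-test map for $\inthom(A,P)$. The target becomes
$$\lim_{0 \neq U \subseteq \mathbb{R}^{n+1}} \Nat\bigl(A(-),\, P(V \oplus U \oplus -)\bigr),$$
and since $\Nat(A(-),-)$ is a right adjoint (and limits commute with limits) this rewrites as
$$\Nat\Bigl(A(-),\,\lim_{0 \neq U \subseteq \mathbb{R}^{n+1}} P(V \oplus U \oplus -)\Bigr).$$

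Finally, apply the $n$-polynomiality of $P$ pointwise at the objects $V \oplus W$ (not at $V$ itself) to obtain natural equivalences $P(V \oplus W) \simeq \lim_U P(V \oplus W \oplus U)$ in $W$, and pass them through $\Nat(A(-),-)$ to conclude. The only work beyond these formal manipulations is checking that the composite equivalence produced this way actually coincides with the canonical polynomial-test map for $\inthom(A,P)$; I expect this naturality verification, rather than any conceptual difficulty, to be the main obstacle, and it should follow from tracing the unit of the $(\conv,\inthom)$-adjunction through the inclusions $V \hookrightarrow V \oplus U$.
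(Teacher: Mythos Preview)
Your argument is correct. It is, however, not the route the paper takes for its primary proof, though the paper itself later notes your approach as an alternative.

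The paper's main proof does not first derive the pointwise formula $\inthom(A,P)(V) \simeq \Nat(A,\sh_V^*P)$. Instead it works more abstractly: it proves the functor-level identity $\sh_U^*\inthom(A,P) \simeq \inthom(A,\sh_U^*P)$ directly, using the adjunction $(\sh_U)_! \dashv \sh_U^*$ together with the observation $(\sh_U)_!(A \conv B) \simeq ((\sh_U)_!A) \conv B$, and then pushes the limit over $U$ inside $\inthom(A,-)$ to invoke polynomiality of $P$. Your approach establishes the pointwise formula first (this is \cref{pointwise_formula_for_internal_hom} in the paper, proved only after the proposition) and then reads off the shift-commutation as a trivial corollary of $\sh_U\sh_X \simeq \sh_{X\oplus U}$; the paper explicitly remarks that this gives a second proof. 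The two arguments are thus close cousins, differing mainly in whether one unwinds to mapping spaces before or after the key commutation. Your version is slightly more concrete and makes the naturality check you flag essentially automatic, while the paper's version keeps everything at the level of functors and adjunctions and postpones the explicit formula.
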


\begin{proof}
By \cref{polynomial_of_degree_n}, we have to check that
$$\inthom(A,P)(X) \to \lim_{0 \neq U \subseteq \mathbb{R}^{n+1}} \inthom(A,P)(X \oplus U) = \lim_{0 \neq U \subseteq \mathbb{R}^{n+1}} \sh_U^* \inthom(A,P) $$
is an equivalence, where $\sh_U$ denotes the functor $\sh_U \colon V \mapsto V \oplus U$
and $\sh_U^* \colon \fun(\cj,\cs) \to \fun(\cj,\cs)$ denotes precomposition by $\sh_U$.

Assume for the moment that 
$$\sh_U^* \inthom(A,P) \simeq \inthom(A,\sh_U^* P)$$
Then, we could calculate
\begin{align*}
\lim_{0 \neq U \subseteq \mathbb{R}^{n+1}}\left(\sh_U^* \inthom(A,P) \right)
	&\simeq \lim_{0 \neq U \subseteq \mathbb{R}^{n+1}} \inthom(A,\sh_U^* P ) \\
	&\simeq_{\cref{internal_hom_commutes_with_products}} \inthom \left( A, \lim_{0 \neq U \subseteq \mathbb{R}^{n+1}} \sh_U^* P \right) \\
	&\simeq_{P \in \poly^{\leq n}(\cj,\cs)} \inthom(A,P)
\end{align*}
and thus conclude that $\inthom(A,P)$ is $n$-polynomial.

It remains to prove that
$$\sh_U^* \inthom(A,P) \simeq \inthom(A,\sh_U^* P)$$

We observe that 
$$\sh_U^* \colon \fun(\cj,\cs) \to \fun(\cj,\cs)$$ has a left adjoint 
$$(\sh_{U})_! \colon \fun(\cj,\cs) \to \fun(\cj,\cs)$$
 given by left Kan extension along $\sh_U$, which exists as $\cs$ is cocomplete. Using the pointwise formula for Kan extensions, it is given by
$$(\sh_U)_! F (V) \simeq \colim \left( \sh_U/V \to \cj \to \cs \right) \simeq \colim_{X \oplus U \to V} F(X) $$

Using the adjunction $(\sh_U)_! \dashv \sh_U^*$ we can compute
\begin{align*}
\Nat(A,\sh_U^* \inthom(B,C)) 
	&\simeq \Nat((\sh_U)_! A,\inthom(B,C)) \\
	&\simeq \Nat(((\sh_U)_! A) \conv B,C) \\
	&\simeq \Nat(((\sh_U)_! (A \conv B),C) \\
	&\simeq \Nat(A \conv B,\sh_U^*C) \\
	&\simeq \Nat(A,\inthom(B, \sh_U^*C)) 
\end{align*}

where we used that 
$$(\sh_U)_! A \conv B \simeq (\sh_U)_! (A \conv B)$$
as Day convolution commutes with colimits and the left Kan extension $(\sh_U)_!$ is given pointwise by colimits.

Thus, by the Yoneda lemma
$$\sh_U^* \inthom(B,C) \simeq \inthom(B, \sh_U^*C)$$

\end{proof}

\paragraph{A Formula for the Internal Hom Functor} $ $ \\

In \cref{internal_hom} and \cref{internal_hom_remains_poly}, we only proved and used the existence of the internal hom functor $\inthom \colon \fun(\cj,\cs)^{op} \times \fun(\cj,\cs) \to \fun(\cj,\cs)$, but did not provide a formula. Luckily, $\fun(\cj,\cs)$ is a (co)presheaf category, where this is relatively easy.

\begin{proposition}
\label{pointwise_formula_for_internal_hom}
For $A,B \in \fun(\cj,\cs)$ and $X \in \cj$
$$\inthom(A,B)(X) \simeq \Nat(A,\sh_X^*B) $$
\end{proposition}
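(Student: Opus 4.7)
The plan is to combine the Yoneda lemma, the defining convolution--internal hom adjunction, and the explicit computation of convolutions of representables from \cref{convolution-of-corepresentables}. Write $y(X) \coloneqq \mor_\cj(X,-) \in \fun(\cj,\cs)$ for the covariant representable. By Yoneda, for any $F \in \fun(\cj,\cs)$ one has $F(X) \simeq \Nat(y(X),F)$; applied to $F = \inthom(A,B)$ this gives
$$\inthom(A,B)(X) \simeq \Nat(y(X),\inthom(A,B)) \simeq \Nat(y(X) \conv A,\, B),$$
where the second equivalence is the convolution--internal hom adjunction of \cref{internal_hom}. The task is thus reduced to identifying $y(X) \conv A$ in a way that lets the right-hand side be rewritten as $\Nat(A,\sh_X^*B)$.

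The key step is to analyse $y(X) \conv A$. I would use the density theorem to present $A$ as a colimit of representables $y(Y)$ indexed over its category of elements; since $\conv$ preserves colimits in each variable by \cref{convolution-commutes-with-colimits}, and since $y(X) \conv y(Y) \simeq y(X \oplus Y)$ by \cref{convolution-of-corepresentables}, this yields a canonical presentation $y(X) \conv A \simeq \colim\, y(X \oplus Y)$ over the same indexing diagram.

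Substituting this presentation back and moving $\Nat(-,B)$ past the colimit turns it into a limit:
$$\Nat(y(X) \conv A,\, B) \simeq \lim \Nat(y(X \oplus Y),\, B) \simeq \lim B(X \oplus Y) \simeq \lim (\sh_X^* B)(Y),$$
where the middle step is Yoneda for $B$ and the last unfolds the definition of $\sh_X^*$. Reassembling the same colimit presentation of $A$ inside a natural-transformation space (one more application of Yoneda, in the opposite direction) identifies this final expression with $\Nat(A,\sh_X^*B)$, which is the stated formula.

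The main obstacle I anticipate is the careful $\infty$-categorical handling of the density/co-Yoneda step and the commutation of $\Nat$ with colimits, which is standard but has to be invoked via a suitable reference such as \cite[5.1.5.8]{lurie-htt}. A slicker alternative that avoids density altogether would be to recycle the identity $\sh_U^* \inthom(A,B) \simeq \inthom(A,\sh_U^*B)$ already proved inside \cref{internal_hom_remains_poly}: evaluated at the zero object $0 \in \cj$, this reduces the statement to the identity $\inthom(A,C)(0) \simeq \Nat(A,C)$, which is itself an instance of the Yoneda computation above together with the observation that $y(0)$ is the monoidal unit of Day convolution.
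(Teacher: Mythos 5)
Your proof is correct, but takes a genuinely different route from the paper. Both begin identically, reducing via Yoneda and the $\conv\dashv\inthom$ adjunction to $\Nat(\yo(X)\conv A,B)$. The paper then completes the argument in two more slick moves: it first computes $\inthom(\yo(Y),B)\simeq \sh_Y^*B$ directly (another Yoneda application together with $\yo(X)\conv\yo(Y)\simeq\yo(X\oplus Y)$ from \cref{convolution-of-corepresentables}), and then uses the \emph{symmetry} of Day convolution to swap the adjunction to the other tensor factor: $\Nat(\yo(X)\conv A,B)\simeq\Nat(A\conv\yo(X),B)\simeq\Nat(A,\inthom(\yo(X),B))\simeq\Nat(A,\sh_X^*B)$. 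You instead expand $A$ as a colimit of representables by density, push the convolution through the colimit, and reassemble — this is correct and perhaps more concrete, but carries the overhead of the $\infty$-categorical density theorem (and of carefully tracking that the same indexing diagram is used at both ends), where the paper gets by with Yoneda alone. What the paper's argument buys is economy of means; what yours buys is that it is a very hands-on computation that makes the pointwise Kan-extension description of $\conv$ visible. Your proposed alternative proof — evaluating the shift-compatibility $\sh_U^*\inthom(A,B)\simeq\inthom(A,\sh_U^*B)$ at $0$ and observing that $\yo(0)$ is the Day convolution unit — is also valid, is not circular (that identity is established independently in \cref{internal_hom_remains_poly}), and is arguably the closest in spirit to the paper; it is amusingly the reverse of the paper's own remark that the pointwise formula re-proves the shift compatibility.
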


\begin{proof}
Denote by 
\begin{align*}
\yo \colon \cj^{op} &\to \fun(\cj,\cs) \\
V &\mapsto \Map(V,-)
\end{align*}

the contravariant Yoneda embedding.

Observe first that there is an equivalence
$$\inthom(A,B)(X) \simeq \Nat(\yo(X) \conv A,B)$$
which is a simple consequence of the Yoneda lemma, as
\begin{align*}
\inthom(G,H)(X) &\simeq
	\Map_{PSh(\cj^{op})}(\yo(X),\inthom(G,H)) \\
	&\simeq \Map_{PSh(\cj^{op})}(\yo(X) \conv G,H) \\
	&\simeq \Nat(\yo(X) \conv G,H)
\end{align*}

Next, recall from \cref{convolution-of-corepresentables} that
$$\yo(X) \conv \yo(Y) \simeq \yo(X \oplus Y)$$
 for $X,Y \in \cj$. Thus, again by Yoneda, 
$$\inthom(\yo (Y), A)(X) \simeq A(Y \oplus X)$$

Finally, the claim follows by calculating
\begin{align*}
\inthom(A,B)(X) 
	&\simeq \Map(\yo(X),\inthom(A,B)) \\
	&\simeq \Map(\yo(X) \conv A,B) \\
	&\simeq \Map(A,\inthom(\yo(X),B)) \\
	&\simeq \Nat(A,\sh_X^*B)
\end{align*}
\end{proof}

\begin{remark}
One can also use this formula to give another proof of  \cref{internal_hom_remains_poly}. Recall that we had to check that
$$\sh_U^*\inthom(A,P) \simeq \inthom(A,\sh_U^*P)$$
Indeed, using \cref{pointwise_formula_for_internal_hom}, we see that
$$\sh_U^*\inthom(A,P)(X) \simeq \Nat(A,\sh_{X \oplus U}^*P)$$
and that
$$\inthom(A,\sh_U^*P)(X) \simeq \Nat(A,\sh_X^*\sh_U^*P)$$
The claim follows as $\sh_U \sh_X \simeq \sh_{X \oplus U}$.
\end{remark}

\subsection{Generalities on Monoidal Localisations}
\label{subsection-on-general-monoidal-localisations}

To exhibit the localisation $T_n$ as monoidal, we will use the following abstract result from category theory, which is essentially due to Brian Day (\cite{day-note-on-monoidal-localisations}, Corollary 1.4) in the 1-categorical setting and has been carried over to $\infty$-land by Lurie.

\begin{definition}[\cite{day-note-on-monoidal-localisations}, Definition 1.3]
Let $Z$ be a class of morphisms in a symmetric monoidal category $\mathcal{A}$. Then $Z$ is called monoidal if 
$$Z = \{s \in Z \mid \id_A \otimes s \in Z \text{ for all } A \in \mathcal{A} \}$$
or equivalently if the class $Z$ is closed under the tensor product.
\end{definition}

\begin{proposition}[\cite{day-note-on-monoidal-localisations}, Corollary 1.4]
If $Z$ is monoidal in $\mathcal{A}$, then there is a monoidal structure on $\mathcal{A}[Z^{-1}]$ such that the projection functor $\mathcal{A} \to \mathcal{A}[Z^{-1}]$ is a monoidal functor.
\end{proposition}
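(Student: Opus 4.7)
The plan is to construct the desired monoidal structure on $\mathcal{A}[Z^{-1}]$ using the universal property of the localisation functor $L \colon \mathcal{A} \to \mathcal{A}[Z^{-1}]$, which by definition inverts precisely the morphisms in $Z$. The key input is that monoidality of $Z$ lets the tensor product descend through $L$.

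First I would record a small but essential consequence of the hypothesis: if $s_1,\ldots,s_n$ are morphisms in $\mathcal{A}$ with each $s_i \in Z$, then $s_1 \otimes \cdots \otimes s_n \in Z$ as well. This follows by writing the iterated tensor product as the composition of the morphisms $\id_{A_1}\otimes\cdots\otimes\id_{A_{i-1}}\otimes s_i \otimes \id_{B_{i+1}}\otimes\cdots\otimes\id_{B_n}$, using closure of $Z$ under tensoring with identities (the hypothesis) and closure under composition (which one may take as part of the standing assumptions on the class to be inverted, or obtains from replacing $Z$ by its closure, which has the same localisation). Consequently, for each $n \geq 0$, the composite
$$\mathcal{A}^n \xrightarrow{\otimes^{(n)}} \mathcal{A} \xrightarrow{L} \mathcal{A}[Z^{-1}]$$
inverts morphisms in the class $Z^{\times n}$, so the universal property of the localisation in each factor yields an essentially unique factorisation $\overline\otimes^{(n)} \colon \mathcal{A}[Z^{-1}]^n \to \mathcal{A}[Z^{-1}]$ with $\overline\otimes^{(n)} \circ L^n \simeq L \circ \otimes^{(n)}$. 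These are the candidate $n$-ary tensor operations on the localisation.

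To package these into a genuine symmetric monoidal structure, rather than working fibre-by-fibre, I would localise the total space of the cocartesian fibration $\mathcal{A}^\otimes \to \nfin$ encoding the monoidal structure on $\mathcal{A}$. Concretely, let $\widetilde Z$ be the class of morphisms of $\mathcal{A}^\otimes$ lying over identities in $\nfin$ whose components (under the Segal equivalence $\mathcal{A}^\otimes_{\langle n \rangle} \simeq \mathcal{A}^n$) are all in $Z$, and let $\mathcal{A}[Z^{-1}]^\otimes$ be the Dwyer--Kan localisation $\mathcal{A}^\otimes[\widetilde Z^{-1}]$. The two things to verify are that the induced map $\mathcal{A}[Z^{-1}]^\otimes \to \nfin$ remains a cocartesian fibration, and that the Segal condition $\mathcal{A}[Z^{-1}]^\otimes_{\langle n\rangle} \simeq \mathcal{A}[Z^{-1}]^n$ is preserved. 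Both of these reduce, via the $\langle n\rangle \to \langle 1\rangle$ inert maps and the active multiplication maps, exactly to the statement that cocartesian transport along the tensor product preserves the class $Z$ up to equivalence in the localisation, which is the monoidality hypothesis. By construction the induced functor $\mathcal{A}^\otimes \to \mathcal{A}[Z^{-1}]^\otimes$ is a map of cocartesian fibrations over $\nfin$, i.e.\ a symmetric monoidal functor, and it restricts on underlying $\infty$-categories to the localisation $L$.

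The main obstacle is the coherence in this last step: producing a single cocartesian fibration encoding a full symmetric monoidal structure (and not merely the tensor products with their uniquely determined comparison data) after localisation. In Day's original $1$-categorical setting this is circumvented by working with strict monoidal structures; in $\infty$-land the cleanest route is to invoke Lurie's general results on localisations compatible with monoidal structures (for example \cite[Prop.~2.2.1.9]{lurie-ha} or \cite[Prop.~4.1.7.4]{lurie-ha}), of which the present statement is essentially the symmetric-monoidal instance. Once the fibrational construction is in place, the symmetric monoidality of $\mathcal{A} \to \mathcal{A}[Z^{-1}]$ is a tautology.
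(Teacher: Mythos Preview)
Your argument is correct and matches the paper's approach: both verify that the composite $L \circ \otimes$ inverts morphisms in $Z \times Z$ (your decomposition into tensors with identities is precisely the ``straightforward check'' the paper alludes to) and then invoke the universal property of the localisation to obtain the dashed tensor product. Your additional discussion of the $\infty$-categorical coherence via localising the total cocartesian fibration goes beyond what is needed for this particular $1$-categorical statement; the paper handles that separately by citing Lurie's Proposition~2.2.1.9 rather than proving it here.
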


\begin{proof}[Proof sketch]
Under the assumption on $Z$, it is straightforward to check that the upper map in the diagram
\[\begin{tikzcd}
	{\mathcal{A} \times \mathcal{A}} && {\mathcal{A}} \\
	{\mathcal{A}[Z^{-1}] \times \mathcal{A}[Z^{-1}]} && {\mathcal{A}[Z^{-1}]}
	\arrow["{\otimes_\mathcal{A}}", from=1-1, to=1-3]
	\arrow[from=1-1, to=2-1]
	\arrow["{\otimes_{\mathcal{A}[Z^{-1}]}}", dashed, from=2-1, to=2-3]
	\arrow[from=1-3, to=2-3]
\end{tikzcd}\]
sends morphisms with components in $Z$ to equivalences, hence the universal property induces the required factorisation.
\end{proof}

In $\infty$-land, the analogous statement follows from \cite[Prop.~2.2.1.9]{lurie-ha}. In the literature, it appears as \cite[Lemma~3.4]{gepner-groth-nikolaus}. The (equivalent) version which we find most convenient appears in Eigil Rischel's bachelor thesis \cite[Section~8]{rischel}.

\begin{definition}[{\cite[Def.~2.2.1.6]{lurie-ha}}, {\cite[Def.~3.3]{gepner-groth-nikolaus}},{\cite[Def.~8.1, Rmk.~8.2]{rischel}}]
\label{compatible_localisation}
Let $\cC^\otimes$ be a symmetric monoidal $\infty$-category and $L \colon \cC \to \cC$ be a localisation of the underlying $\infty$-category. Then we say that $L$ is \textit{compatible with the symmetric monoidal structure} if the following equivalent conditions are satisfied:
\begin{itemize}
\item The tensor product of local equivalences\footnote{
Recall that morphism $f \colon C_1 \to C_2$ in $\cC$ is a local equivalence if $L(f)$ is an equivalence.
} is a local equivalence.
\item Whenever $f \colon X \to Y$ is a local equivalence, and $Z$ is any object of $\cC$, the map 
$$f \otimes \id_Z \colon X \otimes Z \to Y \otimes Z$$
is also a local equivalence.
\end{itemize}
\end{definition}

\begin{proposition}[{\cite[Prop.~2.2.1.9]{lurie-ha}}, {\cite[Lemma~3.4]{gepner-groth-nikolaus}}, {\cite[Prop.~8.3]{rischel}}]
\label{compatible_localisation_consequence}
Let $p \colon \cC^\otimes \to \nfin$ be a symmetric monoidal $\infty$-category and $L \colon \cC \to \cC$ a localisation of $\cC$ which is compatible with the symmetric monoidal structure. Let $(L\cC)^\otimes$ be the subcategory of $\cC^\otimes$ spanned by objects of the form $(LC_1, \ldots, LC_n)$. Then
\begin{itemize}
\item The restriction $p \colon (L\cC)^\otimes \to \nfin$ exhibits $(L\cC)^\otimes$ as a symmetric monoidal $\infty$-category with underlying $\infty$-category $L \cC$.
\item The inclusion $(L\cC)^\otimes \to \cC^\otimes$ admits a left adjoint $L^\otimes$ such that $(L^\otimes)_{|\cC} \simeq L$ and the unit $\eta$ of the adjuncton can be chosen such that $p(\eta) = 1$.
\item $L^\otimes$ is a (strict) symmetric monoidal functor.
\item The inclusion $(L\cC)^\otimes \subset \cC^\otimes$ is a lax symmetric monoidal functor.
\end{itemize}
\end{proposition}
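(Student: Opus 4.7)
The plan is to verify the four conclusions in turn, following Day's classical argument \cite{day-note-on-monoidal-localisations} upgraded to the $\infty$-categorical setting as in \cite[Prop.~2.2.1.9]{lurie-ha}. The compatibility hypothesis is what makes everything go through; the work is in checking the coCartesian structure survives restriction to local objects.

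First, I would check that the restriction $p \colon (L\cC)^\otimes \to \nfin$ is itself a coCartesian fibration satisfying the Segal condition. The condition $(L\cC)^\otimes_{\langle n\rangle} \simeq (L\cC)^n$ is immediate from $\cC^\otimes_{\langle n\rangle} \simeq \cC^n$ and the definition of $(L\cC)^\otimes$ as the full subcategory on tuples of local objects. The key point is to produce coCartesian lifts. For an active morphism $\alpha \colon \langle n\rangle \to \langle 1\rangle$ and an object $(LC_1,\ldots,LC_n) \in (L\cC)^\otimes$, take the coCartesian lift $(LC_1,\ldots,LC_n) \to LC_1 \otimes \cdots \otimes LC_n$ in $\cC^\otimes$ and post-compose with the unit $LC_1 \otimes \cdots \otimes LC_n \to L(LC_1 \otimes \cdots \otimes LC_n)$. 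That the composite is coCartesian in $(L\cC)^\otimes$ is exactly what the compatibility hypothesis guarantees: mapping into a further local object factors uniquely through the localization of the tensor product. Inert lifts are preserved trivially since they correspond to projection onto factors.

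Second, I would construct $L^\otimes$ on objects by $(C_1,\ldots,C_n) \mapsto (LC_1,\ldots,LC_n)$ and on morphisms by applying $L$ componentwise to the underlying maps $\bigotimes_{i \in \alpha^{-1}(j)} C_i \to D_j$ that describe a morphism in $\cC^\otimes$ over $\alpha$. The compatibility hypothesis ensures the induced maps factor through $L$, and the unit of the adjunction is the pointwise localisation unit, which lies over $\mathrm{id}_{\langle n \rangle}$. To see $L^\otimes$ is strictly symmetric monoidal, observe that by the previous step its image of any coCartesian edge is coCartesian: the coCartesian lift of an active $\alpha$ at $(C_1,\ldots,C_n)$ is $(C_i) \to \bigotimes C_i$, and its image is the composite $(LC_i) \to \bigotimes LC_i \to L(\bigotimes LC_i)$, which we identified above as the coCartesian lift in $(L\cC)^\otimes$. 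Finally, the inclusion $(L\cC)^\otimes \subset \cC^\otimes$ is lax symmetric monoidal by the general fact that a right adjoint to a symmetric monoidal functor of symmetric monoidal $\infty$-categories inherits a canonical lax symmetric monoidal structure (cf.\ \cite[Cor.~7.3.2.7]{lurie-ha}).

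The main obstacle is the $\infty$-categorical coherence in step one: checking the coCartesian lifting property rigorously amounts to a mapping space calculation, and one has to verify that composition with the unit $\bigotimes LC_i \to L(\bigotimes LC_i)$ genuinely produces the universal lift. This is precisely where the compatibility hypothesis is essential—any morphism into a tuple of local objects is automatically a local equivalence away from its image in $L\cC$, so the universal property of localisation produces the required unique factorisation. The remaining steps are then bookkeeping. A fully detailed treatment can be found in \cite[Sec.~8]{rischel} or \cite[Lemma~3.4]{gepner-groth-nikolaus}.
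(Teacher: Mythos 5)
Your proposal is correct and follows essentially the same strategy as the paper's own proof sketch, both being paraphrases of Lurie's argument in \cite[Prop.~2.2.1.9]{lurie-ha}: the compatibility hypothesis makes $\bigotimes LC_i \to L(\bigotimes LC_i)$ a localization unit, which yields both the coCartesian lifting in $(L\cC)^\otimes$ and the pointwise adjoint objects needed to produce $L^\otimes$. You present the steps in the reverse order (establishing the coCartesian fibration structure of $(L\cC)^\otimes$ first, then building $L^\otimes$, whereas the paper constructs $L^\otimes$ first and handles the coCartesian considerations afterward via \cite[Lemma~2.2.1.11]{lurie-ha}), and you dispatch the last bullet by invoking the general fact that a right adjoint to a symmetric monoidal functor is lax symmetric monoidal, but these are cosmetic rather than substantive differences.
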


\begin{proof}[Proof sketch]
The key step is to construct the left adjoint $L^\otimes \colon \cC^\otimes \to (L\cC)^\otimes$. This uses that $\infty$-categorical adjoints exist once each object admits an adjoint object\footnote{
This is where the coherences required for a functor of $\infty$-categories are taken care of. See e.g.\ Proposition 6.1.11 of \cite{cisinski-book}.
}. For an object $(C_1, \ldots, C_k) \in \cC^\otimes$,
$$(\eta_{C_1}, \ldots, \eta_{C_k}) \colon (C_1, \ldots, C_k) \to (LC_1, \ldots, LC_k)$$
(where $\eta$ is the unit of the original adjunction)
is a localisation with respect to $(L\cC)^\otimes$, which can be checked by considering an arbitrary object $(Y_1, \ldots, Y_l)$ and checking that the induced map
$$ \Map_{\cC^\otimes}((LC_1, \ldots, LC_k),(Y_1, \ldots, Y_l)) \to \Map_{\cC^\otimes}((C_1, \ldots, C_k),(Y_1, \ldots, Y_l))$$
is an equivalence, which can be checked componentwise, where it follows from the assumption. \\
The remainder of the proof consists of some considerations of coCartesian morphisms and fibrations, using \cite[Lemma~2.2.1.11]{lurie-ha}.
\end{proof}

\begin{corollary}[{\cite[Lemma~3.6, Rmk.~3.7]{gepner-groth-nikolaus}}, , {\cite[Cor.~8.6, Cor.~8.8]{rischel}}]
Let $\cC^\otimes$ be a symmetric monoidal $\infty$-category equipped with a symmetric monoidal localisation $L \colon \cC \to L\cC$ and let $R \colon L\cC \to \cC$ be the right adjoint. Then there is an induced localisation $L' \colon \calg(\cC) \to \calg(L\cC)$ such that the diagram 
\[\begin{tikzcd}
	{\calg(\cC)} & {\calg(L\cC)} \\
	\cC & L\cC
	\arrow["L", from=2-1, to=2-2]
	\arrow["{L'}", from=1-1, to=1-2]
	\arrow[from=1-1, to=2-1]
	\arrow[from=1-2, to=2-2]
\end{tikzcd}\]
commutes.
Moreover, given $A \in \calg(\cC)$, there exists a unique commutative algebra structure on $RLA$ such that the unit map $A \to RLA$ extends to a morphism of commutative algebras.
\end{corollary}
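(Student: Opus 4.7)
The plan is to apply $\calg(-)$ functorially to the symmetric monoidal adjunction produced by \cref{compatible_localisation_consequence}. That proposition gives us a strict symmetric monoidal functor $L^\otimes \colon \cC^\otimes \to (L\cC)^\otimes$ whose right adjoint is the lax symmetric monoidal inclusion $R^\otimes \colon (L\cC)^\otimes \hookrightarrow \cC^\otimes$. Both are $\infty$-operad maps, so postcomposition with them preserves commutative algebra objects (viewed as sections $\nfin \to \cC^\otimes$ that carry inert morphisms to coCartesian ones), yielding functors
$$L' \colon \calg(\cC) \to \calg(L\cC), \qquad R' \colon \calg(L\cC) \to \calg(\cC).$$
The commutativity of the stated square is then immediate, since the forgetful functor $\calg(-) \to (-)$ is evaluation of a section at $\langle 1 \rangle$, and both $L^\otimes$ and $R^\otimes$ restrict on fibres over $\langle 1 \rangle$ to $L$ and $R$ respectively by construction.

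Next I would verify that $L'$ is left adjoint to $R'$. The cleanest approach is to lift the unit $\eta \colon \id_\cC \to R L$: for an algebra $A \colon \nfin \to \cC^\otimes$, whiskering with the unit of the underlying operadic adjunction produces a natural transformation $A \to R^\otimes L^\otimes \circ A = R' L' A$, which is a map in $\calg(\cC)$ with underlying arrow $\eta_A$. The universal property then follows componentwise from that of $\eta$, since the mapping spaces in $\calg$ are computed fibrewise over $\nfin$ and $L$ is a localisation in each coordinate. Equivalently, one can invoke the $\infty$-categorical version of the adjoint lifting theorem for (lax) symmetric monoidal adjunctions, as spelled out in \cite[\S7.3.2]{lurie-ha}. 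I expect this lifting step to be the main technical hurdle, since it requires keeping track of the higher coherences that a $1$-categorical proof would sweep under the rug.

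Fully-faithfulness of $R'$, which shows that $L'$ really exhibits $\calg(L\cC)$ as a localisation of $\calg(\cC)$, reduces to fully-faithfulness of $R$: a morphism of commutative algebras is a morphism of sections into $\cC^\otimes$, and such morphisms between sections landing in $(L\cC)^\otimes$ are the same whether computed in $\cC^\otimes$ or $(L\cC)^\otimes$ because the inclusion is fibrewise fully faithful.

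For the moreover clause, $R'L'A \in \calg(\cC)$ has underlying object $RLA$, equipped with the algebra structure transported through $L^\otimes$ and then pulled back along $R^\otimes$, and the unit $A \to R'L'A$ has underlying arrow $\eta_A \colon A \to RLA$. For uniqueness, suppose $B \in \calg(\cC)$ is some algebra with underlying object $RLA$ and that $\tilde{\eta} \colon A \to B$ is an algebra map lying over $\eta_A$. Since $RLA$ is in the essential image of $R$, the algebra $B$ is $R'$-local, so $\tilde{\eta}$ factors uniquely through the unit $A \to R'L'A$, and the resulting comparison $R'L'A \to B$ is a map of algebras whose underlying arrow is an equivalence in $\cC$ — hence an equivalence of algebras.
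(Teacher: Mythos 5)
Your proposal is correct and takes essentially the same approach as the paper: construct $L'$ (and $R'$) by postcomposing sections $\nfin \to \cC^\otimes$ with the $\infty$-operad maps $L^\otimes$ and $R^\otimes$ from \cref{compatible_localisation_consequence}, and deduce the uniqueness statement from the universal property of the unit together with the fact that an algebra map with equivalence underlying arrow is an equivalence. You spell out the adjunction $L' \dashv R'$ and full faithfulness of $R'$ in more detail than the paper, which merely asserts these (citing Gepner--Groth--Nikolaus and Rischel); both the unit-whiskering route and the invocation of \cite[\S7.3.2]{lurie-ha} are reasonable ways to close that gap.
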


\begin{proof}
An $E_\infty$-algebra in $\cC$ is given by a section of $p \colon \cC^\otimes \to \nfin$. Composing such a section with $\cC^\otimes \to (L\cC)^\otimes$ gives a section of $(L\cC)^\otimes \to \nfin$, i.e.\ an $E_\infty$-algebra in $L\cC$. \\
The second part follows from the fact that a map of algebras is an equivalence if the underlying map is an equivalence and the universal property of the unit.
\end{proof}

Now that we have stated the rigorous details, let's step back for a moment to reflect on what we just said.

\begin{remark}[Informal summary]
If a localisation $L \colon \cC \to L\cC$ and a monoidal structure $\otimes_\cC$ on $\cC$ are compatible in the sense that local equivalences are closed under the tensor product, then:
\begin{itemize}
\item There is a symmetric monoidal structure on $L\cC$ given by
$$LC_1 \otimes_{L\cC} LC_2 := L(LC_1 \otimes_\cC LC_2)$$
\item The lax monoidal structure on $\iota \colon L\cC \to \cC$ is given by the unit of the adjunction, i.e.\ by the maps
$$\eta_{LC_1 \otimes_\cC LC_2} \colon LC_1 \otimes_\cC LC_2 \to L(LC_1 \otimes_\cC LC_2) \simeq LC_1 \otimes_{L\cC} LC_2$$
\item $L \colon \cC \to L\cC$ is (strict) symmetric monoidal, as under the hypothesis, there are equivalences
$$L(C_1 \otimes_\cC C_2) \to L(LC_1 \otimes_\cC LC_2) \simeq LC_1 \otimes_{L\cC} LC_2 $$
\end{itemize}
\end{remark}

\section{A Monoidal Localisation Theorem for Taylor Approximations in Orthogonal Calculus}
\sectionmark{Monoidal Localisation Theorem}
\label{section_on_monoidal_localisation_theorem}

Recall that Orthogonal Calculus provides left-exact localisations

\[\begin{tikzcd}
	{\fun(\cj,\cs)} & {\poly^{\leq n}(\cj,\cs)}
	\arrow[""{name=0, anchor=center, inner sep=0}, "{T_n}", shift left=2, from=1-1, to=1-2]
	\arrow[""{name=1, anchor=center, inner sep=0}, "\iota", shift left=2, hook', from=1-2, to=1-1]
	\arrow["\dashv"{anchor=center, rotate=-90}, draw=none, from=0, to=1]
\end{tikzcd}\]

Our original goal was to show that whenever $F \colon \cj \to \cs$ is a lax monoidal functor (for example $V \mapsto \text{O}(V)$ or $V \mapsto \text{TOP}(V)$), so is $T_nF$. In light of \ref{infty-day-convolution}, it is enough to show that $T_n \colon \fun(\cj,\cs) \to \fun(\cj,\cs)$ is lax monoidal with respect to Day convolution, as a lax monoidal functor sends monoids to monoids. Hence, our theorem now takes the following form

\begin{theorem}
\label{orthogonal_monoidality_theorem}
The $n$-polynomial approximation functors 
$$T_n \colon \fun(\cj,\cs) \to \poly^{\leq n}(\fun(\cj,\cs))$$
are (strict) symmetric monoidal with respect to the Day convolution monoidal structure on $\fun(\cj,\cs)$ and an induced monoidal structure on $\poly^{\leq n}(\fun(\cj,\cs))$.
The inclusion
$$\iota \colon \poly^{\leq n}(\fun(\cj,\cs)) \to \fun(\cj,\cs)$$
 is lax symmetric monoidal. The composition\footnote{
which is often just called $T_n$ as well
 }
$$\iota \circ T_n \colon \fun(\cj,\cs) \to \fun(\cj,\cs)$$
is lax symmetric monoidal.
\end{theorem}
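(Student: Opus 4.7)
The plan is to invoke the abstract monoidal localisation package assembled in \cref{subsection-on-general-monoidal-localisations}, in particular \cref{compatible_localisation_consequence}, applied to the reflective localisation $T_n \dashv \iota$ provided by \cref{infty_oc_taylor_tower}. Once I verify that this localisation is compatible with the Day convolution symmetric monoidal structure in the sense of \cref{compatible_localisation}, the proposition delivers all three conclusions at once: a symmetric monoidal structure on $\poly^{\leq n}(\cj,\cs)$, strict symmetric monoidality of $T_n$, and lax symmetric monoidality of $\iota$. The composite $\iota \circ T_n$ is then lax symmetric monoidal as the composition of a strict with a lax symmetric monoidal functor.

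The actual content to prove is thus the compatibility condition: whenever $f \colon X \to Y$ is a local equivalence (i.e.\ $T_n f$ is an equivalence) and $Z \in \fun(\cj,\cs)$ is arbitrary, the map $f \conv \id_Z \colon X \conv Z \to Y \conv Z$ is again a local equivalence. By the universal property of the reflective localisation of \cref{infty_oc_taylor_tower}, local equivalences are exactly the maps inverted by $\Nat(-,P)$ for every $n$-polynomial $P$, so it suffices to show that
$$\Nat(Y \conv Z, P) \to \Nat(X \conv Z, P)$$
is an equivalence for every such $P$.

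Here is where the internal hom does the work. Using the adjunction $- \conv Z \dashv \inthom(Z,-)$ from \cref{internal_hom}, the displayed arrow is equivalent to
$$\Nat(Y, \inthom(Z,P)) \to \Nat(X, \inthom(Z,P)).$$
Now the crucial input is \cref{internal_hom_remains_poly}: since $P$ is $n$-polynomial, so is $\inthom(Z,P)$. Because $f$ was assumed to be a local equivalence, the universal property of $T_n$ forces this induced map to be an equivalence of spaces, and hence $f \conv \id_Z$ is a local equivalence as required.

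The heavy lifting has therefore already been carried out in \cref{internal_hom_remains_poly}; what remains is pure assembly of the pieces. The only point where I would tread carefully is the identification of local equivalences with maps inverted by mapping into the reflective subcategory, but this is a standard consequence of the adjunction $T_n \dashv \iota$. With compatibility verified, \cref{compatible_localisation_consequence} finishes the proof, and its corollary about commutative algebras then yields the motivating consequence that any lax symmetric monoidal $F \colon \cj \to \cs$ has $T_n F$ canonically lax symmetric monoidal.
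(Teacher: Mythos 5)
Your proof is correct and follows essentially the same path as the paper: reduce to the compatibility condition of \cref{compatible_localisation}, then verify it via the adjunction $- \conv Z \dashv \inthom(Z,-)$ together with \cref{internal_hom_remains_poly}, finishing with \cref{compatible_localisation_consequence}. The only cosmetic difference is that the paper first reduces to the unit maps $\eta_A$ before doing the Yoneda/internal-hom calculation, whereas you verify the condition for an arbitrary local equivalence directly; the core mechanism is identical.
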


\begin{corollary}
\label{approx_of_monoidal_functors}
The $n$-polynomial approximation $T_nF \colon \cj \to \cs$ of a lax symmetric monoidal functor $F \colon \cj \to \cs$ is lax symmetric monoidal.
\end{corollary}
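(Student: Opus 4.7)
The plan is to deduce this corollary as a direct formal consequence of \cref{orthogonal_monoidality_theorem} together with the identification of lax symmetric monoidal functors with commutative algebras for the Day convolution, which was recorded in \cref{infty-day-convolution}. Concretely, the equivalence
$$\calg(\fun(\cj,\cs)^\conv) \simeq \fun^{\text{lax symm.\ monoidal}}(\cj,\cs)$$
lets us translate the statement \enquote{$T_nF$ is lax symmetric monoidal whenever $F$ is} into \enquote{the functor $T_n$ takes commutative algebra objects to commutative algebra objects}, which is a standard feature of lax symmetric monoidal functors.

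First I would unpack the hypothesis: by \cref{infty-day-convolution}, a lax symmetric monoidal $F \colon \cj \to \cs$ corresponds to a commutative algebra object $F \in \calg(\fun(\cj,\cs)^\conv)$, consisting of a coherent system of multiplication maps $F \conv F \to F$ and a unit $\mathbf{1} \to F$. Next, by \cref{orthogonal_monoidality_theorem}, the composite $\iota \circ T_n \colon \fun(\cj,\cs) \to \fun(\cj,\cs)$ carries a lax symmetric monoidal structure, whose structure maps are the components of the universal natural transformation
$$(\iota T_n F) \conv (\iota T_n G) \to \iota T_n (F \conv G)$$
provided by that theorem (equivalently, by the general machinery of \cref{compatible_localisation_consequence} applied to the compatible localisation $T_n$). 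Since any lax symmetric monoidal functor of symmetric monoidal $\infty$-categories induces a functor between the corresponding $\infty$-categories of commutative algebra objects, we obtain an induced functor
$$(\iota \circ T_n)_* \colon \calg(\fun(\cj,\cs)^\conv) \to \calg(\fun(\cj,\cs)^\conv),$$
and the image of $F$ under this functor is $T_nF$ equipped with a natural commutative algebra structure.

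Translating back through the equivalence of \cref{infty-day-convolution} yields the desired lax symmetric monoidal structure on $T_nF$. There is no substantial obstacle once \cref{orthogonal_monoidality_theorem} is in hand; the only thing worth checking carefully is that the commutative algebra structure we produce is indeed on $T_nF$ and not on some other functor in its equivalence class, which is immediate because $(\iota \circ T_n)_*$ lifts $\iota \circ T_n$ along the forgetful functor $\calg(\fun(\cj,\cs)^\conv) \to \fun(\cj,\cs)$. The hard work lies entirely in proving \cref{orthogonal_monoidality_theorem} itself, which is why the corollary reduces to a brief formal argument.
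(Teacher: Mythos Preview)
Your proposal is correct and follows essentially the same approach as the paper: use \cref{infty-day-convolution} to identify lax symmetric monoidal functors with commutative algebras for Day convolution, then invoke \cref{orthogonal_monoidality_theorem} to conclude that the lax symmetric monoidal functor $\iota \circ T_n$ preserves such algebras. The only difference is that you spell out the induced functor on $\calg$ and the compatibility with the forgetful functor a bit more explicitly than the paper does.
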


Using what we explained in the previous section, it is sufficient to show that the localisation $T_n$ is compatible with the Day convolution monoidal structure on $\fun(\cj,\cs)$ in the sense of \cref{compatible_localisation}.

\begin{proposition}
\label{orthogonal_monoidal_compatibility}
$T_n$ is compatible with the Day convolution monoidal structure on $\fun(\cj,\cs)$ in the sense of \cref{compatible_localisation}.
\end{proposition}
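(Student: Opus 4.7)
The plan is to verify the second (equivalent) condition in Definition \ref{compatible_localisation}: for every local equivalence $f \colon X \to Y$ (i.e.\ a morphism with $T_n f$ an equivalence) and every $Z \in \fun(\cj,\cs)$, the morphism $f \conv \id_Z \colon X \conv Z \to Y \conv Z$ is again a local equivalence. Since $T_n$ is the reflection onto $\poly^{\leq n}(\cj,\cs)$, a morphism $g$ is a local equivalence if and only if $\Nat(g, P)$ is an equivalence for every $P \in \poly^{\leq n}(\cj,\cs)$. So the claim reduces to showing that $\Nat(f \conv Z, P)$ is an equivalence for every such $P$.

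The key step is to bring the internal hom of Corollary \ref{internal_hom} into play. By the adjunction $(- \conv Z) \dashv \inthom(Z,-)$, there are natural equivalences
\begin{align*}
\Nat(Y \conv Z, P) &\simeq \Nat(Y, \inthom(Z,P)), \\
\Nat(X \conv Z, P) &\simeq \Nat(X, \inthom(Z,P)),
\end{align*}
and the map $\Nat(f \conv Z, P)$ is identified with $\Nat(f, \inthom(Z,P))$. So it suffices to know that $\inthom(Z,P)$ is itself $n$-polynomial whenever $P$ is, because then we can invoke the assumption that $f$ is a local equivalence applied to the $n$-polynomial functor $\inthom(Z,P)$.

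This last ingredient is exactly the content of Proposition \ref{internal_hom_remains_poly}, which guarantees that $\inthom(Z, P) \in \poly^{\leq n}(\cj,\cs)$. Combining the three ingredients, the diagram
\[\begin{tikzcd}
\Nat(Y \conv Z, P) \arrow[r, "\simeq"] \arrow[d] & \Nat(Y, \inthom(Z,P)) \arrow[d, "\simeq"] \\
\Nat(X \conv Z, P) \arrow[r, "\simeq"] & \Nat(X, \inthom(Z,P))
\end{tikzcd}\]
shows that the left vertical map is an equivalence, which is what we needed. Hence $f \conv \id_Z$ is a local equivalence, establishing the compatibility.

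There is no real obstacle to surmount here, since all of the heavy lifting has already been done in Sections \ref{subsection-on-day-convolution} and \ref{subsection_on_internal_hom}. The only genuine input is Proposition \ref{internal_hom_remains_poly}; once that is available, the internal-hom adjunction mechanically converts the problem into a statement about maps into a polynomial functor, where the definition of local equivalence concludes the argument. Together with Proposition \ref{compatible_localisation_consequence}, this proposition then yields Theorem \ref{orthogonal_monoidality_theorem}.
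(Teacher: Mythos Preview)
Your proof is correct and follows essentially the same route as the paper: reduce via the internal hom adjunction to a statement about mapping into $\inthom(Z,P)$, and then invoke Proposition~\ref{internal_hom_remains_poly}. The only cosmetic difference is that the paper first reduces to the special local equivalence $\eta_A \colon A \to T_nA$ before applying the adjunction, whereas you handle an arbitrary local equivalence $f$ directly; your version is marginally more streamlined, but the core argument is identical.
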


The proof of \cref{orthogonal_monoidal_compatibility} will be given in the next section. Assuming it for now, the main theorem follows easily:

\begin{proof}[Proof of \cref{orthogonal_monoidality_theorem} assuming compatibility of $T_n$ and Day convolution]
By \cref{orthogonal_monoidal_compatibility}, the localisation
\[\begin{tikzcd}
	{\fun(\cj,\cs)} & {\poly^{\leq n}(\cj,\cs)}
	\arrow[""{name=0, anchor=center, inner sep=0}, "{T_n}", shift left=2, from=1-1, to=1-2]
	\arrow[""{name=1, anchor=center, inner sep=0}, "\iota", shift left=2, hook', from=1-2, to=1-1]
	\arrow["\dashv"{anchor=center, rotate=-90}, draw=none, from=0, to=1]
\end{tikzcd}\]
satisfies the conditions of \ref{compatible_localisation_consequence}. As a consequence, $\text{Poly}^{\leq n}(\cj,\cs)$ inherits a symmetric monoidal structure such that $T_n \colon \fun(\cj,\cs) \to \text{Poly}^{\leq n}(\cj,\cs)$ is symmetric monoidal, the inclusion $\iota \colon \text{Poly}^{\leq n}(\cj,\cs) \to \fun(\cj,\cs)$ is lax symmetric monoidal, $\iota \circ T_n \colon \fun(\cj,\cs) \to \fun(\cj,\cs)$ is lax symmetric monoidal. 
\end{proof}

\begin{proof}[Proof of \cref{approx_of_monoidal_functors}]
There is an equivalence between lax symmetric monoidal functors $\cj \to \cs$ and monoids in the Day convolution symmetric monoidal category (\cref{infty-day-convolution}). By \cref{orthogonal_monoidality_theorem}, $\iota \circ T_n$ is lax symmetric monoidal, hence maps monoids to monoids. The diagram looks as follows.
\[\begin{tikzcd}
	{\calg(\fun(\cj,\cs))} & {\calg(\text{Poly}^{\leq n}(\cj,\cs))} & {\calg(\fun(\cj,\cs))} \\
	{\fun^\text{lax monoidal}(\cj,\cs)} && {\fun^\text{lax monoidal}(\cj,\cs)}
	\arrow["\simeq"{description}, draw=none, from=1-1, to=2-1]
	\arrow["\simeq"{description}, draw=none, from=1-3, to=2-3]
	\arrow["{\iota \circ T_n}", from=2-1, to=2-3]
	\arrow["{T_n}", from=1-1, to=1-2]
	\arrow["\iota", from=1-2, to=1-3]
\end{tikzcd}\]
\end{proof}

\begin{proof}[Proof of \cref{orthogonal_monoidal_compatibility} (Compatibility)] $ $ \\
\textbf{Step 1: Reduction to $\eta_A \colon A \to T_nA$} \\
First, we claim that it is sufficient to show that for any $A, C \in \fun(\cj,\cs)$, the transformation
$$\eta_A \conv \id_C \colon A \conv C \to T_nA \conv C$$ 
is an $n$-polynomial equivalence\footnote{One might also call such a map a $T_n$-local equivalence.}, i.e.\ $T_n(\eta_A \conv \id_C)$ is an equivalence. \\
To see that, let $f \colon A \to B$ be an arbitrary $n$-polynomial equivalence. Consider the diagram
\[\begin{tikzcd}
	{A \conv C} && {B \conv C} \\
	{T_nA \conv C} && {T_nB \conv C}
	\arrow["{f \conv \id_C}", from=1-1, to=1-3]
	\arrow["{\eta_A \conv \id_C}"', from=1-1, to=2-1]
	\arrow["{\eta_B \conv \id_C}", from=1-3, to=2-3]
	\arrow["{T_n(f) \conv \id_C}", from=2-1, to=2-3]
\end{tikzcd}\]
The lower horizontal morphism is an equivalence to begin with, hence the upper morphism is sent to an equivalence by $T_n$ if the same is true for the left and the right one. \\
\textbf{Step 2: Yoneda} \\
Let again $A, C \in \fun(\cj,\cs)$. By Yoneda in $\poly^{\leq n}(\cj,\cs)$, it suffices to show that for any $P \in \poly^{\leq n}(\cj,\cs)$, the upper map in
\[\begin{tikzcd}[column sep = huge]
	{\Map(T_nA \conv C, P)} & {\Map(A \conv C, P)} \\
	{\Map(T_n(T_nA \conv C),P)} & {\Map(T_n(A \conv C),P)}
	\arrow["{(\eta_A \conv \id_C)^*}", from=1-1, to=1-2]
	\arrow["{(T_n(\eta_A \conv \id_C))^*}", from=2-1, to=2-2]
	\arrow["\simeq", from=2-2, to=1-2, sloped]
	\arrow["\simeq", from=2-1, to=1-1, sloped]
\end{tikzcd}\]
is an equivalence. \\

\textbf{Step 3: }$(\eta_A \conv \id_C)^*$ \textbf{is an equivalence:} \\

What remains to be done is the following simple calculation using the internal hom adjunction from \cref{internal_hom}, \cref{internal_hom_remains_poly} and the adjunction between $T_n$ and the inclusion of polynomial functors.
\begin{align*}
\Nat(A \conv C, P) &\simeq \Nat(A, \inthom(C,P)) \\
					&\simeq \Nat(T_nA, \inthom(C,P)) \\
					&{ \text{ \small(as } \inthom(C,P) \text{ is polynomial by \cref{internal_hom_remains_poly})}} \\
					&\simeq \Nat(T_nA \conv C,P)
\end{align*}
where in the second step we used that $\inthom(C,P)$ is polynomial by \cref{internal_hom_remains_poly}.

\end{proof}

\begin{proposition}
\cref{orthogonal_monoidality_theorem} also holds for functors valued in pointed spaces, equipped with the smash product. 
\end{proposition}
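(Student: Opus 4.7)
The plan is to carry over the argument of \cref{orthogonal_monoidality_theorem} essentially verbatim, with $(\cs,\times)$ replaced by $(\csp,\wedge)$ throughout. The point is that all the structural ingredients used in the unpointed proof carry over because $\csp$ is itself a cocomplete closed symmetric monoidal $\infty$-category under the smash product. More precisely, I would first record that:
\begin{itemize}
\item By \cref{infty-day-convolution} applied to $\cj^\oplus$ and $(\csp,\wedge)$, the category $\fun(\cj,\csp)$ carries a Day convolution symmetric monoidal structure whose commutative algebra objects are precisely the lax symmetric monoidal functors $\cj \to \csp$.
\item Since smash product of pointed spaces preserves colimits in each variable (as $\csp$ is closed monoidal), the proof of \cref{convolution-commutes-with-colimits} goes through word for word to show that $\conv$ on $\fun(\cj,\csp)$ commutes with colimits in each variable. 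In particular the adjoint functor theorem again provides an internal hom functor $\inthom \colon \fun(\cj,\csp)^{\text{op}} \times \fun(\cj,\csp) \to \fun(\cj,\csp)$.
\item By definition a functor $\cj \to \csp$ is $n$-polynomial iff its composition with the forgetful $\csp \to \cs$ is. Since the forgetful functor preserves limits, the argument of \cref{internal_hom_remains_poly} carries over: writing $\sh_U^*$ for precomposition with $V \mapsto V \oplus U$ and $(\sh_U)_!$ for its left adjoint (which exists because $\csp$ is cocomplete), the pointwise formula $\inthom(A,B)(X) \simeq \Nat(A,\sh_X^*B)$ of \cref{pointwise_formula_for_internal_hom} and the identity $(\sh_U)_!A \conv B \simeq (\sh_U)_!(A \conv B)$ both follow from the same Yoneda and adjunction arguments. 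Hence $\inthom(A,P)$ is $n$-polynomial whenever $P$ is.
\end{itemize}

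Next I would verify compatibility of the localisation $T_n \colon \fun(\cj,\csp) \to \poly^{\leq n}(\cj,\csp)$ with Day convolution, in the sense of \cref{compatible_localisation}. This is the exact analogue of \cref{orthogonal_monoidal_compatibility}, and the three-step proof from the unpointed case applies without change: reduce to showing that $\eta_A \conv \id_C \colon A \conv C \to T_nA \conv C$ is an $n$-polynomial equivalence for all $A,C$; by the Yoneda lemma in $\poly^{\leq n}(\cj,\csp)$ this reduces to showing that for every $P \in \poly^{\leq n}(\cj,\csp)$ the induced map $\Nat(T_nA \conv C,P) \to \Nat(A \conv C,P)$ is an equivalence; and this follows from the internal-hom adjunction combined with the fact that $\inthom(C,P)$ is polynomial, together with the adjunction $T_n \dashv \iota$ from the pointed version of \cref{infty_oc_taylor_tower}.

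With compatibility in hand, \cref{compatible_localisation_consequence} immediately produces an induced symmetric monoidal structure on $\poly^{\leq n}(\cj,\csp)$ for which $T_n$ is strict symmetric monoidal and $\iota$ is lax symmetric monoidal, so the pointed analogue of \cref{orthogonal_monoidality_theorem} holds. The corresponding analogue of \cref{approx_of_monoidal_functors} then follows by the same diagram chase, using that lax symmetric monoidal functors correspond to commutative algebra objects for Day convolution on $\fun(\cj,\csp)$.

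I do not expect a serious obstacle: the argument is entirely formal, and the only thing that needs genuine checking is that the formal properties invoked for $(\cs,\times)$ (cocompleteness, closed symmetric monoidal structure, preservation of limits by the forgetful functor, and the reflective subcategory structure on $n$-polynomial functors) remain valid for $(\csp,\wedge)$. If anything subtle arises, it would be in the verification that the internal hom $\inthom$ in the pointed setting really behaves symmetrically with respect to the shift functors $\sh_U^*$ and $(\sh_U)_!$, but this is again a formal consequence of Yoneda and the fact that $\sh_U^*$ admits a pointed left adjoint computed by pointed colimits.
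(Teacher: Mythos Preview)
Your proposal is correct and follows essentially the same approach as the paper: the paper's proof is a brief sketch listing the four key ingredients (Day convolution commutes with colimits, existence of the internal hom, \cref{internal_hom_remains_poly}, and the compatibility check) and noting that they all carry over because $(\csp,\wedge)$ is closed monoidal, which is exactly what you spell out in detail. Your write-up is more thorough than the paper's sketch, but the strategy and the structural facts invoked are identical.
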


\begin{proof}[Proof sketch]

Recall the key steps of the proof
\begin{enumerate}
\item \cref{convolution-commutes-with-colimits}: Day Convolution commutes with colimits
\item \cref{internal_hom}: The internal hom functor exists.
\item \cref{internal_hom_remains_poly}: The internal hom into a $n$-polynomial functor is $n$-polynomial.
\item \cref{section_on_monoidal_localisation_theorem}: We check the compatibility condition of  \cref{compatible_localisation} to conclude via \cref{compatible_localisation_consequence} that the localisation is monoidal.
\end{enumerate}

They are all readily adapted. In the first step, we use that $(\csp, \wedge)$ is closed monoidal. The other steps are then quite formal and work the same as for $\cs$.

\end{proof}

The same proof cannot work for the categories of polynomial functors valued in pointed spaces equipped with the cartesian product, as the cartesian product in $\csp$ and thus the Day convolution product in $\fun(\cj,\csp)$ do not commute with colimits and hence cannot admit an internal hom functor. \\

However, the same result is still true.

\begin{proposition}
\cref{orthogonal_monoidality_theorem} also holds for functors valued in pointed spaces, equipped with the cartesian product.
\end{proposition}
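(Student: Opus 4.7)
The plan is to reduce the cartesian case to the already established case of $\cs$-valued functors by exploiting the forgetful functor $U \colon \csp \to \cs$. Let $U_* \colon \fun(\cj, \csp) \to \fun(\cj, \cs)$ denote postcomposition with $U$. The relevant properties of $U$ are that it preserves all limits (being right adjoint to $(-)_+$), preserves cartesian products on the nose (products in $\csp$ are products in $\cs$ equipped with the canonical basepoint), preserves colimits indexed by weakly contractible diagrams (in particular filtered diagrams and diagrams with an initial object), and reflects equivalences.

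First I would verify two compatibilities. Compatibility with $T_n$: the construction in \cref{definition_of_T_n} only involves limits and sequential colimits, both preserved by $U$, giving a natural equivalence $U_* \circ T_n^\csp \simeq T_n^\cs \circ U_*$. Compatibility with Day convolution: the pointwise formula of \cref{pointwise-day-convolution} presents
$$(F \conv^\csp G)(V) = \colim^\csp_{V_1 \oplus V_2 \to V} F(V_1) \times^\csp G(V_2)$$
as a colimit over the comma category $(\cj \times \cj) \times_\cj \cj_{/V}$. This category admits $(0, 0, 0 \to V)$ as an initial object and is therefore weakly contractible, so $U$ preserves the colimit; combined with $U$ preserving products, one obtains a natural equivalence $U_*(F \conv^\csp G) \simeq U_* F \conv^\cs U_* G$. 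Thus $U_*$ is strong symmetric monoidal with respect to the cartesian Day convolutions on source and target.

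Next I would verify the compatibility hypothesis of \cref{compatible_localisation}. Let $f \colon F_1 \to F_2$ be a $T_n^\csp$-local equivalence and let $G \in \fun(\cj, \csp)$ be arbitrary. Since $U_*$ commutes with $T_n$ and reflects equivalences, $U_* f$ is a $T_n^\cs$-local equivalence. By the already-established $\cs$-valued case of \cref{orthogonal_monoidal_compatibility}, the morphism $U_* f \conv^\cs \id_{U_* G}$ is a $T_n^\cs$-local equivalence. Combining the two compatibilities above, this rewrites as $U_* T_n^\csp(f \conv^\csp \id_G)$ being an equivalence, whence $T_n^\csp(f \conv^\csp \id_G)$ is an equivalence by reflection. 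Therefore $f \conv^\csp \id_G$ is a $T_n^\csp$-local equivalence, and \cref{compatible_localisation_consequence} produces the desired (strict) symmetric monoidal structure on $T_n^\csp$ together with a lax symmetric monoidal inclusion of $\poly^{\leq n}(\cj, \csp)$.

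The main subtlety is in verifying that $U_*$ is strong symmetric monoidal: unlike the closed-monoidal situation used for the previous cases, here we cannot invoke an internal hom, and instead must check that the Day convolution colimit is well-behaved under $U$. This reduces to the modest geometric input of identifying $(0, 0, 0 \to V)$ as an initial object of the comma category indexing the left Kan extension. Once that is secured, the remainder is formal bookkeeping that leverages the already-proven $\cs$-valued theorem.
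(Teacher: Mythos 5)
Your proof is correct and follows essentially the same route as the paper: reduce to the already-established $\cs$-valued case by means of the forgetful functor $U \colon \csp \to \cs$, exploiting that it is conservative and compatible with both $T_n$ and the cartesian Day convolution. You are more explicit than the paper about \emph{why} $U_*$ is strong symmetric monoidal here (the indexing comma category of the Day convolution colimit has $(0,0,0\to V)$ as an initial object, hence is weakly contractible, so the forgetful functor preserves the colimit), whereas the paper simply asserts that the cartesian product of pointed spaces is the product of underlying spaces with a basepoint and leaves the commutation with the colimit implicit.
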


\begin{proof} 

This follows relatively easily from \cref{orthogonal_monoidality_theorem}. Namely, consider the localisation
\[\begin{tikzcd}
	{\poly^{\leq n}(\cj,\csp)} && {\fun(\cj,\csp)}
	\arrow[""{name=0, anchor=center, inner sep=0}, "{T_n}"', curve={height=12pt}, from=1-3, to=1-1]
	\arrow[""{name=1, anchor=center, inner sep=0}, "\iota"', curve={height=12pt}, from=1-1, to=1-3]
	\arrow["\dashv"{anchor=center, rotate=-90}, draw=none, from=0, to=1]
\end{tikzcd}\]

By \cref{compatible_localisation_consequence}, just like in the proof of \cref{orthogonal_monoidality_theorem}, it is sufficient to check that for $F,G \in \fun(\cj,\csp)$, $T_n(\eta_F \conv \text{id}_G)$ is an equivalence (as a natural transformation of functors to pointed spaces). But as the cartesian product of pointed spaces is just the cartesian product of the corresponding spaces (equipped with the obvious basepoint), we have already shown in \cref{orthogonal_monoidality_theorem} that it is an equivalence of functors to spaces. To conclude, we remember that a map between pointed spaces which is an unpointed equivalence is a pointed equivalence.
\end{proof}

\chapter{Monoidality and Derivative Spectra}
\label{monoidal_oc_spectra}
Finally, we are ready to come back to the question that started this endeavour, namely

\begin{question}
Let $F \colon \cj \to \cs$ be a monoidal functor. To what structure on the $n$-th derivative spectra $\Theta^nF$ does the multiplication map
$$F \conv F \to F$$
correspond?
\end{question}

At the end of this chapter, we will have our answer.

\begin{answer}[\cref{final_derivative_spectra_result}]
Let $F \colon \cj \to \cs$ be a monoidal functor. Then, for any $n \in \mathbb{N}$, there are $O(n)$-equivariant maps
$$\Theta^nF \otimes \Theta^nF \to \Theta^nF \otimes \mathbb{S}^{\text{Ad}_{\text{O}(n)}}$$
\end{answer}

\section{Pointed and Unpointed Functors}
\label{section_pointed_and_unpointed_functors}
In this chapter, we mostly work with functors $F \colon \cj \to \csp$, i.e.\ functors to pointed spaces. 
When we form their convolution product and internal hom, the one of the corresponding functors to $\cs$ is meant. These a priori unpointed convolution products and internal homs are functors to pointed spaces as well (as we show now). This should be understood in analogy with spaces and pointed spaces: The product of pointed spaces as well as the  space of not necessarily basepoint-preserving maps of pointed spaces are pointed spaces, even though there is no adjunction between them.

\begin{proposition}
\label{internal_hom_becomes_pointed}
For functors $A,B \colon \cj \to \csp$, both their convolution product $A \conv B \colon \cj \to \cs$ and their internal hom $\inthom(A,B) \colon \cj \to \cs$ naturally lift to functors to pointed spaces.
\end{proposition}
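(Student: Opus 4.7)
The plan is as follows. A lift of a functor $F \colon \cj \to \cs$ to $\cj \to \csp$ is, via the equivalence $\csp \simeq \cs_{*/}$, the same datum as a natural transformation from the constant functor $*_\cj$ at the one-point space to $F$. So I must produce canonical natural maps $*_\cj \to A \conv B$ and $*_\cj \to \inthom(A,B)$ in $\fun(\cj,\cs)$.

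The key preliminary observation is that $*_\cj$ is the monoidal unit for Day convolution on $\fun(\cj,\cs)$. Indeed, $*_\cj \simeq \yo(0)$ because the zero vector space admits a unique isometric embedding into every $V \in \cj$, and $0$ is the monoidal unit of $(\cj,\oplus)$. From \cref{convolution-of-corepresentables}, $\yo(V) \conv \yo(0) \simeq \yo(V)$ for every representable, and the general case follows because every object of $\fun(\cj,\cs)$ is a colimit of representables and $\conv$ preserves colimits in each variable by \cref{convolution-commutes-with-colimits}.

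Given pointed structures $u_A \colon *_\cj \to A$ and $u_B \colon *_\cj \to B$, I define the basepoint of $A \conv B$ as the composite
$$*_\cj \simeq *_\cj \conv *_\cj \xrightarrow{\, u_A \conv u_B \,} A \conv B.$$
For the internal hom, I use the adjunction from \cref{internal_hom} to identify
$$\Nat(*_\cj, \inthom(A,B)) \simeq \Nat(*_\cj \conv A, B) \simeq \Nat(A, B),$$
and declare the basepoint of $\inthom(A,B)$ to be the map classified by the ``zero morphism'' $A \to *_\cj \xrightarrow{u_B} B$, where $A \to *_\cj$ is the unique map to the terminal object of $\fun(\cj,\cs)$. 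Equivalently, using the pointwise formula $\inthom(A,B)(X) \simeq \Nat(A, \sh_X^* B)$ from \cref{pointwise_formula_for_internal_hom}, this basepoint at $X \in \cj$ is the natural transformation $A \Rightarrow \sh_X^* B$ that is constant at the basepoint of $B$.

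Naturality in $A$ and $B$ is essentially free. If $f \colon A \to A'$ and $g \colon B \to B'$ are morphisms of pointed functors, meaning $f \circ u_A = u_{A'}$ and $g \circ u_B = u_{B'}$, then bifunctoriality of $\conv$ gives
$$(f \conv g) \circ (u_A \conv u_B) = (f \circ u_A) \conv (g \circ u_B) = u_{A'} \conv u_{B'},$$
so $f \conv g$ respects the new basepoints, and the analogous check for $\inthom(f,g)$ is immediate from the adjunction description. I do not anticipate a genuine obstacle here; the only work is to package the construction $\infty$-categorically by working throughout in the coslice $\fun(\cj,\cs)_{*_\cj/} \simeq \fun(\cj,\csp)$, where the above basepoint assignments show that $\conv$ and $\inthom$ both factor through this coslice by its universal property.
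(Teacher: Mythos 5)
Your proposal is correct and takes essentially the same approach as the paper's proof: both identify pointed lifts with natural transformations from the constant functor $*_\cj$, observe that $*_\cj$ is the Day convolution unit, base-point $A \conv B$ via the composite $*_\cj \simeq *_\cj \conv *_\cj \to A \conv B$, and base-point $\inthom(A,B)$ via the adjunction identification $\Nat(*_\cj, \inthom(A,B)) \simeq \Nat(A,B)$ using the constant transformation $A \to *_\cj \to B$. Your extra remarks (identifying $*_\cj \simeq \yo(0)$ via \cref{convolution-of-corepresentables} to justify unitality, and the naturality check) are sound additions that the paper leaves implicit.
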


\begin{proof}
First observe that a lift 
\[\begin{tikzcd}
	& \csp \\
	\cj & \cs
	\arrow["A", from=2-1, to=2-2]
	\arrow[from=1-2, to=2-2]
	\arrow[dotted, from=2-1, to=1-2]
\end{tikzcd}\]
corresponds to a natural transformation
$$ * \to A$$
 where $*$ denotes to constant functor with value the one-point space. This functor is the monoidal unit in $\fun(\cj,\cs)$ with respect to Day convolution.

The convolution product $A \conv B$ is then pointed via
$$* \simeq * \conv * \to A \conv B$$
{
More explicitly, recall that 
$$(A \conv B)(V) \simeq \colim_{V_1 \oplus V_2 \to V} A(V_1) \times B(V_2)$$
In this perspective, the basepoint is given by the class of the basepoint (i.e.\ product of the basepoints) in $A(0) \times B(0)$.
}

For the internal hom, observe that 
$$\Nat(*,\inthom(A,B)) \simeq \Nat(* \conv A,B) \simeq \Nat(A,B)$$
The basepoint in $\Nat(A,B)$ is given by the constant natural transformation
$$A \to * \to B$$
\end{proof}

\begin{proposition}[Monoids are canonically pointed]
\label{monoids_are_canonically_pointed}
Let $F \colon \cj \to \cs$ be lax symmetric monoidal. Then $F$ canonically lifts to a functor $F \colon \cj \to \csp$ to pointed spaces. The corresponding natural transformation $* \to F$ is a homomorphism of monoidal functors.
\end{proposition}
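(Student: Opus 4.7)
The plan is to identify the lift of $F$ to pointed spaces with the unit of the commutative algebra structure that $F$ carries on the Day convolution side.

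First I would determine the monoidal unit of $(\fun(\cj,\cs),\conv)$. Since Day convolution is constructed as left Kan extension along $\oplus \colon \cj \times \cj \to \cj$, the unit is the representable functor at the monoidal unit of $\cj$, namely $\Map_\cj(0,-)$. Because the zero vector space is initial in $\cj$ — the space of linear isometric embeddings $0 \hookrightarrow V$ is a point for every $V$ — this representable is the constant functor with value the one-point space, which in the excerpt is denoted $*$. So the Day convolution unit in $\fun(\cj,\cs)$ is literally the constant functor $*$ that already appears in \cref{internal_hom_becomes_pointed}.

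Next I would invoke the equivalence from \cref{infty-day-convolution} between lax symmetric monoidal functors $\cj \to \cs$ and commutative algebra objects in $(\fun(\cj,\cs),\conv)$. Viewing $F$ as a commutative algebra, its structure includes a unit morphism $\eta \colon * \to F$ in $\fun(\cj,\cs)$, i.e.\ a natural transformation assigning to each $V \in \cj$ a map $* \to F(V)$ with the appropriate naturality. By the discussion in the proof of \cref{internal_hom_becomes_pointed}, such a natural transformation is precisely the data of a factorisation
\[
\begin{tikzcd}
& \csp \arrow[d] \\
\cj \arrow[r,"F"'] \arrow[ur,dotted] & \cs
\end{tikzcd}
\]
so $\eta$ supplies the canonical lift of $F$ to a functor $\cj \to \csp$.

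For the final assertion, recall that in any presentable symmetric monoidal $\infty$-category the monoidal unit carries a canonical (and essentially unique) commutative algebra structure, and as such is the initial object of $\calg$. Applied to $(\fun(\cj,\cs),\conv)$, this means $*$ is the initial commutative algebra, so the unit map $\eta \colon * \to F$ is automatically a morphism of commutative algebras; transporting back along the equivalence of \cref{infty-day-convolution} shows that $\eta$ is a homomorphism of lax symmetric monoidal functors. There is no real obstacle here — the whole argument is a translation between the pointed-lift perspective and the commutative-algebra perspective, once one recognises that the constant functor $*$ plays the role of the Day convolution unit.
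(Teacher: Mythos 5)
Your proof is correct and coincides with the paper's own ``alternative (but equivalent)'' argument, which computes the Day convolution unit to be the constant functor $*$ (using that $0$ is initial in $\cj$) and then observes that the commutative algebra $F$ receives a unit map from it. The paper also offers a more elementary primary argument that reads off the basepoint directly from the lax unit $\varphi \colon * \to F(0)$ and extends it along initiality of $0 \in \cj$; your appeal to the unit being initial in $\calg(\fun(\cj,\cs))$ is a tidy packaging of what that primary proof attributes to the lax-monoidal coherence axioms.
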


\begin{proof}
By definition, $F$ is equipped with a morphism $\varphi \colon 1_\cs \to F(1_\cj)$. As $1_\cs = \{*\}$ and $1_\cj = 0$, this defines a basepoint in $F(0)$. As $0 \in \cj$ is initial, this also defines a basepoint in any $F(V)$. The homomorphism property follows from the coherence axioms for lax monoidal functors.\\
Alternatively (but equivalently), such an $F$ is a commutative monoid in $\fun(\cj,\cs)$ and hence receives a map from the tensor unit of $\fun(\cj,\cs)$. In general, the tensor unit with respect to Day convolution in $\fun(\cC,\cD)$ is given by composing
$$\Map_\cC(1_\cC,-) \colon \cC \to \cs$$
with the unique cocontinuous functor
\begin{align*}
\cs &\to \cD \\
X &\mapsto (1_\cD)^{\oplus X}
\end{align*}
In our case, as $1_\cC = 1_\cj = 0$ is initial, the tensor unit in $\fun(\cj,\cs)$ is the constant functor with value $*$, which also proves the claim. 
\end{proof}

\begin{corollary}
Let $F,G \in \calg(\fun(\cj,\cs))$. Then $\inthom(F,G)$ is canonically pointed.
\end{corollary}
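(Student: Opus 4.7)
The proof should be essentially a direct combination of the two immediately preceding propositions, so the plan is quite short. The key observation is that a commutative algebra object in $\fun(\cj,\cs)$ is the same as a lax symmetric monoidal functor $\cj \to \cs$ (by \cref{infty-day-convolution}), so \cref{monoids_are_canonically_pointed} applies to both $F$ and $G$.

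First I would invoke \cref{monoids_are_canonically_pointed} to lift $F$ and $G$ to functors $\cj \to \csp$, producing canonical natural transformations $* \to F$ and $* \to G$ from the tensor unit (constant functor at a point) in $\fun(\cj,\cs)$. Then I would apply \cref{internal_hom_becomes_pointed} to these pointed functors to obtain a canonical basepoint for $\inthom(F,G)$; concretely, under the equivalence $\Nat(*,\inthom(F,G)) \simeq \Nat(*\conv F, G) \simeq \Nat(F,G)$, the basepoint is the composite $F \to * \to G$, which is well-defined because $G$ is pointed. No further verification is needed.

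There is no real obstacle here: the argument is purely formal once the two preceding propositions are available. If one wanted to be slightly more careful, one could note that only the \emph{target} $G$ needs its pointing to supply a basepoint of $\inthom(F,G)$, but the hypotheses give us both; and that the canonicity of the construction reflects the functoriality of $\inthom$ together with the uniqueness of the unit-induced pointing for an algebra object.
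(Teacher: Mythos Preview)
Your proposal is correct and follows essentially the same approach as the paper: the paper's proof is simply ``Apply \cref{internal_hom_becomes_pointed}'', with the implicit use of \cref{monoids_are_canonically_pointed} to supply the canonical pointings of $F$ and $G$, exactly as you spell out. Your additional remark that only the pointing of the target $G$ is strictly needed is a nice observation, though not required for the argument.
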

\begin{proof}
Apply \cref{internal_hom_becomes_pointed}.
\end{proof}

\section{Monoidality of Homogeneous Layers}
There are two corollaries left to be drawn from the earlier \cref{orthogonal_monoidality_theorem}, which are almost but not quite the same.

\begin{corollary}
\label{corollary_monoidal_homogeneous_layers_spaces}
For $F \colon \cj \to \cs$ lax symmetric monoidal, its homogeneous layers 
$$L_nF = \fib(T_nF \to T_{n-1}F) \colon \cj \to \cs$$
(where the fibres are taken over the canonical basepoints provided by the monoidal unit) are lax monoidal as well.
\end{corollary}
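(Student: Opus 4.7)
The plan is to realise $L_nF$ as the pullback of a diagram of commutative algebras in $(\fun(\cj,\cs),\conv)$, and then transport the resulting algebra structure along the equivalence of \cref{infty-day-convolution} between commutative algebras in $(\fun(\cj,\cs),\conv)$ and lax symmetric monoidal functors $\cj \to \cs$.

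First, by \cref{approx_of_monoidal_functors}, both $T_nF$ and $T_{n-1}F$ are lax symmetric monoidal, hence define commutative algebras in $(\fun(\cj,\cs),\conv)$. Their canonical basepoints, in the sense of \cref{monoids_are_canonically_pointed}, are the unit maps $\ast \to T_nF$ and $\ast \to T_{n-1}F$, where $\ast$ denotes the tensor unit (the constant functor at the point).

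The key step is to show that the canonical natural transformation $T_nF \to T_{n-1}F$ is itself a morphism of commutative algebras. For this I would factor $T_{n-1}$ through $\poly^{\leq n}(\cj,\cs)$ as $T_{n-1}|_{\poly^{\leq n}(\cj,\cs)} \circ T_n$. Applying \cref{orthogonal_monoidality_theorem} to each of the two successive Bousfield localisations renders both factors strictly symmetric monoidal, and the natural transformation $T_nF \to T_{n-1}F$ is obtained by applying $T_n$ to $F$ followed by the unit of the adjunction $T_{n-1}|_{\poly^{\leq n}(\cj,\cs)} \dashv \iota_{n-1,n}$ between symmetric monoidal functors. By the standard fact that the unit of such an adjunction is a monoidal natural transformation, this yields a morphism in $\calg(\fun(\cj,\cs))$.

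With this in hand, the cospan
\[
\ast \longrightarrow T_{n-1}F \longleftarrow T_nF
\]
lives entirely in $\calg(\fun(\cj,\cs))$: the left map is the unit of the algebra $T_{n-1}F$, and the right map is the previous step. Since the forgetful functor $\calg(\fun(\cj,\cs)) \to \fun(\cj,\cs)$ preserves small limits, the pullback of this cospan computed in $\calg(\fun(\cj,\cs))$ has underlying object the pullback computed in $\fun(\cj,\cs)$, which is exactly $L_nF$. This endows $L_nF$ with a commutative algebra structure, corresponding via \cref{infty-day-convolution} to a lax symmetric monoidal structure on the functor $L_nF \colon \cj \to \cs$. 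The main obstacle I anticipate is the middle step, i.e.\ verifying the monoidality of the transformation $T_nF \to T_{n-1}F$ coherently in the $\infty$-categorical setting (via an iterated use of the monoidal localisation machinery of \cref{compatible_localisation_consequence}); once this is in place, the rest is formal and relies only on the preservation of limits by the forgetful functor from commutative algebras.
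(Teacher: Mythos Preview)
Your proposal is correct and follows essentially the same route as the paper: both arguments establish that $T_nF$ and $T_{n-1}F$ are commutative algebras, that the comparison map $T_nF \to T_{n-1}F$ is a map of such algebras via the factorisation of $T_{n-1}$ through $\poly^{\leq n}(\cj,\cs)$, and then conclude by taking the fibre in $\calg(\fun(\cj,\cs))$ using that the forgetful functor creates limits (the paper cites \cite[Cor.~3.2.2.5]{lurie-ha} for exactly this). Your justification of the monoidality of $T_nF \to T_{n-1}F$ via the unit of the inner adjunction is slightly more explicit than the paper's, but the content is the same.
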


\begin{proof}
By \cref{orthogonal_monoidality_theorem}, $T_nF$ and $T_{n-1}F$ are monoidal. The natural transformation $T_nF \to T_{n-1}F$ is a morphism of monoidal functors, by commutativity of
\[\begin{tikzcd}
	{\fun(\cj,\cs)} & {\poly^{\leq n}(\cj,\cs)} \\
	& {\poly^{\leq n-1}(\cj,\cs)}
	\arrow["{{T_{n-1}}_{|\poly^{\leq n}(\cj,\cs)}}", from=1-2, to=2-2]
	\arrow["{T_{n-1}}"', from=1-1, to=2-2]
	\arrow[hook', from=1-2, to=1-1]
\end{tikzcd}\]
as the inclusion $\poly^{\leq n}(\cj,\cs) \hookrightarrow \fun(\cj,\cs)$ is lax monoidal.

 Hence, $L_nF$ is monoidal, as it is the kernel of a morphism of monoidal functors which preserves the unit. (See \cite[Cor.~3.2.2.5]{lurie-ha} for details.)
\end{proof}

\begin{corollary}
\label{corollary_monoidal_homogeneous_layers_pointed_spaces}
For $F \colon \cj \to (\cs_*,\times)$ lax symmetric monoidal, its homogeneous layers 
$$L_nF = \fib(T_nF \to T_{n-1}F) \colon \cj \to \cs_*$$ 
are lax monoidal as well.
\end{corollary}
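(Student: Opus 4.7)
The plan is to copy the proof of \cref{corollary_monoidal_homogeneous_layers_spaces} almost verbatim, replacing the target category $\cs$ with $\csp$ equipped with the cartesian product monoidal structure, and appealing to the pointed-with-cartesian-product version of \cref{orthogonal_monoidality_theorem} that was just proved at the end of \cref{chapter_monoidal_oc_theorem}. The main obstacle will simply be being careful about which monoidal structure is used where.

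First I would invoke the pointed version of \cref{orthogonal_monoidality_theorem} to conclude that the $n$-polynomial approximation $T_n \colon \fun(\cj,\csp) \to \poly^{\leq n}(\cj,\csp)$ is (strict) symmetric monoidal and that the inclusion $\iota \colon \poly^{\leq n}(\cj,\csp) \hookrightarrow \fun(\cj,\csp)$ is lax symmetric monoidal (with respect to the Day convolution of the cartesian product on $\csp$). Applied to $F$, this says that $T_nF$ and $T_{n-1}F$ are themselves lax symmetric monoidal functors $\cj \to \csp$.

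Next, I would observe that the canonical natural transformation $T_nF \to T_{n-1}F$ is a morphism of lax symmetric monoidal functors. This follows from the same argument as in the proof of \cref{corollary_monoidal_homogeneous_layers_spaces}: the composition
\[\begin{tikzcd}
	{\fun(\cj,\csp)} & {\poly^{\leq n}(\cj,\csp)} \\
	& {\poly^{\leq n-1}(\cj,\csp)}
	\arrow["{{T_{n-1}}_{|\poly^{\leq n}(\cj,\csp)}}", from=1-2, to=2-2]
	\arrow["{T_{n-1}}"', from=1-1, to=2-2]
	\arrow[hook', from=1-2, to=1-1]
\end{tikzcd}\]
commutes, and each arrow in sight is lax symmetric monoidal (using that the inclusion of polynomial functors is lax symmetric monoidal and that $T_n$, $T_{n-1}$ are symmetric monoidal).

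Finally, I would conclude that $L_nF = \fib(T_nF \to T_{n-1}F)$ is lax symmetric monoidal, exactly as in the unpointed case, via \cite[Cor.~3.2.2.5]{lurie-ha}: the fibre of a morphism of lax symmetric monoidal functors (taken at the canonical basepoint provided by the unit, which is available here by \cref{monoids_are_canonically_pointed}) inherits a lax symmetric monoidal structure. No extra subtlety arises compared to the unpointed case because in pointed spaces the fibre is the pullback against the basepoint, which coincides with the unpointed fibre over that basepoint, so the whole diagrammatic argument transports without modification.
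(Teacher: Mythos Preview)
Your argument is correct, but it takes a different route from the paper's. You repeat the proof of \cref{corollary_monoidal_homogeneous_layers_spaces} inside $\fun(\cj,\csp)$, invoking the pointed-cartesian version of \cref{orthogonal_monoidality_theorem} and then \cite[Cor.~3.2.2.5]{lurie-ha} for the fibre. The paper instead forgets basepoints, applies \cref{corollary_monoidal_homogeneous_layers_spaces} directly to the underlying functor $\cj \to \cs$, and then uses \cref{monoids_are_canonically_pointed} to observe that a lax symmetric monoidal functor to $(\cs,\times)$ is automatically the same thing as one to $(\csp,\times)$; this reduces the pointed corollary to the unpointed one in two lines. Your approach has the merit of being self-contained in the pointed world and making explicit use of the pointed monoidality theorem just established, while the paper's approach is shorter and highlights the structural fact that for the cartesian product there is no real difference between the pointed and unpointed monoidal settings.
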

\begin{proof}
By \cref{corollary_monoidal_homogeneous_layers_spaces}, they are monoidal functors to spaces. By \cref{monoids_are_canonically_pointed}, this is the same as being monoidal functors to pointed spaces (with the cartesian product).
\end{proof}

\section{Classification of Homogeneous Layers and the Internal Hom}
\sectionmark{Homogeneous Layers and Internal Hom}

The next step is to combine this with the classification of the homogeneous layers. Recall from \cref{theorem_classification_of_homogeneous_functors} that any $n$-homogeneous functor to pointed spaces is of the form

$$V \mapsto \Omega^\infty \left( \left( S^{V \otimes \mathbb{R}^n} \wedge \Phi\right)_{hO(n)} \right)$$

as there is an equivalence

$$\homog \simeq \spc^{\BO(n)} $$

Let's denote the involved functors by
\[\begin{tikzcd}
	\homog & {\spc^{\BO(n)}}
	\arrow["{\Theta^n}", shift left=2, from=1-1, to=1-2]
	\arrow["\Phi", shift left=2, from=1-2, to=1-1]
\end{tikzcd}\]
where $\Theta^n$ is the $n$-th derivative spectrum (over the basepoint) as constructed by Weiss and 
\begin{align*}
\Phi \colon \spc^{\BO(n)} &\to \homog \\
A &\mapsto \left( V \mapsto \Omega^\infty \left( (S^{nV} \wedge A)_{hO(n)} \right) \right)
\end{align*}

Homogeneous functors to $\csp$ are in particular functors to $\cs$, thus it makes sense to ask for a description of $\inthom(\Phi A,\Phi B)$ for $A,B \in \spc^{\BO(n)}$. 

\begin{proposition}
For  $A,B \in \spc^{\BO(n)}$ and $X \in \cj$
\begin{align*}
&\inthom(\Phi A,\Phi B)(X) \\
&\simeq \Nat_{\fun(\cj,\cs)}\left(\Omega^\infty \left( (S^{nV} \wedge A)_{hO(n)}\right), \Omega^\infty \left( (S^{nV} \wedge S^{nX} \wedge A)_{hO(n)} \right)  \right)
\end{align*}
\end{proposition}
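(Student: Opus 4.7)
The plan is to apply the pointwise formula for the internal hom functor (\cref{pointwise_formula_for_internal_hom}) directly, and then unwind the definition of $\sh_X^*\Phi B$ using the fact that the one-point compactification of a direct sum is a smash product.

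First I would invoke \cref{pointwise_formula_for_internal_hom} to rewrite
\[
\inthom(\Phi A, \Phi B)(X) \simeq \Nat_{\fun(\cj,\cs)}\bigl(\Phi A, \sh_X^* \Phi B\bigr).
\]
Since $\Phi A$ is computed pointwise as $V \mapsto \Omega^\infty\bigl((S^{nV} \wedge A)_{hO(n)}\bigr)$, the first factor on the right already matches the desired shape, so the remaining task is to identify $\sh_X^* \Phi B$ with the functor $V \mapsto \Omega^\infty\bigl((S^{nV} \wedge S^{nX} \wedge B)_{hO(n)}\bigr)$.

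By definition of the shift, $(\sh_X^* \Phi B)(V) = (\Phi B)(V \oplus X) = \Omega^\infty\bigl((S^{n(V \oplus X)} \wedge B)_{hO(n)}\bigr)$. The key observation is the natural equivalence
\[
S^{n(V \oplus X)} \simeq S^{nV \oplus nX} \simeq S^{nV} \wedge S^{nX},
\]
coming from the fact that the one-point compactification functor $\cj \to \csp$ is strong symmetric monoidal (sending $\oplus$ to $\wedge$). Substituting this equivalence under the homotopy orbits and $\Omega^\infty$ yields the stated expression.

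I would not expect any serious obstacles here — the statement is essentially a direct unpacking of definitions once the pointwise formula for $\inthom$ is in place. The one point requiring mild care is that the $O(n)$-action on $S^{n(V \oplus X)} \wedge B$ must be identified with the diagonal action on $S^{nV} \wedge S^{nX} \wedge B$ after the equivalence above; this is automatic because $O(n)$ acts through its diagonal on $\mathbb{R}^n \otimes (V \oplus X) \cong (\mathbb{R}^n \otimes V) \oplus (\mathbb{R}^n \otimes X)$, and the identification is $O(n)$-equivariant by construction. All other coherences are handled formally by the Yoneda-style arguments already used in \cref{pointwise_formula_for_internal_hom}.
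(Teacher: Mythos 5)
Your proof is correct and follows essentially the same route as the paper: apply \cref{pointwise_formula_for_internal_hom} to reduce to $\Nat(\Phi A,\sh_X^*\Phi B)$, then use $S^{n(V\oplus X)}\simeq S^{nV}\wedge S^{nX}$ (the paper's version is equally terse and does not spell out the $O(n)$-equivariance check you include). One small observation: the stated target in the proposition writes $A$ in the second argument where it should read $B$ (as your working correctly has, and as the subsequent \cref{remark_theorem_classification_of_homogeneous_functors} confirms); this appears to be a typo in the source.
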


\begin{proof}
A straightforward application of \cref{pointwise_formula_for_internal_hom} gives
\begin{align*}
&\inthom(\Phi A,\Phi B)(X) \\
&\simeq \Nat_{\fun(\cj,\cs)}\left(\Omega^\infty \left((S^{nV} \wedge A)_{hO(n)}\right), \Omega^\infty \left((S^{n(V \oplus X)} \wedge A)_{hO(n)} \right)   \right)
\end{align*}
Using $S^{n(V \oplus X)} \simeq S^{nV} \wedge S^{nX}$ yields the claim.
\end{proof}

\begin{remark}
\label{remark_theorem_classification_of_homogeneous_functors}
Recall that as we have $\homog \simeq \spc^{\BO(n)}$, we know that
\begin{align*}
\Nat_{\fun(\cj,\csp)}\left(\Omega^\infty \left( (S^{nV} \wedge A)_{hO(n)}\right), \Omega^\infty \left( (S^{nV} \wedge S^{nX} \wedge B)_{hO(n)} \right) \right) \\
\simeq \Map_{\spc^{\BO(n)}}(A,S^{nX} \wedge B)
\end{align*}

\end{remark}
\begin{definition}
For $A,B \colon \cj \to \csp$, denote by $\inthom_*(A,B)$ the subfunctor of $\inthom(A,B)$ given by pointed natural transformations, i.e.\
$$\inthom_*(A,B)(V) \simeq \Nat_{\fun(\cj,\csp)}(A,\sh_V^*B) $$
(This is the internal hom functor in $\fun((\cj,\oplus),(\csp,\wedge))$.)
\end{definition}
\begin{proposition}
\label{pointed_unpointed_hom_splitting}
For functors $A,B \colon \cj \to \cs_*$, there is a natural fibre sequence
$$\inthom_*(A,B) \to \inthom(A,B) \to B $$
which splits and hence induces an equivalence
$$\Theta^n \inthom_*(A,B) \oplus \Theta^n B \simeq \Theta^n \inthom(A,B)$$
\end{proposition}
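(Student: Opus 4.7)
The plan is to construct the fibre sequence from the two canonical natural transformations $c_* \xrightarrow{b} A \xrightarrow{t} c_*$ in $\fun(\cj,\cs)$, where $c_*$ denotes the constant functor at the one-point space (simultaneously the terminal object in $\fun(\cj,\cs)$, the Day convolution unit, and the source of $A$'s basepoint structure). Terminality of $c_*$ forces $t \circ b = \id_{c_*}$.

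I would first define the map $\inthom(A,B) \to B$ pointwise by precomposing $\alpha \in \inthom(A,B)(V) = \Nat(A,\sh_V^*B)$ with $b$, landing in $\Nat(c_*, \sh_V^*B)$. The key identification
$$\Nat(c_*, \sh_V^*B) \;\simeq\; \lim_{W \in \cj} B(W \oplus V) \;\simeq\; B(V)$$
uses that the zero inner product space is initial in $\cj$, so the limit collapses to evaluation at $0$. The fibre of this map over the canonical basepoint of $B(V)$ consists exactly of those $\alpha$ whose restriction along $b$ equals the designated basepoint of $\sh_V^*B$, which is precisely the definition of a pointed natural transformation. This identifies the fibre with $\inthom_*(A,B)(V)$, and naturality in $V$ is routine.

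For the splitting, precomposition with $t$ gives a natural map $B \simeq \inthom(c_*,B) \to \inthom(A,B)$ whose composite with $\inthom(A,B) \to B$ is induced by $c_* \xrightarrow{b} A \xrightarrow{t} c_*$ and hence equals the identity on $\inthom(c_*,B) \simeq B$.

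For the consequence on $\Theta^n$: this functor factors as $L_n = \fib(T_n \Rightarrow T_{n-1})$ --- a composition of the left-exact $T_n, T_{n-1}$ of \cref{infty_oc_taylor_tower} with a fibre, hence itself preserving fibres --- followed by the equivalence $\Homog(\cj,\csp) \simeq \spc^{\BO(n)}$ of \cref{theorem_classification_of_homogeneous_functors}. Applying $\Theta^n$ therefore turns the split fibre sequence above into a split fibre sequence in the stable $\infty$-category $\spc^{\BO(n)}$, which is automatically a direct sum decomposition. The only mildly nontrivial step is the identification $\Nat(c_*, \sh_V^*B) \simeq B(V)$ via initiality of $0 \in \cj$; everything else is formal manipulation with the maps $b$ and $t$.
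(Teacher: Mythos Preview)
Your proof is correct and follows essentially the same approach as the paper: both construct the map $\inthom(A,B)\to B$ by precomposition with the basepoint $*\to A$, obtain the splitting by precomposition with $A\to *$, and deduce the $\Theta^n$-statement from left-exactness of $T_n$, $T_{n-1}$ and stability of $\spc^{\BO(n)}$. The only cosmetic difference is that the paper identifies $\inthom(*,B)\simeq B$ directly from $*$ being the Day convolution unit, whereas you unwind it via the pointwise formula and initiality of $0\in\cj$; both are equally valid.
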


\begin{proof}
The second map is given by evaluation at the basepoint of $A$. More formally, the pointing of $A$ is given by a map $p_A \colon * \to A$. This induces the map 
$$p_A^* \colon \inthom(A,B) \to \inthom(*,B) \simeq B$$
The fibre of this map over the basepoint of $B$ is then given by the basepoint-preserving transformations, i.e.\ $\inthom_*(A,B)$.
The splitting is given by
$$(A \to *)^* \colon \inthom(*,B) \to \inthom(A,B)$$
(i.e.\ by sending every $a \in A(V)$ to the same chosen $b \in B(V)$ for all $V$ simultaneously). \\
The formula for $\Theta^n$ then follows as $\Theta^n$ preserves finite limits, as $T_n$ and $T_{n-1}$ do and $L_n$, which determines $\Theta^n$, is the fibre of $T_n \to T_{n-1}$.
\end{proof}

\subsection{Calculation of $\Theta^n \inthom_*(A,B)$ for $A,B$ $n$-homogeneous}
In this subsection, we will do the following calculation.
\begin{proposition}
\label{derivative_of_pointed_internal_hom}
For $A,B \in \spc^{\BO(n)}$ there is an equivalence
\begin{align*}
\Theta^n \inthom_*(\Phi A, \Phi B) 
	&\simeq \Map_{\spc}(A,B) \otimes D_{O(n)}
\end{align*}
with $D_{O(n)} \simeq (\Sigma^\infty_+ O(n))^{hO(n)}$ being the dualising spectrum of $O(n)$ (see \cref{recall_dualising_spectrum}). 
\end{proposition}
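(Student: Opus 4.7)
The plan is to identify $\inthom_*(\Phi A, \Phi B)$ with the $n$-homogeneous functor $\Phi\bigl(\map_\spc(A,B) \otimes D_{O(n)}\bigr)$, whence the statement follows from $\Theta^n \circ \Phi \simeq \id$ of \cref{theorem_classification_of_homogeneous_functors}.

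First, by the pointed analogue of \cref{pointwise_formula_for_internal_hom}, $\inthom_*(\Phi A, \Phi B)(V) \simeq \Nat_{\fun(\cj,\csp)}(\Phi A, \sh_V^* \Phi B)$. Since $\sh_V^* \Phi B \simeq \Phi(S^{nV} \wedge B)$ (with $O(n)$ acting on $S^{nV} = S^{V \otimes \mathbb{R}^n}$ via the standard representation on $\mathbb{R}^n$, and diagonally on the smash product), \cref{remark_theorem_classification_of_homogeneous_functors} identifies this space with $\Omega^\infty \bigl( \map_\spc(A, S^{nV} \wedge B)^{hO(n)} \bigr)$, where the mapping spectrum carries its natural $O(n)$-action by conjugation. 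Because $S^{nV}$ is $\otimes$-invertible in $\spc$, the natural map $S^{nV} \otimes \map_\spc(A, B) \to \map_\spc(A, S^{nV} \wedge B)$ is an $O(n)$-equivariant equivalence (diagonal action on the source).

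Next, the Klein-Spivak Poincaré-duality equivalence (recalled in \cref{recall_dualising_spectrum}) provides, for every $X \in \spc^{\BO(n)}$, a natural equivalence $X^{hO(n)} \simeq (X \otimes D_{O(n)})_{hO(n)}$. Applying this to $X = S^{nV} \otimes \map_\spc(A, B)$ yields
\[ \inthom_*(\Phi A, \Phi B)(V) \simeq \Omega^\infty \bigl( (S^{nV} \otimes \map_\spc(A, B) \otimes D_{O(n)})_{hO(n)} \bigr) = \Phi\bigl(\map_\spc(A, B) \otimes D_{O(n)}\bigr)(V). \]
Since the identification is natural in $V$, applying $\Theta^n$ and invoking \cref{theorem_classification_of_homogeneous_functors} gives the claim.

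The main obstacle is to justify naturality in $V$ at every step so that the pointwise identifications glue into an equivalence of functors $\cj \to \csp$, and to track the $O(n)$-actions carefully — in particular ensuring that the $O(n)$-action on $S^{nV}$ coming from $\mathbb{R}^n$ is carried along compatibly through the Klein-Spivak equivalence, which is the step where the dualising spectrum $D_{O(n)}$ first enters.
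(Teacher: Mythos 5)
Your strategy of identifying $\inthom_*(\Phi A, \Phi B)$ \emph{pointwise} with $\Phi(\Map_\spc(A,B) \otimes D_{O(n)})$ does not work, and the gap is precisely at the step you flagged as delicate. The norm map
$$\text{Nm}\colon (X \otimes D_{O(n)})_{hO(n)} \to X^{hO(n)}$$
is \emph{not} an equivalence for arbitrary $X \in \spc^{\BO(n)}$; it is an equivalence exactly when the Tate construction $X^{tO(n)}$ vanishes, and \cref{recall_dualising_spectrum} records this vanishing only for \emph{induced} spectra of the form $X' \otimes \Sigma^\infty_+ O(n)$. The spectrum $X = S^{nV} \otimes \Map_\spc(A,B)$ (standard action on $S^{nV} = S^{V\otimes\mathbb{R}^n}$, conjugation action on the mapping spectrum) is not free: the action on $S^{nV}$ fixes the two poles, and the conjugation action fixes the identity. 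So no norm equivalence is available at this stage. Indeed the claimed pointwise identification already fails at $V = 0$, where it would assert
$$\Map_\spc(A,B)^{hO(n)} \simeq (\Map_\spc(A,B) \otimes D_{O(n)})_{hO(n)},$$
i.e.\ the vanishing of $\Map_\spc(A,B)^{tO(n)}$, which is false in general (for $n=1$, $A=B=\mathbb{S}$, this is the non-vanishing of $\mathbb{S}^{t\mathbb{Z}/2}$).

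The paper avoids this by commuting the two operations in the opposite order. The key ingredient your argument lacks is \cref{derivative_of_fixed_points_of_homogeneous_functor}, which gives $\Theta^n(F^{hK}) \simeq (\Theta^n F)^{hK}$ for $F$ an $n$-homogeneous functor with a $K$-action. Using it, the paper first computes $\Theta^n$ of the \enquote{unfixed} functor $V \mapsto \Map_\spc(A, S^{nV} \wedge B)$, obtaining via \cref{derivative_of_nonequivariant_mapping_spectrum_functor} the \emph{free} $O(n)$-spectrum $\Map_\spc(A,B) \wedge O(n)_+$, and only then takes the residual homotopy fixed points. At that point the Tate construction vanishes, so \cref{consequence_of_tate_fibre_sequence} applies and the norm map is an equivalence. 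The upshot is that the identification you want holds only after applying $\Theta^n$, not at the level of functors $\cj \to \csp$.
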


On the right-hand side, $\Map_{\spc}(A,B) \otimes D_{O(n)}$, $O(n)$ acts diagonally, i.e.\ via the natural action on $D_{O(n)}$ and the conjugation action on $\Map_{\spc}(A,B)$.
The proof is given at the end of this section, after some preparatory calculations.
The observation that gets us going is that
$$\Map_{\spc^{\BO(n)}}(A,B) \simeq (\Map_{\spc}(A,B))^{hO(n)}$$
where $O(n)$ acts on $\Map_{\spc}(A,B)$ by conjugation. We shall first consider the functor
$$V \mapsto \Map(A,S^{nV} \wedge B)$$
and then take care of the homotopy fixed point construction.
\begin{lemma}
\label{free_homogeneous_functor}
For any (nonequivariant) spectrum $A$, the functor
\begin{align*}
\cj &\to \csp \\
V &\mapsto \Omega^\infty (S^{nV} \wedge A)
\end{align*}
is $n$-homogeneous, with $n$-th derivative spectrum given by 
$$\Theta^n (\Omega^\infty (S^{nV} \wedge A)) \simeq A \wedge O(n)_+$$
with the $O(n)$-action being given by the canonical left action on $O(n)_+$.
\end{lemma}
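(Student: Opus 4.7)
The plan is to exhibit the functor $V \mapsto \Omega^\infty(S^{nV} \wedge A)$ as being of the form $\Phi(\Theta)$ for an explicit $\Theta \in \spc^{\BO(n)}$, and then invoke the classification \cref{theorem_classification_of_homogeneous_functors} to conclude both that the functor is $n$-homogeneous and that $\Theta^n$ of it is precisely $\Theta$.

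The candidate, as suggested by the statement, is $\Theta = A \wedge O(n)_+$ with $O(n)$ acting on $O(n)_+$ by left translation and trivially on $A$. The key computation is then the ``untwisting'' equivalence: for any topological group $G$ and any $G$-spectrum $X$, the map
$$G_+ \wedge X \to G_+ \wedge X, \qquad (g,x) \mapsto (g, g^{-1} \cdot x)$$
is a $G$-equivariant equivalence from the diagonal $G$-action (by left translation on $G_+$ and the given action on $X$) to the action which is by left translation on $G_+$ and trivial on $X$. Taking homotopy orbits gives
$$(G_+ \wedge X)_{hG} \simeq X$$
(where the $G$-action has been absorbed).

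I would apply this to $G = O(n)$ and $X = S^{nV} \wedge A$, where $O(n)$ acts on $S^{nV} \simeq (S^V)^{\wedge n}$ by permuting the smash factors according to its standard action on $\mathbb{R}^n$, and trivially on $A$. With the diagonal $O(n)$-action on $S^{nV} \wedge A \wedge O(n)_+$ (which is exactly the action required by $\Phi$ since on the $A \wedge O(n)_+$ factor it restricts to the left translation action), the untwisting gives
$$\Phi(A \wedge O(n)_+)(V) = \Omega^\infty \left( (S^{nV} \wedge A \wedge O(n)_+)_{hO(n)} \right) \simeq \Omega^\infty(S^{nV} \wedge A).$$
This equivalence is natural in $V$, so it exhibits $V \mapsto \Omega^\infty(S^{nV} \wedge A)$ as $\Phi(A \wedge O(n)_+)$.

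The conclusion follows from \cref{theorem_classification_of_homogeneous_functors}: since $\Phi$ lands in $\homog$ and is an equivalence onto it with inverse $\Theta^n$, our functor is $n$-homogeneous and $\Theta^n(\Omega^\infty(S^{nV} \wedge A)) \simeq A \wedge O(n)_+$ with the left translation action. The only genuine point requiring care is to write down the untwisting equivalence in the $\infty$-categorical setting with enough naturality in $V$; this is standard (it is the Borel construction for a free action), but it is the step where one must be honest about coherences rather than just pointwise equivalences.
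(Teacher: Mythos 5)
Your proof is correct and takes essentially the same approach as the paper: both recognize $\Omega^\infty(S^{nV}\wedge A)$ as $\Phi(A\wedge O(n)_+)$ by rewriting $X \simeq (O(n)_+\wedge X)_{hO(n)}$ and then invoking the classification theorem, with the key step being the shearing equivalence that trades the diagonal $O(n)$-action for one concentrated on the $O(n)_+$ factor. You spell out the shearing map $(g,x)\mapsto(g,g^{-1}x)$ explicitly where the paper only alludes to it, but the argument is the same.
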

\begin{proof}
We can associate to any spectrum $X$ the $O(n)$-free spectrum 
$$
 O(n)_+ \wedge X$$
It satisfies
$$(O(n)_+ \wedge X)_{hO(n)} \simeq X$$
Applying this to $X=S^{nV} \wedge A$, we obtain
$$\Omega^\infty (S^{nV} \wedge A) \simeq \Omega^\infty \left( (S^{nV} \wedge (O(n)_+ \wedge A))_{hO(n)} \right)$$
To conclude that $O(n)_+ \wedge A$ is indeed the derivative, we implicitly use that there is a shearing isomorphism which identifies the $O(n)$-action acting only on $O(n)_+$ with the one acting diagonally on $O(n)_+$ and on the $\mathbb{R}^n$ in $S^{nV} \simeq S^{\mathbb{R}^n \otimes V}$.
\end{proof}
\begin{corollary}
\label{derivative_of_nonequivariant_mapping_spectrum_functor}
For $A,B \colon \cj \to \csp$, the functor
\begin{align*}
\cj &\to \csp \\
V &\mapsto \Map_{\spc}(\Theta^nA, \Theta^n (\sh_V^* B)) \simeq S^{nV} \wedge \Map_{\spc}(\Theta^nA, \Theta^n B)
\end{align*}
is $n$-homogeneous with $n$-th derivative spectrum given by
$$O(n)_+ \wedge \Map_{\spc}(\Theta^nA, \Theta^n B)$$
The $O(n)$-structure of this derivative spectrum is given by the action only on the $O(n)_+$-factor.
\end{corollary}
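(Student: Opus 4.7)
The strategy is to reduce the statement directly to \cref{free_homogeneous_functor}, which already handles functors of the form $V \mapsto \Omega^\infty(S^{nV} \wedge X)$ for a nonequivariant spectrum $X$. The claimed equivalence in the statement, namely
$$\Map_\spc(\Theta^nA, \Theta^n(\sh_V^*B)) \simeq S^{nV} \wedge \Map_\spc(\Theta^nA, \Theta^nB),$$
is precisely what makes this reduction possible. Once it is established, setting $X := \Map_\spc(\Theta^nA, \Theta^nB)$ and invoking \cref{free_homogeneous_functor} yields both the $n$-homogeneity and the identification of the derivative spectrum as $O(n)_+ \wedge X$.

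The equivalence itself will be proved in two sub-steps. First, I would establish that $\Theta^n(\sh_V^*B) \simeq S^{nV} \wedge \Theta^nB$ as $O(n)$-spectra. This is the standard Weiss shift identity: precomposition with $\sh_V$ on the functor side corresponds, under the $n$-homogeneous classification \cref{theorem_classification_of_homogeneous_functors}, to smashing with $S^{nV}$ on the spectrum side, since $(S^{(W \oplus V) \otimes \mathbb{R}^n} \wedge \Theta)_{hO(n)} \simeq (S^{W \otimes \mathbb{R}^n} \wedge S^{nV} \wedge \Theta)_{hO(n)}$. Second, I would use that the sphere $S^{nV}$ is invertible in $\spc$ (with inverse $S^{-nV}$), from which it follows formally that
$$\Map_\spc(Y, Z \wedge S^{nV}) \simeq \Map_\spc(Y, Z) \wedge S^{nV}$$
for arbitrary spectra $Y,Z$, by adjoining $S^{-nV}$ across either side. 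Combining these two identifications produces the asserted equivalence of functors $\cj \to \csp$ (applying $\Omega^\infty$ throughout).

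With the equivalence in hand, \cref{free_homogeneous_functor} applied to $X = \Map_\spc(\Theta^nA, \Theta^nB)$ immediately yields that the functor is $n$-homogeneous and that its derivative spectrum is $O(n)_+ \wedge X$. The statement about the $O(n)$-action requires a brief bookkeeping check: in \cref{free_homogeneous_functor}, the $O(n)$-action on the derivative $O(n)_+ \wedge A$ arises purely from the left translation action on $O(n)_+$, with the shearing isomorphism trading the diagonal action (acting both on $\mathbb{R}^n$ in $S^{nV}$ and on $O(n)_+$) for the one-sided action. Since $X$ here is a \emph{nonequivariant} spectrum for the purposes of the lemma, the resulting $O(n)$-action on $O(n)_+ \wedge X$ is the one in which $O(n)$ acts solely on the $O(n)_+$-factor, exactly as claimed.

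The only subtle point — and the place I would be most careful — is that the $\Map_\spc(\Theta^nA, \Theta^nB)$ appearing as ``$X$'' really is meant as a nonequivariant spectrum when feeding it into \cref{free_homogeneous_functor}. This is consistent: the conjugation $O(n)$-action on $\Map_\spc(\Theta^nA, \Theta^nB)$ which comes from $A,B$ being $O(n)$-spectra is not used in the current corollary, and will only enter later (through a homotopy fixed point construction) when computing $\Theta^n \inthom_*(\Phi A, \Phi B)$ in \cref{derivative_of_pointed_internal_hom}.
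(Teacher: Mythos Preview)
Your proposal is correct and follows exactly the paper's approach: the paper's proof is the single line ``This follows from \cref{free_homogeneous_functor} by setting $A = \Map_{\spc}(\Theta^nA,\Theta^nB)$,'' and you have simply unpacked the implicit steps (the shift identity $\Theta^n(\sh_V^*B) \simeq S^{nV} \wedge \Theta^nB$, the invertibility of $S^{nV}$, and the bookkeeping on the $O(n)$-action) that the paper leaves to the reader.
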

\begin{proof}
This follows from \cref{free_homogeneous_functor} by setting $A = \Map_{\spc}(\Theta^nA,\Theta^nB)$.
\end{proof}
Now we want to make use of the $O(n)$-action on $\Map_{\spc}(\Theta^nA,\Theta^nB)$ and identify $\Theta^n (\Map_{\spc}(\Theta^nA,S^{nV} \wedge \Theta^nB))^{hO(n)}$.
\begin{lemma}
Let $F \colon \cj \to \csp$ be an $n$-polynomial functor with an action of a (topological) group $K$. Then $F^{hK}$ is $n$-polynomial.
\end{lemma}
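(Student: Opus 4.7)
The plan is to exhibit $F^{hK}$ as a limit in $\fun(\cj,\csp)$ of a diagram whose value at each object of $BK$ is equivalent to $F$, and then use that the subcategory $\poly^{\leq n}(\cj,\csp) \subseteq \fun(\cj,\csp)$ is closed under arbitrary limits. Concretely, a $K$-action on $F$ is by definition a functor $\alpha \colon BK \to \fun(\cj,\csp)$ whose value on the unique object is $F$, and $F^{hK}$ is $\lim_{BK} \alpha$ computed in $\fun(\cj,\csp)$.

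The first key step is closure of $n$-polynomial functors under limits. Given a diagram $\{G_i\}_{i \in I}$ of $n$-polynomial functors, I would compute the polynomial comparison map pointwise: for $V \in \cj$,
\begin{align*}
\bigl(\lim_i G_i\bigr)(V) &\simeq \lim_i G_i(V) \\
&\simeq \lim_i \lim_{0 \neq U \subseteq \mathbb{R}^{n+1}} G_i(V \oplus U) \\
&\simeq \lim_{0 \neq U \subseteq \mathbb{R}^{n+1}} \lim_i G_i(V \oplus U) \\
&\simeq \lim_{0 \neq U \subseteq \mathbb{R}^{n+1}} \bigl(\lim_i G_i\bigr)(V \oplus U),
\end{align*}
where the first equivalence uses pointwise computation of limits in $\fun(\cj,\csp)$ (and that limits in $\csp$ are computed in $\cs$), the second uses $n$-polynomiality of each $G_i$, and the third is the commutation of limits with limits. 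This shows $\poly^{\leq n}(\cj,\csp) \subseteq \fun(\cj,\csp)$ is a limit-closed subcategory.

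The second (very short) step is to apply this to the diagram $\alpha \colon BK \to \fun(\cj,\csp)$ defining the $K$-action. Every object of $BK$ is the same object, with value $F$, which is $n$-polynomial by hypothesis; hence $\alpha$ factors through $\poly^{\leq n}(\cj,\csp)$, and its limit $F^{hK}$ lives there. No genuine obstacle arises: the only subtlety is the indexing of the polynomial condition by the space of subspaces rather than a discrete poset, but this is handled uniformly by working $\infty$-categorically as in \cref{infty_oc_taylor_tower}, where $T_n$ is left exact and hence the right adjoint inclusion preserves all limits. Thus $F^{hK}$ is $n$-polynomial.
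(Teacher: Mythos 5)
Your proof is correct and takes essentially the same approach as the paper: both arguments rest on the observation that $(-)^{hK}$ is a limit and hence commutes with the limit over $0 \neq U \subseteq \mathbb{R}^{n+1}$ appearing in the definition of $n$-polynomiality. You phrase it slightly more generally, first establishing that $\poly^{\leq n}(\cj,\csp)$ is closed under arbitrary limits and then specializing to a $BK$-shaped limit, whereas the paper commutes $(-)^{hK}$ with the polynomiality limit directly, but the mathematical content is identical.
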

\begin{proof}
Being $n$-polynomial is defined by a certain map into a limit being an equivalence. Since homotopy fixed points are a limit construction, they commute with this limit. \\
$F$ being $n$-polynomial means that
$$F(V) \to \lim_{0 \neq U \subseteq \mathbb{R}^{n+1}}  F(V \oplus U)$$
is an equivalence. Thus, for $F^{hK}$, we have
\[\begin{tikzcd}
	{F(V)^{hK}} && {\lim_{0 \neq U \subseteq \mathbb{R}^{n+1}} (F(V \oplus U)^{hK})} \\
	&& {(\lim_{0 \neq U \subseteq \mathbb{R}^{n+1}} F(V \oplus U))^{hK}}
	\arrow["\simeq", from=1-3, to=2-3]
	\arrow["\simeq"', from=1-1, to=2-3]
	\arrow[from=1-1, to=1-3]
\end{tikzcd}\]
as $(-)^{hK}$ is a limit and thus commutes with limits. The diagonal map is an equivalence since it is the image of one under the functor $(-)^{hK}$, hence so is the third one, and $F^{hK}$ is $n$-polynomial.
\end{proof}
We can calculate $\Theta^n (F^{hK})$ in the following way.
\begin{lemma}
\label{derivative_of_fixed_points_of_homogeneous_functor}
Let $F \colon \cj \to \cs_*$ be $n$-homogeneous and let $K$ be a (topological) group acting on $F$. Then $\Theta^n (F^{hK}) \simeq (\Theta^n F)^{hK}$.
\end{lemma}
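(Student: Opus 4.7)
The plan is to use Weiss's original construction of the derivative spectrum in terms of the unstable $n$-th derivative $F^{(n)}$. Recall from \cref{subsection_the_original_version_(Weiss)} that for any $G \colon \cj \to \csp$, there is a functor $G^{(n)}$ fitting into a natural fibre sequence
$$G^{(n)}(V) \to G(V) \to \lim_{0 \neq U \subseteq \mathbb{R}^n} G(U \oplus V)$$
from which the derivative spectrum is recovered by $(\Theta^n G)_{nk} \simeq G^{(n)}(\mathbb{R}^k)$, with the $O(n)$-action coming from the tautological action on $\mathbb{R}^k$.

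Each term in this fibre sequence is natural in $G$, so the $K$-action on $F$ induces $K$-actions termwise. Since $(-)^{hK}$ is itself a limit, it preserves fibre sequences and commutes with the holim over $0 \neq U \subseteq \mathbb{R}^n$. Applying it to the sequence above yields a fibre sequence
$$F^{(n)}(V)^{hK} \to F^{hK}(V) \to \lim_{0 \neq U \subseteq \mathbb{R}^n} F^{hK}(U \oplus V)$$
whose middle and right-hand terms match those of the defining fibre sequence for $(F^{hK})^{(n)}(V)$. Hence $(F^{hK})^{(n)}(V) \simeq F^{(n)}(V)^{hK}$ naturally in $V$.

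Evaluating at $V = \mathbb{R}^k$ identifies, level by level,
$$(\Theta^n(F^{hK}))_{nk} \simeq F^{(n)}(\mathbb{R}^k)^{hK} \simeq ((\Theta^nF)^{hK})_{nk}$$
and the $O(n)$-action on each side arises from natural operations on $\mathbb{R}^k$, which are independent of (and thus commute with) the $K$-action on $F$. The equivalences therefore assemble into an $O(n)$-equivariant equivalence of spectra. The one point requiring care is the deliberate choice to go via the unstable derivative: attempting instead to exchange $(-)^{hK}$ and $(-)_{hO(n)}$ in the classification formula $F(V) \simeq \Omega^\infty((S^{nV} \wedge \Theta^nF)_{hO(n)})$ would fail in general, whereas the present argument reduces everything to commuting two limits past one another, which is automatic. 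This is precisely the place where, as foreshadowed in the remark after the definition of $\Theta^n$, the classical Weiss construction is genuinely needed.
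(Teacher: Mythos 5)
Your approach is the same as the paper's: both rely on Weiss's unstable derivative $F^{(n)}$ and the fibre sequence $F^{(n)}(V) \to F(V) \to \lim_{0 \neq U \subseteq \mathbb{R}^{n}} F(U \oplus V)$ from \cite[Prop.~5.3]{weiss-oc}, and both use that $(-)^{hK}$ is a limit and hence commutes with the other limits involved. Up to and including the conclusion $\left(F^{hK}\right)^{(n)} \simeq \left(F^{(n)}\right)^{hK}$, your argument matches the paper's.

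However, there is a genuine gap in the final step. You write that the levelwise equivalences ``assemble into an $O(n)$-equivariant equivalence of spectra,'' but this is precisely the step that needs justification, and you never use the hypothesis that $F$ is $n$-homogeneous anywhere in your proof --- a red flag, since the lemma is false for general $F$. The issue: $(\Theta^n F)^{hK}$ is formed in the stable category $\spc$, and its levels agree with the levelwise homotopy fixed points of the prespectrum $\bigl(F^{(n)}(\mathbb{R}^l)\bigr)_l$ only when that prespectrum is already an $\Omega$-spectrum. For a general prespectrum, passage to the associated spectrum involves a sequential colimit, which does not commute with the limit $(-)^{hK}$. The paper makes this explicit: by \cite[Cor.~5.12]{weiss-oc}, $n$-homogeneity of $F$ is exactly what guarantees that $(\Theta^nF)_{nl} = F^{(n)}(\mathbb{R}^l)$ is an $\Omega$-spectrum, so that $(-)^{hK}$ may be computed levelwise. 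You should add this observation; without it the assembly claim does not follow from what you have proved.
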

\begin{proof}
For this, we have to make heavier use of the methods of Weiss' original paper than before. Recall in particular from his approach:
\begin{itemize}
\item There are categories $\cj_0 \subset \ldots \subset \cj_n$ enriched in pointed spaces, and $F^{(n)} \colon \cj_{n} \to \cs$, the \enquote{unstable $n$-th derivative} of $F$, is defined as a Kan extension of $F \colon \cj_0 \to \cs$ along $\cj_0  \to \cj_{n}$, and $\Map_{\cj_0}(V,W) = \Map_\cj(V,W)_+$. (This is in \cite[Section~2]{weiss-oc}¨.)
\item $F^{(n)}$ relates to the derivative spectra $\Theta^nF$ by $(\Theta^nF)_{nl} = F^{(n)}(\mathbb{R}^l)$. (See e.g.\ \cite{weiss-oc}, p.2, p.3, p.10, p.12.)
\item When $F$ is $n$-homogeneous, this presents $\Theta^nF$ as an $\Omega$-spectrum. (See \cite[Cor.~5.12]{weiss-oc}.)
\end{itemize}
By \cite[Prop.~5.3]{weiss-oc} there are fibre sequences
$$F^{(n)}(V) \to F(V) \to \lim_{0 \neq U \subseteq \mathbb{R}^{n}} F(U \oplus V)$$
Since $(-)^{hK}$ is a limit these induce fibre sequences
$$(F^{(n)}(V))^{hK} \to F(V)^{hK} \to \left( \lim_{0 \neq U \subseteq \mathbb{R}^{n}} F(U \oplus V)\right)^{hK}$$
As limits commute with limits,
$$\left( \lim_{0 \neq U \subseteq \mathbb{R}^{n}} F(U \oplus V)\right)^{hK} \simeq \lim_{0 \neq U \subseteq \mathbb{R}^{n}} \left(F(U \oplus V)^{hK}\right) $$
Thus there are fibre sequences
$$(F^{(n)}(V))^{hK} \to F(V)^{hK} \to \left(\lim_{U \in G_{n}} F(U \oplus V)^{hK}\right)$$
whence
$$\left(F^{(n)}\right)^{hK} \simeq \left(F^{hK}\right)^{(n)}$$
Since $(\Theta^nF)_{nl} = F^{(n)}(\mathbb{R}^l)$ describes an $\Omega$-spectrum, $(-)^{hK}$ (as a limit) can be computed levelwise.
\\ Hence
$$\left( \Theta^n \left(F^{hK} \right) \right)_{nl} \simeq \left(F^{hK}\right)^{(n)}(\mathbb{R}^l) \simeq \left(F^{(n)}\right)^{hK}(\mathbb{R}^l) \simeq  \left( \Theta^n \left(F \right)^{hK} \right)_{nl}$$
\end{proof}
\begin{corollary}
For $A,B \in \spc^{\BO(n)}$, the $n$-th derivative spectrum of the functor
\begin{align*}
V \mapsto \Map_{\spc^{\BO(n)}}(A,S^{nV} \wedge B)
\end{align*}
is given by
\begin{align*}
(O(n)_+ \wedge \Map_{\spc}(A,B))^{hO(n)}
\end{align*}
The $O(n)$-homotopy fixed points are taken with respect to the $O(n)$-action from the right on $O(n)_+$ and the conjugation action on $\Map_{\spc}(A,B)$. \\
As an $n$-th derivative spectrum, $(O(n)_+ \wedge \Map_{\spc}(A,B))^{hO(n)}$ is equipped with another $O(n)$-action. This action is given by left multiplication on $O(n)_+$.
\end{corollary}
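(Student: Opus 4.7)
The plan is to combine the three preparatory results of this section. First I would use the standard identification
\[
\Map_{\spc^{\BO(n)}}(A,S^{nV}\wedge B) \simeq \Map_{\spc}(A,S^{nV}\wedge B)^{hO(n)},
\]
in which the group $O(n)$ acts on the right-hand side by conjugation: through its action on $A$ on the source and diagonally on $S^{nV}\wedge B$ on the target, where the latter uses the standard action on $\mathbb{R}^n\subset \mathbb{R}^n\otimes V$ together with the given action on $B$. Next, by \cref{derivative_of_nonequivariant_mapping_spectrum_functor} (or, equivalently, \cref{free_homogeneous_functor} applied to the spectrum $\Map_{\spc}(A,B)$), the functor $V \mapsto \Map_{\spc}(A,S^{nV}\wedge B) \simeq S^{nV}\wedge \Map_{\spc}(A,B)$ is $n$-homogeneous with derivative spectrum $O(n)_+\wedge \Map_{\spc}(A,B)$, in which the canonical derivative $O(n)$-action is given by left multiplication on $O(n)_+$. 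Finally, applying \cref{derivative_of_fixed_points_of_homogeneous_functor} to move $\Theta^n$ past $(-)^{hO(n)}$ immediately yields
\[
\Theta^n\!\left(V \mapsto \Map_{\spc^{\BO(n)}}(A,S^{nV}\wedge B)\right) \simeq \left(O(n)_+ \wedge \Map_{\spc}(A,B)\right)^{hO(n)}.
\]

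The main obstacle, and really the only work left, is tracking how each copy of $O(n)$ transports through the chain of equivalences. The conjugation action on the factor $\Map_{\spc}(A,B)$ is inherited unchanged from the actions on $A$ and $B$, because the mapping spectrum is independent of $V$. The more delicate piece is the $O(n)$-action on $S^{nV}$ via the $\mathbb{R}^n$-factor: the shearing isomorphism used in the proof of \cref{free_homogeneous_functor} already absorbs a copy of $O(n)$ acting on $\mathbb{R}^n\subset \mathbb{R}^n\otimes V$ into a \emph{left} multiplication action on $O(n)_+$ -- this is the residual derivative $O(n)$-action -- but here we have an \emph{independent additional} copy coming from the group $O(n)$ whose fixed points we are taking, and this independent copy should be encoded as \emph{right} multiplication on $O(n)_+$. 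Since left and right multiplications on $O(n)$ commute, the two actions coexist on $O(n)_+\wedge \Map_{\spc}(A,B)$ without interfering, giving precisely the structure claimed in the corollary. I expect the shearing bookkeeping to be the most technical step, but nothing beyond a careful rerun of the argument of \cref{free_homogeneous_functor} with an extra group action carried along throughout.
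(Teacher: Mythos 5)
Your proposal matches the paper's proof: the corollary is established by combining \cref{derivative_of_nonequivariant_mapping_spectrum_functor} (equivalently, \cref{free_homogeneous_functor} applied to $\Map_{\spc}(A,B)$) with \cref{derivative_of_fixed_points_of_homogeneous_functor}, and then tracking the two commuting $O(n)$-actions through the shearing. Your bookkeeping of the actions --- the inner derivative action ending as left multiplication on $O(n)_+$, and the outer conjugation action acting by conjugation on $\Map_{\spc}(A,B)$ and by right multiplication (with the inverse) on $O(n)_+$ --- agrees with what the paper records in \cref{spelling_out_the_actions}.
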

\begin{proof}
This follows from \cref{derivative_of_nonequivariant_mapping_spectrum_functor} and \cref{derivative_of_fixed_points_of_homogeneous_functor}. (We also explain the identification of the actions once again in \cref{spelling_out_the_actions}.)
\end{proof}
As we soon want to replace a homotopy fixed point construction by a homotopy orbit construction, the Norm map will play a role. Recall from algebra that for $G$ a finite group and $M$ a $G$-module, the Norm map is given by
\begin{align*}
\text{Nm} \colon M_G  &\to M^G \\
\overline{m} &\mapsto \sum_{g \in G} gm
\end{align*}
There is a generalisation of this to the case where $M$ is a spectrum and $G$ is a topological group, of which we will recall some basics now.
\begin{remark}[The Dualising Spectrum]
\label{recall_dualising_spectrum}
 $ $ \\
Recall that the Spivak-Klein dualising spectrum of a topological group $G$ is the $G$-spectrum $D_G \in \spc^{\text{BG}}$ given by $D_G = (\Sigma_+^\infty G)^{hG} = (\mathbb{S}[G])^{hG}$, where the homotopy fixed points are taken with respect to the left action of $G$ on itself. This spectrum has a residual right $G$-action. (Compare \cite{klein-dualising-spectrum-i}, Definition 1 for the original reference, or \cite{nikolaus-scholze} Definition I.4.2 for a more recent, albeit brief, treatment\footnote{
Nikolaus and Scholze would use the notation $D_{\text{BG}}$ for what Klein would call $D_G$.
}.).
The important properties of $D_G$ and in particular $D_{O(n)}$ as they pertain to this work are:
\begin{itemize}
\item For any $E \in \spc^{\text{BG}}$, there is a natural map called the norm map
$$\text{Nm} \colon (D_{\text{BG}} \otimes E)_{hG} \to E^{hG}$$
(\cite[Section~3]{klein-dualising-spectrum-i}, \cite[Theorem~I.4.1~(iv)]{nikolaus-scholze}) \\
A nice construction of this map is given as Remark 3.1 in \cite{klein-dualising-spectrum-i}, namely as the following composition pairing in $\spc^{\text{BG}}$, where $\text{hom}$ denotes the internal hom functor in $\spc^{\text{BG}}$:
$$\text{hom}(\mathbb{S},\mathbb{S}[G]) \otimes_{\mathbb{S}[G]} \text{hom}(\mathbb{S}[G],E) \to \text{hom}(\mathbb{S},E) $$
To make sense of this, one simply needs to realise that $\mathbb{S}[G] = \Sigma_+^\infty G$, $\text{hom}(\mathbb{S},X) \simeq X^{hG}$ and $A \otimes_{\mathbb{S}[G]} B \simeq (A \otimes B)_{hG}$ (with respect to the diagonal action of $G$).
\item The cofibre of the norm map is denoted $E^{tG}$ and called the Tate construction. Together, this forms the famous Tate fibre sequence
$$(D_{\text{G}} \otimes E)_{hG} \to E^{hG} \to E^{tG}$$
$(-)^{tG}$ is universally characterised by vanishing on compact objects (\cite{nikolaus-scholze} I.4.1 (iii)) such that the fibre of $E^{hG} \to E^{tG}$ commutes with colimits. 
\item 
When $G$ is a compact Lie group such as $O(n)$ and $E$ is an induced spectrum, i.e.\ of the form
$$E \simeq E' \otimes \Sigma^\infty_+ G$$
for a non-equivariant spectrum $E'$, then
$$E^{tG} \simeq 0$$
and equivalently
$$(D_{\text{BG}} \otimes E)_{hG} \simeq E^{hG}$$
(This is \cite[Theorem D, Cor.~10.2]{klein-dualising-spectrum-i}, or alternatively follows by identifying the compact objects of $\spc^{\text{BG}}$.)
\item (\cite[Theorem~10.1]{klein-dualising-spectrum-i}) For any compact Lie group $G$, there is an (equivariant) equivalence
$$D_G \simeq S^{\text{Ad}_G}$$
where $S^{\text{Ad}_G}$ denotes the suspension spectrum of the one point compactification of the adjoint representation $\text{Ad}_G \colon G \to \text{GL}(\mathfrak{g})$ of $G$ on its Lie algebra $\mathfrak{g}$.
\item We observe that $S^{\text{Ad}_{O(n)}}$ is invertible as an $O(n)$-spectrum, with inverse $S^{-\text{Ad}_{O(n)}}$. To see that, interpret
 $S^{\text{Ad}_{O(n)}}$ as the Thom spectrum of a vector bundle with fibre $\mathfrak{g}$ on $BO(n)$, take the inverse $-\text{Ad}_{O(n)} \in \text{KO}(BO(n))$ and construct its (virtual) Thom spectrum $S^{-\text{Ad}_{O(n)}}$. Then $S^{\text{Ad}_{O(n)}} \otimes S^{-\text{Ad}_{O(n)}} \simeq S^{\text{Ad}_{O(n)} + (- \text{Ad}_{O(n)})} \simeq S^0 = \mathbb{S}$.
\end{itemize}
\end{remark}
\begin{lemma}
\label{consequence_of_tate_fibre_sequence}
For $A,B \in \spc$, there is an equivalence
$$((\Map_{\spc}(A,B) \wedge O(n)_+))^{hO(n)} \simeq \Map_{\spc}(A,B) \otimes D_{O(n)}$$
where the homotopy fixed points are taken with respect to $O(n)$ acting on $O(n)_+$.
\end{lemma}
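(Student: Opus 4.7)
The plan is to recognise that $E := \Map_{\spc}(A,B) \wedge O(n)_+$ is an induced (equivalently, free) $O(n)$-spectrum, and then to invoke the Tate vanishing for induced spectra recalled in \cref{recall_dualising_spectrum}.

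First I would write $E \simeq \Map_{\spc}(A,B) \otimes \Sigma^\infty_+ O(n)$, where $\Map_{\spc}(A,B)$ carries the trivial $O(n)$-action and $\Sigma^\infty_+ O(n)$ carries the action by translation; this is exactly the shape of an induced spectrum. By the bullet in \cref{recall_dualising_spectrum} recalling \cite[Theorem~D, Cor.~10.2]{klein-dualising-spectrum-i}, the Tate construction vanishes on such induced spectra, so the Klein--Spivak norm map
\[ \text{Nm} \colon (D_{O(n)} \otimes E)_{hO(n)} \to E^{hO(n)} \]
is an equivalence.

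Second I would simplify the source of the norm map. The spectrum $D_{O(n)} \otimes E \simeq (D_{O(n)} \otimes \Map_{\spc}(A,B)) \otimes \Sigma^\infty_+ O(n)$ is again induced (now with diagonal $O(n)$-action), and for any spectrum $X$ with an $O(n)$-action, the shearing equivalence $(X \otimes \Sigma^\infty_+ O(n))_{hO(n)} \simeq X_{\mathrm{triv}}$ recovers the underlying spectrum of $X$ after forgetting the action; applied to $X = D_{O(n)} \otimes \Map_{\spc}(A,B)$ this yields
\[ (D_{O(n)} \otimes E)_{hO(n)} \simeq D_{O(n)} \otimes \Map_{\spc}(A,B), \]
which composed with the previous equivalence gives exactly the statement of the lemma.

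The only real subtlety — and the place where I would be most careful — is bookkeeping of the $O(n)$-actions: one must verify that the \emph{total} diagonal action of $O(n)$ on $D_{O(n)} \otimes \Map_{\spc}(A,B) \otimes \Sigma^\infty_+ O(n)$ can, via the standard shearing isomorphism, be redistributed so that the $O(n)_+$ factor carries all of the action that gets killed by $(-)_{hO(n)}$, while leaving a residual action on $D_{O(n)} \otimes \Map_{\spc}(A,B)$ of the form claimed in the surrounding discussion (i.e.\ conjugation on $\Map_{\spc}(A,B)$ and the natural residual action on $D_{O(n)}$). Once this shearing is made explicit, both steps are formal applications of results already collected in \cref{recall_dualising_spectrum}.
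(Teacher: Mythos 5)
Your proposal is correct and follows essentially the same route as the paper's own proof: both identify $E = \Map_{\spc}(A,B)\wedge O(n)_+$ as an induced (free) $O(n)$-spectrum, conclude Tate vanishing so that the Klein--Spivak norm map is an equivalence, and then simplify $(D_{O(n)}\otimes E)_{hO(n)}$ via the shearing equivalence. The paper compresses the shearing step into a single displayed equivalence and leaves the action bookkeeping implicit (it is spelled out elsewhere in \cref{spelling_out_the_actions}), whereas you flag it explicitly as the delicate point, but the underlying argument is identical.
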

\begin{proof}
The Norm map for the $O(n)$-spectrum 
$$E = \Map_{\spc}(A,B) \wedge O(n)_+ $$ 
where $O(n)$ acts on $O(n)_+$ takes the form
\[\begin{tikzcd}
	{(D_{O(n)} \otimes \Map_{\spc}(A,B) \wedge O(n)_+)_{hO(n)}} & {(\Map_{\spc}(A,B) \wedge O(n)_+)^{hO(n)}} \\
	{}
	\arrow["{\text{Nm}}", from=1-1, to=1-2]
\end{tikzcd}\]
Since the action is free, $\text{Nm}$ is an equivalence (\cite[Cor.~10.2]{klein-dualising-spectrum-i}). The left-hand side simplifies as
$$D_{O(n)} \otimes \Map_{\spc}(A,B) \simeq {(D_{O(n)} \otimes \Map_{\spc}(A,B) \wedge O(n)_+)_{hO(n)}}$$
\end{proof}

At last, we can now calculate $\Theta^n \inthom_*(\Phi A, \Phi B)$:
\begin{proof}[Proof of \cref{derivative_of_pointed_internal_hom}]
The proof is now a straightforward combination of the previous calculations:
\begin{itemize}
\item $\inthom_*(\Phi A, \Phi B)(V) \simeq \Map_{\spc^{\BO(n)}}(A, S^{nV} \wedge B)$ by definition and classification of pointed homogeneous functors.
\item $\Map_{\spc^{\BO(n)}}(A, S^{nV} \wedge B) \simeq (\Map_{\spc}(A, S^{nV} \wedge B))^{hO(n)}$ with $O(n)$ acting by conjugation.
\item Ignoring the $(-)^{hO(n)}$ for a moment, the functor 
$$V \mapsto \Map_{\spc}(A, S^{nV} \wedge B)$$ 
is $n$-homogeneous with $n$-th derivative 
$$\Map_{\spc}(A,B) \wedge O(n)_+$$
by \cref{derivative_of_nonequivariant_mapping_spectrum_functor}.
\item Note that $\Map_{\spc}(A,B) \wedge O(n)_+$ now has two $O(n)$-actions: One acting by multiplication from the left on $O(n)_+$ (this is the action making the derivative spectrum an $O(n)$-spectrum), and a second one acting on $\Map_{\spc}(A,B)$ by \enquote{conjugation} and on $O(n)_+$ from the right by multiplication with the inverse. This second action is the one coming from the action on $\Map_{\spc}(A,\sh_V^* B)$ under the identifications we made. Note that we spell them out once more in \cref{spelling_out_the_actions}.
By \cref{derivative_of_fixed_points_of_homogeneous_functor}, the homotopy fixed point spectrum with respect to the second action calculates 
$$\Theta^n \Map_{\spc^{\BO(n)}}(A, S^{nV}B) \simeq (\Map_{\spc}(A,B) \wedge O(n)_+)^{hO(n)}$$
\item By \cref{consequence_of_tate_fibre_sequence}, after equivariantly identifying 
$\Map_{\spc}(A,B) \wedge O(n)_+$ with the diagonal $O(n)$-action on both factors with the same spectrum with $O(n)$ acting only on the second factor\footnote{
via a shearing isomorphism of the form $A \otimes G \to A \otimes G, (a,g) \mapsto (ga,g)$
},
this is equivalent to 
$$D_{O(n)} \otimes \Map_{\spc}(A,B)$$
with the remaining $O(n)$-action given diagonally.
\end{itemize}
\end{proof}

\begin{remark}[Spelling out the actions once more] \label{spelling_out_the_actions}
$ $ \\
The identification
\begin{align*}
S^{nV} \wedge \Map_{\spc}(A,B) &\simeq (S^{nV} \wedge \Map_{\spc}(A,B) \wedge O(n)_+)_{hO(n)} \\
(s,f) &\mapsto [(s,f,1)] \\
(l^{-1}s,f) &\mapsfrom [(s,f,l)]
\end{align*}

identifies the $O(n)$-action on the left side which is diagonal on $S^{nV}$ and $\Map_{\spc}(A,B)$, i.e.\ given by $(s,f) \mapsto (ks,kf)$ for $k \in O(n)$ with the one on $\Map_{\spc}(A,B) \wedge O(n)_+$ which is the same action on $\Map_{\spc}(A,B)$ and given by acting with $k^{-1}$ from the right on $O(n)_+$.

See also

\[\begin{tikzcd}
	{(o^{-1}s,f)} & {[(s,f,o)]} \\
	{(ko^{-1}s,kf)} & {[(ko^{-1}s,kf,1)] = [(s,kf,ok^{-1})]}
	\arrow[maps to, from=1-2, to=1-1]
	\arrow[maps to, from=1-1, to=2-1]
	\arrow[maps to, from=1-2, to=2-2]
	\arrow[maps to, from=2-2, to=2-1]
\end{tikzcd}\]

Hence on
$$\Theta^n \Map_{\spc^{\BO(n)}}(A,B) \simeq \Map_{\spc}(A,B) \wedge O(n)_+$$
there are two commuting $O(n)$-actions:
\begin{itemize}
\item The \enquote{inner} action, i.e.\ the one present on any $n$-th derivative spectrum, acts from the left on $O(n)_+$.
\item The \enquote{outer} action, i.e.\ the one coming from the conjugation action on $\Map_{\spc}(A,B)$, acts in the usual way on  $\Map_{\spc}(A,B)$ and from the right on $O(n)_+$.
\end{itemize}
Then, under the identification
$$\left( D_{O(n)} \otimes \Map_{\spc}(A,B) \otimes O(n)_+ \right)_{hO(n)} \simeq \left( \Map_{\spc}(A,B) \wedge O(n)_+ \right)^{hO(n)}$$
(where the homotopy fixed points and orbits are taken w.r.t.\ the \enquote{outer} action), the inner action acts (only) on $O(n)_+$ on either side of this description. But then, under the final identification
$$ \left( D_{O(n)} \otimes \Map_{\spc}(A,B) \otimes O(n)_+ \right)_{hO(n)} \simeq D_{O(n)} \otimes \Map_{\spc}(A,B)$$ 
the \enquote{inner} $O(n)$-action on the left hand side is identified with the $O(n)$-action on the right-hand side on both tensor factors, for the same reason as in the first identification in this remark.
\end{remark}

\subsection{Calculation of $\Theta^n (A \conv B)$ for $A,B$ $n$-homogeneous}

After calculating $\Theta^n \inthom_*(A,B)$ for $n$-homogeneous functors $A$ and $B$, we also calculate $\Theta^n(A \conv B)$ for $A,B$ $n$-homogeneous. For our final result, either this or the calculation of $\Theta^n \inthom_*(A,B)$ are sufficient on their own, but it is nice to have both.

\begin{proposition}
\label{calculation_of_theta_n_A_conv_B}
For $A,B \colon \cj \to \csp$ $n$-homogeneous, there is an equivalence

$$\Theta^n(A \conv B) \simeq \Theta^n A \oplus \Theta^nB \oplus D_{O(n)}^\vee \otimes \Theta^n A \otimes \Theta^n B$$
where $D_{O(n)}^\vee \simeq S^{-\text{Ad}_n}$ denotes the dual $O(n)$-spectrum to $D_{O(n)}$.
\end{proposition}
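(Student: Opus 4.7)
The plan is to apply Yoneda in $\spc^{\BO(n)}$: since $\Theta^n$ realises the equivalence $\homog \simeq \spc^{\BO(n)}$, it is enough to exhibit, naturally in $Z \in \spc^{\BO(n)}$, an equivalence
$$\Nat_{\fun(\cj,\csp)}(A \conv B, \Phi Z) \simeq \Map_{\spc^{\BO(n)}}\bigl(\Theta^n A \oplus \Theta^n B \oplus D_{O(n)}^\vee \otimes \Theta^n A \otimes \Theta^n B,\; Z\bigr),$$
where $A \conv B$ is canonically pointed by \cref{internal_hom_becomes_pointed}.

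To prepare the left-hand side I would first invoke the unpointed internal-hom adjunction of \cref{internal_hom} to rewrite $\Nat_{\fun(\cj,\cs)}(A \conv B, \Phi Z) \simeq \Nat_{\fun(\cj,\cs)}(A, \inthom(B, \Phi Z))$, and then strip away the unpointed parts using \cref{pointed_unpointed_hom_splitting}. Evaluating the splitting $\inthom(X,Y) \simeq Y \times \inthom_*(X,Y)$ at $0 \in \cj$ yields, for any pointed $X, Y$, the identity $\Nat_{\fun(\cj,\cs)}(X,Y) \simeq Y(0) \times \Nat_{\fun(\cj,\csp)}(X,Y)$. Applying this on both sides, combined with $\inthom(B, \Phi Z) \simeq \Phi Z \times \inthom_*(B, \Phi Z)$, and cancelling the common $\Phi Z(0)$-evaluation factor, I obtain
$$\Nat_{\fun(\cj,\csp)}(A \conv B, \Phi Z) \simeq \Nat_{\fun(\cj,\csp)}(A, \Phi Z) \times \Nat_{\fun(\cj,\csp)}(B, \Phi Z) \times \Nat_{\fun(\cj,\csp)}(A, \inthom_*(B, \Phi Z)).$$

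Each factor is now identifiable. For $n$-homogeneous $H$ and $n$-polynomial $P$ there is an equivalence $\Nat_{\fun(\cj,\csp)}(H, P) \simeq \Map_{\spc^{\BO(n)}}(\Theta^n H, \Theta^n P)$, obtained from the fibre sequence $L_n P \to T_n P \to T_{n-1} P$ together with $T_{n-1} H = *$. Combined with \cref{internal_hom_remains_poly} (which guarantees that $\inthom_*(B, \Phi Z)$ is $n$-polynomial), the first two factors compute to $\Map(\Theta^n A, Z)$ and $\Map(\Theta^n B, Z)$. For the third factor I would apply \cref{derivative_of_pointed_internal_hom} to identify $\Theta^n \inthom_*(B, \Phi Z)$ with $\Map_{\spc}(\Theta^n B, Z) \otimes D_{O(n)}$; the tensor-hom adjunction in $\spc^{\BO(n)}$, together with the invertibility of $D_{O(n)} \simeq S^{\text{Ad}_{O(n)}}$, then rewrites the third factor as $\Map_{\spc^{\BO(n)}}(D_{O(n)}^\vee \otimes \Theta^n A \otimes \Theta^n B, Z)$ with the diagonal $O(n)$-action. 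Reading the product of the three mapping spaces as a single mapping space out of the direct sum and invoking Yoneda produces the claim.

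The main obstacle I anticipate is the bookkeeping of $O(n)$-actions — in particular verifying that the conjugation action on $\Map_{\spc}(\Theta^n B, Z)$ combined with the action on $D_{O(n)}$ is correctly identified with the diagonal action on $D_{O(n)}^\vee \otimes \Theta^n A \otimes \Theta^n B$ — but this parallels the careful action analysis already carried out in \cref{spelling_out_the_actions}.
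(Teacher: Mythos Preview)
Your argument is correct and reaches the same conclusion as the paper, but it is organised differently. The paper first proves a cofibre sequence
\[
A \vee B \to A \conv B \to A \redconv B
\]
(where $\redconv$ is Day convolution for the smash product on $\csp$), shows that it splits after applying $\Theta^n$, and then computes $\Theta^n(A \redconv B)$ by a Yoneda argument using that $\inthom_*$ is right adjoint to $\redconv$. You instead run Yoneda directly on $A \conv B$, use the \emph{unpointed} adjunction $\conv \dashv \inthom$, and recover the three summands by repeatedly invoking the splitting $\inthom(X,Y) \simeq Y \times \inthom_*(X,Y)$ and the relation $\Nat(X,Y) \simeq Y(0) \times \Nat_*(X,Y)$.

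The step that deserves one more sentence in your write-up is the ``cancellation of the common $\Phi Z(0)$ factor'': you should note explicitly that under the adjunction equivalence $\Nat(A \conv B, \Phi Z) \simeq \Nat(A, \inthom(B, \Phi Z))$ the two retractions onto $\Phi Z(0)$ (evaluation at the basepoint of $A \conv B$, respectively evaluation at $*_A$ followed by evaluation at $*_B$) agree, so that taking fibres is legitimate. Once this is said, your route avoids introducing $\redconv$ altogether, at the price of slightly more pointed/unpointed bookkeeping; the paper's route packages that bookkeeping into the geometric cofibre sequence and its stable splitting. Neither approach is clearly superior, and your anticipated obstacle about the $O(n)$-actions is exactly the same one the paper faces and handles via \cref{spelling_out_the_actions}.
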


The proof consists of two steps:

\begin{lemma}
\label{lemma_splitting_off_A_and_B}
For functors $A,B \colon \cj \to \csp$, there is a cofibre sequence
$$A \vee B \to A \conv B \to A \redconv B$$
where $(A \vee B)(V) := A(V) \vee B(V)$ and $A \redconv B$ denotes the \textit{reduced} convolution product of $A$ and $B$, taken with respect to the smash product monoidal structure on $\csp$ instead of the cartesian product monoidal structure. \\
At the level of derivative spectra, the sequence splits and thus induces an equivalence
$$\Theta^n (A \conv B) \simeq \Theta^n A \oplus \Theta^n B \oplus \Theta^n (A \redconv B)$$
\end{lemma}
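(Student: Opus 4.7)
My plan is to tackle the two claims in sequence. For the cofibre sequence in Part 1, I would work pointwise. For every pair $V_1, V_2 \in \cj$, there is the standard cofibre sequence of pointed spaces $A(V_1) \vee B(V_2) \to A(V_1) \times B(V_2) \to A(V_1) \wedge B(V_2)$. Taking the colimit $\colim_{V_1 \oplus V_2 \to V}$ (a left adjoint, and hence preserving cofibre sequences) identifies the middle and right terms with $(A \conv B)(V)$ and $(A \redconv B)(V)$ respectively. To identify the left term with $(A \vee B)(V)$, I would use that $X \vee Y$ is the pushout of $X \leftarrow * \to Y$ of pointed spaces, that colimits commute with pushouts, and that the constant functor $*$ is the tensor unit for Day convolution on $(\fun(\cj,\csp),\conv)$ (as established in the proof of \cref{monoids_are_canonically_pointed}), so that $\colim_{V_1 \oplus V_2 \to V} A(V_1) = (A \conv *)(V) = A(V)$ and analogously for $B$.

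For the splitting at the level of derivative spectra in Part 2, the key observation is that basepoints and collapses provide natural sections and retractions. The basepoint map $* \to B$ induces a section $\iota_A \colon A \simeq A \conv * \to A \conv B$ which factors through the wedge inclusion $A \vee B \to A \conv B$; dually, the collapse $B \to *$ induces a retraction $\pi_A \colon A \conv B \to A \conv * \simeq A$. Symmetric constructions give $\iota_B$ and $\pi_B$. One checks directly that $\pi_A \iota_A = \mathrm{id}_A$ and $\pi_B \iota_B = \mathrm{id}_B$, while the cross-composites $\pi_A \iota_B$ and $\pi_B \iota_A$ factor through $*$ and are therefore null. Applying $\Theta^n$, the pair $(\iota_A, \iota_B)$ with retraction $(\pi_A, \pi_B)$ exhibits $\Theta^n A \oplus \Theta^n B$ as a direct summand of $\Theta^n(A \conv B)$.

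To identify the complementary summand as $\Theta^n(A \redconv B)$, I would apply $\Theta^n$ to the cofibre sequence from Part 1, obtain a cofibre sequence in the stable category $\spc^{\BO(n)}$, and observe that the retraction $(\pi_A, \pi_B)$ splits its first map after identifying $\Theta^n(A \vee B) \simeq \Theta^n A \oplus \Theta^n B$ (proved by the analogous section/retraction argument applied to the wedge inclusions $A, B \to A \vee B$ and the wedge collapses $A \vee B \to A$ and $A \vee B \to B$). The resulting split cofibre sequence then yields the desired decomposition.

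The main obstacle lies in justifying that $\Theta^n$ sends the cofibre sequence of Part 1 to a cofibre sequence of spectra. While $T_n$ is a left-exact left adjoint and therefore preserves cofibre sequences into $\poly^{\leq n}(\cj,\csp)$, the functor $L_n$ is only a right adjoint and does not preserve colimits in general. The argument must exploit the stability of $\homog \simeq \spc^{\BO(n)}$: since $L_n$ commutes with $\Omega$ (both being limit constructions, using the left-exactness of $T_n$) and since fibre and cofibre sequences coincide in the stable target, the cofibre sequence in $\poly^{\leq n}(\cj,\csp)$ can be rotated into a fibre sequence and then transported through $L_n$ to give a fibre (hence cofibre) sequence in $\spc^{\BO(n)}$.
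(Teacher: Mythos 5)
Your first two paragraphs track the paper's proof closely — the pointwise cofibre sequence argument in Part 1 and the section/retraction construction from basepoints and collapses in Part 2 are essentially what the paper does (its diagram encodes exactly your maps $\iota_A, \iota_B, \pi_A, \pi_B$). The gap is in your final paragraph. You are right that the lemma needs $\Theta^n$ to carry the cofibre sequence to a cofibre sequence of $O(n)$-spectra and right that this does not follow formally from $L_n$ being a right adjoint, but your proposed repair does not work: in the merely pointed (non-stable) $\infty$-category $\poly^{\leq n}(\cj,\csp)$, a cofibre sequence $X \to Y \to Z$ is not a fibre sequence and cannot be \enquote{rotated} into one — the Puppe sequence extends only to the right as $X \to Y \to Z \to \Sigma X \to \cdots$, and applying $\Omega$ to a cofibre sequence does not produce a fibre sequence. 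Knowing that $L_n$ commutes with $\Omega$ and lands in a stable category is not enough; a limit-preserving functor into a stable $\infty$-category need not preserve cofibres.

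The missing fact — that $\Theta^n$ preserves cofibre sequences — is true, but it follows from Weiss's explicit construction rather than from formal properties of $L_n$. Since $F^{(n)}$ is defined as a left Kan extension of $F$ along $\cj_0 \to \cj_n$, the assignment $F \mapsto F^{(n)}$ preserves all colimits, and $(\Theta^n F)_{nl} = F^{(n)}(\mathbb{R}^l)$ followed by spectrification; all of these steps preserve colimits, so $\Theta^n$ does as well, hence sends cofibre sequences to cofibre sequences. Combined with stability of $\spc^{\BO(n)}$ (so a split monomorphism has a complementary summand computed by the cofibre), this yields the claimed decomposition, and is presumably what the paper's terse parenthetical \enquote{(and we are using the stability of $\spc^{\BO(n)}$)} is implicitly invoking.
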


\begin{lemma}
\label{lemma_derivative_of_reduced_convolution}
For $A,B \colon \cj \to \csp$ $n$-homogeneous, there is an equivalence of $O(n)$-spectra 
$$\Theta^n (A \redconv B) \simeq \Theta^n A \otimes \Theta^n B \otimes D_{O(n)}^\vee$$
\end{lemma}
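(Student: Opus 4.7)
The plan is to prove the lemma by a Yoneda argument in $\spc^{\BO(n)}$, combining the $(\redconv, \inthom_*)$-adjunction with the already established identification of $\Theta^n \inthom_*$ from \cref{derivative_of_pointed_internal_hom}. Fix an arbitrary test object $Z \in \Homog(\cj,\csp)$, equivalently $\Theta^n Z \in \spc^{\BO(n)}$ via the Weiss classification. Since $L_n$ is right adjoint to $\Homog(\cj,\csp) \hookrightarrow \poly^{\leq n}(\cj,\csp)$ and $\Theta^n$ is an equivalence onto $\spc^{\BO(n)}$, one has
$$\Nat(A \redconv B, Z) \simeq \Map_{\spc^{\BO(n)}}(\Theta^n(A \redconv B), \Theta^n Z)$$
naturally in $Z$. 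Hence it suffices to exhibit $\Nat(A \redconv B, Z)$ as $\Map_{\spc^{\BO(n)}}(\Theta^n A \otimes \Theta^n B \otimes D_{O(n)}^\vee, \Theta^n Z)$ naturally and then invoke Yoneda.

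To compute $\Nat(A \redconv B, Z)$ the other way, I first apply the convolution--internal-hom adjunction to obtain $\Nat(A, \inthom_*(B, Z))$. The functor $\inthom_*(B, Z)$ is $n$-polynomial, since the fibre sequence $\inthom_*(B,Z) \to \inthom(B,Z) \to Z$ of \cref{pointed_unpointed_hom_splitting} identifies it as a finite limit of $n$-polynomial functors (using \cref{internal_hom_remains_poly} and polynomiality being closed under finite limits). Since $A$ is $n$-homogeneous, the Weiss classification again gives
$$\Nat(A, \inthom_*(B, Z)) \simeq \Map_{\spc^{\BO(n)}}(\Theta^n A, \Theta^n \inthom_*(B, Z)),$$
and \cref{derivative_of_pointed_internal_hom}, applied with its ``$A$'' and ``$B$'' replaced by $\Theta^n B$ and $\Theta^n Z$ respectively, yields the $O(n)$-equivariant equivalence
$$\Theta^n \inthom_*(B, Z) \simeq \Map_{\spc}(\Theta^n B, \Theta^n Z) \otimes D_{O(n)}.$$

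Finally, invertibility of $D_{O(n)}$ in $\spc^{\BO(n)}$ (see \cref{recall_dualising_spectrum}) yields the $O(n)$-equivariant identification $\Map_{\spc}(\Theta^n B, \Theta^n Z) \otimes D_{O(n)} \simeq \Map_{\spc}(\Theta^n B \otimes D_{O(n)}^\vee, \Theta^n Z)$, after which the tensor--hom adjunction in $\spc^{\BO(n)}$ delivers
$$\Map_{\spc^{\BO(n)}}(\Theta^n A, \Map_{\spc}(\Theta^n B \otimes D_{O(n)}^\vee, \Theta^n Z)) \simeq \Map_{\spc^{\BO(n)}}(\Theta^n A \otimes \Theta^n B \otimes D_{O(n)}^\vee, \Theta^n Z).$$
Chaining these natural equivalences and applying Yoneda in $\spc^{\BO(n)}$ produces the desired identification of $\Theta^n(A \redconv B)$. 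I expect the main obstacle to be the careful bookkeeping of the $O(n)$-actions throughout the zig-zag, in particular distinguishing the ``inner'' action on each derivative spectrum produced by the Weiss classification from the ``outer'' conjugation action on $\Map_{\spc}(\Theta^n B, \Theta^n Z)$ as highlighted in \cref{spelling_out_the_actions}, and ensuring that the invertibility manipulation at the end intertwines these correctly to give an equivalence in $\spc^{\BO(n)}$ and not just in $\spc$.
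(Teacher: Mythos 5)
Your overall route matches the paper's: a Yoneda argument in $\spc^{\BO(n)}$, the $\redconv$--$\inthom_*$ adjunction, \cref{derivative_of_pointed_internal_hom} for the derivative of the internal hom, then invertibility of $D_{O(n)}$ and tensor--hom. However, your very first step has a gap. You assert that
$$\Nat(A\redconv B, Z) \simeq \Map_{\spc^{\BO(n)}}(\Theta^n(A\redconv B),\Theta^n Z)$$
follows from the $L_n\dashv\iota$-style adjunction together with $\Theta^n$ being an equivalence. But that adjunction provides $\Nat(H,P)\simeq\Nat(H,L_nP)$ for $H$ \emph{homogeneous} and $P$ $n$-polynomial, i.e.\ it controls maps \emph{out of} a homogeneous functor, and it does so via the vanishing of $\Nat(H,T_{n-1}P)$, which in turn uses the fibre sequence $L_nP\to P\to T_{n-1}P$ and the fact that corepresenting by $H$ preserves this fibre sequence. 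Here you need the opposite direction: maps \emph{into} the homogeneous $Z$. After applying the $T_n$-adjunction you have $\Nat(T_n(A\redconv B),Z)$, and the comparison with $\Nat(L_n(A\redconv B),Z) = \Map(\Theta^n(A\redconv B),\Theta^n Z)$ does \emph{not} follow from any adjunction in this direction, because mapping the fibre sequence $L_n\to T_n\to T_{n-1}$ into a fixed $Z$ yields neither a fibre nor a cofibre sequence in $\csp$.

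What actually closes the gap --- and what the paper's proof does --- is to show directly that $T_{n-1}(A\redconv B)\simeq *$, so that $L_n(A\redconv B)\simeq T_n(A\redconv B)$ on the nose. This is where the monoidality results earn their keep: $T_{n-1}$ is symmetric monoidal for the smash-product Day convolution on $\fun(\cj,\csp)$, so $T_{n-1}(A\redconv B)\simeq T_{n-1}A\redconv T_{n-1}B\simeq *\redconv T_{n-1}B\simeq *$, using that $A$ (being $n$-homogeneous) has $T_{n-1}A\simeq *$ and that the reduced Day convolution with the terminal functor is terminal. Without this input your first equivalence is unjustified. Once you add it, your argument agrees with the paper's, and your later use of the $L_n$-adjunction (step: $\Nat(A,\inthom_*(B,Z))\simeq\Nat(A,L_n\inthom_*(B,Z))$, which really is a map out of the homogeneous $A$) is correctly placed.
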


\begin{proof}[Proof of \cref{calculation_of_theta_n_A_conv_B}] \leavevmode \\
This is immediate from \cref{lemma_splitting_off_A_and_B} and \cref{lemma_derivative_of_reduced_convolution}. \qedhere
\end{proof}
Now, it just remains to prove the two lemmas.

\begin{proof}[Proof of \cref{lemma_splitting_off_A_and_B}]
On $V \in \cj$, the sequence
$$(A \vee B)(V) \to (A \conv B)(V) \to (A \redconv B)(V)$$
takes the form
$${\colim_{U_1 \oplus U_2 \to V}A(U_1) \vee B(U_2)} \to {\colim_{U_1 \oplus U_2 \to V}A(U_1) \times B(U_2)} \to {\colim_{U_1 \oplus U_2 \to V}A(U_1) \wedge B(U_2)}$$
after using 
$${\colim_{U_1 \oplus U_2 \to V}A(U_1) \vee B(U_2)} \simeq A(V) \vee B(V)$$
Hence it is a cofibre sequence since the sequences
$$A(U_1) \vee B(U_2) \to A(U_1) \times B(U_2) \to A(U_1) \wedge B(U_2)$$
are cofibre sequences.

On the level of derivative spectra, the required splitting is induced by

\[\begin{tikzcd}
	{A \conv B} && {A \times B} \\
	{A \conv * \vee * \conv B} \\
	{A \vee B}
	\arrow["\simeq"{description}, from=3-1, to=2-1]
	\arrow[from=2-1, to=1-1]
	\arrow[from=1-1, to=1-3]
	\arrow[from=3-1, to=1-3]
\end{tikzcd}\]

where the map $A \conv B \to A \times B$ is the product of $A \conv B \to A \conv *$ and $A \conv B \to * \conv B$ (and we are using the stability of $\spc^{\BO(n)}$).

\end{proof}

\begin{proof}[Proof of \cref{lemma_derivative_of_reduced_convolution}] $ $ \\
Let $A = \Phi M$, $B=\Phi N$ for $M, N \in \spc^{\BO(n)}$. Recall here that we denote by $\Phi$ the inverse equivalence to $\homog \to \spc^{\BO(n)}$.

Then, by Yoneda, the following calculation for arbitrary $C \in \spc^{\BO(n)}$ shows that 
$$\Theta^n(A \redconv B) \simeq D_{O(n)}^\vee \otimes M \otimes N$$
\begin{align*}
\Map_{\spc^{\BO(n)}}(\Theta^n(A \redconv B),C)
& \simeq \Nat_*(L_n(\Phi A \redconv \Phi B), \Phi C) \\
& \simeq \Nat_*(T_n(\Phi A \redconv \Phi B), \Phi C) \\
& \simeq \Nat_*(\Phi A \redconv \Phi B,\Phi C) \\
& \simeq \Nat_*(\Phi A,\inthom_*(\Phi B,\Phi C)) \\
& \simeq \Nat_*(\Phi A,L_n \inthom_*(\Phi B,\Phi C) \\
& \simeq \Nat_*(\Phi A,\Phi(\Map_{\spc}(B,C)\otimes D_{O(n)})) \\
& \simeq \Map_{\spc^{\BO(n)}}(M,\Map_{\spc}(N,C) \otimes D_{O(n)}) \\
& \simeq \Map_{\spc^{\BO(n)}}(D_{O(n)}^\vee \otimes M,\Map_{\spc}(N,C)) \\
& \simeq \Map_{\spc^{\BO(n)}}(D_{O(n)}^\vee \otimes M \otimes N,C)
\end{align*}

Almost all of the steps are straightforward applications of the equivalence $\homog \to \spc^{\BO(n)}$ and the adjunctions for $T_n \colon \fun(\cj,\csp) \to \poly^{\leq n}(\cj,\csp)$ and ${L_n}_{|\poly^{\leq n}(\cj,\csp)} \colon \poly^{\leq n}(\cj,\csp) \to \homog $. For the second step, we use that
$$T_n(\Phi A \redconv \Phi B) \simeq L_n(\Phi A \redconv \Phi B)$$
which holds as $T_{n-1} (\Phi A \redconv \Phi B) \simeq *$, which follows from the pullback square
\[\begin{tikzcd}
	{\poly^{\leq n}(\cj,\csp)} & \homog \\
	{\poly^{\leq n-1}(\cj,\csp)} & {*}
	\arrow[from=1-1, to=2-1]
	\arrow[from=1-2, to=1-1]
	\arrow[from=2-2, to=2-1]
	\arrow[from=1-2, to=2-2]
	\arrow["\lrcorner"{anchor=center, pos=0.125, rotate=-90}, draw=none, from=1-2, to=2-1]
\end{tikzcd}\]
and monoidality of $T_{n-1}$.

\end{proof}

\section{Final Result on Derivative Spectra of Monoidal Functors}
\sectionmark{Derivative Spectra of Monoidal Functors}

One of the questions which started this project was
\begin{center}
For a monoidal functor $F \colon \cj \to \cs$, what can be said about its $n$-th derivative spectra $\Theta^nF$?
\end{center}

Using \cref{derivative_of_pointed_internal_hom}, we can finally give the following answer

\begin{corollary}
\label{final_derivative_spectra_result}
Let $F \colon \cj \to \cs$ be a monoidal functor. Then, for any $n \in \mathbb{N}$, there are $O(n)$-equivariant maps
$$\Theta^nF \otimes \Theta^nF \to \Theta^nF \otimes D_{O(n)}$$
\end{corollary}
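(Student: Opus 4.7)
The plan is to transport the multiplication from $F$ down to its $n$-th homogeneous layer $L_nF$, apply $\Theta^n$, and then read off the desired structure map from the known splitting of $\Theta^n$ of a convolution product.

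First, I would observe that a lax symmetric monoidal $F \colon \cj \to \cs$ is canonically a lax symmetric monoidal functor $F \colon \cj \to (\csp,\times)$ by \cref{monoids_are_canonically_pointed}. By \cref{corollary_monoidal_homogeneous_layers_pointed_spaces}, the homogeneous layer $L_nF \colon \cj \to \csp$ is then itself lax symmetric monoidal, so in particular it comes equipped with a multiplication
$$\mu \colon L_nF \conv L_nF \to L_nF$$
in $\fun(\cj,\csp)$, where $\conv$ denotes Day convolution over $(\csp,\times)$.

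Next, I would apply the derivative functor $\Theta^n \colon \homog \to \spc^{\BO(n)}$, or rather $\Theta^n \circ L_n$, to both sides of $\mu$. The target is straightforward: $\Theta^n L_n (L_nF \conv L_nF)$ identifies via the classification $\homog \simeq \spc^{\BO(n)}$ together with \cref{calculation_of_theta_n_A_conv_B} (applied to $A = B = L_nF$, which are $n$-homogeneous) as the $O(n)$-equivariant splitting
$$\Theta^n(L_nF \conv L_nF) \simeq \Theta^nF \oplus \Theta^nF \oplus \bigl(D_{O(n)}^\vee \otimes \Theta^nF \otimes \Theta^nF\bigr)$$
while on the right $\Theta^n L_nF \simeq \Theta^n F$. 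Restricting $\Theta^n(\mu)$ to the third summand yields an $O(n)$-equivariant map
$$D_{O(n)}^\vee \otimes \Theta^nF \otimes \Theta^nF \to \Theta^n F$$
and tensoring with $D_{O(n)}$, using that $D_{O(n)} \simeq S^{\text{Ad}_{O(n)}}$ is invertible in $\spc^{\BO(n)}$ with inverse $D_{O(n)}^\vee$ as recorded in \cref{recall_dualising_spectrum}, produces the desired
$$\Theta^nF \otimes \Theta^nF \to \Theta^nF \otimes D_{O(n)}$$

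The main obstacle is ensuring that the splitting of \cref{lemma_splitting_off_A_and_B} is genuinely $O(n)$-equivariant at the level of derivative spectra, so that the restriction to the third summand makes sense. This should follow because the splitting $A \vee B \to A \conv B \to A \redconv B$ lives already in $\fun(\cj,\csp)$ before applying $\Theta^n$, and the two projections $A \conv B \to A$ and $A \conv B \to B$ (given by the pointings of $B$ and $A$) are entirely natural, so the induced decomposition in $\spc^{\BO(n)}$ is equivariant. A secondary check is that the first two summands $\Theta^nF \oplus \Theta^nF$ of $\Theta^n(L_nF \conv L_nF)$ land under $\Theta^n(\mu)$ in $\Theta^nF$ through the identity (plus unit data), so the only genuinely new content of $\mu$ is carried by the third summand, which is exactly what the final map captures.
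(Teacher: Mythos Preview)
Your proof is correct and takes the dual route to the paper's. The paper proves the corollary by passing through the internal hom: from $\mu \colon L_nF \conv L_nF \to L_nF$ it takes the adjoint $L_nF \to \inthom(L_nF,L_nF)$, projects to $\inthom_*(L_nF,L_nF)$ via \cref{pointed_unpointed_hom_splitting}, applies $\Theta^n$ using \cref{derivative_of_pointed_internal_hom} to obtain $\Theta^nF \to \Map_{\spc}(\Theta^nF,\Theta^nF) \otimes D_{O(n)}$, and then un-adjoints. You instead apply $\Theta^n$ directly to $\mu$ and use \cref{calculation_of_theta_n_A_conv_B} to identify the source, then restrict to the interesting summand. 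The paper explicitly remarks (just before \cref{calculation_of_theta_n_A_conv_B}) that either calculation suffices for the final result, so you have chosen the other branch. Your route is slightly more direct in that it avoids the adjoint passage and the pointed/unpointed splitting of $\inthom$; the paper's route has the mild advantage that applying $\Theta^n$ to an $n$-homogeneous source $L_nF$ is immediate, whereas you must invoke the full decomposition of $\Theta^n$ on a non-homogeneous convolution. Your final paragraph about the first two summands is not needed for the bare existence statement of the corollary, and the equivariance concern you raise is already handled by the fact that the splitting of \cref{lemma_splitting_off_A_and_B} is a statement in $\spc^{\BO(n)}$.
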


\begin{proof}
Fix $n \in \mathbb{N}$. Since $F$ is monoidal, it lifts to pointed spaces by \cref{monoids_are_canonically_pointed}. By our main theorem (\cref{orthogonal_monoidality_theorem}), $T_nF$ and $T_{n-1}F$ are monoidal. By \cref{corollary_monoidal_homogeneous_layers_spaces}, the $n$-homogeneous layer $L_nF$, which corresponds to $\Theta^n F$ under the classification of homogeneous functors (\cref{theorem_classification_of_homogeneous_functors}) is monoidal. Hence there is a structure map
$$L_nF \conv L_nF \to L_nF$$
Using the internal hom - adjunction and \cref{pointed_unpointed_hom_splitting}, we obtain
$$L_nF \to \inthom(L_nF,L_nF) \to \inthom_*(L_nF,L_nF)$$
Applying $\Theta^n$ to this and using \cref{derivative_of_pointed_internal_hom}, we get a map
$$\Theta^n F \to \Map_{\spc}(\Theta^nF,\Theta^nF) \otimes D_{O(n)} $$
As we pointed out in \cref{recall_dualising_spectrum} (last point), $D_{O(n)} \simeq S^{\text{Ad}_n}$ has an inverse $D_{O(n)}^\vee \simeq S^{-\text{Ad}_n}$. Thus this map is equivalent to a map
$$ \Theta^n F \otimes D_{O(n)}^\vee \to \Map_{\spc}(\Theta^nF, \Theta^nF) $$
By the internal hom adjunction in $\spc^{\BO(n)}$, this corresponds to a map
$$\Theta^nF \otimes D_{O(n)}^\vee \otimes \Theta^nF \to \Theta^n F$$
which corresponds to the desired map
$$\Theta^n F \otimes \Theta^nF \to \Theta^nF \otimes D_{O(n)}$$
\end{proof}

\begin{remark}
\label{remark_twisted_multiplication_structure}
These maps on derivative spectra will satisfy additional consistencies such as homotopies making 
\[\begin{tikzcd}
	{\Theta^n F \otimes \Theta^nF \otimes \Theta^nF} & {\Theta^nF \otimes \Theta^nF \otimes D_{O(n)}} \\
	{\Theta^nF \otimes D_{O(n)} \otimes \Theta^nF} & {\Theta^n F \otimes D_{O(n)}^{\otimes 2}}
	\arrow[from=1-1, to=1-2]
	\arrow[from=1-1, to=2-1]
	\arrow[from=2-1, to=2-2]
	\arrow[from=1-2, to=2-2]
\end{tikzcd}\]
commute, since the map
$$m \colon F \conv F \to F$$
comes with homotopies making the square
\[\begin{tikzcd}
	{F \conv F \conv F} & {F \conv F} \\
	{F \conv F} & F
	\arrow["{m \conv id}", from=1-1, to=2-1]
	\arrow["m", from=2-1, to=2-2]
	\arrow["m"', from=1-2, to=2-2]
	\arrow["{id \conv m}"', from=1-1, to=1-2]
\end{tikzcd}\]
commute.
\end{remark}

\begin{conjecture}
\label{corollary_untwisted_multiplication_structure}
For $F \colon \cj \to \csp$ lax symmetric monoidal, these maps make $\widetilde{\Theta^n}F := \Theta^nF \otimes D_{O(n)}^\vee$ into a nonunital $E_\infty$-ring spectrum.
\end{conjecture}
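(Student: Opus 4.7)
The plan is to refine the construction of \cref{final_derivative_spectra_result} from a bare binary multiplication (satisfying associativity and commutativity only up to homotopy, as noted in \cref{remark_twisted_multiplication_structure}) into a coherent non-unital $E_\infty$-structure. The strategy mirrors the proof of \cref{orthogonal_monoidality_theorem}: produce a suitable lax symmetric monoidal functor at the $\infty$-operadic level whose restriction to commutative algebras yields the desired ring structure on $\widetilde{\Theta^n}F$.

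The starting observation is that by \cref{corollary_monoidal_homogeneous_layers_pointed_spaces}, the homogeneous layer $L_nF$ of any lax symmetric monoidal $F$ is a non-unital commutative algebra in $(\fun(\cj, \csp), \conv)$: the sequence $L_nF \to T_nF \to T_{n-1}F$ is a fibre sequence of commutative algebras (using \cref{orthogonal_monoidality_theorem} for each term, together with the fact that the maps $T_n \to T_{n-1}$ are morphisms of commutative algebras), and the fibre of a morphism of commutative algebras inherits a non-unital commutative algebra structure by \cite[Cor.~3.2.2.5]{lurie-ha}. Hence the full $\infty$-operadic structure is already available on $L_nF$ inside $\fun(\cj, \csp)$; the work is to transport it along $\Theta^n$ to $\widetilde{\Theta^n}F$ inside $\spc^{\BO(n)}$ (or, forgetting the action, in $\spc$).

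The cleanest route is suggested by the identity
$$\mathbb{S} \oplus \widetilde{\Theta^n}(A \conv B) \simeq \bigl(\mathbb{S} \oplus \widetilde{\Theta^n}A\bigr) \otimes \bigl(\mathbb{S} \oplus \widetilde{\Theta^n}B\bigr)$$
for $A,B$ $n$-homogeneous, which is a direct rearrangement of \cref{calculation_of_theta_n_A_conv_B} once one substitutes $\widetilde{\Theta^n}(-) = \Theta^n(-) \otimes D_{O(n)}^\vee$ and absorbs the two copies of $D_{O(n)}^\vee$ into the tensor factors. The plan is to upgrade the augmented functor $\overline{\Theta^n} \colon F \mapsto \mathbb{S} \oplus \widetilde{\Theta^n}F$ to a lax symmetric monoidal functor $(\fun(\cj, \csp), \conv) \to (\spc, \otimes)_{/\mathbb{S}}$. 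Applying it to the commutative algebra $F$ produces an augmented $E_\infty$-algebra $\overline{\Theta^n}F \to \mathbb{S}$ whose augmentation ideal is precisely $\widetilde{\Theta^n}F$ equipped with a non-unital $E_\infty$-structure. Since $\widetilde{\Theta^n}F$ only sees $L_nF$, the verification of the structure maps reduces to the homogeneous case, where the above identity is an equivalence.

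The main obstacle is the $\infty$-operadic coherence step: lifting the binary objectwise computation of \cref{calculation_of_theta_n_A_conv_B} to a genuine $\infty$-operad map over $N(\text{Fin}_*)$ — i.e., producing the required sequence of $n$-ary compatibilities, not merely a binary one satisfying associativity up to homotopy. This would be handled analogously to Lurie's construction of Day convolution recalled in \cref{infty-day-convolution}, but combined with an $n$-fold generalisation of \cref{lemma_derivative_of_reduced_convolution} describing $\Theta^n$ of an arbitrary multifold reduced convolution. A potentially slicker alternative is to equip $\spc^{\BO(n)}$ with the non-unital symmetric monoidal structure $\otimes_D$ defined by $A \otimes_D B := D_{O(n)}^\vee \otimes A \otimes B$, exhibit $\Phi^{-1} \circ L_n$ as a non-unital symmetric monoidal functor $(\fun(\cj, \csp), \conv) \to (\spc^{\BO(n)}, \otimes_D)$, and then use that the autoequivalence $A \mapsto A \otimes D_{O(n)}^\vee$ identifies $(\spc^{\BO(n)}, \otimes_D)$ with $(\spc^{\BO(n)}, \otimes)$ non-unitally; either way, the hardest part is producing an honest $\infty$-operadic enhancement of a formula currently known only on the homotopy category.
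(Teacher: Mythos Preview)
The statement you are attempting to prove is stated in the paper as a \emph{Conjecture}, not a theorem: the paper offers no proof. What the paper does provide is \cref{final_derivative_spectra_result} (the existence of the binary map) and \cref{remark_twisted_multiplication_structure} (an indication that the obvious associativity square commutes up to homotopy), after which the full $E_\infty$-coherence is explicitly left open.

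Your proposal is not a proof either, and to your credit you say so: you correctly isolate the genuine obstruction, namely lifting the binary computation of \cref{calculation_of_theta_n_A_conv_B} and \cref{lemma_derivative_of_reduced_convolution} to an $\infty$-operad map over $N(\mathrm{Fin}_*)$. Both strategies you sketch---augmenting to $\mathbb{S}\oplus\widetilde{\Theta^n}(-)$ and trying to make that lax symmetric monoidal, or introducing the twisted product $A\otimes_D B := D_{O(n)}^\vee\otimes A\otimes B$ on $\spc^{\BO(n)}$ and showing $\Theta^n\circ L_n$ is non-unitally symmetric monoidal for it---are reasonable, and the second is arguably closer in spirit to how the paper already proves \cref{orthogonal_monoidality_theorem} via \cref{compatible_localisation_consequence}. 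But in either route the decisive step is the one you flag: the equivalence $\Theta^n(A\redconv B)\simeq D_{O(n)}^\vee\otimes\Theta^n A\otimes\Theta^n B$ is established in the paper only as an isomorphism of objects (via a Yoneda argument), not as a natural transformation compatible with the symmetry and higher associativity constraints, and producing that enhancement is exactly the open problem.

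In short: there is no proof in the paper to compare against, your outline identifies the right difficulty, and neither you nor the paper resolves it.
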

\begin{proposition}
There is a (non-equivariant) equivalence
$$ \widetilde{\Theta^n}F \simeq \Omega^\frac{n(n-1)}{2} \Theta^n F$$
\end{proposition}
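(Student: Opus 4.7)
The plan is to unwind the definitions and compute $D_{O(n)}^\vee$ non-equivariantly, which turns out to be essentially a dimension count of the Lie algebra $\mathfrak{o}(n)$.

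First, I would recall from \cref{recall_dualising_spectrum} that there is an equivariant equivalence
$$D_{O(n)} \simeq S^{\text{Ad}_{O(n)}},$$
where $S^{\text{Ad}_{O(n)}}$ is the suspension spectrum of the one-point compactification of the adjoint representation of $O(n)$ on its Lie algebra $\mathfrak{o}(n)$. Taking duals, this gives $D_{O(n)}^\vee \simeq S^{-\text{Ad}_{O(n)}}$ as $O(n)$-spectra.

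Next, I would forget the $O(n)$-action. Non-equivariantly, the suspension spectrum $S^V$ of the one-point compactification of any real vector space $V$ is equivalent to $S^{\dim V}$, and correspondingly its inverse is $S^{-\dim V} \simeq \Omega^{\dim V}\mathbb{S}$. The Lie algebra $\mathfrak{o}(n)$ consists of skew-symmetric $n \times n$ real matrices, so
$$\dim \mathfrak{o}(n) = \frac{n(n-1)}{2}.$$
Therefore $D_{O(n)}^\vee \simeq S^{-n(n-1)/2}$ non-equivariantly.

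Finally, smashing with $\Theta^n F$ gives the desired equivalence
$$\widetilde{\Theta^n}F = \Theta^n F \otimes D_{O(n)}^\vee \simeq \Theta^n F \otimes S^{-n(n-1)/2} \simeq \Omega^{\frac{n(n-1)}{2}} \Theta^n F.$$
There is no real obstacle here; the only subtlety worth flagging is that the equivalence is genuinely non-equivariant, since the $O(n)$-action on $\text{Ad}_n$ is nontrivial and the equivariant homotopy type of $S^{\text{Ad}_{O(n)}}$ is not a shift of the sphere spectrum.
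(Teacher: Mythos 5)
Your proof is correct and follows the same route as the paper's: identify $D_{O(n)}^\vee \simeq S^{-\text{Ad}_{O(n)}}$, observe that non-equivariantly this is a desuspension of the sphere by $\dim \mathfrak{o}(n) = \frac{n(n-1)}{2}$, and smash with $\Theta^n F$. Your version is slightly more explicit in identifying $\mathfrak{o}(n)$ with skew-symmetric matrices, but the argument is the same.
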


\begin{proof}
As 
$$\dim O(n) = \frac{n(n-1)}{2}$$ we have (non-equivariantly) 
$$D_{O(n)} \simeq \Sigma^\infty S^{\text{Ad}_n} \simeq \mathbb{S}^{\frac{n(n-1)}{2}}$$ 
thus $$D_{O(n)}^\vee \simeq \mathbb{S}^{-\frac{n(n-1)}{2}}$$
thus 
$$\widetilde{\Theta^n}F \simeq D_{O(n)}^\vee \otimes \Theta^n F \simeq \Omega^{-\frac{n(n-1)}{2}} \Theta^n F$$
\end{proof}

\cref{corollary_untwisted_multiplication_structure} would then imply

\begin{conjecture}
\label{corollary_multiplication_on_theta_1}
For $F \colon \cj \to \cs$ lax symmetric monoidal, $\Theta^1 F$ is a nonunital $E_\infty$-ring spectrum.
\end{conjecture}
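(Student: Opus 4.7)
The plan is to establish the stronger Conjecture~\ref{corollary_untwisted_multiplication_structure} and specialize to $n = 1$. The specialization is immediate: since $O(1) = \mathbb{Z}/2$ is discrete, its Lie algebra vanishes, so $D_{O(1)} \simeq \mathbb{S}$ equivariantly and hence $\widetilde{\Theta^1}F = \Theta^1 F \otimes D_{O(1)}^\vee \simeq \Theta^1 F$ as $O(1)$-spectra. I would structure the argument in three steps.

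First, record that $L_n F$ is canonically a non-unital commutative algebra in the Day convolution monoidal category $(\fun(\cj,\csp), \conv)$. By \cref{orthogonal_monoidality_theorem} both $T_n F$ and $T_{n-1} F$ are commutative monoids, and the natural transformation $T_n F \to T_{n-1} F$ is a morphism of commutative monoids (compare the proof of \cref{corollary_monoidal_homogeneous_layers_spaces}). The fibre of a homomorphism of commutative monoids in a pointed $\infty$-category with finite limits inherits a canonical non-unital commutative algebra structure, because a product of ``augmentation-ideal'' elements maps to the basepoint in $T_{n-1}F$ and so the multiplication $L_nF \conv L_n F \to T_n F$ factors through $L_n F$.

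Second, transport this structure along $\Theta^n \colon \homog \simeq \spc^{\BO(n)}$. I would endow $\homog$ with the non-unital symmetric monoidal structure $A \boxtimes B := L_n(A \redconv B)$ inherited from the reduced Day convolution, and $\spc^{\BO(n)}$ with the twisted symmetric monoidal structure $X \star Y := X \otimes Y \otimes D_{O(n)}^\vee$. The calculation in \cref{lemma_derivative_of_reduced_convolution} gives the object-level identification $\Theta^n(A \boxtimes B) \simeq \Theta^n A \star \Theta^n B$; upgrading this to a symmetric monoidal equivalence of $\infty$-categories then transports the non-unital commutative algebra $L_n F$ to a non-unital commutative $\star$-algebra structure on $\Theta^n F$. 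Finally, the invertibility of $D_{O(n)}^\vee$ makes tensoring with it a symmetric monoidal equivalence $(\spc^{\BO(n)}, \star) \simeq (\spc^{\BO(n)}, \otimes)$, producing the desired non-unital $E_\infty$-ring structure on $\widetilde{\Theta^n}F$, and hence on $\Theta^1 F$ upon specializing.

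The main obstacle is the symmetric monoidal enhancement in the second step. Unlike the object-level identification, extending \cref{lemma_derivative_of_reduced_convolution} to coherent natural equivalences compatible with all higher operadic structure requires careful bookkeeping of the equivariant shearing identifications spelled out in \cref{spelling_out_the_actions} and their interaction with the symmetry constraints. One approach is to upgrade the inverse functor $\Phi \colon \spc^{\BO(n)} \to \homog$, given by $\Phi(\Theta)(V) = \Omega^\infty((S^{nV} \wedge \Theta)_{hO(n)})$, to a lax symmetric monoidal functor by exploiting the commutation of homotopy orbits with smash products together with the isomorphism $S^{nV} \wedge S^{nW} \simeq S^{n(V \oplus W)}$, and then pass to the adjoint $\Theta^n$. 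A more abstract alternative is to descend the symmetric monoidal structure on $\fun(\cj, \csp)$ through the localization $L_n$ using a variant of \cref{compatible_localisation_consequence} adapted to the reduced convolution, once one verifies the relevant compatibility condition modulo the $D_{O(n)}^\vee$-twist.
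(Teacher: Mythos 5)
Your first paragraph matches the paper's own ``proof'' of this statement exactly: it observes that $O(1) \cong \mathbb{Z}/2$ is zero-dimensional, so $D_{O(1)} \simeq \mathbb{S}$ equivariantly and hence $\widetilde{\Theta^1}F \simeq \Theta^1 F$. Be aware, though, that the paper labels both this statement and \cref{corollary_untwisted_multiplication_structure} as conjectures, and the proof you are being compared against is explicitly a \emph{conditional} one --- it only records that \cref{corollary_untwisted_multiplication_structure} would imply the present claim. So as far as the paper's own proof is concerned, you have reproduced it.

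Where you go beyond the paper is in sketching a proof of \cref{corollary_untwisted_multiplication_structure} itself, which the paper deliberately does not supply. Your outline is the natural one. Two comments. First, a small correction to your step one: the paper's \cref{corollary_monoidal_homogeneous_layers_spaces} actually endows $L_nF$ with a \emph{unital} lax symmetric monoidal structure (it is the fibre of a map of commutative algebras preserving the unit). The non-unitality only appears after applying $\Theta^n$, because the Day convolution unit is a constant functor, hence $0$-polynomial, hence has vanishing $n$-th derivative for $n \geq 1$. Second, and more seriously, the crux is exactly what you flag: upgrading the object-level formula of \cref{lemma_derivative_of_reduced_convolution}, $\Theta^n(A \redconv B) \simeq \Theta^n A \otimes \Theta^n B \otimes D_{O(n)}^\vee$, to a coherent symmetric monoidal equivalence of $\infty$-categories. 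The paper's proof of that lemma is a Yoneda computation that produces objectwise equivalences with no higher coherence data, and the shearing identifications spelled out in \cref{spelling_out_the_actions} show that the $O(n)$-equivariant bookkeeping is already delicate at the $1$-categorical level. This is precisely why the paper stops at a conjecture. Your proposal correctly identifies the gap but does not close it; for the statement as stated, the conditional reduction you give is what the paper proves, and you have it right.
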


\begin{proof}
As $O(1) \cong \mathbb{Z}/2$ is $0$-dimensional, the dualising spectrum $D_{O(1)}$ is the sphere spectrum, hence $\widetilde{\Theta^1}F \simeq \Theta^1 F$.
\end{proof}

\chapter{Applications and Open Questions}
\label{chapter_applications_and_open_questions}

This short final chapter discusses the following topics:
\begin{itemize}

\item In \cref{section_examples_of_monoidal_functors}, examples of monoidal functors $\cj \to \csp$ which motivated this work and to which the main results, i.e.\ \cref{orthogonal_monoidality_theorem} and \cref{final_derivative_spectra_result}, can be applied, are discussed. However, as the proofs of the main results do not provide explicit formulas for the resulting structure on the polynomial approximations and on the derivative spectra, concrete formulas remain out of reach for now. 

\item In \cref{section_potential_alternative_description}, we explain a potential alternative description of the multiplication on $\Theta^1F$ of a monoidal functor $F$.

\item  In \cref{section_open_questions}, we present some open questions and challenges, as well as some directions for future work. These include more sophisticated examples of monoidal structures, the existence of a similar theorem in Goodwillie Calculus, as well as some more general open questions in Orthogonal Calculus which we find interesting.

\end{itemize}

\section{Examples of Monoidal Functors}
\label{section_examples_of_monoidal_functors}

\begin{example}
\label{monoidal_functor_examples}
The following functors $\cj \to \cs$ (already mentioned in \cref{example-functors}) are lax symmetric monoidal.
\begin{enumerate}[label=\alph*)]
\item 
$
\cj \to \cs \\
V \mapsto S^V \simeq V^c
$
\item
$
\cj \to \cs \\
V \mapsto O(V)
$
\item 
$
\cj \to \cs \\
V \mapsto \BO(V)
$
\item 
$
\cj \to \cs \\
V \mapsto \B \TOP(V)
$
\item 
$
\cj \to \cs \\
V \mapsto \B G(S^V)
$ \\

(Here, $G(X)$ denotes the grouplike topological monoid of homotopy automorphisms, i.e.\ self homotopy-equivalences, of $X$.)
\end{enumerate}
\end{example}

\begin{proof}
In all cases, we indicate the required maps
$$F(V) \times F(W) \to F(V \oplus W)$$
\begin{enumerate}[label=\alph*)]
\item 
For $V \mapsto S^V$, up to identifying $S^{V \oplus W}$ with $S^V \wedge S^W$, these are the quotient maps $S^V \times S^W \to S^V \wedge S^W \simeq (S^V \times S^W)/(S^V \vee S^W)$.

\item For $V \mapsto O(V)$, taking block sums of matrices/automorphisms defines maps 
\begin{align*}
O(V) \times O(W) &\to O(V \oplus W) \\
(A,B) &\mapsto \begin{pmatrix}
A & 0 \\
0 & B
\end{pmatrix} 
\end{align*}
\item For $V \mapsto \B O(V)$, apply the functor $\B$ to the maps for $O(V)$.
\item The cases $V \mapsto \TOP(V)$ and $V \mapsto \B \TOP (V)$ work analogously to $O(V)$ and $\BO(V)$.
\item For $V \mapsto G(S^V)$, the identification $S^V \wedge S^W \simeq S^{V \oplus W}$ gives maps
$$\Map(S^V,S^V) \times \Map(S^W,S^W) \to \Map(S^{V \oplus W},S^{V \oplus W})$$
which restrict to the desired maps. \qedhere
\end{enumerate}
\end{proof}

By
\cref{orthogonal_monoidality_theorem}
and
\cref{final_derivative_spectra_result}
the $n$-polynomial approximations of the functors of 
\cref{monoidal_functor_examples}
are lax symmetric monoidal, and their derivative spectra admit multiplication maps

$$\Theta^nF \otimes \Theta^nF \to \Theta^nF \otimes D_{O(n)}$$

We would like to be able to describe these maps explicitly, but are so far unable to do so. There are some guesses in easy cases.

\begin{example}
Let $F(V) = BO(V)$. By \cite{weiss-oc}, the first few derivative spectra are given by
\begin{align*}
\Theta^1 \BO &\simeq \mathbb{S} \\
\Theta^2 \BO &\simeq \Omega \mathbb{S} \\
\Theta^3 \BO &\simeq \Omega^2 \mathbb{S}/3
\end{align*}
The multiplication maps predicted by \cref{final_derivative_spectra_result} thus are maps of the form
\begin{itemize}
\item ($n=1$) $\mathbb{S} \otimes \mathbb{S} \to \mathbb{S}$
\item ($n=2$) $\mathbb{S}^{-2} \otimes \mathbb{S}^{-2} \to \mathbb{S}^{-2}$
\item ($n=3$) $\Omega^5 \mathbb{S}/3 \otimes \Omega^5 \mathbb{S}/3 \to \Omega^5 \mathbb{S}/3$
\end{itemize}

We conjecture that the first map is the usual multiplication map on the sphere spectrum. About the second, we remark that it must be non-equivariantly nullhomotopic, but we think that this is not true as an $O(2)$-equivariant map. \\
For general $n$, there is also the description of $\Theta^n \BO$ by Arone (\cite{arone-bo-bu}) which states that
$$ \Theta^n \BO \simeq \Map_*(L_n,D_{O(n)})$$
where $L_n$ is the unreduced suspension of the geometric realisation of the category of non-trivial direct-sum decompositions of $\mathbb{R}^n$. \\
A possible guess would be that the multiplication map is in general related to the diagonal map on $L_n$.

\end{example}

\begin{example}
\label{example_a_theory_as_derivative_of_btop}
Let $F(V)=\BTOP(V)$. Then
$$\Theta^1 F \simeq A(*) \simeq K(\mathbb{S}) \simeq \mathbb{S} \vee \text{Wh}^{\text{DIFF}}(*)$$
where $A(*)$ denotes Waldhausen
's A-theory spectrum of a point and $\text{Wh}^{\text{DIFF}}$ denotes the smooth Whitehead spectrum). (For a reference, see \cite{weiss-oc}, p.3, which references \cite[Theorem 2, Addendum 4]{waldhausen-manifold-approach} for this fact.) \\
By \cref{corollary_multiplication_on_theta_1} we obtain a nonunital multiplication on $A(*)$. We conjecture that it is the usual multiplication. A possible explanation for this fact uses work of Weiss-Williams which is yet to appear\footnote{
likely as \enquote{Automorphisms of Manifolds and Algebraic K-Theory IV}.
}.
\end{example}

\section{A Potential Alternative Description of the Multiplication On $\Theta^1F$}
\label{section_potential_alternative_description}
\sectionmark{Multiplication on $\Theta^1F$}

\begin{remark}[Potential alternative description of a multiplication on the first derivative spectrum] $ $ \\
\label{potential_explicit_formula}
There is a potential alternative explicit description of a multiplication on the first derivative spectrum of a monoidal functor using the explicit construction of the homotopy fibre of a map $f \colon (X,x_0) \to (Y,y_0)$ as
$$ \hofib f = \{(a,p) \in X \times PY \mid p(0)=f(a) \text{ and } p(1)=y_0\} $$
Recall that (e.g.\ as a special case of \cite[Prop.~5.3]{weiss-oc}) there is a fibre sequence
$$F^{(1)}(V) \to F(V) \to F(V \oplus \mathbb{R}) $$

Applying this we get
$$F^{(1)}(V) = \{(a,p) \in F(V) \times PF(V \oplus \mathbb{R}) \mid p(0)=i_*(a), p(1)=*\}$$
The monoidal structure on $F$ provides maps 
$$m_{V,W} \colon F(V) \times F(W) \to F(V \oplus W)$$
Using this one can also write down a multiplication map in the following way.

Define 
$$\tilde{m}_{V,W} \colon F^{(1)}(V) \times F^{(1)}(W) \to \Omega F^{(1)}(V \oplus W \oplus \mathbb{R})$$
by 
\begin{align*}
\tilde{m} ((a,p),(b,q)) = (\eta , \nu) &\in \Omega F^{(1)}(V \oplus W \oplus \mathbb{R}) \\
&\subset \Omega F(V \oplus W \oplus \mathbb{R}) \times \Omega P F(V \oplus W \oplus \mathbb{R} \oplus \mathbb{R})
\end{align*}
where
$$\eta = i_*(q) \ast m_{V \oplus \mathbb{R},W}(p,b) \ast \overline{m_{V,W \oplus \mathbb{R}}(a,q)} \ast i_*(\overline{p})$$
Here, $\overline{\gamma}$ for a path $\gamma$ denotes the reverse path.

The construction of $\nu$, which needs to be a path in $\Omega F^{(1)}(V \oplus W \oplus \mathbb{R} \oplus \mathbb{R})$, proceeds similarly. Concretely, if $s$ is the path-space variable, $\nu$ can be defined by
\begin{equation*}
v_s(t) = 
\begin{cases}
m(p,q)(s \cdot 4t,1) & \text{if } 0 \leq t \leq \frac{1}{4}\\
m(p,q)(s,s \cdot 4 (t-\frac{1}{4})) & \text{if } \frac{1}{4} < t \leq \frac{1}{2}\\
m(p,q)(s \cdot (1-4(t-\frac{1}{2})),s) & \text{if } \frac{1}{2} < t \leq \frac{3}{4}\\
m(p,q)(s(1-4(t-\frac{3}{4})),s) & \text{if } \frac{3}{4} < t \leq 1
\end{cases}
\end{equation*}

We have now described maps
$$F^{(1)}(V) \times F^{(1)}(W) \to \Omega F^{(1)}(V \oplus W \oplus \mathbb{R})$$

Since 
$$(\Theta^1 F)_n \simeq F^{(1)}(\mathbb{R}^n)$$
these maps together describe a map
$$m \colon \Theta^1 F \otimes \Theta^1 F \to \Theta^1 F \simeq \Sigma \Omega \Theta^1F$$
for a monoidal functor $F \colon \cj \to \csp$. \\
We conjecture that this map models the map 
$$\Theta^1 F \otimes \Theta^1F \to \Theta^1F$$ 
that is predicted by our theory (see \cref{corollary_multiplication_on_theta_1}). If so, this much more concrete perspective might enable concrete calculations. It seems poorly suited for establishing more abstract coherence properties though.
\end{remark}

\section{Further Open Questions}
\label{section_open_questions}

Here are some questions that we can't answer as of now but which might be interesting for further study.

\begin{question}[Rational $K$-Theory of $\mathbb{Z}$] $ $ \\
We already mentioned in \cref{example_a_theory_as_derivative_of_btop} that 
 $$\Theta^1 (V \mapsto \text{BTOP}(V)) \simeq A(*) \simeq K(\mathbb{S}) \simeq \mathbb{S} \vee \text{Wh}^{\text{DIFF}}(*)$$
Using $S(V)$ to denote the unit sphere in $V$, one can similarly show that 
$$\Theta^1 (V \mapsto \text{BTOP}(S(V))) \simeq \mathbb{S} \vee \Omega \text{Wh}^{\text{DIFF}}(*)$$

Rationally, the former is identified with $K(\mathbb{Z})$, the second with $\mathbb{S} \vee \Omega K(\mathbb{Z})$. Borel (\cite{borel}) proved that 
  \[
    \pi_* K(\mathbb{Z}) \otimes \mathbb{Q} = \left\{\begin{array}{lr}
    	\mathbb{Q}, & \text{for } *=0\\
        \mathbb{Q}, & \text{for } *=4k+1 \text{ with } k \geq 1\\
        0, & \text{otherwise}\\
        \end{array}\right.
  \]
so one sees that 
  \[
    \pi_* \Theta^1 (V \mapsto \text{BTOP}(S(V))) \otimes \mathbb{Q} = \left\{\begin{array}{lr}
    	\mathbb{Q}, & \text{for } *=0\\
        \mathbb{Q}, & \text{for } *=4k \text{ with } k \geq 1\\
        0, & \text{otherwise}\\
        \end{array}\right.
  \]
  
By virtue of the monoidality theorem for $\Theta^1$, this will carry a graded ring structure. It would now be very interesting to determine it. In particular, is this ring generated by the class in degree $4$?
\end{question}

\begin{question}
Although unexplained in this work, one possible purpose of Orthogonal Calculus is to generalise the theory of characteristic classes. E.g.\ for $F \colon \cj \to \cs$ one can construct a cohomology class on $F(\mathbb{R}^\infty)$ with values in $\Theta^1F$ which in the case of $V \mapsto \BO(V)$ recovers (up to change of coefficients) the classical total Stiefel-Whitney class (\cite{weiss-williams-iv}). Can the product structure on $\Theta^1F$ in case of a monoidal functor $F$ thus recover the classical product formula for the total Stiefel-Whitney class as well as prove generalisations of it, e.g.\ for Euclidean fibre bundles ($V \mapsto \BTOP(V)$) or spherical fibrations ($V \mapsto \text{BG}(V)$)?
\end{question}

\begin{question}[The gluing datum in the orthogonal tower and chromatic localization] $ $ \\
Even if one understands all the homogeneous layers of the orthogonal tower, the question 
how to reconstruct the $n$-polynomial approximations can remain difficult. In Goodwillie Calculus, the gluing data can be described as maps to certain Tate constructions, e.g.\ for functors to spectra, by a result of Kuhn (\cite{kuhn}, Prop.\ 5.13), there is a pullback diagram
\begin{center}
\begin{tikzcd}
T_d F \arrow[d] \arrow[r] & (\Delta_dF)^{h \Sigma_d} \arrow[d] \\
T_{d-1}F \arrow[r]        & (\Delta_dF)^{t \Sigma_d}          
\end{tikzcd}

\end{center}
In orthogonal calculus, a similar concrete description of the gluing data is still missing and should be investigated.

A fascinating consequence of the description via Tate constructions in Goodwillie Calculus is that, after certain chromatic localizations, i.e.\ $T(n)$-locally, Goodwillie towers split, as the Tate constructions are trivial (Kuhn, \cite[Thm.~7.6]{kuhn}).

Orthogonal calculus too can be done locally with respect to such localisations (see e.g.\ Taggart \cite{taggart-local}). An interesting follow-up question would therefore be if an analogue of Kuhn's theorem holds in orthogonal calculus too.
\end{question}

\begin{question}
\label{question_oc_over_more_general_fields}

In \cref{remark_oc_over_more_general_fields}, we observed that several functors of interest $\cj \to \cs$, such as $V \mapsto \BO(V)$, are also defined on a smaller category which does not know about inner products, only about injective maps and natural complements. 

This motivates the following question, which to our knowledge is open.

Let $K$ be a field, and let $\cj_K$ be the category of finite-dimensional $F$-vector spaces, with
$\mor_{\cj_K}(V,W)$ given by
$$\{f \colon V \to W, U_{V,W} \subseteq W \mid f \text{ linear and injective}, U_{V,W} \text{ a complement to \im(f)} \} $$
with composition maps
$$\mor_{\cj_K}(V,W) \times \mor_{\cj_K}(W,X) \to \mor_{\cj_K}(V,X)$$
$$((f \colon V \to W, U_{V,W}),(g \colon W \to X,U_{W,X}) \mapsto (g \circ f \colon V \to X, U_{W,X} + g(U_{V,W}))$$
Then by definition, independent of the choice of $K$, there are functors such as
\begin{itemize}
\item $V \mapsto \text{GL}(V)$
\item $V \mapsto \text{SL}(V)$
\end{itemize}
For $F = \mathbb{R}$ there is a functor
$$\cj \to \cj_{\mathbb{R}}$$
$$V \mapsto V$$
$$f \mapsto (f,(\im f)^\perp)$$
inducing (by precomposition) a calculus for functors on $\cj_\mathbb{R}$. \\
The question is if there are such calculi for other fields too. 
Speculatively, such calculi might be applied to questions of arithmetic, e.g.\ about $\text{SL}_n(F)$ or maybe even $\text{SL}_n(\mathbb{Z})$. 

We do not know the answer. We speculate that for local fields such as $\mathbb{Q}_p$ (on which there exists $p$-adic analysis), such a calculus might look rather similar to Orthogonal Calculus, while for even more general fields such as global or finite fields, we assume the proofs would need to use quite different, rather more algebro-geometric methods\footnote{
Could motivic homotopy theory play a role here?}.
\end{question}

\begin{question}
It seems very likely that \cref{orthogonal_monoidality_theorem} holds equally, with the same proof strategy, for Goodwillie Calculus for functors $\cC \to \cD$ when $\cC$ and $\cD$ are sufficiently well behaved, e.g.\ $\cD$ closed monoidal. Does this have interesting consequences in Goodwillie Calculus? What examples of monoidal functors to which Goodwillie Calculus can be applied are there?
\end{question}

{\sloppy

\cleardoublepage
\phantomsection
\addcontentsline{toc}{chapter}{Bibliography}
\printbibliography
}

\end{document}